\newcommand{\textxy}[1]{\text{\emph{#1}}}
\newcommand{\harxiv}[1]{\href{http://arxiv.org/abs/#1}{\texttt{arXiv:#1}}}
\newcommand{\hyref}[2]{\hyperref[#2]{#1~\ref*{#2}}}
\newcommand{\listskip}{\\[0.7ex]}
\theoremstyle{plain}
\newtheorem{theorem}{Theorem}[section]
\newtheorem{lemma}[theorem]{Lemma}
\newtheorem{corollary}[theorem]{Corollary}
\newtheorem{proposition}[theorem]{Proposition}
\theoremstyle{definition}
\newtheorem{remark}[theorem]{Remark}
\newtheorem{example}[theorem]{Example}
\newtheorem{definition}[theorem]{Definition}
\newtheorem*{notation}{Notation}
\newcommand{\IC}{{\mathbb{C}}}
\newcommand{\IH}{{\mathbb{H}}}
\newcommand{\IP}{{\mathbb{P}}}
\newcommand{\IR}{{\mathbb{R}}}
\newcommand{\IZ}{{\mathbb{Z}}}
\newcommand{\IN}{{\mathbb{N}}}
\newcommand{\kk}{{\mathbf{k}}}
\newcommand{\sA}{\mathsf{A}}
\newcommand{\sB}{\mathsf{B}}
\newcommand{\sC}{\mathsf{C}}
\newcommand{\sD}{\mathsf{D}}
\newcommand{\sH}{\mathsf{H}}
\newcommand{\sK}{\mathsf{K}}
\newcommand{\sL}{\mathsf{L}}
\newcommand{\sM}{\mathsf{M}}
\newcommand{\sN}{\mathsf{N}}
\newcommand{\sR}{\mathsf{R}}
\newcommand{\sS}{\mathsf{S}}
\newcommand{\sU}{\mathsf{U}}
\newcommand{\sV}{\mathsf{V}}
\newcommand{\sW}{\mathsf{W}}
\newcommand{\sX}{\mathsf{X}}
\newcommand{\sY}{\mathsf{Y}}
\newcommand{\sZ}{\mathsf{Z}}
\DeclareMathAlphabet{\mathpzc}{OT1}{pzc}{m}{it}
\newcommand{\cF}{\mathcal{F}}
\newcommand{\cK}{\mathcal{K}}
\newcommand{\cS}{\mathcal{S}}
\newcommand{\cT}{\mathcal{T}}
\newcommand{\cX}{\mathcal{X}}
\newcommand{\cY}{\mathcal{Y}}
\newcommand{\cZ}{\mathcal{Z}}
\newcommand{\LLambda}{\Lambda(r,n,m)}
\newcommand{\Db}{\sD^b}
\newcommand{\Kb}{\sK^b}
\newcommand{\Ksplit}{K^{\mathrm{split}}_0}
\newcommand{\coloneqq}{\mathrel{\mathop:}=}
\newcommand{\rk}[1]{\mathrm{rk}\, #1}
\DeclareMathOperator{\image}{\mathsf{Im}}
\newcommand{\orth}{^\perp}
\newcommand{\inv}{^{-1}}
\newcommand{\blank}{-}
\DeclareMathOperator{\susp}{\mathsf{susp}}
\DeclareMathOperator{\cosusp}{\mathsf{cosusp}}
\DeclareMathOperator{\id}{{\mathsf{id}}}
\newcommand{\ind}[1]{\mathsf{ind}(#1)}
\renewcommand{\mod}[1]{\mathsf{mod}(#1)}
\newcommand{\proj}[1]{\mathsf{proj}(#1)}
\DeclareMathOperator{\add}{\mathsf{add}}
\newcommand{\lmu}[1]{\sL_{#1}}
\newcommand{\rmu}[1]{\sR_{#1}}
\newcommand{\thick}[2]{\mathsf{thick}_{#1}(#2)}
\newcommand{\siltsub}[1]{\mathsf{siltcat}(#1)}        
\newcommand{\silt}[1]{\mathsf{silt}_2(#1)}
\newcommand{\stab}[1]{\mathsf{stab}(#1)}
\newcommand{\charge}{Z}
\newcommand{\fl}{{\star}}  
\newcommand{\simples}{{\mathcal S}}
\DeclareMathOperator{\Hom}{\mathrm{Hom}}
\newcommand{\posetsilt}{\IP_1}
\newcommand{\posetpair}{\IP_2}
\renewcommand{\setminus}{\backslash}
\newcommand{\clext}[1]{{\langle #1\rangle}}    
\newcommand{\tri}[3]{#1\rightarrow #2\rightarrow #3\rightarrow \Sigma #1}
\newcommand{\isom}{ \text{{\hspace{0.48em}\raisebox{0.8ex}{${\scriptscriptstyle\sim}$}}}
                    \hspace{-0.65em}{\rightarrow}\hspace{0.3em}}
\newcommand{\embed}{\hookrightarrow}
\newcommand{\too}{\longrightarrow}
\renewcommand{\iff}{\Longleftrightarrow}
\newcommand{\iffdef}{\underset{\text{\tiny def}}{\Longleftrightarrow}}
\newcommand{\rightlabel}[1]{\stackrel{#1}{\longrightarrow}}
\renewcommand{\phi}{\varphi}
\renewcommand{\epsilon}{\varepsilon}
\newcommand{\bib}[6]{{\bibitem{#2} #3: {\emph{#4},} #5#6.}}
\newcommand{\bibno}[1]{}
\begin{document}

\title[Discrete derived categories II]{Discrete derived categories II \\
                       \scalebox{0.86}{The silting pairs CW complex and the stability manifold}}

\author{Nathan Broomhead}
\address{Faculty of Mathematics,
         Bielefeld University,
         PO Box 100 131,
         33501 Bielefeld, Germany}
\email{nbroomhe@math.uni-bielefeld.de}

\author{David Pauksztello}
\address{School of Mathematics,
        The University of Manchester,
        Oxford Road,
        Manchester,  M13 9PL, United Kingdom}
\email{david.pauksztello@manchester.ac.uk}

\author{David Ploog}
\address{Institut f\"{u}r Algebraische Geometrie,
         Fa\-kul\-t\"{a}t f\"{u}r Ma\-the\-ma\-tik und Physik,
         Leibniz Universit\"{a}t Hannover,
         Welfengarten 1, 30167 Hannover, Germany}
\email{ploog@math.uni-hannover.de}

\begin{abstract}
Discrete derived categories were studied initially by Vossieck \cite{Vossieck} and later by Bobi\'nski, Gei\ss, Skowro\'nski \cite{BGS}. In this article, we define the CW complex of silting pairs for a triangulated category and show that it is contractible in the case of discrete derived categories. We provide an explicit embedding from the silting CW complex into the stability manifold. By work of Qiu and Woolf \cite{QW}, there is a deformation retract of the stability manifold onto the silting pairs CW complex. We obtain that the space of stability conditions of discrete derived categories is contractible.
\end{abstract}

\maketitle

{\small
\setcounter{tocdepth}{1}
\tableofcontents
}

\addtocontents{toc}{\protect{\setcounter{tocdepth}{-1}}}  

\section*{Introduction}
\addtocontents{toc}{\protect{\setcounter{tocdepth}{1}}}   

\noindent
The class of algebras whose bounded derived categories are discrete, the so-called \emph{derived-discrete algebras}, was introduced by Vossieck in \cite{Vossieck}. This family constitutes a particularly interesting source of examples:
\begin{itemize}
\item they are intermediate in complexity between finite and tame representation type hereditary algebras;
\item they can have arbitrarily large global dimension;
\item the Auslander--Reiten quivers of their bounded derived categories are completely described \cite{BGS}.
\end{itemize}
Derived-discrete algebras provide a 3-parameter family of triangulated categories in representation theory, allowing explicit calculations in arbitrary global dimension.
Similarly concrete and detailed knowledge of the AR quivers of the bounded derived categories is available for hereditary algebras; however, these satisfy peculiarly strong homological properties which are unavailable in general.
In this paper, we shall examine the important interplay between some geometric aspects of finite-dimensional algebras with some combinatorial aspects.

\medskip

\noindent{\it Geometric aspects: Bridgeland stability conditions.}
Stability conditions were introduced by Bridgeland in \cite{Bridgeland} as a new invariant of triangulated categories inspired by work in mathematical physics. Stability conditions may be considered as a `continuous' generalisation of bounded t-structures, which are central to the study of triangulated categories in the following ways:
\begin{itemize}
\item they construct abelian categories, \emph{hearts}, inside triangulated categories;
\item they construct different cohomology theories on triangulated categories.
\end{itemize}
 Combining this cohomological information can be achieved by passing to Bridgeland's stability manifold \cite{Bridgeland}, which is a `moduli space' of stability conditions encoding the information of (most of) the bounded t-structures in a triangulated category.

Unfortunately, computations with stability conditions and stability manifolds, particularly in geometric settings, are notoriously difficult. This has led some people to seek to determine and understand the stability manifolds of finite-dimensional algebras, for example in the work of Dimitrov, Haiden, Katzarkov and Kontsevich \cite{Kontsevich}, which while still difficult, may at least be manageable. As a concrete example: amongst experts there is a feeling that, when non-empty, the stability manifold is contractible. Thus far, for algebraic examples, this is known explicitly only for the bounded derived categories of the algebras $\kk A_2, \kk\tilde A_1, \kk\tilde A_2, \Lambda(1,2,0)$ (we refer to Section~\ref{sec:discrete} for the precise definition); see \cite{BQS,Okada,DK,Woolf}, respectively. In Theorem~\ref{thm:stab-contractible}, we show the contractibility of the stability manifold for the entire family of finite global dimension derived-discrete algebras.

\medskip

\noindent{\it Combinatorial aspects: silting objects.}
Introduced in \cite{Keller-Vossieck}, silting objects are a generalisation of tilting objects in which we no longer insist that negative self-extensions vanish. They sit on the cusp of classical tilting theory and cluster-tilting theory.

Inspired by the combinatorics of classical tilting theory \cite{Riedtmann-Schofield, Unger}, Aihara and Iyama introduced a partial order on silting objects in \cite{AI}. For hereditary algebras, restricting this to a partial order on so-called `two-term silting objects' recovers the corresponding exchange graph of cluster-tilting objects, placing the combinatorics of silting objects at the centre of cluster-tilting theory. For an excellent survey of these connections see \cite{Bruestle-Yang}.

In Section~\ref{sec:silting-pairs}, we introduce a new poset: the poset of \emph{silting pairs}. This poset and its associated topological space provide new invariants for triangulated categories. We show that this poset satisfies certain good finiteness properties in the case of derived-discrete algebras, making it a CW poset and the space a regular CW complex, the \emph{silting pairs CW complex} (Theorem~\ref{thm:CW-poset} and Proposition~\ref{prop:finiteness}). We show that the partial order on silting pairs is very closely related to the silting analogue of the classical tilting concept of `Bongartz completion' in Proposition~\ref{prop:po-equivalent}. In Theorem~\ref{thm:CW-contractible}, we show that the silting pairs CW complex of a derived-discrete algebra is contractible, and in particular, that its silting quiver in the sense of \cite{AI} is connected.

As a consequence, we obtain in Corollary~\ref{cor:contractible} that the CW complex of two-term silting objects for discrete derived catgeories is also contractible. This may be considered as a result on cluster structures in higher global dimension. This suggests that two-term silting objects for derived-discrete algebras warrant further investigation; see Remark~\ref{rem:cluster-connection}.

\medskip

\noindent{\it The connection.}
Let $\Lambda$ be a finite-dimensional algebra of finite global dimension. Whereas tilting objects in $\Db(\Lambda)$ detect the finite-dimensional algebras which are derived-equivalent to $\Lambda$, silting objects in $\Db(\Lambda)$ determine the finite-dimensional algebras which have module categories sitting inside $\Db(\Lambda)$ as hearts. In other words, silting objects determine the bounded t-structures in $\Db(\Lambda)$ whose hearts are `algebraic'. In the case of derived-discrete algebras, results of the first article \cite{discrete-one} show that all hearts inside a discrete derived category are algebraic. In particular, this means that the silting pairs CW complex captures essentially the same information as the stability manifold. In fact, the silting pairs CW complex is a deformation retraction of the stability manifold.

At this point, we would like to stress the following aspect of our philosophy. The stability manifold is an interesting but often difficult geometric invariant. The poset of silting pairs, and the induced CW complex, give a concrete combinatorial and representation-theoretic interpretation and viewpoint for `algebraic' stability conditions. In the case of discrete derived categories, the silting pairs CW complex is a half-dimensional analogue of the stability manifold which contains the same information.

\medskip

\noindent{\it Relationship with the work of Qiu and Woolf.}
At a late stage of writing, we became aware of the independent and concurrent work of Qiu and Woolf \cite{QW}. From the point of view of tilting t-structures at torsion pairs, they define a poset $\mathrm{Int}(\sD)$, which, after formally adjoining a bottom element, is isomorphic to our poset of silting pairs. Both strategies entail obtaining a deformation retraction of the stability manifold onto the classifying space of this poset and showing that this classifying space is contractible.

However, there are important differences in exposition and content. Our approach is more algebraic, and provides important connections to representation-theoretic constructions such as Bongartz completion. We develop the theory of silting pairs CW complexes as an algebraic counterpart to stability manifolds. We are able to apply a detailed understanding of the representation theory of derived-discrete algebras to show that the stability manifold is connected. Combining this result with the approach taken in \cite{QW}, where each component of the stability manifold of a derived-discrete algebra is shown to be contractible, one gets that the whole stability manifold is contractible.

In this article, we prove the contractibility of the CW complex and give the embedding into the stability space but refrain from proving that the former is a deformation retract of the latter. For that, we cite \cite{QW}. We have an independent proof which is different from that of \cite{QW}; the techniques used there may warrant another article on the subject.

\subsection*{Acknowledgments}
We are grateful to Aslak Bakke Buan, Martin Kalck, Henning Krause, and Dong Yang.
We would like to thank Yu Qiu and Jon Woolf for kindly sharing their preprint \cite{QW}.
We are grateful to Osamu Iyama for pointing out the reference \cite{Aihara} and to Jairui Fei for pointing out \cite{DF}.
We would like to thank the referee for a careful reading and valuable comments.
The second author acknowledges the financial support of the EPSRC of the United Kingdom through the grant EP/K022490/1.


\section{Preliminaries}

\noindent
In this section we collect some notation, mostly standard. We work over an algebraically closed field $\kk$. The suspension functors (otherwise known as shift or translation) of all triangulated categories are denoted by $\Sigma$. All categories and functors are supposed to be $\kk$-linear. Subcategories are supposed to be full, additive and closed under isomorphisms. Objects will always be considered up to isomorphisms, and by abuse of terminology we shall identify objects with their isomorphism classes. All triangles we mention are distinguished, and all functors between triangulated categories are triangle functors.

\subsection{General categorical notions}
Let $\sD$ be a $\kk$-linear triangulated category. In this article, we assume $\sD$ to be a Hom-finite, Krull-Schmidt category (unless we explicitly work in greater generality), i.e.\ every object has a unique decomposition as a direct sum of finitely many indecomposable objects.
For two objects $A,B$ of $\sD$, we use the traditional shortcut notation $\Hom^i(A,B) = \Hom(A,\Sigma^i B)$ resembling Ext spaces in abelian categories. We write
\[ \Hom^{>0}(A,B) = \bigoplus_{i>0} \Hom(A,\Sigma^i B) \]
for aggregated homomorphism spaces, and similarly for obvious variants.

Occasionally, we will use the notation $\Hom(\sA,\sB)$ for subcategories $\sA,\sB\subseteq\sD$ to mean the collection of all morphisms $A\to B$, where $A\in\sA, B\in\sB$.

\subsection{Subcategory constructions} \label{sub:subcategory-constructions}
For a full subcategory $\sC$ of $\sD$, we write $\ind{\sC}$ for the set of \emph{indecomposable} objects of $\sC$ up to isomorphism, and consider the following subcategories associated with $\sC$:

\medskip
\noindent
\begin{tabular}{@{} p{0.13\textwidth} @{} p{0.87\textwidth} @{}}
$\sC\orth$,      & the \emph{right orthogonal} to $\sC$, the full subcategory of $D\in\sD$ with $\Hom(\sC,D)=0$, \\
${}\orth\sC$ ,   & the \emph{left orthogonal} to $\sC$, the full subcategory of $D\in\sD$ with $\Hom(D,\sC)=0$. \\
                 & If $\sC$ is closed under suspensions and cosuspensions, then $\sC\orth$ and $\orth\sC$ are
                  triangulated subcategories of $\sD$. \listskip
$\thick{}{\sC}$, & the \emph{thick subcategory generated by $\sC$}, the smallest thick (i.e.\ triangulated and closed under direct summands) subcategory of $\sD$ containing $\sC$. \listskip
$\susp(\sC)$
and
$\cosusp(\sC)$,  & the \emph{(co-)suspended subcategory generated by $\sC$}, the smallest full subcategory of
                   $\sD$ containing $\sC$ which is closed under (co-)suspension, extensions and taking direct
                   summands. \listskip
$\add(\sC)$,     & the \emph{additive subcategory} of $\sD$ containing $\sC$, the smallest full subcategory of
                   $\sD$ containing $\sC$ which is closed under finite coproducts and direct summands. \listskip
$\clext{\sC}$,   & the smallest full subcategory of $\sD$ containing $\sC$ that is closed under extensions
                   and direct summands. \\
\end{tabular}

\medskip\noindent
For two full subcategories $\sC_1,\sC_2$ of $\sD$ we denote by $\sC_1 * \sC_2$ the full subcategory of all objects $D$ occurring in triangles $C_1\to D\to C_2\to\Sigma C_1$ with $C_1\in\sC_1$ and $C_2\in\sC_2$. This construction can be iterated and is associative by the octahedral axiom, so that we will write $\sC_1 * \sC_2 * \sC_3$ etc.\ for more than two factors.

A subcategory $\sC$ is \emph{extension-closed} if $\sC*\sC=\sC$, or equivalently, $\sC*\sC\subseteq\sC$.

\subsection{Categorical approximations} \label{sub:approximations}
For the following definitions, let $\sD$ be an additive category and $\sC$ a full subcategory of $\sD$.

A \emph{right $\sC$-approximation} of an object $D\in\sD$ is a morphism $C\to D$ with $C\in\sC$ such that the induced maps $\Hom(C',C)\to\Hom(C',D)$ are surjective for all $C'\in\sC$.
A morphism $f\colon C\to D$ is called a \emph{minimal right $\sC$-approximation} if $fg=f$ is only possible for isomorphisms $g\colon C\to C$.
A \emph{(minimal) left $\sC$-approximation} is defined dually.

The subcategory $\sC$ is called \emph{functorially finite in $\sD$} if every object of $\sD$ has a right $\sC$-approximation and a left $\sC$-approximation.

\subsection{t-structures and co-t-structures} \label{sub:t-structures}

Following \cite{BBD}, a \emph{t-structure} is a pair of full subcategories $(\sX,\sY)$ such that $\sX*\sY=\sD$ and $\orth\sY=\sX, \sY=\sX\orth$ and $\Sigma \sX \subseteq \sX$. The last inclusion implies $\Sigma^{-1} \sY \subseteq \sY$. We will only consider \emph{bounded} t-structures, i.e.\ posit the requirement $\bigcup_{i\in \IZ} \Sigma^i \sX = \bigcup_{i \in \IZ} \Sigma^i \sY = \sD$.

If $(\sX,\sY)$ is a t-structure, then for any $D\in\sD$ there exist unique triangles $\tri{X}{D}{Y}$ with $X\in\sX$ and $Y\in\sY$; these triangles depend functorially on $D$, hence $X$ and $Y$ are called the \emph{right} and \emph{left truncation} of $D$, respectively. Another way of expressing the functoriality of the decomposition triangles is this: the inclusion $\sX\embed\sD$ has a right adjoint (given by $D\mapsto X$) and $\sY\embed\sD$ has a left adjoint. In particular, truncations are minimal approximations. Note that `t-structure' stands for `truncation structure'.

Furthermore, for a t-structure $(\sX,\sY)$, the intersection $\sX \cap \Sigma \sY$ is an abelian subcategory of $\sD$ called the \emph{heart} of $(\sX,\sY)$. It is possible to reconstruct a bounded t-structure $(\sX,\sY)$ from its heart $\sH$ by $\sX=\susp\sH$ and $\sY=\cosusp\Sigma\inv\sH$. Finally, $\sX$ is called the \emph{aisle} and $\sY$ the \emph{co-aisle} of the t-structure; always $\sX={}\orth\sY$ and $\sY=\sX\orth$.

By an \emph{algebraic t-structure} we mean a bounded t-structure whose heart is an abelian category of finite length, possessing only finitely many simple objects.

\medskip\noindent
A \emph{co-t-structure} is a pair of full subcategories $(\sX,\sY)$ with $\sX*\sY=\sD$ and $\orth\sY=\sX, \sY=\sX\orth$ and $\Sigma^{-1} \sX \subseteq \sX$; see \cite{Pauksztello}, or \cite{Bondarko} for the same notion with a different name. The notions of \emph{(co-)aisle} and \emph{bounded} are defined as for t-structures. However, for a co-t-structure $(\sX,\sY)$, the inclusion of the (co-)aisle does not necessarily possess an adjoint. Moreover, the \emph{co-heart} $\Sigma \sX \cap \sY$ is an additive subcategory of $\sD$ but not necessarily abelian.

\subsection{Silting objects and silting subcategories} \label{sub:siltings}
A full subcategory $\sM$ of $\sD$ is \emph{partial silting} if $\Hom^{>0}(\sM,\sM)=0$. It is called \emph{silting} if it is partial silting and $\thick{}{\sM}=\sD$. An object $M\in\sD$ is called a \emph{silting object} if $\add(M)$ is a silting subcategory. These notions are from \cite{Keller-Vossieck} and generalise tilting objects; our terminology follows \cite{AI}.

Two silting objects $M,M'\in\sD$ are \emph{equivalent} if and only if $\add(M) = \add(M')$.

It is easy to see that silting subcategories are extension-closed.
We collect the following facts from \cite{AI}.

\begin{proposition} \label{prop:silt-properties}
Let $\sD$ be a Krull-Schmidt triangulated category with a silting subcategory $\sM$. We have the following.
\begin{enumerate}[label=(\arabic*)]
\item If $\sD$ has a silting object then any silting subcategory is additively generated by a silting object. \label{item:silting-object}
\item $\sD$ has a silting object if and only if $K_0(\sD)$ is free of finite rank. \label{item:finite-rank}
\end{enumerate}
\end{proposition}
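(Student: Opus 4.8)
The plan is to route both statements through a single fact: whenever $\sD$ possesses a silting subcategory $\sN$, the group $K_0(\sD)$ is free abelian with basis the classes $[N]$, $N\in\ind{\sN}$. Once this is in place, parts~\ref{item:silting-object} and~\ref{item:finite-rank} reduce to counting ranks of free abelian groups, so all of the content is concentrated in that fact.

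To prove it, fix a silting subcategory $\sN$, so that $\Hom^{>0}(\sN,\sN)=0$ and $\thick{}{\sN}=\sD$, and consider the natural map $\Ksplit(\add(\sN))\to K_0(\sD)$; here $\Ksplit(\add(\sN))$ is the split Grothendieck group of $\add(\sN)$, i.e.\ the free abelian group on $\ind{\sN}$. Surjectivity is the easy direction: each object of $\sD$ lies in some finite extension-interval $\Sigma^{a}\sN * \Sigma^{a-1}\sN * \cdots * \Sigma^{b}\sN$ — such intervals are closed under direct summands, a standard consequence of $\sN$ being silting, and their union is a thick subcategory containing $\sN$, hence all of $\thick{}{\sN}=\sD$ — and inductively splitting off the defining triangles, together with $[\Sigma X]=-[X]$, writes the class of any such object as an integral combination of the $[N]$. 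Injectivity is the substantial point; for it I would use the bounded co-t-structure $(\sX,\sY)$ on $\sD$ attached to $\sN$, with $\sX=\clext{\Sigma^{-n}\sN \,:\, n\geq 1}$ and co-heart $\Sigma\sX\cap\sY=\add(\sN)$. Iterating the truncation triangles of this co-t-structure attaches to each $X\in\sD$ a finite tower whose graded pieces lie in $\add(\sN)$; the alternating sum of the classes of these pieces is independent of all choices and defines a two-sided inverse $K_0(\sD)\to\Ksplit(\add(\sN))$, so $K_0(\sD)\cong\Ksplit(\add(\sN))$ is free on $\ind{\sN}$.

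Granting this, part~\ref{item:finite-rank} is immediate. If $\sD$ has a silting object $M=M_{1}\oplus\cdots\oplus M_{n}$ with the $M_{i}$ pairwise non-isomorphic indecomposables, then $\add(M)$ is a silting subcategory having exactly $n$ indecomposable objects up to isomorphism, so $K_0(\sD)\cong\IZ^{n}$ is free of finite rank. Conversely, if $K_0(\sD)$ is free of finite rank $r$, the fact applied to the silting subcategory $\sM$ given by hypothesis realises a group isomorphic to $\IZ^{r}$ as the free abelian group on the set $\ind{\sM}$; being finitely generated, it forces $\ind{\sM}$ to be finite (of size $r$), and then $\sM=\add\big(\bigoplus_{N\in\ind{\sM}}N\big)$ is additively generated by a silting object. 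Part~\ref{item:silting-object} runs the same argument twice: a silting object first presents $K_0(\sD)$ as free of finite rank $n$, and then any silting subcategory $\sN$ has $\ind{\sN}$ indexing a basis of $\IZ^{n}$, hence finite of size $n$, hence equal to $\add$ of a silting object; in particular all silting objects of $\sD$ have $\rk{K_0(\sD)}$ indecomposable summands.

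The main obstacle is the injectivity in the key fact — equivalently, producing the co-t-structure $(\sX,\sY)$ above and controlling its truncation towers. This is exactly where the silting hypothesis is used essentially: $\Hom^{>0}(\sN,\sN)=0$ makes the truncation tower of any object terminate after finitely many steps, and, together with $\thick{}{\sN}=\sD$, guarantees that the resulting alternating class is well defined (equivalently, that the weight complex of an object is well defined up to homotopy). Everything else — surjectivity of the class map, the closure of the extension-intervals under summands, and the translation between ``a silting subcategory with finitely many indecomposables'' and ``a silting object'' — is routine.
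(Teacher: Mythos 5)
Your proposal is correct, but it takes a genuinely more self-contained route than the paper. The paper's own proof is a two-line citation: part~(1) is \cite[Proposition 2.20]{AI}, and part~(2) is deduced from (1) together with \cite[Theorem 2.27]{AI}, which asserts precisely the isomorphism $K_0(\sD)\cong\Ksplit(\sM)$ for a silting subcategory $\sM$. You instead prove that isomorphism from scratch via the weight-complex argument for the co-t-structure $(\cosusp\Sigma^{-1}\sN,\susp\sN)$ attached to $\sN$ --- this is Bondarko's Theorem 5.3.1, which the paper also invokes elsewhere --- and then derive both parts by rank-counting. Both approaches are sound. The paper's citation buys brevity; your version makes visible exactly where the hypotheses bite: $\Hom^{>0}(\sN,\sN)=0$ gives termination and homotopy-invariance of the truncation towers (hence injectivity) and closure of the intervals $\Sigma^a\sN*\cdots*\Sigma^b\sN$ under summands, while $\thick{}{\sN}=\sD$ gives surjectivity. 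You also invert the logical order --- proving the $K_0$-isomorphism first and getting (1) and (2) as corollaries, whereas the paper routes (2) through (1) --- but once the isomorphism is in hand either ordering works; neither part feeds into the other in an essential way.
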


\begin{proof}
Statement~\ref{item:silting-object} is \cite[Proposition 2.20]{AI}. Statement~\ref{item:finite-rank} follows from \ref{item:silting-object} and \cite[Theorem 2.27]{AI}.
\end{proof}

Under some mild assumptions, Aihara and Iyama \cite{AI} provide a way to understand certain silting subcategories of $\sD$ by going to a quotient.
\begin{theorem}[Silting reduction {\cite[Theorem 2.37]{AI}}] \label{thm:silt-red}
Let $\sD$ be a Krull-Schmidt triangulated category, $\sU\subset\sD$ a thick, functorially finite subcategory and $F\colon\sD \to \sD/\sU$ the canonical functor. Then for any silting subcategory $\sN$ of $\sU$, there is a bijection
\[ \{ \text{silting subcategories $\sM$ of $\sD$} \mid \sN \subseteq \sM \} \isom
   \{ \text{silting subcategories of $\sD/\sU$} \}, \quad \sM \mapsto F(\sM) .
\]
\end{theorem}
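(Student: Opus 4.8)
The plan is to verify, in turn: that $F(\sM)$ is a silting subcategory of $\sD/\sU$ for every silting $\sM$ with $\sN\subseteq\sM$; that $\sM\mapsto F(\sM)$ is injective; and that it is surjective. Two structural facts do the work. First, since $\sN$ is silting in $\sU$ it induces a bounded co-$t$-structure on $\sU$ with co-heart $\add\sN$: every object of $\sU$ lies in $\Sigma^{-n}\sN*\Sigma^{-n+1}\sN*\cdots*\Sigma^{n}\sN$ for some $n\geq 0$ (we write $\sN$ for $\add\sN$), and this filtration truncates at any level. Second, because $\sN\subseteq\sM$ and $\sM$ is silting, $\Hom^{>0}(\sN,\sM)=0=\Hom^{>0}(\sM,\sN)$; hence every silting $\sM\supseteq\sN$ is contained in the transverse subcategory
\[ \sZ := \{X\in\sD \mid \Hom^{>0}_\sD(\sN,X)=0=\Hom^{>0}_\sD(X,\sN)\}, \]
which also contains $\add\sN$.

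The technical heart is to establish, by combining functorial finiteness of $\sU$ with the above co-$t$-structure, that $\sZ\cap\sU=\add\sN$; that $F$ restricts to an essentially surjective functor $\sZ\to\sD/\sU$ inducing an equivalence of additive categories $\sZ/[\add\sN]\isom\sD/\sU$ (with $[\add\sN]$ the ideal of morphisms factoring through $\add\sN$, so in particular $\sD/\sU$ is Krull--Schmidt); and that for $Z,Z'\in\sZ$ and $i>0$ the natural map $\Hom^{i}_\sD(Z,Z')\to\Hom^{i}_{\sD/\sU}(FZ,FZ')$ is bijective. The first assertion confronts the two orthogonality conditions with the co-$t$-structure on $\sU$. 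The equivalence and the last bijection rest on a truncation argument: a morphism into, or through, an object $U\in\sU$ is split by the co-$t$-structure into a part supported on non-positive shifts of $\sN$ and a part supported on positive shifts, and $\sZ$ is designed so that the relevant one of these vanishes. Essential surjectivity of $F|_\sZ$ is obtained by starting from an arbitrary $X\in\sD$ and alternately forming right and left $\sU$-approximations; boundedness of the co-$t$-structure on $\sU$ is exactly what forces this procedure to terminate in an object of $\sZ$. (The precise orthogonality in the definition of $\sZ$, and the sign conventions for the co-$t$-structure, must be matched to these arguments.)

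Granting all this, the three assertions follow. \emph{Well-definedness:} $\sM\subseteq\sZ$, so $F(\sM)$ is defined; $\thick{}{F(\sM)}=\sD/\sU$ since $F$ is an essentially surjective triangle functor preserving summands and $\thick{}{\sM}=\sD$; and $\Hom^{>0}_{\sD/\sU}(F\sM,F\sM)\cong\Hom^{>0}_\sD(\sM,\sM)=0$ by the bijection. \emph{Surjectivity:} given a silting subcategory $\sM'$ of $\sD/\sU$, lift each indecomposable of $\sM'$ to an object of $\sZ$ and set $\sM:=\add\bigl(\sN\cup\{\text{these lifts}\}\bigr)$; then $F(\sM)=\sM'$, generation follows from $\thick{}{\sM}\supseteq\thick{}{\sN}=\sU$ together with $F(\thick{}{\sM})=\sD/\sU$, and $\Hom^{>0}(\sM,\sM)=0$ reduces to the $\sN$--$\sN$ case ($\sN$ silting in $\sU$), the $\sN$--lift cases (definition of $\sZ$), and the lift--lift case (the bijection, since $\Hom^{>0}_{\sD/\sU}(\sM',\sM')=0$). \emph{Injectivity:} since $FN=0$ precisely for $N\in\sN$ and $\sD/\sU$ is Krull--Schmidt, the indecomposables of a silting $\sM\supseteq\sN$ are those of $\sN$ together with one lift in $\sZ$, unique up to isomorphism --- an isomorphism in $\sD/\sU$ between indecomposables of $\sZ$ lying outside $\sN$ lifts to $\sD$ by an idempotent-lifting argument --- of each indecomposable of $F(\sM)$; so $\sM$ is recovered from $\sN$ and $F(\sM)$.

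I expect the main obstacle to be the essential surjectivity of $F|_\sZ$: producing for each $X\in\sD$ an isomorphic image coming from $\sZ$. This is precisely where functorial finiteness of $\sU$ is indispensable and where boundedness of the co-$t$-structure must be invoked to halt the iterated approximation; controlling the positive-degree morphism spaces (hence the Krull--Schmidt property of $\sD/\sU$ and the transfer of the silting conditions) calls for similar care. Everything else is formal once these are in place.
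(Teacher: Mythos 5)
The paper does not prove this statement: it is a black-box citation to Aihara--Iyama \cite[Theorem 2.37]{AI}, so there is no in-paper argument to compare against. Your sketch is a reconstruction of the argument in \cite{AI} (and in the cleaner later treatment by Iyama--Yang, ``Silting reduction and Calabi--Yau reduction of triangulated categories''), and it correctly identifies the standard scaffolding: the transverse subcategory $\sZ = \{X \mid \Hom^{>0}(\sN,X)=0=\Hom^{>0}(X,\sN)\}$, the observation that every silting $\sM\supseteq\sN$ sits inside $\sZ$, the additive equivalence $\sZ/[\add\sN]\simeq\sD/\sU$ coming from functorial finiteness of $\sU$ together with the bounded co-$t$-structure on $\sU$ induced by $\sN$, and the transfer of silting subcategories through this equivalence. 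You also flag the right places where the work is concentrated (essential surjectivity of $F|_\sZ$ and control of positive-degree Hom's). So yes: same approach as the source that the paper delegates to.

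The one claim I would caution you on is that ``for $Z,Z'\in\sZ$ and $i>0$ the natural map $\Hom^i_\sD(Z,Z')\to\Hom^i_{\sD/\sU}(FZ,FZ')$ is bijective.'' That statement is true, but it is not an immediate consequence of the additive equivalence $\sZ/[\add\sN]\simeq\sD/\sU$: the triangulated structure that Iyama--Yang place on $\sZ/[\add\sN]$ uses a shift $\langle 1\rangle$ defined by taking the cone of a \emph{left} $\sN$-approximation $Z'\to N_{Z'}$ (so $Z'\langle 1\rangle\neq\Sigma Z'$ in $\sD$, though $F(Z'\langle 1\rangle)\cong\Sigma FZ'$), and one must then verify, via the triangle $Z'\to N_{Z'}\to Z'\langle 1\rangle\to\Sigma Z'$, that $N_{Z'}\to Z'\langle 1\rangle$ is a right $\sN$-approximation and that the orthogonality built into $\sZ$ identifies $\Hom_\sD(Z,Z'\langle i\rangle)/[\add\sN]$ with $\Hom_\sD(Z,\Sigma^i Z')$. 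Your ``truncation argument'' gestures at this but does not show how the modified shift enters; in a full write-up this is precisely where the care is needed, and it is also where the hypotheses (functorial finiteness, boundedness of the co-$t$-structure) are actually consumed. Similarly, ``alternately forming right and left $\sU$-approximations'' to land in $\sZ$ is the right idea, but the termination argument in the original source goes through the co-$t$-structure filtration of $\sU$ rather than a literal alternation, so the wording is looser than what is really done.
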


\subsection{K\"onig--Yang correspondences} \label{sub:Koenig-Yang}
The following explains the connection between the various concepts defined above.

\begin{theorem}[\cite{Koenig-Yang} Theorems 6.1, 7.11, 7.12] \label{thm:Koenig-Yang}
\label{thm:koenig-yang}
Let $\Lambda$ be a finite-dimensional $\kk$-algebra. There are bijections between
\begin{enumerate}[label=(\arabic*)]
\item equivalence classes of silting objects in $\sK^b(\proj \Lambda)$,
\item algebraic t-structures in $\sD^b(\Lambda)$,
\item bounded co-t-structures in $\sK^b(\proj \Lambda)$.
\end{enumerate}
\end{theorem}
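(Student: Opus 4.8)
The plan is to establish two of the three bijections directly and to obtain the remaining two correspondences by composition, letting the parametrising data pass through two intermediate notions: the \emph{co-heart} of a bounded co-t-structure on $\sK^b(\proj\Lambda)$, and the \emph{simple-minded collection} formed by the simple objects of the heart of an algebraic t-structure on $\sD^b(\Lambda)$. The bijection (1)$\leftrightarrow$(3) is soft --- a special case of the now-standard dictionary between silting subcategories and bounded co-t-structures (cf.\ \cite{Bondarko}) --- and I would deduce it first. The bijection between algebraic t-structures and simple-minded collections is likewise essentially formal once the finiteness afforded by $\Lambda$ being finite-dimensional is in hand. The correspondence between silting objects and simple-minded collections carries the real content; composed with the two soft bijections it yields (1)$\leftrightarrow$(2) and (2)$\leftrightarrow$(3).

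For (1)$\leftrightarrow$(3): given a silting object $M\in\sK^b(\proj\Lambda)$, put $\sX=\cosusp(\Sigma\inv M)$ and $\sY=\sX\orth$; that $(\sX,\sY)$ is a bounded co-t-structure uses only $\Hom^{>0}(M,M)=0$ and $\thick{}{M}=\sK^b(\proj\Lambda)$ (the latter to build the approximation triangles), and a short computation identifies its co-heart $\Sigma\sX\cap\sY$ with $\add M$. Conversely, the co-heart $\sC=\Sigma\sX\cap\sY$ of a bounded co-t-structure is partial silting by a diagram chase from $\Sigma\inv\sX\subseteq\sX$ and $\sY=\sX\orth$; it thick-generates because the co-t-structure is bounded; and since $K_0(\sK^b(\proj\Lambda))=K_0(\Lambda)$ is free of finite rank, Proposition~\ref{prop:silt-properties}\ref{item:finite-rank} shows that $\sC$ is the additive closure of a silting object. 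That the two assignments are mutually inverse reduces to recovering a bounded co-t-structure from its co-heart, again using boundedness. For the bijection between algebraic t-structures and simple-minded collections: given an algebraic t-structure with heart $\sH$, its simples $S_1,\dots,S_n$ satisfy $\Hom(S_i,\Sigma^kS_j)=0$ for $k<0$ (negative Hom's between objects of any heart vanish), $\Hom(S_i,S_j)=0$ for $i\ne j$, and $\Hom(S_i,S_i)=\kk$ (Schur's lemma, using that $\kk$ is algebraically closed and $\sH$ has finite length), and they thick-generate $\sD^b(\Lambda)$ (finite length with finitely many simples gives $\sH\subseteq\thick{}{\{S_1,\dots,S_n\}}$, and boundedness of the t-structure then gives all of $\sD^b(\Lambda)$). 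Conversely, an abstract such collection generates a bounded t-structure with aisle $\susp\{S_1,\dots,S_n\}$ whose heart is the extension closure of the $S_i$; that this t-structure is bounded and that its heart is of finite length with exactly these $n$ simples rests on the Hom-finiteness of $\sD^b(\Lambda)$, which bounds the relevant filtration lengths.

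The main obstacle is the correspondence between silting objects and simple-minded collections. From a silting object $M=M_1\oplus\dots\oplus M_n$ one wants the collection $(S_1,\dots,S_n)$ pinned down by the requirement that $\Hom(M_i,\Sigma^kS_j)$ vanish unless $(i,k)=(j,0)$, in which case it equals $\kk$; this is produced by a recursive cone construction along a chosen ordering of the summands of $M$, after which one must verify that the result is genuinely simple-minded and that the dual construction recovers $M$. Unlike the tilting case there is no shortcut via declaring $\Hom(M,\blank)$ an equivalence onto a module category, because the algebraic heart in general has no projective generator; this explicit Koszul-type duality, together with the check that it interchanges the `silting' compatibility conditions with the `simple-minded' ones, is the only genuinely non-formal ingredient and absorbs the bulk of the work in \cite{Koenig-Yang}. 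The rest --- composing the established bijections to produce (1)$\leftrightarrow$(2) and (2)$\leftrightarrow$(3) --- is bookkeeping.
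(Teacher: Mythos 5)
The paper does not prove this statement: it is cited from Theorems 6.1, 7.11, 7.12 of \cite{Koenig-Yang}, and Remark~\ref{rem:koenig-yang} explicitly stresses that the result is an accumulation of work by many authors. The only proof-adjacent material in the paper is Appendix~\ref{app:examples}, which records the explicit assignments in the finite-global-dimension case (for instance $\sX_\sM=(\Sigma^{<0}\sM)\orth$, $\sY_\sM=(\Sigma^{\geq 0}\sM)\orth$, and the co-heart $\Sigma\sA\cap\sB$), without verifying bijectivity. You are therefore reconstructing a proof that the paper outsources, and as a sketch of the cited argument it is sound in outline. Note, though, that you take a different route from what the paper records: the appendix passes directly from a silting subcategory to the t-structure via the orthogonality formula above, whereas you insert simple-minded collections as an intermediate datum. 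That is closer to how \cite{Koenig-Yang} themselves organize the correspondence (simple-minded collections are a fourth item in their Theorem 6.1), but it is a genuinely different factorization of the map than the one the paper writes down.

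There is also a genuine gap. You classify the passage between algebraic t-structures and simple-minded collections as ``essentially formal once the finiteness afforded by $\Lambda$ being finite-dimensional is in hand,'' and in the converse direction assert that the Hom-finiteness of $\sD^b(\Lambda)$ ``bounds the relevant filtration lengths'' and hence yields a bounded t-structure with aisle $\susp\{S_1,\ldots,S_n\}$ and heart $\clext{S_1,\ldots,S_n}$. This undersells the hard part. Given an abstract collection satisfying the Hom-vanishing and generation conditions, establishing that $\susp\{S_1,\ldots,S_n\}$ is actually an aisle --- i.e.\ that its inclusion admits a right adjoint and that every object admits the required truncation triangle --- is a substantive theorem (Al-Nofayee for symmetric algebras, extended by K\"onig--Yang to the general finite-dimensional case), and so is the fact that the resulting heart is a length category whose simples are precisely the $S_i$. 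Hom-finiteness alone gives you neither: it does not produce the adjoint, and it does not show the extension closure of the $S_i$ is abelian with the $S_i$ as its simples. ``Bounded filtration lengths'' is the conclusion you want, not a hypothesis available up front. So the step you mark as soft is comparable in weight to the silting/simple-minded duality you correctly identify as carrying the main content, and a complete proof would have to spend real effort there.
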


The algebras $\Lambda$ we have in mind in this article are of finite global dimension. Moreover, all their bounded t-structures are algebraic. Therefore, in these examples, the K\"onig--Yang correspondences provide bijections between silting subcategories, bounded t-structures and bounded co-t-structures in $\sD^b(\Lambda)$. These correspondences are crucial for our work. For readers unfamiliar with them, an explicit description with a concrete example is given in Appendix \ref{app:examples}.

\begin{remark} \label{rem:koenig-yang}
We point out that what we call K\"onig--Yang correspondences are in fact consequences of the work of many authors, including K\"onig and Yang. The paper \cite{Koenig-Yang} is the first place where all these correspondences are collected in the case of finite-dimensional algebras. We direct the reader to the introduction of \cite{Koenig-Yang} where an overview of the literature is given.
\end{remark}

\subsection{Abelian categories}
Let $\sH$ be an essentially small abelian category. We denote the set of simple objects of $\sH$ by $\simples(\sH)$. Given a set $\cS$ of objects of $\sH$, we denote by $\clext{\cS}_\sH$ its extension closure in $\sH$.

A \emph{torsion pair} $(\cT,\cF)$ in $\sH$ consists of two full subcategories $\cT,\cF\subseteq\sH$ such that $\Hom(\cT,\cF)=0$ and that $\cT,\cF$ are maximal with this property. Torsion pairs on hearts of bounded t-structures can be used to tilt the t-structure, see Section~\ref{sec:compatibility}.


\section{Background on discrete derived categories}
\label{sec:discrete}

\noindent
In this section, we recall some pertinent facts about discrete derived categories. For more details, the reader is advised to consult the articles \cite{BGS,discrete-one,Vossieck}.
Let $Q(r,n,m)$ be the quiver with $n+m$ vertices which make up an oriented cycle of length $n>0$ and a tail of length $m \geq 0$, as shown below. We define an admissible ideal $I(r,n,m)$ of the (infinite-dimensional) path algebra $\kk Q(r,n,m)$ generated by the $r >0$ length-two zero relations along the cycle of the quiver as indicated. We denote by $\LLambda$ the bound path algebra $\kk Q(r,n,m)/I(r,n,m)$.

\medskip
\begin{center}
\includegraphics[width=0.45\textwidth]{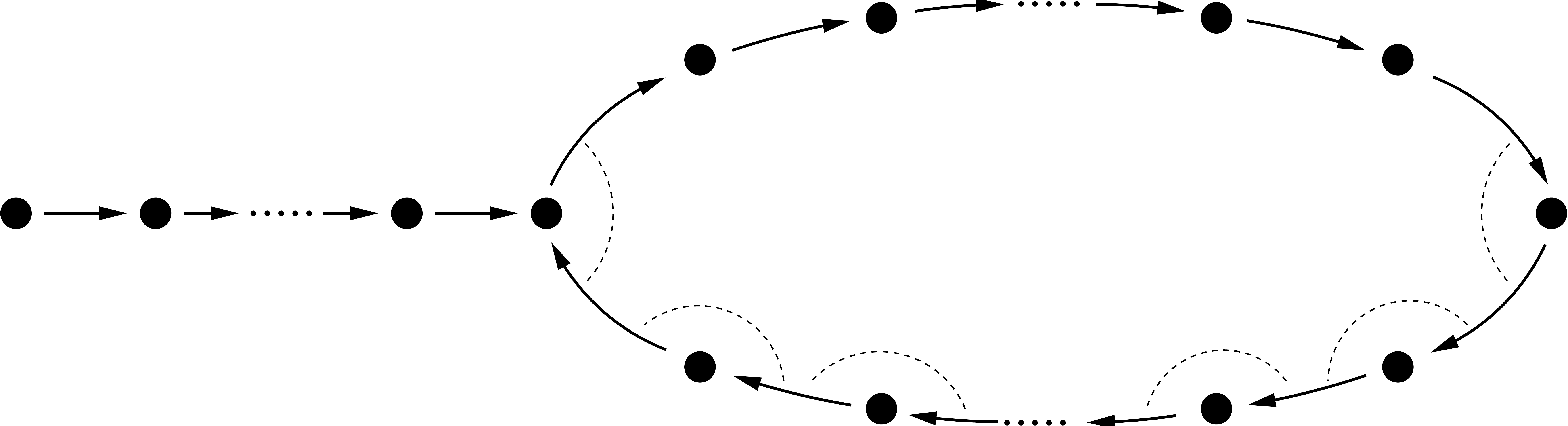}
\end{center}
\medskip

\noindent
Following \cite{Vossieck}, a derived category $\sD$ is \emph{discrete} if for every map $v\colon \IZ\to K_0(\sD)$ there are only finitely many isomorphism classes of objects $D\in\sD$ with $[H^i(D)]=v(i)\in K_0(\sD)$ for all $i\in\IZ$.

Algebras whose bounded derived categories are discrete were classified in \cite{Vossieck} and a derived Morita classification was given in \cite{BGS}. Such algebras are called \emph{derived-discrete algebras}: a finite-dimensional algebra is derived-discrete if and only if it is derived-equivalent to either a representation finite and hereditary algebra (i.e.\ the path algebra of a simply-laced Dynkin diagram) or the bound path algebra $\LLambda$ for some values of $r,n,m$.

\medskip\noindent {\it Convention: We always assume $n>r>0$.} \medskip

\noindent
Thus we are restricting our attention to those derived-discrete algebras which are of finite global dimension but not representation-finite.

The bounded derived categories of derived-discrete algebras also satisfy the following strong finiteness property, which is crucial for us: ultimately, it will allow us to understand the stability manifold completely in terms of silting pairs.

\begin{proposition}[{\cite[Proposition 6.1]{discrete-one}}] \label{prop:length}
 Any heart of a t-structure of a discrete derived category has only a finite number of indecomposable objects up to isomorphism, and is a length category.
\end{proposition}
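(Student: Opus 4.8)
The plan is to first confine any heart to a bounded cohomological window with respect to the standard t-structure, and then to read the two finiteness assertions off the known shape of the Auslander--Reiten quiver of $\Db(\LLambda)$; discreteness enters precisely at the second step.

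\textbf{A bounded window.} Write $\sD=\Db(\LLambda)$ with its standard t-structure $(\sD^{\le 0},\sD^{\ge 0})$, heart $\mod{\LLambda}$ and cohomology functors $H^i$. Since $\LLambda$ has finite global dimension, truncating a projective resolution of a module $M$ exhibits $M$ as an iterated extension of non-negative shifts of the indecomposable projectives, so $\sD^{\le 0}=\susp(\mod{\LLambda})\subseteq\susp(\LLambda)\subseteq\sD^{\le 0}$, whence $\susp(\LLambda)=\sD^{\le 0}$; dually $\cosusp(D\LLambda)=\sD^{\ge 0}$ for the injective cogenerator $D\LLambda$. Now let $\sH$ be the heart of an arbitrary (bounded) t-structure $(\sX,\sY)$. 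Boundedness provides $N_1,N_2\ge 0$ with $\Sigma^{N_1}\LLambda\in\sX$ and $\Sigma^{-N_2}(D\LLambda)\in\sY$. As $\sX$ is suspended and extension-closed, $\sD^{\le -N_1}=\Sigma^{N_1}\susp(\LLambda)\subseteq\sX$, and dually $\sD^{\ge N_2}\subseteq\sY$. Passing to orthogonals, $\sX={}\orth\sY\subseteq{}\orth(\sD^{\ge N_2})=\sD^{\le N_2-1}$ and $\sY=\sX\orth\subseteq(\sD^{\le -N_1})\orth=\sD^{\ge -N_1+1}$, so $\sH=\sX\cap\Sigma\sY\subseteq\sD^{\le N_2-1}\cap\sD^{\ge -N_1}$. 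Thus every object of $\sH$ has $H^i=0$ for $i\notin[-N_1,N_2-1]$; in particular every indecomposable summand of an object of $\sH$ has cohomological amplitude at most $N_1+N_2-1$.

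\textbf{Finiteness.} By \cite{BGS} the Auslander--Reiten quiver of $\Db(\LLambda)$ has only finitely many connected components, each of the form $\IZ A_\infty$ (or $\IZ A_\infty^\infty$), and along each component the cohomological amplitude of an indecomposable grows without bound as one moves into its interior. Hence, for any fixed $N$, only finitely many indecomposable objects of $\sD$ have amplitude $\le N$. Combined with the previous paragraph, $\sH$ contains only finitely many indecomposable objects up to isomorphism. It is exactly here that discreteness is used: it is the non-discrete bounded derived categories of finite-dimensional algebras whose finite windows contain infinitely many indecomposables, and Vossieck's condition is what rules this out (equivalently, it is what forces the amplitude growth along AR components).

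\textbf{Length, and the main obstacle.} A heart is abelian, and as a full subcategory of the Hom-finite Krull--Schmidt category $\sD$ it is Hom-finite and Krull--Schmidt; an abelian Hom-finite Krull--Schmidt $\kk$-category with finitely many indecomposable objects is a length category, so $\sH$ is one. (Alternatively, \cite{discrete-one} obtains the length property simultaneously with the finiteness above, via the explicit classification of t-structures.) The window step is routine once $\susp(\LLambda)=\sD^{\le 0}$ and its injective dual are in hand, and the length step is soft; the real content is the finiteness in the middle paragraph, i.e.\ controlling which indecomposables can occur in a bounded window. The cleanest route is to quote the combinatorial description of the indecomposable objects and of the shape of the Auslander--Reiten quiver from \cite{BGS}; a more self-contained argument would deduce ``bounded window $\Rightarrow$ finitely many indecomposables'' directly from the defining property of discreteness, but this appears to require essentially the same structural input.
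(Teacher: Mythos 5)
Your Step 1 (bounded cohomological window) is correct and is a general fact about $\Db(\Lambda)$ for any finite-dimensional $\Lambda$ of finite global dimension; it makes no use of discreteness. Your Step 3 (length) is also fine: the standard argument is that $d(X)=\sum_j\dim_\kk\Hom(A_j,X)$ is finite by Hom-finiteness and strictly increasing along proper monomorphisms (if $d$ is constant under a mono $X\hookrightarrow Y$ then, since every object is a finite sum of the $A_j$, Yoneda forces the mono to split), so $\sH$ is noetherian, and dually artinian.

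The genuine gap is Step 2. The assertion that "the cohomological amplitude grows without bound as one moves into the interior" of an AR component is not literally a statement of \cite{BGS}, and for the $\cZ$-components of type $\IZ A_\infty^\infty$ it needs real care: these are indexed by $\IZ^2$ and have no boundary or interior, so one must check amplitude growth along every direction of the grid, not just one. Worse, your suggestion that this finiteness can be extracted "directly from the defining property of discreteness" does not work. Vossieck's condition bounds the number of indecomposables realising a \emph{fixed} $K_0$-valued cohomology vector; a bounded window still contains infinitely many candidate vectors, and discreteness alone puts no bound on how many of them occur. (Your own observation that this "appears to require essentially the same structural input" is an understatement: it is not a reformulation of discreteness but a separate finiteness that must be read off the explicit description of the indecomposables.) So the finiteness in the middle step is genuinely unproved here, and it is precisely the content that the cited \cite[Proposition 6.1]{discrete-one} supplies via a concrete classification of t-structures and hearts, obtaining the finiteness and the length property together rather than factoring through your window reduction.
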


As remarked in \cite{discrete-one}, this means that each heart inside a discrete derived category is the module category of a finite-dimensional algebra of finite representation type.

\subsection{The Auslander--Reiten quiver of a discrete derived category}

By work of Bobi\'nski, Gei\ss\ and Skowro\'nski in \cite[Theorem B]{BGS}, the Auslander--Reiten (AR) quiver of $\Db(\LLambda)$ has a very pleasant structure: It has precisely $3r$ components, of which $2r$ components are of type $\IZ A_\infty$, the so-called $\cX$ and $\cY$ components, and the remaining $r$ components are of type $\IZ A_\infty^\infty$: the $\cZ$ components $\cZ^0,\ldots,\cZ^{r-1}$. In this paper, only the behaviour of the $\cZ$ components will be relevant.

Indecomposable objects in $\cZ^k$ are labelled $Z^k_{ij}$ with $i,j \in \IZ$. The indecomposable object $Z^k_{ij}$ only admits irreducible morphisms to $Z^k_{i+1,j}$ and $Z^k_{i,j+1}$.
The action of the suspension functor on objects of $\cZ$ components, with $k = 0,\ldots,r-2$, is given by
\[
\Sigma Z^k_{ij} = Z^{k+1}_{ij}, \quad \text{and} \quad \Sigma Z^{r-1}_{ij} = Z^0_{i+r+m,j+r-n}.
\]

The homomorphisms between indecomposable objects were studied in detail in \cite{discrete-one}. Below we recall the Hom-hammocks from objects in the $\cZ$ components. The pictures in \cite{discrete-one} may be useful in understanding the structure of $\Db(\LLambda)$ in more detail.

\begin{proposition}[{\cite[Proposition 2.4]{discrete-one}}] \label{prop:Z-hammocks}
Let $A=Z^k_{ij}\in\ind{\cZ^k}$. For any indecomposable object $B\in\ind{\cZ}$ we have:
\[
\Hom(A,B) \neq 0 \iff  B = Z^k_{ab} \textrm{ for } a \geq i,\, b \geq j, \textrm{ or, } B = \Sigma Z^k_{ab} \textrm{ for } a \leq i-1,\, b \leq j-1.
\]
\end{proposition}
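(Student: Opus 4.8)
The plan is to reduce to a single component via the suspension and then to compute the Hom-spaces by knitting with Auslander--Reiten triangles. Since $\LLambda$ has finite global dimension, $\Db(\LLambda)$ has Auslander--Reiten triangles, and the fact that $\cZ^k$ is of type $\IZ A_\infty^\infty$ with the stated irreducible maps forces the AR triangle ending at $Z^k_{ab}$ to be the mesh triangle
\[
Z^k_{a-1,b-1} \to Z^k_{a-1,b} \oplus Z^k_{a,b-1} \to Z^k_{ab} \to \Sigma Z^k_{a-1,b-1},
\]
so that $\tau Z^k_{ab} = Z^k_{a-1,b-1}$. Because $\Sigma$ is an autoequivalence cyclically permuting the $\cZ$ components (with the index shift $(i,j) \mapsto (i+r+m, j+r-n)$ on passing from $\cZ^{r-1}$ to $\cZ^0$) and $\Hom(A,B) \cong \Hom(\Sigma^{-k}A, \Sigma^{-k}B)$, I may assume $A = Z^0_{ij}$. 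Translating the target region through these formulas, the claim becomes: among all $\cZ$ components, $\Hom(Z^0_{ij}, -)$ is supported on exactly two of them --- the ``forward rectangle'' $\{Z^0_{ab} : a \geq i,\ b \geq j\}$ inside $\cZ^0$, and the shifted ``backward rectangle'' $\Sigma\{Z^0_{ab} : a \leq i-1,\ b \leq j-1\}$ inside $\Sigma\cZ^0$ (which is $\cZ^1$ when $r \geq 2$, and $\cZ^0$ again with an index shift when $r = 1$) --- and vanishes identically on every other $\cZ$ component.

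I would then establish the two rectangles by a simultaneous induction. Applying the cohomological functor $\Hom(A,-)$ to the mesh triangle ending at $B = Z^l_{ab}$ yields the long exact sequence
\[
\Hom(A, Z^l_{a-1,b-1}) \to \Hom(A, Z^l_{a-1,b}) \oplus \Hom(A, Z^l_{a,b-1}) \to \Hom(A,B) \to \Hom(A, \Sigma Z^l_{a-1,b-1}) \to \cdots,
\]
which determines $\Hom(A,B)$ from its values at the two AR-predecessors of $B$ and at the $\Sigma$-shifted AR-translate of $B$. Running this recursion downward --- decreasing, say, the quantity $a + b$, while rotating through the components to absorb the $\Sigma$-terms --- the vanishing and nonvanishing statements feed into one another: outside the two rectangles the sequence squeezes $\Hom(A,B)$ between groups already shown to be zero, whereas on the boundary of a rectangle exactly one of the two predecessor contributions survives, propagating nonvanishing (and, with a little more bookkeeping, one-dimensionality) one mesh further inward. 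The base of the induction consists of the nonvanishing of $\operatorname{id}_A \in \operatorname{End}(A)$ together with the vanishing of $\Hom(A,-)$ on the indecomposables immediately adjacent to $A$ in the AR-predecessor and negative-suspension directions (so that the backward rectangle begins at $(i-1,j-1)$ rather than at $(i,j)$), which I would extract from the known global structure of the AR quiver of $\Db(\LLambda)$ --- directedness within the $\cZ$ components and the absence of morphisms between components lying far apart under iterated $\Sigma$.

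The step I expect to be the main obstacle is making this induction genuinely well-founded. Every mesh triangle drags the computation into a neighbouring $\cZ$ component through its $\Sigma$-term, the wrap-around $\Sigma Z^{r-1}_{ij} = Z^0_{i+r+m, j+r-n}$ shifts indices, and one must simultaneously prove the vanishing on all the remaining $\cZ$ components; so the induction really has to be organised over all $\cZ$ components at once --- equivalently, over a single $\IZ A_\infty^\infty$-shaped cover on which $\Sigma$ acts as a shift by $r$ combined with the $(r+m, r-n)$-translation --- and one must choose an inductive parameter that strictly decreases along this enlarged recursion. A less self-contained but more mechanical alternative would avoid the knitting altogether: $\LLambda$ is a gentle algebra, the $\cZ$-objects are string complexes, and the combinatorial description of morphisms between string complexes turns the statement into a finite check on the underlying homotopy strings; this is the form in which one would want to double-check the answer.
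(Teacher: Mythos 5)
The paper does not prove this proposition: it simply cites \cite[Proposition 2.4]{discrete-one}, so strictly there is no proof in this paper to compare against. Assessing your sketch on its own terms (and against what the cited reference must do), you lay out the correct shape of the answer and both available routes, but the first route---knitting with Auslander--Reiten triangles---is not sound as organised, and you yourself identify why.

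The obstacle you flag is genuine and, as written, fatal. Applying $\Hom(A,\blank)$ to the mesh triangle ending at $Z^l_{ab}$ requires knowing $\Hom(A,\Sigma Z^l_{a-1,b-1})$, which lives in the next component; iterating drags the recursion around the $r$-cycle of $\cZ$-components, with the wrap-around $(i,j)\mapsto(i+r+m,\,j+r-n)$ applied once per lap. Since $n>r$, the $j$-coordinate drifts \emph{downward} under $\Sigma^r$, so a measure like $a+b$ can grow after a lap and the recursion is not obviously well-founded. More seriously, the base cases you propose---vanishing of $\Hom(A,\blank)$ on AR-predecessors of $A$ and on ``components far away under iterated $\Sigma$''---are themselves part of the conclusion, and do not follow from ``directedness'' of the AR quiver: the union of the $\cZ$-components is not directed, since already $\Hom(A,\Sigma A)\neq 0$ links one component to the next. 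Without an independent derivation of those boundary vanishings, the knitting argument is circular.

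Your fallback is the route that actually closes the gap, and it is much closer to what one must do in \cite{discrete-one}: $\LLambda$ is gentle and derived-discrete (so no band complexes), hence the indecomposables of $\Db(\LLambda)$, in particular those in the $\cZ$-components, are finite string complexes, and morphisms between string complexes in $\Db$ of a gentle algebra admit an explicit combinatorial basis. The two rectangles fall out of a finite case analysis on the relevant homotopy strings, with no induction over the AR quiver required. If you insist on a self-contained knitting proof, you would first need to establish the boundary vanishings by a direct computation---at which point you are essentially reproducing the string combinatorics---or lift to a $\IZ$-cover of the $\cZ$-components on which $\Sigma$ acts as a translation and exhibit a genuinely well-founded parameter; neither of these is supplied in your sketch.
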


The following facts from \cite{discrete-one} will be useful:

\begin{proposition}[{\cite[Proposition 6.5]{discrete-one}}]\label{prop:embedding-A}
Let $Z \in \ind{\cZ}$ and $\sZ = \thick{\Db(\LLambda)}{Z}$. Then $\sZ$ is functorially finite in $\Db(\LLambda)$ and $\sZ\orth \simeq \Db(\kk A_{n+m-1})$.
\end{proposition}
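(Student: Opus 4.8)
The plan is to describe $\sZ$ explicitly, to exhibit it as a functorially finite, coreflective subcategory of $\Db(\LLambda)$ (so that $\sZ\orth$ becomes a Verdier quotient), and then to identify that quotient with $\Db(\kk A_{n+m-1})$. First I would show that $Z=Z^k_{ij}$ is an exceptional object. Since the $\cZ$-components are standard components of type $\IZ A_\infty^\infty$ we have $\mathrm{End}(Z)=\kk$, and combining Proposition~\ref{prop:Z-hammocks} with the displayed action of $\Sigma$ on the $\cZ$-components one checks that $\Hom(Z,\Sigma^p Z)=0$ for all $p\neq 0$: writing $p=qr+s$ with $0\le s<r$, neither hammock region of $Z$ can contain $\Sigma^p Z$ unless $q=s=0$, because the index shift $(a,b)\mapsto(a+r+m,\,b+r-n)$ produced by $\Sigma^r$ has $r+m>0$ and $r-n<0$ under the standing convention $n>r>0$. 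Hence $\sZ=\thick{}{Z}\simeq\Db(\kk)$, with $\ind{\sZ}=\{\Sigma^p Z\mid p\in\IZ\}$, all lying in $\cZ$-components.

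Next, the same hammock bookkeeping, applied now to $\Hom(Z,\Sigma^{-p}D)$ and $\Hom(\Sigma^{-p}D,Z)$ and using the complete Hom-hammock description of $\Db(\LLambda)$ from \cite{discrete-one}, shows that for each indecomposable $D$ only finitely many $p$ give $\Hom(\Sigma^p Z,D)\neq 0$ or $\Hom(D,\Sigma^p Z)\neq 0$. Therefore $\bigoplus_p\Hom(\Sigma^p Z,D)$ is finite-dimensional, and since $\sZ\simeq\Db(\kk)$ the object $W_D=\bigoplus_p\Sigma^p Z\otimes_\kk\Hom(\Sigma^p Z,D)$ lies in $\sZ$ and its evaluation map $W_D\to D$ is a right $\sZ$-approximation; dually there is a left $\sZ$-approximation, so $\sZ$ is functorially finite in $\Db(\LLambda)$. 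Because $W_D\to D$ induces isomorphisms on $\Hom(\Sigma^q Z,-)$ for every $q$, its cone lies in $\sZ\orth$, and these triangles depend functorially on $D$; hence the inclusion $\sZ\embed\Db(\LLambda)$ has a right adjoint and the Verdier quotient functor restricts to an equivalence $\sZ\orth\isom\Db(\LLambda)/\sZ$. In particular $K_0(\Db(\LLambda))\cong K_0(\sZ)\oplus K_0(\sZ\orth)$, and since $K_0(\sZ)\cong\IZ$ while $\LLambda$ has $n+m$ simples, $K_0(\sZ\orth)$ is free of rank $n+m-1$.

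It remains to show $\Db(\LLambda)/\sZ\simeq\Db(\kk A_{n+m-1})$. Here I would read off $\ind{\sZ\orth}=\{D\mid\Hom(\Sigma^p Z,D)=0\text{ for all }p\}$ explicitly from the Hom-hammocks of \cite{discrete-one} as a subconfiguration of the $\cX$-, $\cY$- and $\cZ$-components, exhibit within it a tilting object $T$ (that is, $\Hom^i(T,T)=0$ for $i\neq 0$ and $\thick{}{T}=\sZ\orth$) with $\mathrm{End}(T)\cong\kk A_{n+m-1}$, and conclude by a standard tilting theorem (Rickard), using that $\kk A_{n+m-1}$ is hereditary so that $\Db(\kk A_{n+m-1})=\Kb(\proj{\kk A_{n+m-1}})$. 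An alternative to constructing $T$ by hand: $\sZ\orth$ has a silting object by Proposition~\ref{prop:silt-properties}\ref{item:finite-rank}, and one such can be obtained as the image, under silting reduction (Theorem~\ref{thm:silt-red} with $\sU=\sZ$ and silting subcategory $\add(Z)$), of a Bongartz completion of $Z$ in $\Db(\LLambda)$; its endomorphism algebra is then read off from the hammocks.

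The genuinely hard step is the last one: pinning down $\ind{\sZ\orth}$ inside the AR quiver of $\Db(\LLambda)$ and recognising its shape and mesh structure as that of $\Db(\kk A_{n+m-1})$ — equivalently, computing the endomorphism algebra of the tilting (or silting) object. This requires careful use of the detailed combinatorics of the AR quiver of $\Db(\LLambda)$ from \cite{BGS,discrete-one}, in particular of how morphisms run between the $\cX$-, $\cY$- and $\cZ$-components; everything preceding it is essentially formal.
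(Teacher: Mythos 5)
This proposition is not proved in the present paper; it is imported verbatim from the companion article \cite{discrete-one} (Proposition~6.5 there), so there is no in-paper proof to compare your attempt against. Evaluating your proposal on its own terms:

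The formal scaffolding you set up is correct and standard. The computation that $Z = Z^k_{ij}$ is exceptional using Proposition~\ref{prop:Z-hammocks} together with $\Sigma^r Z^k_{ij} = Z^k_{i+r+m,\,j+r-n}$ is right (the two signs $r+m>0$ and $r-n<0$ do force $p=0$; the case $s=1$ of your division $p=qr+s$ is the one that meets the second hammock region and then self-destructs, but the conclusion is correct). Given exceptionality, $\sZ\simeq\Db(\kk)$, the universal approximation $W_D=\bigoplus_p\Sigma^pZ\otimes_\kk\Hom(\Sigma^pZ,D)\to D$ and its cone give the Bousfield triangle, so the inclusion $\sZ\embed\Db(\LLambda)$ has a right adjoint, $\sZ$ is functorially finite, one gets a semiorthogonal decomposition $\Db(\LLambda)=\langle\sZ\orth,\sZ\rangle$, and $\sZ\orth\simeq\Db(\LLambda)/\sZ$. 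All of this is sound provided one grants the finiteness of $\{p:\Hom(\Sigma^pZ,D)\neq0\}$ for indecomposable $D$ in the $\cX$- and $\cY$-components as well as the $\cZ$-components; you cite this to \cite{discrete-one}, which is where those hammocks live, so this is a legitimate appeal.

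The genuine gap is exactly where you flag it: the identification $\sZ\orth\simeq\Db(\kk A_{n+m-1})$. This is not a formal consequence of anything you have done — the $K_0$ rank count only gives $\rk K_0(\sZ\orth)=n+m-1$, which is compatible with many different derived categories. Everything else in the proposition (exceptionality, functorial finiteness, the Verdier quotient identification) is soft; the content is recognising $\sZ\orth$ combinatorially inside the AR quiver, producing a tilting object with endomorphism algebra $\kk A_{n+m-1}$, and invoking Rickard. You name this as the hard step and sketch two plausible routes (explicit tilting object from hammocks, or silting reduction via Theorem~\ref{thm:silt-red} plus Bongartz completion), but neither is carried out. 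As it stands the proposal is a correct reduction of the proposition to its essential core, not a proof of it.
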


\begin{lemma}[{\cite[Corollary 6.9]{discrete-one}}] \label{lem:Z-in-silting}
Any silting subcategory of $\Db(\LLambda)$ contains an indecomposable object from some $\cZ$ component.
\end{lemma}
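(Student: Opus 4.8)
The plan is to argue by contradiction: suppose $\sM$ is a silting subcategory of $\Db(\LLambda)$ containing no indecomposable from any $\cZ$ component, so every indecomposable summand of $\sM$ lies in an $\cX$ or $\cY$ component. Fix any indecomposable $Z \in \ind{\cZ}$ and set $\sZ = \thick{\Db(\LLambda)}{Z}$. By Proposition~\ref{prop:embedding-A}, $\sZ$ is functorially finite and $\sZ\orth \simeq \Db(\kk A_{n+m-1})$; dually one should record that $\orth\sZ$ is also understood. The strategy is to show that under our assumption $\sM$ would have to be a silting subcategory of $\sZ\orth$ (or of some piece not involving $\cZ$), but $\thick{}{\sM}$ then cannot be all of $\Db(\LLambda)$ because it misses $Z$ — contradicting the definition of silting.

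The key steps, in order, are as follows. First, I would use the Hom-hammock description (Proposition~\ref{prop:Z-hammocks}) together with the suspension action $\Sigma Z^k_{ij} = Z^{k+1}_{ij}$, $\Sigma Z^{r-1}_{ij} = Z^0_{i+r+m,j+r-n}$ to analyse $\Hom^{\IZ}$ between $\cZ$ objects and the $\cX,\cY$ objects; the relevant input from \cite{discrete-one} is that the full subcategory of $\cZ$ components is ``orthogonal enough'' to the $\cX,\cY$ components that one can separate them. Concretely, I expect that $\thick{}{\cX\text{-}\text{and-}\cY\text{-objects}}$ is contained in $\sZ\orth$ for a suitable choice of $Z$, or at least in a proper thick subcategory. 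Second, since by Proposition~\ref{prop:silt-properties}\ref{item:finite-rank} the Grothendieck group $K_0(\Db(\LLambda))$ is free of rank $n+m$, and $K_0(\sZ\orth) = K_0(\Db(\kk A_{n+m-1}))$ has rank $n+m-1$, a silting subcategory of $\Db(\LLambda)$ has $n+m$ non-isomorphic indecomposable summands (it is additively generated by a silting object by Proposition~\ref{prop:silt-properties}\ref{item:silting-object}, and silting objects have the ``right'' number of summands). If all $n+m$ of these lived in a thick subcategory of rank $< n+m$, their thick closure could not be $\Db(\LLambda)$ — giving the contradiction directly by a rank count. Third, I would make precise which $Z$ to pick: since the $\cZ$ components are permuted cyclically by $\Sigma$ and the orbit of any fixed $Z$ under $\Sigma^{\IZ}$ meets every $\cZ^k$, the choice does not matter, and one $Z$ suffices to witness that $\thick{}{\sM} \subsetneq \Db(\LLambda)$.

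The main obstacle I anticipate is the first step: cleanly showing that a thick subcategory generated by finitely many $\cX$- and $\cY$-component indecomposables, with no self-extensions in positive degrees, is forced into a proper thick subcategory of $\Db(\LLambda)$ — equivalently, that one genuinely needs a $\cZ$-object to generate the whole category. This is where the detailed geometry of the AR quiver of $\Db(\LLambda)$ from \cite{BGS} and \cite{discrete-one} enters, and the rank-count shortcut via $\sZ\orth \simeq \Db(\kk A_{n+m-1})$ is the cleanest route; the remaining steps are then bookkeeping with Proposition~\ref{prop:silt-properties}. An alternative, possibly slicker, route is to invoke silting reduction (Theorem~\ref{thm:silt-red}) with $\sU = \sZ$: if $\sM$ avoided all $\cZ$ components one could try to realise it inside $\sZ\orth$ and contradict that $F(\sM)$ would have to be silting in $\Db(\LLambda)/\sZ$, but the hypotheses of Theorem~\ref{thm:silt-red} require $\sM \supseteq \sN$ for a silting $\sN$ of $\sZ$, so some care is needed to set this up correctly.
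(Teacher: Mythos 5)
The paper does not actually prove this statement: it is imported wholesale as \cite[Corollary 6.9]{discrete-one}, so there is no ``paper's own proof'' here beyond the citation. Your proposal is an attempt to reconstruct one, and the scaffolding around it (silting objects having exactly $\rk K_0 = n+m$ indecomposable summands, silting reduction at $\thick{}{Z}$, the equivalence $\sZ\orth\simeq\Db(\kk A_{n+m-1})$) is correctly assembled and would indeed finish the job \emph{if} the central claim were available. But the central claim is exactly where you stop, and you say so yourself (``the main obstacle I anticipate is the first step''). That acknowledged obstacle is not a technicality --- it is the entire content of the lemma.

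Concretely, the step that fails is the hoped-for inclusion $\thick{}{\cX\text{-and-}\cY\text{-objects}}\subseteq\sZ\orth$ (or into some other controlled proper thick subcategory). This cannot hold: $\sZ\orth$ has Grothendieck rank $n+m-1$, and for any fixed $Z$ the Hom-hammocks from $Z$ (and into $\Sigma^{-1}Z$) hit infinitely many indecomposables of the $\cX$ and $\cY$ components, so $\sZ\orth$ omits large swathes of $\cX\cup\cY$; there is no single $Z$ whose right orthogonal swallows all of it. Your hedge (``or at least a proper thick subcategory'') is also not resolved: it is not obvious, and you do not argue, that $\thick{}{\cX\cup\cY}\subsetneq\Db(\LLambda)$, nor that a partial silting collection drawn from $\cX\cup\cY$ necessarily has fewer than $n+m$ nonisomorphic indecomposables. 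Either of these would close the gap via the rank count, but neither is established. What is actually needed is a direct combinatorial analysis of the Hom-hammocks \emph{within and between} the $\cX$ and $\cY$ components (the $\IZ A_\infty$ ones), showing that a set of objects there with $\Hom^{>0}$-vanishing among themselves is forced to be too small; this is the hard input from \cite{BGS} and \cite{discrete-one} that Corollary 6.9 encapsulates, and the present sketch does not supply a substitute for it. As written, the proposal reduces the lemma to an equivalent unproved assertion rather than proving it.
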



\section{The poset of silting pairs}
\label{sec:silting-pairs}

\noindent
In this section we prove some technical results about silting subcategories. Let $\sD$ be an arbitrary Krull-Schmidt triangulated category.
We recall the definition of silting mutation from \cite[Definition 2.5]{Iyama-Yoshino}. We refer to Section~\ref{sub:subcategory-constructions} for the $*$ composition of subcategories and to Section~\ref{sub:approximations} for generalities on approximations.

\begin{definition}
A \emph{silting pair} $(\sM,\sM')$ consists of a silting subcategory $\sM\subset\sD$ and a functorially finite subcategory $\sM' \subseteq \sM$.
We call
\begin{align*}
\rmu{\sM'}(\sM) & \coloneqq (\Sigma\inv\sM * \sM') \cap (\Sigma\inv\sM')\orth && \text{the \emph{right mutation} of $\sM$ at $\sM'$, and} \\
\lmu{\sM'}(\sM) & \coloneqq (\sM' * \Sigma\sM) \cap {}\orth\Sigma\sM'         && \text{the \emph{left mutation} of $\sM$ at $\sM'$.}
\end{align*}
\end{definition}

More concrete descriptions of the mutated categories are \cite{Iyama-Yoshino} and \cite[Definition 2.30]{AI}:
\[ 
\begin{array}{l @{\quad\text{with}\quad} l}
 \rmu{\sM'}(\sM) = \add{\big( \sM' \cup \{\rmu{\sM'}(M) \mid M\in \sM\} \big) } & \Sigma\inv M \too \rmu{\sM'}(M') \too M' \xrightarrow{r_M} M , \\[0.5ex]
 \lmu{\sM'}(\sM) = \add{\big( \sM' \cup \{\lmu{\sM'}(M) \mid M\in \sM\} \big) } & M \xrightarrow{l_M} M' \too \lmu{\sM'}(M) \too \Sigma M
\end{array} 
\]
where we choose right and left $\sM'$-approximations $r_M$ and $l_M$ for each $M\in\sM$. The invariant definition shows that the subcategories $\rmu{\sM'}(\sM)$ or $\lmu{\sM'}(\sM)$ do not depend on the choice of approximations. The crucial property of this operation is that $\rmu{\sM'}(\sM)$ and $\lmu{\sM'}(\sM)$ are silting subcategories \cite[Theorem 2.31]{AI}. Right and left mutations are mutually inverse; see \cite[Theorem 2.33]{AI}. The extreme cases $\sM'=0$ and $\sM'=\sM$ are allowed and lead to $\rmu{0}(\sM)=\Sigma\inv \sM$ and $\rmu{\sM}(\sM)=\sM$.

For a silting subcategory $\sM$, we define $\Ksplit(\sM)$ to be the \emph{split Grothendieck group} of $\sM$, i.e.\ the free group on isomorphism classes of objects of $\sM$ modulo the relation $[M_1 \oplus M_2] = [M_1] + [M_2]$; see, for example, \cite{Bondarko}. By \cite[Theorem 5.3.1]{Bondarko} we have that $K_0(\sD) \cong \Ksplit(\sM)$ for any silting subcategory $\sM$ of $\sD$.

\begin{definition} \label{def:irreducible-mutation}
Let $\sM$ be a silting subcategory and suppose $\sM' \subset \sM$ is a partial silting subcategory. We define the \emph{rank} of $\sM'$, $\rk{\sM'}$, to be the rank of the split Grothendieck group $\Ksplit(\sM')$. Given a silting pair $(\sM,\sM')$, mutations $\rmu{\sM'}{(\sM)}$ and $\lmu{\sM'}{(\sM)}$ are called \emph{irreducible right mutation} and \emph{irreducible left mutation}, respectively, if $\rk{\sM'} = \rk{\sM} - 1$.
\end{definition}

We point out that if $\sM = \add(M)$ for some silting object $M$, then
\begin{align*}
 \rk{\sM} \,  &= \# \{\text{non-isomorphic indecomposable summands of } M\} , \\
 \rk{\sM'}    &= \# \{\text{non-isomorphic indecomposable summands of an additive generator of } \sM'\} .
\end{align*}

The following easy observation is certainly well known.

\begin{lemma} \label{lem:intersection-mutation}
If $(\sM,\sM')$ is a silting pair in $\sD$, then $\sM' = \sM \cap \rmu{\sM'}(\sM)$.
\end{lemma}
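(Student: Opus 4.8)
The plan is to establish the two inclusions separately: $\sM'\subseteq\sM\cap\rmu{\sM'}(\sM)$ is essentially a definition chase, while the reverse inclusion reduces to a single exact-sequence argument powered by the defining vanishing $\Hom^{>0}(\sM,\sM)=0$ of a silting subcategory.

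For $\sM'\subseteq\sM\cap\rmu{\sM'}(\sM)$: that $\sM'\subseteq\sM$ is part of the definition of a silting pair, and $\sM'\subseteq\rmu{\sM'}(\sM)$ is immediate from the explicit description $\rmu{\sM'}(\sM)=\add\big(\sM'\cup\{\rmu{\sM'}(M)\mid M\in\sM\}\big)$. (Alternatively, from the invariant description: an $M'\in\sM'$ lies in $\Sigma\inv\sM\ast\sM'$ via the split triangle $0\to M'\xrightarrow{\id}M'\to 0$, and in $(\Sigma\inv\sM')\orth$ since $\Hom(\Sigma\inv\sM',M')=\Hom^1(\sM',M')=0$ because $\sM'\subseteq\sM$ and $\sM$ is silting.) Since $\sM$ and $\rmu{\sM'}(\sM)$ are additive and closed under direct summands, the reverse inclusion $\sM\cap\rmu{\sM'}(\sM)\subseteq\sM'$ will follow once I show that every indecomposable $X\in\sM\cap\rmu{\sM'}(\sM)$ lies in $\sM'$.

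So let $X$ be such an indecomposable. By the invariant description, $X\in\rmu{\sM'}(\sM)\subseteq\Sigma\inv\sM\ast\sM'$, so there is a triangle
\[ \Sigma\inv M \xrightarrow{\ a\ } X \xrightarrow{\ b\ } M' \xrightarrow{\ c\ } M \]
with $M\in\sM$ and $M'\in\sM'$. Applying $\Hom(\blank,X)$ gives the exact sequence
\[ \Hom(M',X)\xrightarrow{\ b^{\ast}\ }\Hom(X,X)\xrightarrow{\ a^{\ast}\ }\Hom(\Sigma\inv M,X). \]
The crucial point is that $\Hom(\Sigma\inv M,X)\cong\Hom^1(M,X)=0$, since $M,X\in\sM$ and $\sM$ is silting. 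Hence $b^{\ast}$ is surjective, so $\id_X=\phi\circ b$ for some $\phi\colon M'\to X$; thus $b$ is a split monomorphism and $X$ is a direct summand of $M'\in\sM'$. As $\sM'$ is closed under direct summands, $X\in\sM'$, which finishes the argument.

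I do not expect a real obstacle — the authors flag the statement as easy and well known, and the only point meriting care is the Krull--Schmidt bookkeeping (closure of $\sM$ and $\rmu{\sM'}(\sM)$ under direct summands) that permits the reduction to indecomposable $X$; in the cases of interest every silting subcategory is $\add$ of a silting object, so this is automatic. One could equally well run the identical $\Hom(\blank,X)$ computation on the concrete mutation triangle $\Sigma\inv M\to\rmu{\sM'}(M)\to M'\xrightarrow{r_M}M$ applied to an indecomposable summand $X$ of $\rmu{\sM'}(M)$: the vanishing of $\Hom^1(M,X)$ makes $\Hom(M',X)\to\Hom(\rmu{\sM'}(M),X)$ surjective, and lifting the retraction $\rmu{\sM'}(M)\to X$ and precomposing with the split inclusion $X\to\rmu{\sM'}(M)$ again exhibits $X$ as a summand of $M'\in\sM'$.
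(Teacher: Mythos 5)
Your proof is correct and takes essentially the same approach as the paper: both arguments hinge on the single observation that, since $M$ and the object $A\in\sM\cap\rmu{\sM'}(\sM)$ are both in the silting subcategory $\sM$, the group $\Hom^1(M,A)=\Hom(\Sigma^{-1}M,A)$ vanishes, which forces the triangle $\Sigma^{-1}M\to A\to M'\to M$ to split and exhibits $A$ as a summand of $M'$. The paper simply notes directly that the map $\Sigma^{-1}M\to A$ is zero and does not reduce to indecomposables, which is slightly shorter than your route of applying $\Hom(-,X)$ and lifting $\id_X$, but the mathematics is identical.
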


\begin{proof}
The inclusion $\sM'\subseteq\sM\cap\rmu{\sM'}(\sM)$ follows at once from the definition of the right mutation as $\rmu{\sM'}(\sM)=(\Sigma\inv \sM * \sM')\cap (\Sigma\inv \sM')\orth$: firstly $\sM' \subseteq \Sigma\inv \sM * \sM'$ is a triviality, and secondly $\sM' \subseteq (\Sigma\inv \sM')\orth$ follows from $\sM'$ partial silting.

For the other inclusion, let $A \in \Sigma\inv \sM * \sM'$. This object is, by definition, an extension of the form $\Sigma\inv M \xrightarrow{e}A \to M' \to M$ with $M'\in\sM'$ and $M\in\sM$. If, furthermore, $A\in\sM$, then $e=0$ as $\sM$ is silting. Hence $M'\cong A\oplus M$ which implies $A\in\sM'$, as $\sM'$ is an additive subcategory of $\sD$ and hence idempotent closed.
\end{proof}

Leaning on \cite{AI} but using a different convention --- see Remark~\ref{rem:conventions} --- we define an order on silting subcategories by $\sM \leq \sN$ if and only if $\Hom_{\sD}^{>0}(\sM,\sN)=0$. This is in fact a partial order; see \cite[Theorem 2.11]{AI}. We denote the poset of silting subcategories of $\sD$ by $\posetsilt(\sD)$.

\begin{example}
For any silting pair $(\sM,\sM')$, the mutations sit in the following chain of inequalities:  $\Sigma\inv\sM\leq\rmu{\sM'}(\sM)\leq\sM\leq\lmu{\sM'}(\sM)\leq\Sigma\sM$.
\end{example}

It will be convenient to have alternative descriptions of this partial order. For this, we recall two pieces of data equivalent to giving a silting subcategory.  By \cite{MSSS}, there is a bijection between silting subcategories $\sM$ of $\sD$ and bounded co-t-structures in $\sD$, which is given by
 $\sM \mapsto (\cosusp \Sigma^{-1} \sM, \susp \sM)$.

If $\sD=\Db(\Lambda)$ for some finite-dimensional algebra $\Lambda$, then by \cite{Koenig-Yang} one can also associate a bounded t-structure $(\sX_{\sM},\sY_{\sM})$ to $\sM$ as follows:
\begin{align*}
\sX_{\sM} & \coloneqq (\Sigma^{<0}\sM)\orth =
           \{X \in \sD \mid \Hom_{\sD}(M,\Sigma^i X)=0 \textrm{ for all } M\in \sM \textrm{ and } i > 0\} ; \\
\sY_{\sM} & \coloneqq (\Sigma^{\geq0}\sM)\orth =
           \{Y \in \sD \mid \Hom_{\sD}(M,\Sigma^i Y)=0 \textrm{ for all } M\in \sM \textrm{ and } i \leq 0\} .
\end{align*}

The partial order on the set of silting subcategories can be rephrased in terms of partial orders on the set of t-structures and co-t-structures; see \cite{Koenig-Yang} for example.

\begin{lemma} \label{lem:equivalent-po}
Let $\sM$ and $\sN$ be silting subcategories of $\sD$. Then the following conditions are equivalent, where (3) only applies if $\sD = \Db(\Lambda)$ for a finite-dimensional algebra $\Lambda$:
\begin{enumerate}[label=(\arabic*)~]
\item $\sM \leq \sN$, i.e.\ $\Hom^{>0}(\sM,\sN)=0$;
\item $\cosusp \sM \subseteq \cosusp \sN$ (equivalently $\susp \sM \supseteq \susp \sN$);
\item $\sY_{\sM} \subseteq \sY_{\sN}$ (equivalently $\sX_{\sM} \supseteq \sX_{\sN}$).
\end{enumerate}
\end{lemma}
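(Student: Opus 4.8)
The plan is to route the whole statement through the single equivalence
\[
\Hom^{>0}(\sM,\sN)=0 \quad\Longleftrightarrow\quad \susp\sN\subseteq\susp\sM ,
\]
after which the remaining reformulations are purely formal. First I would assemble the bookkeeping about orthogonals. Passing to extensions and direct summands does not alter a right orthogonal, and $\cosusp\Sigma\inv\sM$ is precisely the extension-and-summand closure of $\Sigma^{<0}\sM$, so $(\cosusp\Sigma\inv\sM)\orth=(\Sigma^{<0}\sM)\orth$. By \cite{MSSS} the pair $(\cosusp\Sigma\inv\sM,\susp\sM)$ is a bounded co-t-structure, so its axioms give $\susp\sM=(\cosusp\Sigma\inv\sM)\orth$ and $\cosusp\Sigma\inv\sM={}\orth(\susp\sM)$; combining these, $\susp\sM=(\Sigma^{<0}\sM)\orth$, which is exactly $\sX_{\sM}$ when $\sD=\Db(\Lambda)$. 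Granting this identification $\susp\sM=\sX_{\sM}$, the equivalence (2)$\iff$(3) is immediate (in the $\susp$-form of (2) and the $\sX$-form of (3)), and the parenthetical reformulations inside (2) and inside (3) follow from the standard fact that the aisle and co-aisle of a (co-)t-structure determine one another as left/right orthogonals, together with the fact that $\Sigma\inv$ is an autoequivalence, which identifies $\cosusp\sM\subseteq\cosusp\sN$ with the aisle inclusion $\cosusp\Sigma\inv\sM\subseteq\cosusp\Sigma\inv\sN$.

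It then remains to prove the displayed equivalence, which I would do directly with right orthogonals. If $\susp\sN\subseteq\susp\sM=(\cosusp\Sigma\inv\sM)\orth$, then for every $N\in\sN$ and $i\geq1$ we get $\Hom(M,\Sigma^iN)=\Hom(\Sigma^{-i}M,N)=0$, since $\Sigma^{-i}\sM\subseteq\cosusp\Sigma\inv\sM$; this is exactly $\Hom^{>0}(\sM,\sN)=0$. Conversely, assume $\Hom^{>0}(\sM,\sN)=0$. Rewritten as $\Hom(\Sigma^{-i}M,N)=0$ for all $i\geq1$, this says $\sN\subseteq(\Sigma^{<0}\sM)\orth=(\cosusp\Sigma\inv\sM)\orth\eqqcolon\cC$. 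Being a right orthogonal, $\cC$ is closed under extensions and direct summands; it is moreover closed under $\Sigma$, because $\cosusp\Sigma\inv\sM$ is closed under $\Sigma\inv$, so $\Hom(C,\Sigma T)=\Hom(\Sigma\inv C,T)=0$ for all $C\in\cosusp\Sigma\inv\sM$ and $T\in\cC$. Hence $\susp\sN\subseteq\cC=\susp\sM$, which finishes the displayed equivalence and, with it, (1)$\iff$(2).

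These two steps give (1)$\iff$(2) in full generality and, when $\sD=\Db(\Lambda)$, also (1)$\iff$(3). I do not expect a genuine obstacle. The points that most need care are keeping track of the shift $\Sigma\inv$ inside $\cosusp\Sigma\inv\sM$, as opposed to the $\cosusp\sM$ appearing in the statement of (2), and the identification $\susp\sM=\sX_{\sM}$ of the co-aisle of the co-t-structure with the aisle of the K\"onig--Yang t-structure; this last uses the explicit formula for $\sX_{\sM}$ recalled in the text together with the co-t-structure axioms. Everything else is the standard fact that right orthogonals are closed under extensions and direct summands, plus the observation that the right orthogonal of a $\Sigma\inv$-stable subcategory is $\Sigma$-stable.
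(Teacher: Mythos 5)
The paper offers no proof of this lemma; it just gestures to \cite{Koenig-Yang}, so your blind proof is filling a real gap rather than being compared against a written argument. Your proof is correct. The spine is the clean identity $\susp\sM=(\Sigma^{<0}\sM)\orth=(\cosusp\Sigma\inv\sM)\orth$, which follows because $\cosusp\Sigma\inv\sM$ is the extension/summand closure of $\Sigma^{<0}\sM$ (and these operations leave right orthogonals unchanged), together with the co-t-structure axiom $\sY=\sX\orth$ applied to $(\cosusp\Sigma\inv\sM,\susp\sM)$. That identity both delivers (2)$\iff$(3) at once (since $(\Sigma^{<0}\sM)\orth$ is exactly the paper's $\sX_\sM$) and makes (1)$\iff$(2) a two-line orthogonality check: one direction is immediate from $\Sigma^{-i}\sM\subseteq\cosusp\Sigma\inv\sM$, and the other uses that $(\cosusp\Sigma\inv\sM)\orth$ is closed under $\Sigma$, extensions and summands, so it swallows $\susp\sN$ once it contains $\sN$. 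Your handling of the parenthetical reformulations via ${}\orth(-)$ and $(-)^\perp$ and the shift autoequivalence is also fine. One small organizational remark: once you have $\susp\sM=\sX_\sM$, the verification that $\cC$ is $\Sigma$-closed is redundant (it is a co-aisle of a co-t-structure, hence $\Sigma$-closed by definition), but including the direct argument does no harm and keeps the proof self-contained.
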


The partial order is compatible with silting mutation:

\begin{lemma} \label{lem:mut-compat}
Let $\sM \leq \sN$ be silting subcategories of $\sD$ and suppose $\sK \subseteq \sM \cap \sN$. Then $\rmu{\sK}(\sM) \leq \rmu{\sK}(\sN)$ and $\lmu{\sK}(\sM) \leq \lmu{\sK}(\sN)$.
\end{lemma}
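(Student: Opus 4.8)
The plan is to translate the partial order into the language of (co-)suspended subcategories via Lemma~\ref{lem:equivalent-po} and reduce the claim to a containment of cosuspended subcategories. The key step will be to establish, for an arbitrary silting pair $(\sM,\sK)$, the identities
\[ \cosusp \rmu{\sK}(\sM) = \cosusp\big(\Sigma\inv\sM \cup \sK\big) \qquad\text{and}\qquad \susp \lmu{\sK}(\sM) = \susp\big(\Sigma\sM \cup \sK\big). \]
For the first identity, the inclusion ``$\subseteq$'' is immediate: the invariant definition gives $\rmu{\sK}(\sM) \subseteq \Sigma\inv\sM * \sK$, and the right-hand side above is extension-closed and contains both $\Sigma\inv\sM$ and $\sK$. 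For ``$\supseteq$'', recall that $\sK \subseteq \rmu{\sK}(\sM)$ by the additive description of the mutation, so it suffices to show $\Sigma\inv\sM \subseteq \cosusp \rmu{\sK}(\sM)$. Given $M \in \sM$, I would rotate the defining mutation triangle $\Sigma\inv M \to \rmu{\sK}(M) \to K_M \to M$ to the triangle $\Sigma\inv K_M \to \Sigma\inv M \to \rmu{\sK}(M) \to K_M$; since $K_M \in \sK \subseteq \rmu{\sK}(\sM)$ and $\cosusp \rmu{\sK}(\sM)$ is closed under $\Sigma\inv$ and under extensions, both outer terms of this triangle lie in $\cosusp \rmu{\sK}(\sM)$, and hence so does $\Sigma\inv M$. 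The second identity is proved dually.

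Granting these identities, the lemma becomes a short formal argument. By Lemma~\ref{lem:equivalent-po}, the hypothesis $\sM \le \sN$ is equivalent to $\cosusp\sM \subseteq \cosusp\sN$, and applying the autoequivalence $\Sigma\inv$ gives $\cosusp\Sigma\inv\sM \subseteq \cosusp\Sigma\inv\sN$. Therefore $\Sigma\inv\sM \subseteq \cosusp\Sigma\inv\sN \subseteq \cosusp(\Sigma\inv\sN \cup \sK)$, while trivially $\sK \subseteq \cosusp(\Sigma\inv\sN \cup \sK)$; since the latter category is cosuspended and extension-closed, we deduce $\cosusp(\Sigma\inv\sM \cup \sK) \subseteq \cosusp(\Sigma\inv\sN \cup \sK)$, that is, $\cosusp \rmu{\sK}(\sM) \subseteq \cosusp \rmu{\sK}(\sN)$, which by Lemma~\ref{lem:equivalent-po} says precisely $\rmu{\sK}(\sM) \le \rmu{\sK}(\sN)$. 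The inequality $\lmu{\sK}(\sM) \le \lmu{\sK}(\sN)$ is obtained in the same way, now using the equivalent form $\susp\sM \supseteq \susp\sN$ of the order together with the $\susp$-identity.

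The only point that is not purely formal is the inclusion ``$\supseteq$'' in the key identity --- that passing to the cosuspended closure already absorbs the whole mutated subcategory, leaving only $\Sigma\inv\sM$ and $\sK$; everything else is bookkeeping with extension-closure and orthogonality. If one prefers to avoid these identities, there is a direct route on the additive generators $\sK \cup \{\rmu{\sK}(M) : M \in \sM\}$: one has $\Hom^{>0}(\sK, \rmu{\sK}(\sN)) = 0$ because $\sK \subseteq \rmu{\sK}(\sN)$ and $\rmu{\sK}(\sN)$ is silting, and for $M \in \sM$ and $X \in \rmu{\sK}(\sN)$, applying $\Hom(\blank,\Sigma^i X)$ with $i > 0$ to the mutation triangle for $M$ traps $\Hom^i(\rmu{\sK}(M), X)$ in an exact sequence between $\Hom^i(\sK, X) = 0$ and $\Hom^{i+1}(\sM, X)$, the latter vanishing because $\rmu{\sK}(\sN) \subseteq \Sigma\inv\sN * \sK$ and $\Hom^{>0}(\sM, \sN) = 0$. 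The left-mutation case is dual, and either route settles the lemma.
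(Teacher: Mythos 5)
Your primary argument is correct and is a genuinely different route from the paper's. The paper proves the inequality by directly establishing $\Hom^{>0}(\rmu{\sK}(\sM),\rmu{\sK}(\sN))=0$: it works with the invariant descriptions $(\Sigma\inv\sM*\sK)\cap(\Sigma\inv\sK)\orth$ and $(\Sigma\inv\sN*\sK)\cap(\Sigma\inv\sK)\orth$ and combines four vanishing statements (using $\sM$ silting, $\sM\leq\sN$, $\sN$ silting, and the orthogonality condition in the target) via the obvious long exact sequences. You instead pass through Lemma~\ref{lem:equivalent-po} and reduce everything to the containment $\cosusp\rmu{\sK}(\sM)\subseteq\cosusp\rmu{\sK}(\sN)$, which becomes transparent once one isolates the identity $\cosusp\rmu{\sK}(\sM)=\cosusp(\Sigma\inv\sM\cup\sK)$; your verification of both inclusions (extension-closure for ``$\subseteq$'', rotating the mutation triangle for ``$\supseteq$'') is sound, as is the dual identity for $\susp$ and left mutation. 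The trade-off: the paper's computation is entirely self-contained and local to the definition of mutation, while your route packages the computation into a reusable structural fact about how mutation interacts with the aisles of the associated co-t-structures --- a point that clarifies \emph{why} the lemma holds and makes the remaining deduction purely formal. Your remarked ``alternative direct route'' via the mutation triangles is, up to reorganization, essentially the paper's own proof.
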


\begin{proof}
We prove the statement for right mutations, the statement for left mutations is dual. We need to show that $\Hom^{>0}(\rmu{\sK}(\sM),\rmu{\sK}(\sN))=0$, which using the invariant definitions amounts to
 $\Hom^{>0}( (\Sigma\inv \sM * \sK) \cap (\Sigma\inv \sK)\orth, (\Sigma\inv \sN * \sK) \cap (\Sigma\inv \sK)\orth ) = 0$. Indeed, the stronger
 $\Hom^{>0}( \Sigma\inv \sM * \sK, (\Sigma\inv \sN * \sK) \cap (\Sigma\inv \sK)\orth) = 0$ follows from these four facts:

\begin{enumerate}[label=(\arabic*)]
\item $\Hom^{>0}(\Sigma\inv \sM * \sK,\sK)=0$, since $\sM$ is silting and $\sK\subseteq\sM$.
\item $\Hom^{>0}(\Sigma\inv \sM,\Sigma\inv \sN)=\Hom^{>0}(\sM,\sN)=0$ as $\sM\leq\sN$.
\item $\Hom^1(\sK, (\Sigma\inv \sK)\orth)=0$ by the definition of the orthogonal subcategory.
\item $\Hom^{>1}(\sK, \Sigma\inv \sN)=\Hom^{>0}(\sK,\sN)=0$, since $\sN$ is silting and $\sK\subseteq\sN$. \qedhere
\end{enumerate}
\end{proof}

We have the following technical observation.

\begin{lemma} \label{lem:total_order}
Let $\sD$ be a Hom-finite, Krull-Schmidt triangulated category and $\sM$ a silting subcategory. Any finite subset of the poset $\posetsilt(\sD)$ is contained in an interval $[\Sigma^a\sM,\Sigma^b\sM]$.
\end{lemma}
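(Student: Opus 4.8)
The statement is purely about the poset $\posetsilt(\sD)$ and the distinguished family of shifts $\{\Sigma^a\sM\}_{a\in\IZ}$ of a fixed silting subcategory $\sM$. The first thing I would record is that the shifts form a \emph{chain}: since $\sM$ is silting, $\Hom^{>0}(\sM,\Sigma^a\sM) = \Hom^{>a}(\sM,\sM) = 0$ whenever $a \leq 0$, so $\Sigma^a\sM \leq \Sigma^b\sM$ precisely when $a \leq b$, using Lemma~\ref{lem:equivalent-po}(1). Hence $[\Sigma^a\sM,\Sigma^b\sM]$ really is an interval, and to prove the lemma it suffices to handle a single silting subcategory $\sN$ at a time: if for each of the finitely many elements $\sN_1,\ldots,\sN_k$ of the given finite subset we find integers $a_t \leq b_t$ with $\Sigma^{a_t}\sM \leq \sN_t \leq \Sigma^{b_t}\sM$, then taking $a = \min_t a_t$ and $b = \max_t b_t$ works, again because the shifts form a chain.

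\textbf{The single-subcategory estimate.} So the heart of the argument is: given any silting subcategory $\sN$, there exist $a \leq b$ with $\Sigma^a\sM \leq \sN \leq \Sigma^b\sM$. By Lemma~\ref{lem:equivalent-po}(1) this amounts to finding $b$ with $\Hom^{>0}(\sN,\Sigma^b\sM) = 0$ and $a$ with $\Hom^{>0}(\Sigma^a\sM,\sN) = 0$; I will produce $b$, the other being dual (replace $\sM$ by $\sN$ and $\sN$ by $\sM$, or invoke the symmetry of the situation). Pick a silting \emph{object} $N$ with $\add(N) = \sN$ (possible by Proposition~\ref{prop:silt-properties}\ref{item:silting-object}, since $\sD$ has a silting object and $K_0(\sD)$ has finite rank) and likewise $M$ with $\add(M) = \sM$. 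Write $N = N_1 \oplus \cdots \oplus N_p$ and $M = M_1 \oplus \cdots \oplus M_q$ into indecomposables. For each pair $(s,t)$ the graded $\kk$-vector space $\bigoplus_{i\in\IZ}\Hom(N_s,\Sigma^i M_t)$ is finite-dimensional in each degree, and — because $\sD = \thick{}{M}$, so $N_s$ lies in a finite iterated extension-and-summand construction from shifts of the $M_t$ — it is supported in only finitely many degrees $i$. Consequently there is an integer $b$ such that $\Hom(N_s,\Sigma^i M_t) = 0$ for all $s,t$ and all $i > b$; equivalently $\Hom^{>0}(\sN,\Sigma^b\sM) = \Hom^{>b}(N,M) = 0$, i.e.\ $\sN \leq \Sigma^b\sM$. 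The dual argument, using $\sD = \thick{}{N}$ and the fact that each $M_t$ is built in finitely many steps from shifts of the $N_s$, yields $a$ with $\Hom^{>0}(\Sigma^a\sM,\sN) = \Hom^{>0}(\Sigma^a M, N) = 0$, i.e.\ $\Sigma^a\sM \leq \sN$. Shrinking $a$ if necessary we may assume $a \leq b$, and $\sN \in [\Sigma^a\sM,\Sigma^b\sM]$.

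\textbf{Main obstacle.} The one point needing genuine care is the claim that $\bigoplus_i \Hom(N_s, \Sigma^i M_t)$ is concentrated in finitely many degrees. Hom-finiteness gives finite-dimensionality in each degree but not a priori finite total support; the input that closes this gap is $\thick{}{\sM} = \sD$, so that $N_s$ (indeed every object of $\sD$) lies in $\add\big(\Sigma^{j_1}\sM * \cdots * \Sigma^{j_\ell}\sM\big)$ for some finite set of shifts $j_1,\ldots,j_\ell$. Then a standard induction on the length of a $*$-composition, using the long exact sequences obtained by applying $\Hom(\blank,\Sigma^i M_t)$ to the defining triangles together with the fact that $\Hom(\Sigma^{j}M_{s'}, \Sigma^i M_t) = \Hom^{i-j}(M_{s'},M_t)$ vanishes for $|i-j|$ large (again by Hom-finiteness applied to the finitely many pairs of indecomposable summands of $M$, which have bounded-degree Hom between them by the same thick-subcategory argument applied internally to $\sM$ — but here this is immediate since $\Hom^{\gg 0}(M,M)=0$ and one only needs an upper bound, while a lower bound is equally clear as $\Hom^{\ll 0}(M,M)=0$ fails in general but $\Hom(\Sigma^j M,\Sigma^i M)$ is still bounded below in $i-j$ by Hom-finiteness of the finite-rank category... ), gives boundedness of the support. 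I would state this boundedness cleanly as a preliminary observation — "for a silting object $M$ in a Hom-finite Krull–Schmidt triangulated category $\sD$ with $\thick{}{M}=\sD$, every object $D\in\sD$ satisfies $\Hom(D,\Sigma^i M)=0$ for $|i|\gg 0$" — prove it by the $*$-length induction, and then the rest of the lemma is the bookkeeping described above. Everything else (the chain property of the shifts, the reduction from finitely many $\sN$'s to one, the min/max step) is routine.
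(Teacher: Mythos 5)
Your argument is correct in substance, and it takes a mildly different route from the paper's. The paper invokes the fact (from MSSS) that $(\cosusp\Sigma^{-1}\sM,\susp\sM)$ is a \emph{bounded} co-t-structure, so that each indecomposable summand $N_{i,j}$ of each $\sN_i$ lies in $\cosusp\Sigma^{b_{i,j}}\sM$ for some integer $b_{i,j}$; taking $b=\max b_{i,j}$ and appealing to the reformulation $\sN_i\leq\Sigma^b\sM\iff\cosusp\sN_i\subseteq\cosusp\Sigma^b\sM$ from Lemma~\ref{lem:equivalent-po}(2) gives the upper bound, and the lower bound is dual using $\susp$. You instead work directly with the Hom-vanishing characterisation of the order (Lemma~\ref{lem:equivalent-po}(1)) and replace ``bounded co-t-structure'' by the more elementary observation that $\thick{}{\sM}=\sD$ forces every object of $\sD$ to lie in $\add\big(\Sigma^{j_1}\sM * \cdots * \Sigma^{j_\ell}\sM\big)$ for some finite set of shifts, and then bound the nonvanishing degrees by a long-exact-sequence induction. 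These are essentially two packagings of the same underlying finiteness; the paper's is shorter because it outsources the work to the cited boundedness, yours is more self-contained.

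Two small points of hygiene, neither fatal. First, at the very start you write that $\Hom^{>0}(\sM,\Sigma^a\sM)=\Hom^{>a}(\sM,\sM)=0$ ``whenever $a\leq 0$''; this should be $a\geq 0$ (for $a<0$ the sum includes $\Hom^0(\sM,\sM)\neq 0$). The conclusion you draw, namely $\Sigma^a\sM\leq\Sigma^b\sM$ iff $a\leq b$, is nevertheless correct. Second, the long parenthetical in your ``Main obstacle'' paragraph about a \emph{lower} bound on the support of $\bigoplus_i\Hom(N_s,\Sigma^iM_t)$ — the worry that $\Hom^{\ll 0}(M,M)$ might not vanish — is a red herring, and the half-argument you sketch for it is not right. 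The point is that you never need two-sided boundedness of that graded Hom. To produce $b$ with $\sN\leq\Sigma^b\sM$ you only need $\Hom(N_s,\Sigma^iM_t)=0$ for $i\gg 0$, which uses $N_s\in\add(\Sigma^{j_1}\sM*\cdots*\Sigma^{j_\ell}\sM)$ together with the \emph{one-sided} silting vanishing $\Hom^{>0}(\sM,\sM)=0$: every term $\Hom^{i-j_k}(M',M_t)$ in the long exact sequences vanishes once $i>\max_k j_k$. For $a$ you run the symmetric argument with $\thick{}{\sN}=\sD$ and the silting property of $\sN$, as you say. So you should simply delete the speculation about lower bounds; as written it makes the proof look as though it rests on a claim you have not established, when in fact it does not need that claim at all.
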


\begin{proof}
We write $\rho \coloneqq \rk K_0(\sD)$. Let $\{\sN_1,\ldots,\sN_t\}\subseteq\posetsilt(\sD)$ be a finite subset. Choose additive generators for each silting subcategory in the set: $\sN_i = \add(\bigoplus_{j=1}^{\rho} N_{i,j})$. As $(\cosusp \Sigma^{-1} \sM, \susp \sM)$ is a bounded co-t-structure in $\sD$, for any $i,j$ there exists an integer $b_{i,j}$ with $N_{i,j} \in \cosusp \Sigma^{b_{i,j}} \sM$. Set $b\coloneqq\max b_{i,j}$; then $\cosusp \sN_i \subseteq \cosusp \Sigma^b \sM$ for each $i$, which by Lemma~\ref{lem:equivalent-po} means $\sN_i \leq \Sigma^b \sM$ for each $i$. Hence, $\Sigma^b\sM$ gives an upper bound. The index $a$ is found by a similar argument using $\susp \sM$.
\end{proof}

\begin{remark} \label{rem:conventions}
In the literature there are various conventions in use on naming of right and left mutations (and later, tilts) and choice of partial order.
Our partial orders are opposite to those in \cite{AI,Koenig-Yang,Woolf} but the same as those in \cite{King-Qiu,Qiu}.
What we call right mutation/right tilt corresponds to right mutation/right tilt in \cite{AI,Koenig-Yang}, backward tilt in \cite{King-Qiu,Qiu} and left tilt in \cite{Woolf}.
For us, right mutation/right tilt makes objects smaller in the partial order, whereas for \cite{AI,Koenig-Yang} it makes them larger.
We have chosen our partial order because of the convenient property $\Sigma\inv \sM \leq  \sM \leq \Sigma \sM$ for any silting subcategory $\sM$.
\end{remark}

Using the partial order on silting subcategories, we define an order on silting pairs:

\begin{definition}
Let $(\sM, \sM')$ and $(\sN,\sN')$ be two silting pairs of $\sD$, then
\[
(\sN,\sN') \leq (\sM,\sM') \iffdef \rmu{\sM'}(\sM) \leq \rmu{\sN'}(\sN) \leq \sN \leq \sM.
\]
\end{definition}

\begin{example}
  For any silting pair $(\sM,\sM')$, we have the following tautological chain:
$(\sM,\sM) \leq (\sM,\sM') \leq (\sM,0)$.
\end{example}

This is indeed a partial order on pairs:

\begin{lemma} \label{lem:partial_order_pairs}
The relation $\leq$ determines a partial order on silting pairs of $\sD$.
\end{lemma}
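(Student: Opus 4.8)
The plan is to verify the three defining properties of a partial order --- reflexivity, antisymmetry, and transitivity --- for the relation $\leq$ on silting pairs, using throughout that $\leq$ is already known to be a partial order on $\posetsilt(\sD)$ (cited as \cite[Theorem 2.11]{AI}) and that right mutation is a bijection which is monotone in the compatible sense of Lemma~\ref{lem:mut-compat}.

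\emph{Reflexivity} is immediate: for a silting pair $(\sM,\sM')$ the chain $\rmu{\sM'}(\sM)\leq\rmu{\sM'}(\sM)\leq\sM\leq\sM$ holds because $\rmu{\sM'}(\sM)\leq\sM$ (one of the inequalities in the Example following Lemma~\ref{lem:equivalent-po}) and $\leq$ on $\posetsilt(\sD)$ is reflexive. For \emph{antisymmetry}, suppose $(\sN,\sN')\leq(\sM,\sM')$ and $(\sM,\sM')\leq(\sN,\sN')$. Reading off the last two terms of each chain gives $\sN\leq\sM$ and $\sM\leq\sN$, so $\sM=\sN$ by antisymmetry of $\leq$ on silting subcategories. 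Then the chains become $\rmu{\sM'}(\sM)\leq\rmu{\sN'}(\sM)$ and $\rmu{\sN'}(\sM)\leq\rmu{\sM'}(\sM)$, hence $\rmu{\sM'}(\sM)=\rmu{\sN'}(\sM)$, again by antisymmetry on silting subcategories. Since right mutation at a fixed silting $\sM$ is injective in $\sM'$ --- this is exactly Lemma~\ref{lem:intersection-mutation}, which recovers $\sM'=\sM\cap\rmu{\sM'}(\sM)$ from $\sM$ and $\rmu{\sM'}(\sM)$ --- we conclude $\sM'=\sN'$, so $(\sM,\sM')=(\sN,\sN')$.

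\emph{Transitivity} is the step I expect to require the most care. Assume $(\sP,\sP')\leq(\sN,\sN')$ and $(\sN,\sN')\leq(\sM,\sM')$, i.e.
\[
\rmu{\sN'}(\sN)\leq\rmu{\sP'}(\sP)\leq\sP\leq\sN, \qquad \rmu{\sM'}(\sM)\leq\rmu{\sN'}(\sN)\leq\sN\leq\sM.
\]
The outer inequalities $\sP\leq\sN\leq\sM$ give $\sP\leq\sM$ and $\rmu{\sM'}(\sM)\leq\rmu{\sP'}(\sP)$ directly by transitivity on $\posetsilt(\sD)$ (stringing together $\rmu{\sM'}(\sM)\leq\rmu{\sN'}(\sN)\leq\rmu{\sP'}(\sP)$). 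What remains is to show $\rmu{\sP'}(\sP)\leq\sN$; we are given $\rmu{\sP'}(\sP)\leq\sP$ and $\sP\leq\sN$, so transitivity on silting subcategories yields $\rmu{\sP'}(\sP)\leq\sN$ as well, but in fact we need the chain in the correct order, namely $\rmu{\sM'}(\sM)\leq\rmu{\sP'}(\sP)\leq\sP\leq\sM$, which is now assembled from the pieces above. Thus all four inequalities of $(\sP,\sP')\leq(\sM,\sM')$ hold. The subtle point is that the definition only demands the four-term chain $\rmu{\sM'}(\sM)\leq\rmu{\sP'}(\sP)\leq\sP\leq\sM$, and each link has been obtained purely from transitivity of the underlying order together with the always-valid inequality $\rmu{\sP'}(\sP)\leq\sP$; no use of Lemma~\ref{lem:mut-compat} is actually needed for transitivity, though it would be the natural tool if the definition had instead compared the mutated pairs more symmetrically. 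I would double-check at the end that no hidden compatibility between $\sN'$ and $\sP'$ (or $\sM'$) is being silently assumed --- it is not, precisely because the middle terms $\rmu{\sP'}(\sP)$ and $\rmu{\sN'}(\sN)$ appear on the correct sides of the chains --- and this is the place where a careless reading could introduce an error.
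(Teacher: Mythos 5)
Your proof is correct and follows the same strategy as the paper: reflexivity and transitivity reduce to chaining inequalities in $\posetsilt(\sD)$, and antisymmetry uses the antisymmetry on silting subcategories together with Lemma~\ref{lem:intersection-mutation} to recover $\sM'=\sM\cap\rmu{\sM'}(\sM)$. The paper dismisses reflexivity and transitivity as ``clear'' and only spells out antisymmetry, which you handle identically, so the only difference is that you write out the easy steps (and could tighten the transitivity discussion, where the detour through $\rmu{\sP'}(\sP)\leq\sN$ is unnecessary since $\rmu{\sP'}(\sP)\leq\sP$ is already one of the hypotheses).
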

\begin{proof}
Reflexivity and transitivity are clear. For anti-symmetry suppose that $(\sN,\sN') \leq (\sM,\sM') $ and $(\sM,\sM') \leq (\sN,\sN')$. It follows from the definition that $\rmu{\sM'}(\sM) \leq \rmu{\sN'}(\sN) \leq \sN \leq \sM$ and $\rmu{\sN'}(\sN) \leq \rmu{\sM'}(\sM) \leq \sM \leq \sN$. Using the anti-symmetry of the partial order on silting objects we see that $\sM = \sN$ and $\rmu{\sM'}(\sM) = \rmu{\sN'}(\sN)$. Therefore, applying Lemma~\ref{lem:intersection-mutation},  $\sM' = \sM \cap \rmu{\sM'}(\sM) = \sN \cap \rmu{\sN'}(\sN) = \sN'$ and so $(\sN,\sN') = (\sM,\sM') $.
\end{proof}

In the following, the poset of silting pairs will play an important role. However, as soon as $\sD$ has more than one silting object, this poset does not possess a bottom element. For technical reasons, we need to artificially adjoin a bottom element, hence we make the following definition:

\begin{definition}
Let $(\posetpair(\sD), \leq)$ be the poset obtained from the poset of silting pairs by formally adjoining a bottom element $\hat{0}$. By abuse of terminology, from now on we shall call this the \emph{poset of silting pairs} and write $\posetpair(\sD)$.
\end{definition}


\section{Silting mutation versus admissible tilts}
\label{sec:compatibility}

\noindent
In this section, we describe the compatibility between silting mutation and certain tilts of t-structures at torsion pairs. This will facilitate the reader in translating statements in the language of silting pairs used in this article into the language of tilting torsion pairs, which is used in \cite{QW,Woolf}. As a consequence of the compatibility, we obtain two technical results which we shall need in Sections~\ref{sec:CW-properties} and \ref{sec:stability}.
Throughout this section we assume $\sD \coloneqq \Db(\Lambda)$, where $\Lambda$ is a finite-dimensional algebra.

\subsection{Admissible tilts} \label{sub:tilts}

We define the \emph{rank} of a subset of objects $\sS$ to be the rank of the K-group of the full triangulated subcategory generated by $\sS$ in $\sD$, cf. Definition~\ref{def:irreducible-mutation}.

Let $(\sX,\sY)$ be a bounded t-structure in $\sD$ with heart $\sH = \sX \cap \Sigma \sY$. If $(\cT,\cF)$ is a torsion pair in the heart $\sH$, then one obtains a new bounded  t-structure $(\sX',\sY')$ by
\begin{equation*} \label{def:admissible-tilt}
\sX' \coloneqq \clext{\Sigma^{n-1} \cT, \Sigma^n \cF \mid n \geq 0}
\quad \textrm{and} \quad
\sY' \coloneqq \clext{\Sigma^{n-1} \cT, \Sigma^n \cF \mid n < 0}.
\end{equation*}
The t-structure $(\sX',\sY')$ is called the \emph{right tilt of $(\sX,\sY)$ at the torsion pair $(\cT,\cF)$}; see \cite{HRS}.

Further suppose that $\sH$ is the heart of an algebraic t-structure, whose simple objects are $\simples(\sH) \coloneqq \{ S_1, \ldots, S_t \}$. If $\cT = \clext{S_{i_1},\ldots, S_{i_{\rho}}}_{\sH}$, where $\clext{\blank}_{\sH}$ denotes extension closure in $\sH$, then we call $(\sX',\sY')$ an \emph{admissible right tilt of rank $\rho$ of $(\sX,\sY)$}. Note that, when $\rho = 1$, such a tilt is called an \emph{irreducible right tilt of $(\sX,\sY)$}; see \cite{Koenig-Yang}.

\subsection{Compatibility} \label{sub:compatibility}

Let $\sM = \add(M)$ be a silting subcategory, with $M$ a basic silting object (i.e.\ all indecomposable summands are non-isomorphic); write $M = M_1 \oplus \cdots \oplus M_t$. By Theorem~\ref{thm:koenig-yang}, $\sM$ corresponds to an algebraic t-structure $(\sX_{\sM},\sY_{\sM})$ with heart $\sH_{\sM}$, which has $t$ non-isomorphic simple objects. We denote the set of (isomorphism classes) of simple objects of $\sH_{\sM}$ by $\simples(\sH_{\sM})$. By \cite{Koenig-Yang} the indecomposable summands of $M$ and the simple objects of $\sH_{\sM}$, appropriately enumerated, satisfy the following duality principle:
\[
\Hom(M_i,S_j) = \delta_{ij} \kk.
\]
We can now use this duality to define a torsion pair $(\cT_{\sM'},\cF_{\sM'})$ in $\sH_{\sM}$ as follows. Let
\[
\cT_{\sM'} \coloneqq   \clext{\simples(\sH_\sM) \cap (\sM')\orth}_{\sH_\sM}
\text{ and }
\cF_{\sM'} \coloneqq (\cT_{\sH_{\sM}})^{\perp_{\sH_{\sM}}}.
\]
The following lemma establishes the compatibility between silting mutation and admissible tilting. It is a straightforward generalisation of \cite[Theorem 7.12]{Koenig-Yang}.

\begin{lemma}
\label{lem:compatibility}
Let $\Lambda$ be a finite-dimensional algebra, $\sD\coloneqq\Db(\Lambda)$ its derived category and $\sK\coloneqq\sK^b(\proj{\Lambda})$ its bounded homotopy category. The bijection
\begin{align*}
   \{ \text{silting subcategories of $\sK$} \}
   & \stackrel{1-1}{\longleftrightarrow}
   \{ \text{algebraic t-structures in $\sD$} \} \\
   \sM
   & \longleftrightarrow
   (\sX_{\sM}, \sY_{\sM})
\end{align*}
of Theorem~\ref{thm:koenig-yang} is compatible with silting mutation and admissible tilting in the sense that the following diagram commutes:
\[
\xymatrix{
  \sM \ar@{<->}[rrr]^-{\textxy{K\"onig--Yang}} \ar@{~>}[d]_-{\textxy{mutate at } \sM'}    & & &
    (\sX_{\sM}, \sY_{\sM}) \ar@{~>}[d]^-{\textxy{tilt at } (\cT_{\sM'},\cF_{\sM'})} \\
\rmu{\sM'}(\sM) \ar@{<->}[rrr]_-{\textxy{K\"onig--Yang}}                      & & &
    (\sX_{\rmu{\sM'}(\sM)}, \sY_{\rmu{\sM'}(\sM)}).
}
\]
\end{lemma}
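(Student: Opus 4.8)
The plan is to verify that the two paths around the square produce the same bounded t-structure, by computing both explicitly and comparing aisles. Recall that the right-hand vertical map sends $(\sX_\sM,\sY_\sM)$ to its HRS-tilt at $(\cT_{\sM'},\cF_{\sM'})$, whose aisle is $\sX' = \clext{\Sigma^{n-1}\cT_{\sM'}, \Sigma^n\cF_{\sM'} \mid n\ge 0}$; the bottom horizontal map sends $\rmu{\sM'}(\sM)$ to the t-structure with aisle $\sX_{\rmu{\sM'}(\sM)} = (\Sigma^{<0}\rmu{\sM'}(\sM))\orth$. So the goal reduces to the identity
\[
(\Sigma^{<0}\rmu{\sM'}(\sM))\orth = \clext{\Sigma^{n-1}\cT_{\sM'},\ \Sigma^n\cF_{\sM'} \mid n\ge 0}.
\]
First I would record, using the concrete description $\rmu{\sM'}(\sM) = \add(\sM' \cup \{\rmu{\sM'}(M_i)\})$ with triangles $\Sigma\inv M_i \to \rmu{\sM'}(M_i) \to M'_i \xrightarrow{r_i} M_i$, that the indecomposable summands of $\rmu{\sM'}(\sM)$ are: those $M_i \in \sM'$ (unchanged), and the genuinely mutated $\rmu{\sM'}(M_i)$ for $M_i \notin \sM'$. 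By the König--Yang duality $\Hom(M_i,S_j) = \delta_{ij}\kk$, the simples $S_j$ with $M_j\in\sM'$ are exactly those killed by $\sM'$-positively, hence $\cT_{\sM'} = \clext{S_j \mid M_j\in\sM'}_{\sH_\sM}$ and $\cF_{\sM'}$ is generated by the remaining simples.

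The core computation is then a compatibility check between the mutation triangles and the HRS recipe, mirroring \cite[Theorem 7.12]{Koenig-Yang} verbatim in the rank-one case and running the same argument for general $\sM'$. Concretely I would show: (i) $\Sigma^{<0}\rmu{\sM'}(\sM)$ is right-orthogonal to $\clext{\Sigma^{n-1}\cT_{\sM'},\Sigma^n\cF_{\sM'}\mid n\ge 0}$, by checking $\Hom^{>0}$-vanishing separately on the two types of summands of $\rmu{\sM'}(\sM)$ against $\Sigma^{n-1}\cT_{\sM'}$ and $\Sigma^n\cF_{\sM'}$, using that $\sM$ is silting, the defining triangle for $\rmu{\sM'}(M_i)$, and the position of the simples in $\sH_\sM$; and (ii) conversely that any object orthogonal to $\Sigma^{<0}\rmu{\sM'}(\sM)$ lies in the extension closure, for which the cleanest route is to check that $\clext{\Sigma^{n-1}\cT_{\sM'},\Sigma^n\cF_{\sM'}\mid n\ge 0}$ is itself the aisle of \emph{some} bounded t-structure (true by HRS) whose co-aisle is $(\Sigma^{<0}\rmu{\sM'}(\sM))\orth$-orthogonal --- and then invoke that a bounded t-structure is determined by either half, so the containment in (i) forces equality. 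Alternatively, and perhaps more transparently, one argues at the level of hearts: the HRS-tilted heart has simple objects $\{\Sigma^{-1}S_j : M_j\in\sM'\}\cup\{S_j : M_j\notin\sM'\}$, and one checks directly from the mutation triangles that these are exactly the objects dual (under $\Hom(-,?)=\delta\kk$) to the indecomposable summands $\{M_j : M_j\in\sM'\}\cup\{\rmu{\sM'}(M_j):M_j\notin\sM'\}$ of $\rmu{\sM'}(\sM)$; by the uniqueness in Theorem~\ref{thm:koenig-yang} this pins down the t-structure and the square commutes.

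I would reduce the general-rank case to the irreducible one where possible: since $\cT_{\sM'}$ is generated by the simples $S_j$ with $M_j\in\sM'$, and silting mutation at $\sM'$ can be performed one indecomposable at a time (the mutated object $\rmu{\sM'}(M_i)$ depends only on the $\sM'$-approximation of $M_i$), one can try to factor both vertical arrows as composites of irreducible steps and apply \cite[Theorem 7.12]{Koenig-Yang} repeatedly. The subtlety here --- and what I expect to be the main obstacle --- is that a torsion pair with $\cT$ generated by several simples is \emph{not} in general the composite of the rank-one tilts at those simples individually (HRS-tilting at $S_{i_1}$ changes the heart, and $S_{i_2}$ need not remain simple, or even an object, of the new heart), so one cannot naively iterate; one must instead check directly that the \emph{simultaneous} tilt at $\cT_{\sM'}=\clext{S_{i_1},\dots,S_{i_\rho}}_{\sH_\sM}$ matches the \emph{simultaneous} mutation at $\sM'=\add(M_{i_1}\oplus\cdots\oplus M_{i_\rho})$. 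Fortunately this simultaneous comparison is exactly the content of the orthogonality bookkeeping in step (i)–(ii) above, carried out with $\cT_{\sM'}$ and $\sM'$ as given rather than summand-by-summand, so the argument goes through; it is just a matter of being careful that the approximation triangles $\Sigma\inv M_i\to\rmu{\sM'}(M_i)\to M'_i\to M_i$ use the \emph{full} subcategory $\sM'$ and that the resulting $\Hom$-vanishings are with respect to all of $\cT_{\sM'}$ and $\cF_{\sM'}$ at once. Finally I would remark that, as the statement notes, this is a straightforward generalisation of the rank-one result in \cite{Koenig-Yang}, so once the general-rank orthogonality computations are in place no new ideas are required.
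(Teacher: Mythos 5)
Your overall strategy --- compare the two routes around the square either by matching aisles or, more transparently, by matching the simple objects of the hearts via the K\"onig--Yang duality --- is the right way to expand the paper's terse citation of \cite[Theorem 7.12]{Koenig-Yang} (the paper itself gives no further proof detail), and your caution that a simultaneous tilt cannot naively be decomposed into iterated rank-one tilts is well placed. However, you have the torsion pair reversed, and this error propagates into your description of the new simples. From $\cT_{\sM'} = \clext{\simples(\sH_\sM) \cap (\sM')\orth}_{\sH_\sM}$ together with the duality $\Hom(M_i,S_j)=\delta_{ij}\kk$, a simple $S_j$ lies in $(\sM')\orth$ precisely when $M_j\notin\sM'$: indeed if $M_j\in\sM'$ then $\Hom(\sM',S_j)\supseteq\Hom(M_j,S_j)=\kk\neq0$. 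So $\cT_{\sM'}=\clext{S_j\mid M_j\notin\sM'}_{\sH_\sM}$, not $\clext{S_j\mid M_j\in\sM'}_{\sH_\sM}$ as you assert, and consequently the simples of the tilted heart are $\{S_j\mid M_j\in\sM'\}\cup\{\Sigma\inv S_j \mid M_j\notin\sM'\}$ --- the opposite of what you wrote.

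The sanity check is the rank-one case of \cite[Theorem 7.12]{Koenig-Yang}: mutating $\sM$ at the subcategory $\sM'$ additively generated by all indecomposable summands except $M_i$ replaces the single object $M_i\notin\sM'$, and the corresponding admissible right tilt is at $\cT=\clext{S_i}_{\sH_\sM}$, pushing exactly the simple dual to the mutated summand down by one suspension; your identification would instead leave $S_i$ alone and shift all the others. With the identification corrected, your proposed orthogonality and duality bookkeeping for general rank is the correct route and genuinely fills the gap the paper leaves implicit.
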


\subsection{An upper bound} \label{sub:bound}

The following technical lemma, or rather its corollary, will be used in
Section~\ref{sec:CW-properties}.

\begin{lemma}
\label{lem:intersection}
Suppose $(\sX,\sY)$ is an algebraic t-structure in $\sD$, with heart $\sH$.
For $i=1,2$, let $\sR_i\subseteq\simples(\sH)$ be a subset of simple objects, and denote by $(\sX_i,\sY_i)$ the admissible right tilt of $(\sX,\sY)$ induced by $\sR_i$.
Suppose there is an algebraic t-structure $(\sU,\sV)$ such that
\[
(\sU,\sV) \leq (\sX_1,\sY_1) \leq (\sX,\sY) \qquad\text{and}\qquad (\sU,\sV) \leq (\sX_2,\sY_2) \leq (\sX,\sY) .
\]
Write $\widetilde{\sR} \coloneqq \sR_1 \cup \sR_2$ and $(\widetilde{\sX},\widetilde{\sY})$ for the corresponding admissible right tilt of $(\sX,\sY)$. Then
\[  
(\sU,\sV) \leq (\widetilde{\sX},\widetilde{\sY}) \leq (\sX,\sY) .  
\]
\end{lemma}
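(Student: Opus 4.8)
The plan is to translate the hypotheses entirely into the language of aisles and co-aisles, so that the statement becomes a set-theoretic claim about extension closures. Using Lemma~\ref{lem:equivalent-po}(3) (or rather the t-structure version of it), the condition $(\sU,\sV) \le (\sX_i,\sY_i) \le (\sX,\sY)$ is equivalent to the chain of aisle inclusions $\sX \subseteq \sX_i \subseteq \sU$, and dually $\sV \subseteq \sY_i \subseteq \widetilde{\sV}$ — wait, more precisely $\sX \subseteq \sX_i$ and $\sX_i \subseteq \sU$. Since $\sU$ is an aisle, it is in particular extension-closed and closed under suspension, and it contains $\sX_1$ and $\sX_2$. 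So $\sU$ contains the extension closure of $\sX_1 \cup \sX_2$, and the goal reduces to showing that $\widetilde{\sX} = \clext{\sX_1, \sX_2}$, i.e.\ that the aisle of the admissible right tilt at $\widetilde{\sR} = \sR_1 \cup \sR_2$ is generated as an extension-closed, suspension-closed subcategory by $\sX_1$ and $\sX_2$; the inclusion $\widetilde{\sX} \subseteq \sX$ is immediate since $\widetilde{\sR}$ still consists of simples of $\sH$ and the HRS construction only enlarges the aisle by at most one step.

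The key computation is therefore purely about the HRS tilt. Writing $\cT_i = \clext{\sR_i}_{\sH}$ for the torsion class generated by $\sR_i$ and $\cF_i$ its torsion-free class, and similarly $\widetilde{\cT} = \clext{\sR_1 \cup \sR_2}_{\sH}$, the relevant aisles are
\[
\sX_i = \clext{\Sigma^{n-1}\cT_i, \Sigma^n \cF_i \mid n \ge 0}, \qquad
\widetilde{\sX} = \clext{\Sigma^{n-1}\widetilde{\cT}, \Sigma^n \widetilde{\cF} \mid n \ge 0},
\]
and I want $\widetilde{\sX} = \clext{\sX_1 \cup \sX_2}$. The inclusion $\clext{\sX_1 \cup \sX_2} \subseteq \widetilde{\sX}$ needs: $\cT_i \subseteq \widetilde{\cT}$ (clear, since $\sR_i \subseteq \sR_1\cup\sR_2$) and $\Sigma \cF_i \subseteq \widetilde{\sX}$; the latter holds because any object of $\cF_i$ sits in a short exact sequence in $\sH$ with sub in $\widetilde{\cT}\cap\cF_i$-part and quotient in $\widetilde{\cF}$ — here one uses that $\widetilde{\cT} = \cT_1 * \cT_2$ inside $\sH$ (extension closure of a union of torsion classes generated by simples) and that the simples in $\widetilde{\sR}\setminus\sR_i$ all lie in $\cF_i$, so $\Sigma$ of the $\widetilde{\cT}$-part lands in $\Sigma^{0}\widetilde{\cT}\subseteq\widetilde{\sX}$ shifted appropriately and the $\widetilde{\cF}$-part lands in $\Sigma\widetilde{\cF}\subseteq\widetilde{\sX}$. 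For the reverse inclusion $\widetilde{\sX}\subseteq\clext{\sX_1\cup\sX_2}$, it suffices to check the generators: $\Sigma^{n-1}\widetilde{\cT}$ for $n\ge0$ lies in $\clext{\Sigma^{n-1}\cT_1,\Sigma^{n-1}\cT_2}\subseteq\clext{\sX_1\cup\sX_2}$, and $\Sigma^n\widetilde{\cF}$ for $n\ge0$ lies in $\Sigma^n\cF_i\subseteq\sX_i$ since $\widetilde{\cF}\subseteq\cF_i$ (as $\cT_i\subseteq\widetilde{\cT}$ implies $\widetilde{\cF}=\widetilde{\cT}^{\perp_\sH}\subseteq\cT_i^{\perp_\sH}=\cF_i$).

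The main obstacle I anticipate is the bookkeeping around the identity $\widetilde{\cT} = \cT_1 * \cT_2$ (equivalently $\clext{\sR_1\cup\sR_2}_\sH = \clext{\sR_1}_\sH * \clext{\sR_2}_\sH$) and the filtration argument showing $\Sigma\cF_i \subseteq \widetilde{\sX}$; these use in an essential way that $\sH$ is a \emph{length} category and that the $\sR_i$ are sets of \emph{simple} objects (so the torsion classes they generate are precisely the subcategories of objects all of whose composition factors lie in $\sR_i$), which is exactly what ``admissible tilt'' guarantees. Everything else — the passage between the three equivalent formulations of the partial order, and the fact that an aisle is extension- and suspension-closed hence absorbs $\clext{\sX_1\cup\sX_2}$ — is formal and can be cited from Lemma~\ref{lem:equivalent-po} and Section~\ref{sub:t-structures}. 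Once $\widetilde{\sX}=\clext{\sX_1\cup\sX_2}$ is established, the sandwiching $\sX\subseteq\widetilde{\sX}\subseteq\sU$ follows and, re-applying Lemma~\ref{lem:equivalent-po}, gives $(\sU,\sV)\le(\widetilde{\sX},\widetilde{\sY})\le(\sX,\sY)$, as required.
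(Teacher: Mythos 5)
Your proposal follows essentially the same path as the paper's proof, but proposes to prove more than is needed, which introduces avoidable difficulties.

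The paper's proof goes straight for the inclusion $\widetilde{\sX}\subseteq\sU$: it checks on the generators $\Sigma^{n-1}\widetilde{\cT}$ and $\Sigma^{n}\widetilde{\cF}$ ($n\geq 0$), using the two facts $\widetilde{\cF}=\cF_1\cap\cF_2$ and $\widetilde{\cT}=\clext{\cT_1,\cT_2}_\sH$, together with the fact that $\sU$ is extension-closed. You propose instead to establish the \emph{equality} $\widetilde{\sX}=\clext{\sX_1\cup\sX_2}$. The half you actually need, $\widetilde{\sX}\subseteq\clext{\sX_1\cup\sX_2}$, is exactly the same calculation as the paper's (your ``$\widetilde{\cF}\subseteq\cF_i$'' and ``$\Sigma^{n-1}\widetilde{\cT}\subseteq\clext{\Sigma^{n-1}\cT_1,\Sigma^{n-1}\cT_2}$'' are the paper's two facts, and plugging into $\sU$ extension-closed gives $\widetilde{\sX}\subseteq\sU$ directly). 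The other half, $\clext{\sX_1\cup\sX_2}\subseteq\widetilde{\sX}$, is entirely unnecessary for the lemma: you need $\sX\subseteq\widetilde{\sX}$ as part of $(\widetilde{\sX},\widetilde{\sY})\leq(\sX,\sY)$, but that is just the generic fact that a right HRS tilt enlarges the aisle, and does not require knowing where $\sX_1$ and $\sX_2$ sit inside $\widetilde{\sX}$. Dropping that half removes the one genuinely shaky point in your writeup, namely the assertion $\widetilde{\cT}=\cT_1*\cT_2$: the single-extension product $\cT_1*\cT_2$ of two torsion classes need not be extension-closed in general, and an object of $\sH$ with composition factors alternating between $\sR_1\setminus\sR_2$ and $\sR_2\setminus\sR_1$ may fail to lie in $\cT_1*\cT_2$ while obviously lying in $\widetilde{\cT}$. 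What is true, and what you actually use for the needed inclusion, is the iterated extension closure $\widetilde{\cT}=\clext{\cT_1,\cT_2}_\sH$; keep that phrasing and the issue disappears.

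One small slip: early on you write that ``the inclusion $\widetilde{\sX}\subseteq\sX$ is immediate,'' but with the paper's conventions the relation $(\widetilde{\sX},\widetilde{\sY})\leq(\sX,\sY)$ amounts to $\sX\subseteq\widetilde{\sX}$ (aisles shrink as you go up in $\leq$). You state the correct sandwiching $\sX\subseteq\widetilde{\sX}\subseteq\sU$ at the end, so this is only a typo, but it is worth correcting to avoid confusion.
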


\begin{proof}
Recall that $(\sU,\sV) \leq (\sX_i,\sY_i) \leq (\sX,\sY)$ if and only if $\sX \subseteq \sX_i \subseteq \sU$ for $i=1,2$. Thus, we need to show that $\widetilde{\sX} \subseteq \sU$. Let $\cT_i = \clext{\sR_i}_{\sH}$ and $\cF_i = \cT_i^{\perp_{\sH}}$ for $i=1,2$. Write $\widetilde{\cT}\coloneqq \clext{\widetilde{\sR}}_{\sH}$ and $\widetilde{\cF} \coloneqq \widetilde{\cT}^{\perp_{\sH}}$. Since
 $\widetilde{\sX} = \clext{ \Sigma^{n-1}\widetilde{\cT},\Sigma^n \widetilde{\cF} \mid n \geq 0 }$,
it is sufficient to verify $\Sigma^{n-1} \widetilde{\cT} \subseteq \sU$ and $\Sigma^n \widetilde{\cF} \subseteq \sU$ for $n\geq 0$ since $\sU$ is closed under extensions.

For the inclusion $\Sigma^n \widetilde{\cF} \subseteq \sU$, by definition and assumption we have
\[
\sX_i \coloneqq \clext{ \Sigma^{n-1} \cT_i, \Sigma^n \cF_i \mid n \geq 0 } \subseteq \sU .
\]
Now
 $\widetilde{\cF} = \widetilde{\cT}^{\perp_{\sH}}
                  =  \clext{\sR_1,\sR_2}_{\sH}^{\perp_{\sH}}
                  = \clext{\sR_1}_{\sH}^{\perp_{\sH}} \cap \clext{\sR_2}_{\sH}^{\perp_{\sH}}
                  = \cF_1 \cap \cF_2$.
Thus it follows that $\Sigma^n \widetilde{\cF} \subseteq \sU$ for each $n \geq 0$.

For the inclusion $\Sigma^{n-1} \widetilde{\cT} \subseteq \sU$: we have $\widetilde{\cT} = \clext{\sR_1,\sR_2}_{\sH} = \clext{\cT_1, \cT_2}_{\sH}$. Now, since $\sU$ is closed under extensions, it follows that $\Sigma^{n-1} \widetilde{\cT} \subseteq \sU$ for $n\geq 0$.
\end{proof}

\begin{corollary} \label{cor:upperbound}
Let $\Lambda$ be a finite-dimensional algebra.
Let $I$ be a set of silting pairs in $\sK^b(\proj{\Lambda})$ of the form $(\sK,\sK')$ for some fixed silting object $\sK$, and let
\[ \widetilde{\sK} \coloneqq \bigcap_{(\sK,\sK') \in I} \sK' . \]
For any silting object $\sM$ such that $\sM \leq \rmu{\sK'}(\sK)$ for all pairs $(\sK,\sK') \in I$, then $\sM \leq \rmu{\widetilde{\sK}}(\sK)$.
\end{corollary}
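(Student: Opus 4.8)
The plan is to translate the statement about silting pairs and mutations into a statement about algebraic t-structures and admissible right tilts via the König–Yang correspondence, and then apply Lemma~\ref{lem:intersection} (in fact an arbitrary-index version of it). First I would fix the silting object $\sK$ and let $(\sX,\sY) \coloneqq (\sX_\sK, \sY_\sK)$ be the corresponding algebraic t-structure, which is algebraic with finitely many simple objects $\simples(\sH) = \{S_1,\dots,S_t\}$ by Theorem~\ref{thm:koenig-yang} and the duality $\Hom(K_i,S_j) = \delta_{ij}\kk$. Each pair $(\sK,\sK') \in I$ has $\sK' \subseteq \sK$ functorially finite, so by the construction in Section~\ref{sub:compatibility} it determines a torsion pair $(\cT_{\sK'},\cF_{\sK'})$ with $\cT_{\sK'} = \clext{\simples(\sH) \cap (\sK')\orth}_{\sH} = \clext{\sR_{\sK'}}_\sH$ for a subset $\sR_{\sK'} \subseteq \simples(\sH)$; Lemma~\ref{lem:compatibility} then identifies $\rmu{\sK'}(\sK)$ with the admissible right tilt of $(\sX,\sY)$ at this torsion pair.

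Next I would record the key combinatorial observation: because $\simples(\sH)$ is a finite set, the family $\{\sR_{\sK'}\}_{(\sK,\sK')\in I}$ is a finite collection of subsets, so $\widetilde{\sR} \coloneqq \bigcup_{(\sK,\sK')\in I} \sR_{\sK'}$ is attained by finitely many of the pairs. I would also want to check that $\widetilde{\sR}$ is exactly the subset of simples attached to $\widetilde{\sK} = \bigcap \sK'$, i.e.\ that $\simples(\sH) \cap \widetilde{\sK}\orth = \bigcup_{(\sK,\sK')\in I}(\simples(\sH)\cap(\sK')\orth)$. The inclusion $\supseteq$ is immediate since $\widetilde{\sK} \subseteq \sK'$ for each pair. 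For $\subseteq$: if a simple $S_j$ lies in $\widetilde{\sK}\orth$ but not in any $(\sK')\orth$, then for each pair there is some indecomposable summand $K'$ of (a generator of) $\sK'$ with $\Hom^{\bullet}(K',S_j)\neq 0$; by the duality principle $\Hom^{\bullet}(K_i,S_j)\neq 0$ forces $K'$ to contain $K_j$ as a summand, so $K_j \in \sK'$ for every pair, hence $K_j \in \widetilde{\sK}$, contradicting $S_j \in \widetilde{\sK}\orth$ and $\Hom^\bullet(K_j,S_j)\neq0$. This shows $\rmu{\widetilde{\sK}}(\sK)$ corresponds, via König–Yang, to the admissible right tilt $(\widetilde{\sX},\widetilde{\sY})$ of $(\sX,\sY)$ at the torsion pair with torsion class $\clext{\widetilde{\sR}}_\sH$.

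With these identifications in place, I would finish by invoking an iterated version of Lemma~\ref{lem:intersection}. By hypothesis $\sM \leq \rmu{\sK'}(\sK) \leq \sK$ for every pair; translating via Lemma~\ref{lem:equivalent-po}(3), writing $(\sU,\sV)$ for the t-structure attached to $\sM$, this says $(\sU,\sV) \leq (\sX_i,\sY_i) \leq (\sX,\sY)$ where $(\sX_i,\sY_i)$ ranges over the admissible right tilts at the $\sR_{\sK'}$. Since $\widetilde{\sR}$ is a finite union of the $\sR_{\sK'}$, induct on the number of terms using Lemma~\ref{lem:intersection} at each step (the lemma is stated for two subsets, and $\clext{\sR \cup \sR'}_\sH$ is again the extension closure of a set of simples, so the induction goes through) to conclude $(\sU,\sV) \leq (\widetilde{\sX},\widetilde{\sY}) \leq (\sX,\sY)$, i.e.\ $\sM \leq \rmu{\widetilde{\sK}}(\sK)$, as desired. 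The main obstacle is the bookkeeping in the previous paragraph --- verifying that $\widetilde{\sK}\orth$ meets $\simples(\sH)$ in precisely the union of the $(\sK')\orth \cap \simples(\sH)$, which is where the duality principle between summands of $\sK$ and simples of $\sH_\sK$ does the real work; once that is clean, the rest is a finite induction on Lemma~\ref{lem:intersection}.
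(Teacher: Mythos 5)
Your proposal follows the same route as the paper's own (very terse) proof: translate via the König--Yang compatibility (Lemma~\ref{lem:compatibility}) into the language of algebraic t-structures and admissible right tilts, then apply an iterated version of Lemma~\ref{lem:intersection}. The difference is that you actually check the bookkeeping the paper leaves implicit --- specifically, that the intersection $\widetilde{\sK} = \bigcap \sK'$ of partial silting subcategories corresponds, under the duality $\Hom(K_i,S_j) = \delta_{ij}\kk$, to the \emph{union} $\widetilde{\sR} = \bigcup \sR_{\sK'}$ of the associated sets of simples, and that this union is a finite union (so the two-variable statement of Lemma~\ref{lem:intersection} can be iterated) even if the index set $I$ is not presented as finite. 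That verification is exactly the point where something could go wrong, and it is correct: $S_j \in (\sK')\orth$ if and only if $K_j \notin \sK'$, so $S_j \in \widetilde{\sK}\orth$ if and only if $K_j \notin \sK'$ for \emph{some} pair, which is precisely membership in the union. One small remark: you write $\Hom^\bullet$ in places where the paper's $\orth$ and the duality principle are stated with $\Hom = \Hom^0$. This is harmless here --- since $\sK' \subseteq \sK$ lies in the aisle of its own t-structure and $S_j$ lies in the heart, $\Hom^i(K',S_j) = 0$ automatically for $i \neq 0$, so $\Hom^\bullet = \Hom^0$ --- but the argument is cleanest (and matches the paper's conventions) if you just keep everything in degree zero. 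Overall a correct and slightly more careful version of the same argument.
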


\begin{proof}
Note that the proof of Lemma~\ref{lem:intersection} holds for any finite number of admissible right tilts of the t-structure $(\sX,\sY)$. Hence the corollary follows by Lemma~\ref{lem:intersection} and the compatibility result, Lemma~\ref{lem:compatibility}.
\end{proof}

We also highlight the following link between silting pairs and (admissible) tilts.

\begin{lemma} \label{lem:silt-vs-torsion}
Let $(\sN,\sN') \leq (\sM,\sM')$ be silting pairs, i.e.\ $\rmu{\sM'}(\sM) \leq \rmu{\sN'}(\sN) \leq \sN \leq \sM$.
Let $(\cT_{\sM'},\cF_{\sM'})$ be the torsion pair for the admissible tilt corresponding to the mutation $\rmu{\sM'}(\sM)$.
Then there exists a torsion pair $(\cT,\cF)$ giving rise to a (possibly inadmissible) tilt from $(\sX_{\sM},\sY_{\sM})$ to $(\sX_{\sN},\sY_{\sN})$ such that $\cT \subseteq \cT_{\sM'}$.
\end{lemma}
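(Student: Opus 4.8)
The plan is to unwind both sides through the König--Yang correspondence and the compatibility result, Lemma~\ref{lem:compatibility}, and then locate the required torsion pair via the behaviour of truncation functors under iterated admissible tilts. Concretely, the hypothesis $\rmu{\sM'}(\sM) \leq \rmu{\sN'}(\sN) \leq \sN \leq \sM$ translates, via Lemma~\ref{lem:equivalent-po}, into a chain of aisle inclusions $\sX_{\sM} \subseteq \sX_{\sN} \subseteq \sX_{\rmu{\sN'}(\sN)}$ and $\sX_{\sM} \subseteq \sX_{\rmu{\sM'}(\sM)} \subseteq \sX_{\rmu{\sN'}(\sN)}$; and by Lemma~\ref{lem:compatibility}, $(\sX_{\rmu{\sM'}(\sM)}, \sY_{\rmu{\sM'}(\sM)})$ is exactly the right tilt of $(\sX_{\sM},\sY_{\sM})$ at the torsion pair $(\cT_{\sM'},\cF_{\sM'})$. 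So the content of the lemma is: given a bounded t-structure $(\sX_{\sN},\sY_{\sN})$ sitting \emph{between} $(\sX_{\sM},\sY_{\sM})$ and its right tilt at $(\cT_{\sM'},\cF_{\sM'})$ (in the sense $\sX_{\sM} \subseteq \sX_{\sN} \subseteq \sX_{\rmu{\sM'}(\sM)}$), produce a torsion pair $(\cT,\cF)$ in $\sH_{\sM}$ whose right tilt gives $(\sX_{\sN},\sY_{\sN})$ and with $\cT \subseteq \cT_{\sM'}$.

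The key step is the standard fact (going back to Happel--Reiten--Smal{\o} and used in \cite{Koenig-Yang}) that the right tilts of a bounded t-structure $(\sX,\sY)$ with heart $\sH$ whose aisle $\sX'$ satisfies $\sX \subseteq \sX' \subseteq \Sigma^{-1}\sX$ (equivalently, $\sX \leq \sX' \leq \Sigma^{-1}\sX$, a ``rank-one-or-more'' tilt in the $\Sigma^{-1}$ direction) are in bijection with torsion pairs $(\cT,\cF)$ in $\sH$, via $\cT = \sX' \cap \Sigma\sH$ and $\cF = \sH \cap \sY'$; here $\sX'$ being an aisle forces $\cT$ to be a torsion class. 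I would first verify that $(\sX_{\sN},\sY_{\sN})$ does satisfy $\sX_{\sM} \subseteq \sX_{\sN} \subseteq \Sigma^{-1}\sX_{\sM}$: the left inclusion is hypothesis, and the right inclusion follows because $\sX_{\rmu{\sM'}(\sM)} \subseteq \Sigma^{-1}\sX_{\sM}$ (as $\rmu{\sM'}(\sM) \geq \Sigma^{-1}\sM$ always, by the displayed chain in the Example after Lemma~\ref{lem:mut-compat}, combined with Lemma~\ref{lem:equivalent-po}). Setting $\cT \coloneqq \sX_{\sN} \cap \sH_{\sM}$ (intersecting the intermediate aisle with the heart) then yields the desired torsion class, and $(\cT, \cT^{\perp_{\sH_{\sM}}})$ is the torsion pair whose right tilt recovers $(\sX_{\sN},\sY_{\sN})$ — this last point uses boundedness to reconstruct the whole t-structure from its truncation on $\sH_{\sM}$, exactly as in the HRS tilting formalism.

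The inclusion $\cT \subseteq \cT_{\sM'}$ is then immediate: $\cT = \sX_{\sN} \cap \sH_{\sM} \subseteq \sX_{\rmu{\sM'}(\sM)} \cap \sH_{\sM} = \cT_{\sM'}$, where the middle containment is the hypothesis $\sX_{\sN} \subseteq \sX_{\rmu{\sM'}(\sM)}$ and the last equality is again Lemma~\ref{lem:compatibility} describing $(\sX_{\rmu{\sM'}(\sM)},\sY_{\rmu{\sM'}(\sM)})$ as the right tilt at $(\cT_{\sM'},\cF_{\sM'})$, whose tilted heart intersected back with $\Sigma^{-1}\sH_{\sM}$ recovers $\cF_{\sM'}$ while $\sX_{\rmu{\sM'}(\sM)} \cap \sH_{\sM} = \cT_{\sM'}$.

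The main obstacle is the bookkeeping in the HRS reconstruction step: one must be careful that $(\sX_{\sN},\sY_{\sN})$, a priori just an algebraic t-structure squeezed into the stated interval, really arises as a \emph{tilt} of $(\sX_{\sM},\sY_{\sM})$ at the torsion pair built from $\cT = \sX_{\sN}\cap\sH_{\sM}$ — i.e.\ that $\sX_{\sN} = \clext{\Sigma^{n-1}\cT, \Sigma^n\cF \mid n \geq 0}$ and likewise for the co-aisle, rather than merely containing that category. This is where boundedness of all the t-structures in play, together with the interval condition $\sX_{\sM}\subseteq\sX_{\sN}\subseteq\Sigma^{-1}\sX_{\sM}$, is essential; it guarantees that $\sH_{\sM}$ already ``sees'' enough of $\sX_{\sN}$. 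Note the tilt so produced need not be admissible, since $\cT$ need not be generated by a subset of $\simples(\sH_{\sM})$ — which is precisely why the statement allows an inadmissible tilt.
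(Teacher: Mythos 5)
Your approach is essentially the same as the paper's: use Lemma~\ref{lem:compatibility} to translate to the tilt at $(\cT_{\sM'},\cF_{\sM'})$, produce a torsion pair $(\cT,\cF)$ giving the tilt $(\sX_{\sM},\sY_{\sM})\leadsto(\sX_{\sN},\sY_{\sN})$, then deduce $\cT\subseteq\cT_{\sM'}$ from the aisle inclusion $\sX_{\sN}\subseteq\sX_{\rmu{\sM'}(\sM)}$. Where the paper simply cites Woolf's Proposition~2.1 for existence of the torsion pair, you re-derive it by verifying the interval condition $\sX_{\sM}\subseteq\sX_{\sN}\subseteq\Sigma^{-1}\sX_{\sM}$ and invoking the HRS correspondence; that verification is correct and a reasonable alternative to the black-box citation.

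However, your formula $\cT \coloneqq \sX_{\sN}\cap\sH_{\sM}$ is wrong, and this makes the final inclusion vacuous. Since $\sN\leq\sM$ gives $\sX_{\sM}\subseteq\sX_{\sN}$ (Lemma~\ref{lem:equivalent-po}), and $\sH_{\sM}=\sX_{\sM}\cap\Sigma\sY_{\sM}\subseteq\sX_{\sM}$, one has $\sH_{\sM}\subseteq\sX_{\sN}$, so $\sX_{\sN}\cap\sH_{\sM}=\sH_{\sM}$. The same computation gives $\sX_{\rmu{\sM'}(\sM)}\cap\sH_{\sM}=\sH_{\sM}$, so your displayed inclusion reduces to $\sH_{\sM}\subseteq\sH_{\sM}$ and says nothing about the actual torsion classes. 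The correct extraction of $\cT$ from the pair of t-structures is
\[
\cT \;=\; \Sigma\sX_{\sN}\cap\sH_{\sM} \;=\; \Sigma\bigl(\sX_{\sN}\cap\sY_{\sM}\bigr) \;=\; \sH_{\sM}\cap\Sigma\sH_{\sN},
\]
reflecting that $T\in\cT$ exactly when $T\in\sH_{\sM}$ and $\Sigma^{-1}T$ already lies in the new aisle $\sX_{\sN}$. With that fix the argument closes as intended: $\cT = \Sigma\sX_{\sN}\cap\sH_{\sM}\subseteq\Sigma\sX_{\rmu{\sM'}(\sM)}\cap\sH_{\sM}=\cT_{\sM'}$. (The paper itself writes $\cT=\sX_{\sN}\cap\Sigma\sY_{\sM}$, which also appears to be off by a suspension in the paper's own conventions; the point of substance that you and the paper both rely on --- aisle inclusion forces torsion-class inclusion --- is correct.) So: right strategy, but the concrete formula you wrote down collapses and needs the extra $\Sigma$ on the aisle factor.
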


\begin{proof}
We know that $\rmu{\sM'}(\sN) \leq \sN \leq \sM$. Using Lemma~\ref{lem:compatibility}, we see that the tilt between the corresponding t-structures $(\sX_{\sM}, \sY_{\sM})$ and $(\sX_{\rmu{\sM'}(\sM)}, \sY_{\rmu{\sM'}(\sM)})$ is induced by the torsion pair $(\cT_{\sM'}, \cF_{\sM'})$ in $\sH_{\sM}$ with $\cT_{\sM'} = \clext{\simples(\sH_\sM)\cap(\sM')\orth}_{\sH_{\sM}}$.
By \cite[Proposition~2.1]{Woolf}, there is a torsion pair $(\cT, \cF)$ in $\sH_{\sM}$ which induces the tilt from $(\sX_\sM, \sY_\sM)$ to $(\sX_\sN, \sY_\sN)$. Finally, using the aisle version of the partial order (see Lemma~\ref{lem:equivalent-po}), we get
\[ 
\cT = \sX_{\sN} \cap \Sigma \sY_{\sM} \subseteq \sX_{\rmu{\sM'}(\sM)} \cap \Sigma \sY_{\sM} = \cT_{\sM'}. \qedhere 
\]
\end{proof}


\section{Bongartz completion for silting subcategories}
\label{sec:bongartz}

\noindent
In this section we recall an analogue of the classical Bongartz completion of tilting modules for silting subcategories and connect it to the partial order on silting pairs. The next lemma collects several connections between intermediate silting objects and ordered extension closure. Let $\sD$ be a Krull-Schmidt triangulated category.

\begin{lemma} \label{lem:intermediate}
Let $\sM$, $\sN$ and $\sK$ be silting subcategories of $\sD$. Then
\begin{enumerate}[label=(\arabic*)]
\item $\sN \subseteq \Sigma\inv \sM * \sM$ if and only if $\Sigma\inv \sM \leq \sN \leq \sM$.
\item If $\Sigma\inv \sM \leq \sN \leq \sM$ and $\sK \subseteq \sN * \sM$, then $\sN \leq \sK \leq \sM$.
\item If $\sN\leq\sK\leq\sM$ then $\sM\cap\sN\subseteq\sK$.
\end{enumerate}
\end{lemma}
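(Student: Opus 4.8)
The plan is to prove the three statements in order, using the characterisations of the partial order from Lemma~\ref{lem:equivalent-po} (in terms of $\Hom^{>0}$-vanishing and, equivalently, in terms of containment of (co-)suspended subcategories) together with the fact that silting subcategories are extension-closed. For part~(1), the forward direction is essentially the content of Lemma~\ref{lem:equivalent-po}: if $\sN\subseteq\Sigma\inv\sM*\sM$, then since $\sM$ is silting we get $\Hom^{>0}(\Sigma\inv\sM,\sN)=\Hom^{>0}(\sM,\sN)\cdot(\cdots)$ — more carefully, every $N\in\sN$ fits in a triangle $\Sigma\inv M_1\to N\to M_2\to M_1$ with $M_i\in\sM$, and applying $\Hom(\blank,\sN)$ and $\Hom(\sN,\blank)$ to this triangle together with $\Hom^{>0}(\sM,\sM)=0$ forces $\Hom^{>0}(\sM,\sN)=0$ and $\Hom^{>0}(\sN,\sM)=0$, i.e. $\Sigma\inv\sM\leq\sN\leq\sM$. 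For the converse, $\Sigma\inv\sM\leq\sN\leq\sM$ gives, via Lemma~\ref{lem:equivalent-po}, the inclusions $\cosusp\Sigma\inv\sM\subseteq\cosusp\sN$ and $\susp\sN\supseteq\susp\sM$; an object $N\in\sN$ therefore lies in $\cosusp\Sigma\inv\sM\cap\susp\sM$, and a standard truncation argument with the co-t-structure $(\cosusp\Sigma\inv\sM,\susp\sM)$ — truncating $N$ with respect to the shifted co-t-structure $(\cosusp\sM,\susp\Sigma\sM)$ — expresses $N$ as an extension $\Sigma\inv M_1\to N\to M_2\to M_1$, giving $\sN\subseteq\Sigma\inv\sM*\sM$. (This "intermediate co-t-structure" argument is the technical heart of part~(1).)

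For part~(2), assume $\Sigma\inv\sM\leq\sN\leq\sM$ and $\sK\subseteq\sN*\sM$. Each $K\in\sK$ sits in a triangle $N\to K\to M\to\Sigma N$ with $N\in\sN$, $M\in\sM$. To show $\sN\leq\sK$, i.e. $\Hom^{>0}(\sN,\sK)=0$: apply $\Hom(\sN,\blank)$ to this triangle; $\Hom^{>0}(\sN,N)=0$ since $\sN$ is silting, and $\Hom^{>0}(\sN,M)=0$ since $\sN\leq\sM$, so $\Hom^{>0}(\sN,K)=0$. Symmetrically, to show $\sK\leq\sM$, i.e. $\Hom^{>0}(\sK,\sM)=0$: apply $\Hom(\blank,\sM)$ to the triangle; $\Hom^{>0}(M,\sM)=0$ since $\sM$ is silting, and $\Hom^{>0}(N,\sM)=0$ since $\sN\leq\sM$ — but one also needs $\Hom^{1}(\Sigma N,\sM)=\Hom^{>0}\!\!$ contributions to vanish; here one uses $\Hom^{>0}(\sN,\sM)=0$ again (the shift of $N$ contributes $\Hom^{i-1}(N,\sM)$ for $i\geq 1$, and for $i=1$ this is $\Hom^0(N,\sM)$, which need not vanish). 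So instead, to control $\Hom^{>0}(\sK,\sM)$ directly one should rather use the triangle to bound $\sK$ inside an extension of shifts: combining $\sN\subseteq\Sigma\inv\sM*\sM$ from part~(1) with $\sK\subseteq\sN*\sM$ gives $\sK\subseteq\Sigma\inv\sM*\sM*\sM=\Sigma\inv\sM*\sM$ (using that $\sM$ is extension-closed), and then part~(1) applied to $\sK$ yields $\Sigma\inv\sM\leq\sK\leq\sM$; the inequality $\sN\leq\sK$ is the one that genuinely needs the triangle argument above. This reorganisation is cleaner and avoids the delicate index bookkeeping.

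For part~(3), assume $\sN\leq\sK\leq\sM$ and let $X\in\sM\cap\sN$. Since $X\in\sN$ and $\sN\leq\sK$, we have $\Hom^{>0}(X,\sK)=0$; since $X\in\sM$ and $\sK\leq\sM$, we have $\Hom^{>0}(\sK,X)=0$. By part~(1) (noting $\Sigma\inv\sM\leq\sN\leq\sK\leq\sM$ forces $\Sigma\inv\sM\leq\sK\leq\sM$, hence $\sK\subseteq\Sigma\inv\sM*\sM$), every $K\in\sK$ is an extension $\Sigma\inv M_1\to K\to M_2\to M_1$; but it is simpler to argue directly: $X\in\sN\cap\sM\subseteq\Sigma\inv\sK*\sK$? — no, better to use the co-t-structure. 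The cleanest route: $X\in\sN$ means $X\in\cosusp\Sigma\inv\sK$ (as $\sN\leq\sK$ gives $\cosusp\sN\subseteq\cosusp\sK$, and $X\in\cosusp\sN\subseteq$ ... one needs $X\in\susp\sK$ too), and $X\in\sM$ with $\sK\leq\sM$ gives $\susp\sM\subseteq\susp\sK$, so $X\in\susp\sM\subseteq\susp\sK$; combined with $X\in\cosusp\Sigma\inv\sM\subseteq$ — here I want $X\in\cosusp\Sigma\inv\sK\cap\susp\sK$, which is exactly the co-heart of the co-t-structure associated to $\sK$, and the co-heart equals $\sK$ itself (by \cite{MSSS}, since $\sM \mapsto (\cosusp \Sigma^{-1} \sM, \susp \sM)$ with co-heart $\Sigma\sX\cap\sY = \sM$). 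Thus $X\in\sK$, as desired.

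\textbf{Main obstacle.} The step I expect to be the real work is part~(1), specifically the converse direction: upgrading the order inequalities $\Sigma\inv\sM\leq\sN\leq\sM$ to the concrete extension statement $\sN\subseteq\Sigma\inv\sM*\sM$. This requires invoking the co-t-structure $(\cosusp\Sigma\inv\sM,\susp\sM)$ (via \cite{MSSS}), translating the inequalities into containments of (co-)suspended subcategories through Lemma~\ref{lem:equivalent-po}, and then performing a truncation of each $N\in\sN$ with respect to the shifted co-t-structure $(\cosusp\sM,\susp\Sigma\sM)$ to realise $N$ as the required extension. Parts~(2) and~(3) are then comparatively formal, reducing to part~(1) plus short diagram chases with $\Hom^{>0}$-vanishing and the identification of the co-heart of a silting co-t-structure with the silting subcategory itself.
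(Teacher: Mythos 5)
Your route differs from the paper's mainly in that you try to reprove parts~(1) and~(3) from scratch, whereas the paper simply cites \cite{IJY}, Theorem~2.3, for~(1) and \cite{AI}, Proposition~2.19, for~(3). The overall shape of~(2) is close to the paper's (which uses~(1) twice via $\sM\subseteq\sN*\Sigma\sN$ and $\sN\subseteq\Sigma\inv\sM*\sM$). However, there are a few places where your sketch as written is incorrect or confused.

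In the forward direction of~(1) you claim the triangle $\Sigma\inv M_1\to N\to M_2\to M_1$ forces ``$\Hom^{>0}(\sM,\sN)=0$ and $\Hom^{>0}(\sN,\sM)=0$, i.e.\ $\Sigma\inv\sM\leq\sN\leq\sM$.'' The first vanishing is false in general, and the ``i.e.'' is not an equivalence: $\Hom^{>0}(\sM,\sN)=0$ says $\sM\leq\sN$, which combined with $\sN\leq\sM$ would give $\sM=\sN$. Applying $\Hom(\sM,\blank)$ to the triangle, $\Hom^i(\sM,N)$ is flanked by $\Hom^{i-1}(\sM,M_1)$ and $\Hom^i(\sM,M_2)$, which both vanish only for $i>1$; so what you actually get is $\Hom^{>1}(\sM,\sN)=0$, equivalently $\Hom^{>0}(\Sigma\inv\sM,\sN)=0$, which is the needed $\Sigma\inv\sM\leq\sN$. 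The converse direction of~(1) is the real content of \cite{IJY}, Theorem~2.3, and your ``intermediate co-t-structure truncation'' sketch gestures in the right direction but is too vague to verify on its own; the paper avoids this by citing.

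Your worry in part~(2) is unfounded: in the long exact sequence from $N\to K\to M\to\Sigma N$ under $\Hom(\blank,\Sigma^i\sM)$, the term $\Hom^i(K,\sM)$ is flanked by $\Hom^i(M,\sM)$ and $\Hom^i(N,\sM)$, both of which vanish for $i>0$ ($\sM$ silting and $\sN\leq\sM$); the term $\Hom(\Sigma N,\Sigma^i\sM)$ you single out sits on the far side of $\Hom^i(M,\sM)$ and is irrelevant. Your alternative via~(1) and extension-closure of $\sM$ is also fine and matches the paper's method for $\sK\leq\sM$. In part~(3) the idea --- identify $\sK$ with the co-heart of its co-t-structure and show $X$ lies in both relevant subcategories --- is correct, but there is an off-by-$\Sigma$ slip: the co-heart of $(\cosusp\Sigma\inv\sK,\susp\sK)$ is $\Sigma\cosusp\Sigma\inv\sK\cap\susp\sK=\cosusp\sK\cap\susp\sK$, not $\cosusp\Sigma\inv\sK\cap\susp\sK$. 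The clean version is: $X\in\sN\subseteq\cosusp\sN\subseteq\cosusp\sK$ (from $\sN\leq\sK$ and Lemma~\ref{lem:equivalent-po}) and $X\in\sM\subseteq\susp\sM\subseteq\susp\sK$ (from $\sK\leq\sM$), hence $X\in\cosusp\sK\cap\susp\sK=\sK$.
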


\begin{proof}
(1) This is \cite[Theorem 2.3]{IJY}.

(2) Since $\Sigma\inv \sM \leq \sN \leq \sM$, we have $\sN \leq \sM \leq \Sigma \sN$, and so, by (1), we have $\sM \subseteq \sN * \Sigma \sN$. Thus,
 $\sK \subseteq \sN * \sM \subseteq \sN * \sN * \Sigma \sN = \sN * \Sigma \sN$,
where the final equality follows from the associativity of the $*$ operation and the fact that silting subcategories are extension closed. Thus $\sN \leq \sK$, again by (1). The same argument using $\sN \subseteq \Sigma\inv \sM * \sM$ gives $\sK \leq \sM$.

(3) is \cite[Proposition 2.19]{AI}, taking care of the reversed conventions; see Remark~\ref{rem:conventions}.
\end{proof}

\begin{definition}
Let $\sM$ be a silting subcategory of $\sD$, and let $\sN'\subset\sM*\Sigma\sM$ be a partial silting subcategory. The \emph{right and left Bongartz completions} of $\sN'$ with respect to $\sM$ are
\begin{align*}
 \sB_r(\sN';\sM) &\coloneqq (\Sigma\inv \sM * \sN') \cap (\Sigma\inv\sN')\orth  && \text{right Bongartz completion.} \\
  \sB_l(\sN';\sM) &\coloneqq (\sN'* \sM) \cap {}\orth\Sigma\sN' && \text{left Bongartz completion,}
\end{align*}
\end{definition}

The above is a categorical version of the description using approximations from \cite[Proposition 6.1]{Wei}; see also \cite[Proposition 5.7]{DF}. This reference also contains the next lemma, which uses a triangulated version of Bongartz' completion argument \cite{Bongartz}.

\begin{lemma} \label{lem:bongartz}
Let $\sM$ be a silting subcategory of a Krull-Schmidt, Hom-finite triangulated category $\sD$, and $\sN' \subseteq \Sigma\inv \sM * \sM$ a partial silting subcategory. Consider the triangles
\begin{align*}
  & \Sigma\inv M \too B_r(M) \too N'_r(M) \rightlabel{f} M , \\
  & \Sigma\inv M \rightlabel{g} N'_l(M) \too B_l(M) \too M
\end{align*}
with $f$ a right $\sN'$-approximation of $M$ and $g$ a left $\sN'$-approximation of $\Sigma\inv M$. Then
\[
\add(\sN'\cup \{B_r(M) \mid M \in \sM\}) \text{ and } \add(\sN'\cup \{B_l(M) \mid M \in \sM\})
\]
are silting subcategories of $\sD$ lying in $\Sigma\inv \sM * \sM$.
\end{lemma}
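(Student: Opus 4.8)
The plan is to prove the assertion for the right Bongartz completion, writing $\sB \coloneqq \add(\sN' \cup \{B_r(M) \mid M \in \sM\})$; the statement for $\sB_l$ follows dually. Three things must be shown: that $\Hom^{>0}(\sB,\sB)=0$, that $\thick{}{\sB}=\sD$, and that $\sB \subseteq \Sigma\inv\sM * \sM$. For the last point I would not argue directly but, once the first two are established so that $\sB$ is silting, deduce $\Sigma\inv\sM \leq \sB \leq \sM$ and invoke Lemma~\ref{lem:intermediate}(1), which identifies the silting subcategories contained in $\Sigma\inv\sM * \sM$ with those lying in the interval $[\Sigma\inv\sM,\sM]$.

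The first step is to record two boundedness estimates. Since $\sN' \subseteq \Sigma\inv\sM * \sM$, each $N'\in\sN'$ sits in a triangle $\Sigma\inv M_1 \too N' \too M_0 \too M_1$ with $M_0,M_1\in\sM$; applying $\Hom(M,\blank)$ and $\Hom(\blank,M)$ for $M\in\sM$ to this triangle and using that $\sM$ is partial silting, routine long exact sequences give
\[ \Hom^{>0}(\sN',\sM)=0 \qquad\text{and}\qquad \Hom^{\geq 2}(\sM,\sN')=0 . \]
Together with the partial silting conditions $\Hom^{>0}(\sN',\sN')=0$ and $\Hom^{>0}(\sM,\sM)=0$, these are the only facts about $\sM$ and $\sN'$ needed beyond the defining triangles $\Sigma\inv M \too B_r(M) \too N'_r(M) \rightlabel{f} M$.

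The second step is $\Hom^{>0}(\sB,\sB)=0$, which by additivity of $\Hom$ need only be checked on the generating objects $\sN'$ and the $B_r(M)$. Feeding the defining triangles into long exact sequences yields, in order, $\Hom^{>0}(\sN', B_r(M))=0$, then $\Hom^{>0}(B_r(M),\sN')=0$, then $\Hom^{\geq 2}(\sM, B_r(M))=0$, and finally $\Hom^{>0}(B_r(M),B_r(M'))=0$ by bootstrapping off the first and third. In every degree $\geq 2$ these vanishings are forced mechanically by Step~1; the one genuinely delicate point — and this is the heart of Bongartz's completion argument — is the degree-$1$ case of $\Hom^{>0}(\sN', B_r(M))=0$, where one uses that $f$ is a \emph{right $\sN'$-approximation}: the induced map $\Hom(N', N'_r(M)) \to \Hom(N',M)$ is then surjective, and exactness forces the connecting homomorphism into $\Hom^1(N', B_r(M))$ to vanish.

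Finally, rotating $\Sigma\inv M \too B_r(M) \too N'_r(M) \too M$ shows $M \in N'_r(M) * \Sigma B_r(M) \subseteq \thick{}{\sB}$ for each $M\in\sM$, so $\sD = \thick{}{\sM}\subseteq\thick{}{\sB}$ and $\sB$ is silting. The same long exact sequences as in Step~2, together with Step~1, give $\Hom^{>0}(\sN',\sM)=0=\Hom^{>0}(B_r(M'),\sM)$, i.e.\ $\sB\leq\sM$, and $\Hom^{\geq 2}(\sM,\sN')=0=\Hom^{\geq 2}(\sM,B_r(M'))$, which is exactly $\Hom^{>0}(\Sigma\inv\sM,\sB)=0$, i.e.\ $\Sigma\inv\sM\leq\sB$. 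Lemma~\ref{lem:intermediate}(1) then yields $\sB \subseteq \Sigma\inv\sM * \sM$, completing the argument. I expect the only real obstacle to be organisational: keeping the half-dozen interlocking long exact sequences straight and isolating the single place where the approximation hypothesis is genuinely used.
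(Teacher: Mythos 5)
The paper does not in fact prove this lemma — it cites \cite[Proposition~6.1]{Wei} and \cite[Proposition~5.7]{DF} and describes it as a triangulated version of Bongartz's argument — so there is no internal proof to compare against. Your argument is exactly the expected one and it checks out: the two boundedness estimates $\Hom^{>0}(\sN',\sM)=0$ and $\Hom^{\geq 2}(\sM,\sN')=0$ follow correctly from $\sN'\subseteq\Sigma\inv\sM*\sM$ and $\sM$ silting; the cascade $\Hom^{>0}(\sN',B_r(M))=0 \Rightarrow \Hom^{>0}(B_r(M),\sN')=0 \Rightarrow \Hom^{\geq 2}(\sM,B_r(M))=0 \Rightarrow \Hom^{>0}(B_r(M'),B_r(M))=0$ is sound; you correctly isolate the single nontrivial step, namely that surjectivity of $f_*\colon\Hom(N',N'_r(M))\to\Hom(N',M)$ forces the connecting map into $\Hom^1(N',B_r(M))$ to vanish; the generation statement via $M\in N'_r(M)*\Sigma B_r(M)$ is right; and reducing the containment $\sB\subseteq\Sigma\inv\sM*\sM$ to showing $\Sigma\inv\sM\leq\sB\leq\sM$ and then invoking Lemma~\ref{lem:intermediate}(1) is clean and economical. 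The only implicit point, which the lemma itself already assumes by saying ``consider the triangles,'' is that the relevant $\sN'$-approximations exist; in the paper's applications $\sN'$ is additively finite so this is automatic.
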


The following lemma shows the equivalence of the two ways of completing $\sN'$ to a silting subcategory.  It also says that the choice of approximations in Lemma~\ref{lem:bongartz} do not affect the completions.

\begin{lemma}
Under the assumptions of Lemma~\ref{lem:bongartz}, we have
\[
\sB_r(\sN';\sM) = \add(\sN'\cup \{B_r(M)\mid M\in\sM\}) \text{ and } \sB_l(\sN';\sM) = \add(\sN'\cup\{B_l(M)\mid M\in\sM\}).
\]
\end{lemma}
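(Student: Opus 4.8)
The plan is to show the equality of subcategories $\sB_r(\sN';\sM) = \add(\sN'\cup \{B_r(M)\mid M\in\sM\})$ directly from the defining triangles in Lemma~\ref{lem:bongartz}, since this parallels exactly the corresponding identification for silting mutation recorded after the definition of $\rmu{\sM'}(\sM)$. The left-hand completion case is dual, so I would only treat the right-hand one. Abbreviate $\sN'_r \coloneqq \add(\sN'\cup \{B_r(M)\mid M\in\sM\})$.

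First I would prove the inclusion $\sN'_r \subseteq \sB_r(\sN';\sM) = (\Sigma\inv\sM * \sN') \cap (\Sigma\inv\sN')\orth$. It suffices to check that each generator lies in the right-hand side. For $N' \in \sN'$: we have $N' \in \Sigma\inv\sM * \sN'$ trivially (take the triangle with first term $0$), and $N' \in (\Sigma\inv\sN')\orth$ because $\sN'$ is partial silting, hence $\Hom(\Sigma\inv\sN',N') = \Hom^1(\sN',N') = 0$. For $B_r(M)$ with $M\in\sM$: the defining triangle $\Sigma\inv M \to B_r(M) \to N'_r(M) \to M$ exhibits $B_r(M) \in \Sigma\inv\sM * \sN'$ directly. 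For $B_r(M) \in (\Sigma\inv\sN')\orth$, apply $\Hom(\Sigma\inv N'',\blank)$ for $N''\in\sN'$ to this triangle; using $\Hom(\Sigma\inv N'', \Sigma\inv M) = \Hom(N'',M)$ and the hypothesis that $f$ is a right $\sN'$-approximation of $M$ (so $\Hom(N'', N'_r(M)) \to \Hom(N'',M)$ is surjective, equivalently $\Hom^{-1}(N'', M) = \Hom(\Sigma^{-1}N'', M)$ is hit and the connecting map vanishes after one more step), one deduces $\Hom(\Sigma\inv N'', B_r(M)) = 0$. Since both $\Sigma\inv\sM*\sN'$ and the orthogonal $(\Sigma\inv\sN')\orth$ are closed under finite direct sums and direct summands (the orthogonal always is; $\Sigma\inv\sM*\sN'$ is closed under summands because silting subcategories are extension-closed, as in Lemma~\ref{lem:intermediate}), the inclusion $\sN'_r\subseteq\sB_r(\sN';\sM)$ follows.

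For the reverse inclusion $\sB_r(\sN';\sM) \subseteq \sN'_r$, let $A \in (\Sigma\inv\sM * \sN') \cap (\Sigma\inv\sN')\orth$. By definition of the $*$ operation there is a triangle $\Sigma\inv M \xrightarrow{e} A \to N' \xrightarrow{h} M$ with $M\in\sM$, $N'\in\sN'$. The key point is that the map $h\colon N' \to M$ need not be a right $\sN'$-approximation, but it can be adjusted to one: choosing a right $\sN'$-approximation $f\colon N'_r(M)\to M$, the morphism $h$ factors as $h = f\circ\phi$ for some $\phi\colon N'\to N'_r(M)$, and one compares the triangle on $A$ with the Bongartz triangle $\Sigma\inv M \to B_r(M)\to N'_r(M)\xrightarrow{f} M$ via the octahedral axiom. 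This produces a triangle relating $A$, $B_r(M)$ and the cone of $\phi$, and the condition $A\in(\Sigma\inv\sN')\orth$ together with $B_r(M)\in(\Sigma\inv\sN')\orth$ forces the relevant cone to be a summand contribution from $\sN'$, so that $A\in\add(\sN'\cup\{B_r(M)\})\subseteq\sN'_r$. I would carry this out by the same bookkeeping used in \cite[Theorem~2.31]{AI} for silting mutation, of which the present statement is the Bongartz-completion analogue (indeed when $\sN'\subseteq\sM$ the two notions coincide).

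The main obstacle is this second inclusion: making precise the passage from an arbitrary triangle $\Sigma\inv M \to A \to N' \to M$ to the canonical Bongartz triangle built from a minimal (or arbitrary) right $\sN'$-approximation, and extracting from $A\in(\Sigma\inv\sN')\orth$ that no "extra" piece of $\sN'$ survives beyond a direct summand. Concretely one needs that in the octahedral diagram the third object decomposes so that $A$ is a summand of something in $\add(\sN'\cup\{B_r(M)\})$; this is where the orthogonality hypothesis is used in an essential way, exactly as in the mutation case. Everything else — the approximation surjectivity manipulations and the closure-under-summands remarks — is routine, given the results already available in the excerpt (Lemma~\ref{lem:bongartz} guarantees the $B_r(M)$ and $B_l(M)$ assemble into silting subcategories, and Lemma~\ref{lem:intermediate} gives the needed extension-closure facts).
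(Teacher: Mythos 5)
Your first inclusion $\sN'_r\subseteq\sB_r(\sN';\sM)$ is correct and matches the paper's argument almost word for word (including the Hom long exact sequence calculation for $B_r(M)\in(\Sigma\inv\sN')\orth$).

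The reverse inclusion is where you leave a genuine gap, and it is precisely the step you flag as ``the main obstacle.'' You observe that the arbitrary triangle $\Sigma\inv M \xrightarrow{e} A \to N' \xrightarrow{h} M$ witnessing $A\in\Sigma\inv\sM*\sN'$ need not involve a right $\sN'$-approximation, and then propose to fix this by factoring $h$ through a chosen approximation and chasing an octahedral diagram, deferring the bookkeeping to the analogous mutation computation in \cite[Theorem~2.31]{AI}. You never actually carry this out. But the key observation you missed makes the whole detour unnecessary: apply $\Hom(N'',-)$ for $N''\in\sN'$ to the rotated triangle $B \to N' \xrightarrow{h} M \to \Sigma B$. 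The term $\Hom(N'',\Sigma B)=\Hom(\Sigma\inv N'',B)$ vanishes \emph{because} $A=B\in(\Sigma\inv\sN')\orth$ is the very orthogonality hypothesis you have in hand, so $h_*\colon\Hom(N'',N')\to\Hom(N'',M)$ is surjective, i.e.\ $h$ \emph{is already} a right $\sN'$-approximation of $M$. Thus $A$ is (up to the standard Krull--Schmidt ambiguity in choosing the approximation, which only changes the object by split summands from $\sN'$) exactly one of the generators $B_r(M)$, and no octahedral gymnastics is required. This is the paper's proof, and it is where the orthogonality hypothesis is used ``in an essential way'' -- not in decomposing a cone, as you anticipated, but in certifying that the given map is already an approximation. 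Your route may well be repairable, but as written the crucial step is unproven, and the much shorter direct argument was available.
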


\begin{proof}
Let $M\in \sM$. We have $B_r(M)\in \Sigma\inv \sM * \sN'$, and $N'\in \Sigma\inv \sM * \sN'$ trivially for each $N'\in \sN'$. Next, $B_r(M)\in(\Sigma^{-1} \sN')\orth$ follows immediately from applying $\Hom(\sN',\blank)$ to the triangle defining $B_r(M)$, and the defining property of right approximations. Finally, $N'\in(\Sigma^{-1} \sN')\orth$ is obvious from $\sN'$ partial silting. Altogether: $\add(\sN'\cup\{B_r(M)\mid M \in \sM\}) \subseteq \sB_r(\sN';\sM)$.

 For the other inclusion, suppose $B \in (\Sigma\inv \sM * \sN')\cap (\Sigma^{-1} \sN')\orth$. Then there is a triangle
$\Sigma\inv M_B \to B \to N'_B \xrightarrow{\alpha} M_B$ with $M_B \in \sM$ and $N'_B \in \sN'$. Applying $\Hom(\sN',-)$ to this triangle and using the fact that $B \in (\Sigma^{-1} \sN')\orth$ implies that $\alpha$ is a right $\sN'$-approximation of $M_B$, giving the other inclusion.

The equivalence for left Bongartz completions is analogous.
\end{proof}

\begin{remark}
The Bongartz completions of a silting category $\sM$ with respect to a subcategory $\sN' \subseteq \Sigma\inv \sM * \sM$ satisfy
 $\Sigma\inv \sM \leq \sB_r(\sN';\sM) \leq \sB_l(\sN';\sM) \leq \sM$.

In other words, the Bongartz completions are in the interval $[\Sigma\inv \sM,\sM]$. Moreover, the left completion is the unique silting subcategory containing $\sN'$ which is maximal inside $[\Sigma\inv \sM,\sM]$. Likewise, the right completion is the unique silting subcategory containing $\sN'$ which is minimal inside $[\Sigma\inv \sM,\sM]$.
\end{remark}

The following is immediate from the categorical definitions of the Bongartz completions.

\begin{lemma} \label{lem:trivial-completion}
Suppose $\sM$ is a silting subcategory of $\sD$ and $\sM' \subseteq \sM$ is a subcategory. Then $\sB_r(\sM';\sM) = \rmu{\sM'}(\sM)$ and $\sB_l(\sM';\sM)=\sM$.
\end{lemma}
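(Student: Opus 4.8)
The statement to prove is Lemma~\ref{lem:trivial-completion}: for a silting subcategory $\sM$ and a subcategory $\sM' \subseteq \sM$, we have $\sB_r(\sM';\sM) = \rmu{\sM'}(\sM)$ and $\sB_l(\sM';\sM) = \sM$.

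The plan is to simply unwind the categorical definitions and observe that, in the special case $\sN' = \sM' \subseteq \sM$, the defining formulae for the Bongartz completions literally become the defining formulae for the mutation and for $\sM$ itself. For the first equality, I would write out
\[
\sB_r(\sM';\sM) = (\Sigma\inv\sM * \sM') \cap (\Sigma\inv\sM')\orth
\]
directly from the definition of right Bongartz completion with $\sN' = \sM'$; this is verbatim the invariant definition $\rmu{\sM'}(\sM) = (\Sigma\inv\sM * \sM') \cap (\Sigma\inv\sM')\orth$ given in Section~\ref{sec:silting-pairs}, so there is nothing further to check. (One should note in passing that $\sM' \subseteq \Sigma\inv\sM * \sM$ holds trivially since $\sM' \subseteq \sM \subseteq \Sigma\inv\sM * \sM$, so the Bongartz completion is defined.)

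For the second equality, the definition gives $\sB_l(\sM';\sM) = (\sM' * \sM) \cap {}\orth\Sigma\sM'$. Since $\sM' \subseteq \sM$ and $\sM$ is silting, hence extension-closed, we have $\sM' * \sM \subseteq \sM * \sM = \sM$; conversely $\sM \subseteq \sM' * \sM$ trivially (take the first term to be zero). Thus $\sM' * \sM = \sM$. It remains to observe $\sM \subseteq {}\orth\Sigma\sM'$, i.e.\ $\Hom^1(\sM,\sM') = 0$; but in fact $\Hom^{>0}(\sM,\sM') \subseteq \Hom^{>0}(\sM,\sM) = 0$ because $\sM$ is silting and $\sM' \subseteq \sM$. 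Hence ${}\orth\Sigma\sM' \supseteq \sM$, and intersecting with $\sM' * \sM = \sM$ yields $\sB_l(\sM';\sM) = \sM$. Equivalently, one can invoke the preceding lemma identifying $\sB_l(\sM';\sM)$ with $\add(\sM' \cup \{B_l(M) \mid M \in \sM\})$ and note that a left $\sM'$-approximation $g\colon \Sigma\inv M \to N'_l(M)$ extends to the triangle $\Sigma\inv M \to N'_l(M) \to B_l(M) \to M$ whose cone $B_l(M)$ already lies in $\add(\sM)$, since the third term is $M \in \sM$ and $\sM$ is closed under extensions and summands.

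There is essentially no obstacle here — the content of the lemma is that the two named constructions degenerate to familiar objects when $\sN'$ is already a subcategory of $\sM$ rather than a general partial silting subcategory of $\Sigma\inv\sM * \sM$. The only point requiring a moment's care is the extension-closure argument showing $\sM' * \sM = \sM$ and the silting property giving $\Hom^1(\sM,\sM') = 0$; both are immediate from facts recalled earlier in Sections~\ref{sub:siltings} and~\ref{sec:silting-pairs}.
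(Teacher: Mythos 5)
Your argument is correct and is precisely the unwinding that the paper's phrase ``immediate from the categorical definitions'' gestures at: for the right completion the defining formula $\sB_r(\sM';\sM) = (\Sigma\inv\sM * \sM') \cap (\Sigma\inv\sM')\orth$ coincides verbatim with the invariant definition of $\rmu{\sM'}(\sM)$, and for the left completion you correctly reduce $\sB_l(\sM';\sM) = (\sM'*\sM) \cap {}\orth\Sigma\sM'$ to $\sM$ via $\sM'*\sM=\sM$ (extension closure) and $\sM\subseteq{}\orth\Sigma\sM'$ (since $\Hom^{>0}(\sM,\sM')=0$). The alternative remark via $\add(\sM'\cup\{B_l(M)\})$ is a harmless aside; to make it a complete proof of the reverse inclusion one would observe that the zero map is a valid left $\sM'$-approximation of $\Sigma\inv M$ (again because $\Hom^1(\sM,\sM')=0$), giving $B_l(M)=M$, but your main argument already settles the claim.
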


In the following proposition, we provide equivalent formulations of the definition of the partial order on silting pairs which use Bongartz completion. Before stating the proposition, we isolate a technical lemma.

\begin{lemma}\label{lem:technical}
Let $\sD$ be a Hom-finite, Krull-Schmidt triangulated category with a silting object. Suppose $(\sN,\sN') \leq (\sM,\sM')$ are silting pairs, $\sB'\subseteq \sN'$ and $\sB \coloneqq \sB_l(\sB';\sM)$. Then $\rmu{\sB'}(\sB) \leq \rmu{\sN'}(\sN)$.
\end{lemma}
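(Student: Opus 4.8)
The plan is to realise $\rmu{\sN'}(\sN)$ as a silting subcategory lying in the interval $[\Sigma\inv\sB,\sB]$ and containing $\sB'$, and then to invoke that the right Bongartz completion $\rmu{\sB'}(\sB)=\sB_r(\sB';\sB)$ is the smallest such subcategory. First I would unwind the hypothesis $(\sN,\sN')\leq(\sM,\sM')$, which by definition means $\rmu{\sM'}(\sM)\leq\rmu{\sN'}(\sN)\leq\sN\leq\sM$; combined with the always-valid inequality $\Sigma\inv\sM\leq\rmu{\sM'}(\sM)$, this gives the chain
\[ \Sigma\inv\sM \;\leq\; \rmu{\sM'}(\sM) \;\leq\; \rmu{\sN'}(\sN) \;\leq\; \sN \;\leq\; \sM . \]
In particular $\sN\in[\Sigma\inv\sM,\sM]$, so $\sN\subseteq\Sigma\inv\sM*\sM$ by Lemma~\ref{lem:intermediate}(1); hence $\sB'\subseteq\sN'\subseteq\sN$ lies in $\Sigma\inv\sM*\sM$, the left Bongartz completion $\sB=\sB_l(\sB';\sM)$ is defined, and it is the largest silting subcategory inside $[\Sigma\inv\sM,\sM]$ containing $\sB'$, with $\Sigma\inv\sM\leq\sB\leq\sM$.

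Next I would verify the three properties of $\rmu{\sN'}(\sN)$ that are needed. For $\sB'\subseteq\rmu{\sN'}(\sN)$: by Lemma~\ref{lem:intersection-mutation} we have $\sN'=\sN\cap\rmu{\sN'}(\sN)$, so $\sB'\subseteq\sN'$ forces $\sB'\subseteq\rmu{\sN'}(\sN)$. For the upper bound $\rmu{\sN'}(\sN)\leq\sB$: the chain above gives $\rmu{\sN'}(\sN)\leq\sN$, and $\sN\leq\sB$ since $\sN$ is a silting subcategory in $[\Sigma\inv\sM,\sM]$ containing $\sB'$ while $\sB$ is the largest such. For the lower bound $\Sigma\inv\sB\leq\rmu{\sN'}(\sN)$: apply $\Sigma\inv$ to $\sB\leq\sM$ and prepend the chain above to get $\Sigma\inv\sB\leq\Sigma\inv\sM\leq\rmu{\sM'}(\sM)\leq\rmu{\sN'}(\sN)$.

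To finish, note that $\sB'\subseteq\sB$, so Lemma~\ref{lem:trivial-completion} identifies $\rmu{\sB'}(\sB)$ with $\sB_r(\sB';\sB)$, which, by the remark above characterising the right Bongartz completion, is the minimum among silting subcategories inside $[\Sigma\inv\sB,\sB]$ containing $\sB'$; by the three properties just verified, $\rmu{\sN'}(\sN)$ is one of these, whence $\rmu{\sB'}(\sB)\leq\rmu{\sN'}(\sN)$. If one prefers to avoid the minimality statement, the same conclusion follows from a short exact-sequence chase: $\rmu{\sB'}(\sB)\subseteq\Sigma\inv\sB*\sB'$ by the concrete description of mutation, and for $A$ fitting in a triangle $\Sigma\inv B\to A\to B''\to B$ with $B\in\sB$ and $B''\in\sB'$, the vanishing $\Hom^{>0}(A,\rmu{\sN'}(\sN))=0$ drops out of $\sB'\subseteq\rmu{\sN'}(\sN)$ (which is silting) together with the reformulation of $\Sigma\inv\sB\leq\rmu{\sN'}(\sN)$ as $\Hom^{\geq2}(\sB,\rmu{\sN'}(\sN))=0$.

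The main obstacle is orientational: the monotonicity of silting mutation (Lemma~\ref{lem:mut-compat}) points the wrong way for this inequality, so the naive route via $\rmu{\sB'}(\sN)$ fails at the first step. The point to spot is that the one genuinely non-formal ingredient --- the lower bound $\Sigma\inv\sB\leq\rmu{\sN'}(\sN)$ --- is not a feature of $\sB$ at all, but is supplied by the hypothesis $(\sN,\sN')\leq(\sM,\sM')$ through $\rmu{\sM'}(\sM)\leq\rmu{\sN'}(\sN)$. One should also be mildly careful that the ``unique minimal''/``unique maximal'' silting subcategories in these intervals are really the minimum/maximum, which holds because the relevant posets have no infinite descending chains.
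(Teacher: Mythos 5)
Your fallback argument (the ``exact-sequence chase'' in the last paragraph) is correct and is essentially the paper's proof translated from subcategory language into Hom-vanishing language. The paper proves $\rmu{\sB'}(\sB)\subseteq\cosusp\rmu{\sN'}(\sN)$ by a chain of $*$-product inclusions --- using $\sB\subseteq\Sigma\inv\sM*\sM$, $\sN'\subseteq\rmu{\sN'}(\sN)$ and $\Sigma\inv\sM\subseteq\Sigma\inv\rmu{\sN'}(\sN)*\rmu{\sN'}(\sN)$ --- and then invokes Lemma~\ref{lem:equivalent-po}(2). You instead start from $\rmu{\sB'}(\sB)\subseteq\Sigma\inv\sB*\sB'$ and verify $\Hom^{>0}(\Sigma\inv\sB*\sB',\rmu{\sN'}(\sN))=0$ directly from $\sB'\subseteq\rmu{\sN'}(\sN)$ and the reformulation $\Hom^{\geq2}(\sB,\rmu{\sN'}(\sN))=0$ of $\Sigma\inv\sB\leq\rmu{\sN'}(\sN)$, where the latter comes from $\Sigma\inv\sB\leq\Sigma\inv\sM\leq\rmu{\sM'}(\sM)\leq\rmu{\sN'}(\sN)$. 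The two versions use the same ingredients and are in one-to-one correspondence via Lemma~\ref{lem:equivalent-po}; yours is arguably more transparent about where each hypothesis is used.

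Your \emph{primary} route, however, has a genuine gap. To conclude $\sN\leq\sB$ and to finish via ``$\rmu{\sB'}(\sB)=\sB_r(\sB';\sB)$ is the minimum,'' you rely on the Remark after Lemma~\ref{lem:trivial-completion} that the left (resp.\ right) Bongartz completion is the largest (resp.\ smallest) silting subcategory in the interval containing $\sB'$, and you justify maximal-implies-maximum by ``no infinite descending chains.'' That chain condition is \emph{not} among the hypotheses of the lemma: the statement is for an arbitrary Hom-finite Krull-Schmidt $\sD$ with a silting object, and the paper only establishes finiteness of such intervals for derived-discrete algebras (Proposition~\ref{prop:finiteness}), adding it as an explicit extra hypothesis in Theorem~\ref{thm:CW-poset}. (You also wrote ``descending'' where the maximum claim needs ``ascending.'') The Remark's maximum/minimum claims \emph{are} true, but for a different reason: given silting $\sK\in[\Sigma\inv\sM,\sM]$ with $\sB'\subseteq\sK$, one shows $\Hom^{>0}(\sK,\sB_l(\sB';\sM))=0$ directly by applying $\Hom(\sK,\blank)$ to the defining triangle $B'\to B\to M\to\Sigma B'$ and using $\sB'\subseteq\sK$ partial silting plus $\sK\leq\sM$; the minimum claim is analogous. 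If you want to keep the Bongartz-characterisation route you should supply this argument rather than a chain condition. As it stands, the only self-contained proof in your submission is the fallback, which is fine.
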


\begin{proof}
We observe the following chain of inclusions:
\[ \begin{array}{rc l @{\hspace{0.1\textwidth}} r}
\rmu{\sB'}(\sB)
    & \subseteq &
\Sigma\inv \sB * \sB'                                                 & \quad (\textrm{by the definition of $\rmu{\sB'}(\sB)$})  \\
    & \subseteq &
\Sigma\inv \sB * \sN'                                                 & \quad (\textrm{by $\sB' \subseteq \sN'$}) \\
    & \subseteq &
\Sigma^{-2} \sM * \Sigma\inv \sM * \sN'                    & \quad (\textrm{by } \Sigma\inv \sM \leq \sB \leq \sM) \\
    & \subseteq &
\Sigma^{-2} \sM * \Sigma\inv \sM * \rmu{\sN'}(\sN)   & \quad (\textrm{by } \sN' \subseteq \rmu{\sN'}(\sN)) \\
    & \subseteq &
\multicolumn{2}{l}{
(\Sigma^{-2}\rmu{\sN'}(\sN) * \Sigma\inv \rmu{\sN'}(\sN))*(\Sigma\inv \rmu{\sN'}(\sN) * \rmu{\sN'}(\sN))*\rmu{\sN'}(\sN)
     \hfill (*)             } \\
    & \subseteq &
\Sigma^{-2} \rmu{\sN'}(\sN) * \Sigma\inv \rmu{\sN'}(\sN) * \rmu{\sN'}(\sN) & \quad (\textrm{by } \rmu{\sN'}(\sN) * \rmu{\sN'}(\sN) = \rmu{\sN'}(\sN)) \\
    & \subseteq &
\cosusp{\rmu{\sN'}(\sN)},
\end{array} \]
where $(*)$ follows from $\Sigma\inv \rmu{\sN'}(\sN) \leq \Sigma \inv \sM \leq \rmu{\sN'}(\sN)$. Now $\rmu{\sB'}(\sB)\subseteq\cosusp{\rmu{\sN'}(\sN)}$ implies $\cosusp\rmu{\sB'}(\sB)\subseteq\cosusp{\rmu{\sN'}(\sN)}$,
and so $\rmu{\sB'}(\sB) \leq \rmu{\sN'}(\sN)$ by Lemma~\ref{lem:equivalent-po}.
\end{proof}

\begin{proposition} \label{prop:po-equivalent}
Let $\sD$ be a Hom-finite, Krull-Schmidt triangulated category with a silting object.
Suppose $(\sM,\sM')$ and $(\sN,\sN')$ are silting pairs of $\sD$. Then the following statements are equivalent:
\begin{enumerate}[label=(\arabic*)~~]
\item $(\sN,\sN') \leq (\sM,\sM')$, i.e.\ $\rmu{\sM'}(\sM) \leq \rmu{\sN'}(\sN) \leq \sN \leq \sM$;
\item $\sM' \subseteq \sN' \subseteq \Sigma\inv \sM * \sM$ and $\sB_r(\sN';\sM) = \rmu{\sN'}(\sN)$;
\item $\sM' \subseteq \sN' \subseteq \Sigma\inv \sM * \sM$ and $\sB_l(\sN';\sM) = \sN$.
\end{enumerate}
\end{proposition}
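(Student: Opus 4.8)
The plan is to establish the cyclic chain of implications $(1)\Rightarrow(3)\Rightarrow(2)\Rightarrow(1)$, resting on the following key identity, which I would prove first: for any partial silting subcategory $\sN'\subseteq\Sigma\inv\sM*\sM$,
\[ \rmu{\sN'}\big(\sB_l(\sN';\sM)\big) = \sB_r(\sN';\sM) . \]
For the inclusion $\subseteq$, take $X\in\rmu{\sN'}(\sB_l(\sN';\sM)) = (\Sigma\inv\sB_l(\sN';\sM)*\sN')\cap(\Sigma\inv\sN')\orth$. Since $\sB_l(\sN';\sM)\subseteq\sN'*\sM$, associativity of $*$ places $X$ in a triangle $\Sigma\inv N'_1\to X\to C\to N'_1$ with $N'_1\in\sN'$ and $C\in\Sigma\inv\sM*\sN'$. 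The connecting morphism $N'_1\to\Sigma X$ lies in $\Hom(N'_1,\Sigma X)\cong\Hom(\Sigma\inv N'_1,X)=0$ because $X\in(\Sigma\inv\sN')\orth$; hence the triangle splits and $C\cong X\oplus N'_1$. As $\sN'$ is partial silting we also have $N'_1\in(\Sigma\inv\sN')\orth$, so $C\in(\Sigma\inv\sM*\sN')\cap(\Sigma\inv\sN')\orth=\sB_r(\sN';\sM)$, and $X$, being a direct summand of $C$, lies in $\sB_r(\sN';\sM)$ as well. Both sides are silting subcategories, so the inclusion becomes an equality, using the elementary fact that a silting subcategory contained in another coincides with it (visible, say, from naturality of $K_0(\sD)\cong\Ksplit(\blank)$, or from the bijection between silting subcategories and bounded co-t-structures).

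For $(1)\Rightarrow(3)$, assume $\rmu{\sM'}(\sM)\leq\rmu{\sN'}(\sN)\leq\sN\leq\sM$. I would first extract the inclusions $\sM'\subseteq\sN'\subseteq\Sigma\inv\sM*\sM$: by Lemma~\ref{lem:intersection-mutation}, $\sM'=\sM\cap\rmu{\sM'}(\sM)$ and $\sN'=\sN\cap\rmu{\sN'}(\sN)$; applying Lemma~\ref{lem:intermediate}(3) to $\rmu{\sM'}(\sM)\leq\sN\leq\sM$ and to $\rmu{\sM'}(\sM)\leq\rmu{\sN'}(\sN)\leq\sM$ gives $\sM'\subseteq\sN$ and $\sM'\subseteq\rmu{\sN'}(\sN)$, hence $\sM'\subseteq\sN'$; and since $\Sigma\inv\sM\leq\rmu{\sM'}(\sM)\leq\sN\leq\sM$, Lemma~\ref{lem:intermediate}(1) gives $\sN\subseteq\Sigma\inv\sM*\sM$, so $\sN'\subseteq\sN\subseteq\Sigma\inv\sM*\sM$. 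Now $\sN$ is a silting subcategory in $[\Sigma\inv\sM,\sM]$ containing $\sN'$, so $\sN\leq\sB_l(\sN';\sM)$ by the maximality characterisation of the left Bongartz completion; Lemma~\ref{lem:technical} (taken with $\sB'=\sN'$) gives $\rmu{\sN'}(\sB_l(\sN';\sM))\leq\rmu{\sN'}(\sN)$, while $\sN\leq\sB_l(\sN';\sM)$ together with $\sN'\subseteq\sN\cap\sB_l(\sN';\sM)$ gives the reverse inequality via Lemma~\ref{lem:mut-compat}; hence $\rmu{\sN'}(\sN)=\rmu{\sN'}(\sB_l(\sN';\sM))$, and applying the inverse mutation $\lmu{\sN'}$ yields $\sN=\sB_l(\sN';\sM)$, which is statement (3).

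The remaining steps are essentially formal. For $(3)\Rightarrow(2)$, the key identity immediately gives $\sB_r(\sN';\sM)=\rmu{\sN'}(\sB_l(\sN';\sM))=\rmu{\sN'}(\sN)$. For $(2)\Rightarrow(1)$, applying $\lmu{\sN'}$ to $\sB_r(\sN';\sM)=\rmu{\sN'}(\sN)$ and using the key identity again gives $\sN=\sB_l(\sN';\sM)$, hence $\sN\leq\sM$; the inequality $\rmu{\sN'}(\sN)\leq\sN$ always holds; and since $\rmu{\sM'}(\sM)=\sB_r(\sM';\sM)$ by Lemma~\ref{lem:trivial-completion} is the minimal silting subcategory in $[\Sigma\inv\sM,\sM]$ containing $\sM'$, while $\rmu{\sN'}(\sN)=\sB_r(\sN';\sM)$ is such a silting subcategory (as $\sM'\subseteq\sN'$), minimality gives $\rmu{\sM'}(\sM)\leq\rmu{\sN'}(\sN)$, completing (1). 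I expect the one genuinely substantial point to be the key identity $\rmu{\sN'}(\sB_l(\sN';\sM))=\sB_r(\sN';\sM)$ — matching the one-sided expression $\Sigma\inv\sB_l(\sN';\sM)*\sN'$ against $\Sigma\inv\sM*\sN'$ through the splitting argument above, and then promoting the resulting inclusion of silting subcategories to an equality; everything downstream is order-theoretic bookkeeping with Lemmas~\ref{lem:intersection-mutation}, \ref{lem:intermediate}, \ref{lem:mut-compat}, \ref{lem:trivial-completion} and \ref{lem:technical}.
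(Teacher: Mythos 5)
Your proof is correct and takes a genuinely different route from the paper's for the implications out of condition $(2)$. For $(1)\Rightarrow(3)$ you essentially reproduce the paper's argument: extract $\sM'\subseteq\sN'\subseteq\Sigma\inv\sM*\sM$ via Lemmas~\ref{lem:intersection-mutation} and \ref{lem:intermediate}, squeeze $\rmu{\sN'}(\sB_l(\sN';\sM))=\rmu{\sN'}(\sN)$ between Lemma~\ref{lem:technical} and Lemma~\ref{lem:mut-compat}, and apply the inverse mutation. The real point of departure is your key identity
\[ \rmu{\sN'}\bigl(\sB_l(\sN';\sM)\bigr) = \sB_r(\sN';\sM), \]
proved by the splitting argument via $(\Sigma\inv\sN')\orth$ and the fact that a silting subcategory contained in another must equal it. With this in hand, $(3)\Rightarrow(2)$ is a one-liner, and $(2)\Rightarrow(1)$ drops out from the minimality characterisation of $\sB_r(\sM';\sM)$. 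By contrast, the paper proves $(3)\Rightarrow(1)$ directly via a multi-step chain of $\Hom$-vanishing computations on the invariant descriptions of the mutations, and then dispatches $(1)\iff(2)$ by declaring it a dual argument using right Bongartz completion. Your identity makes the relationship between the two Bongartz completions explicit and, in my view, is cleaner; it also exposes why the duality works. One small caveat: you invoke the minimality of $\sB_r(\sM';\sM)$ among silting subcategories in $[\Sigma\inv\sM,\sM]$ containing $\sM'$, which the paper only asserts in a remark. This follows from a dual of Lemma~\ref{lem:intermediate}(2) (if $\Sigma\inv\sM\leq\sK\leq\sM$ and $\sL\subseteq\Sigma\inv\sM*\sK$ then $\sL\leq\sK$; proved exactly as the stated version using $\Sigma\inv\sM\subseteq\Sigma\inv\sK*\sK$), and so is easily supplied, but it would be worth stating explicitly in a full write-up.
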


\begin{proof}
We make silent use of the property $\sM' = \sM \cap \rmu{\sM'}(\sM)$ from Lemma~\ref{lem:intersection-mutation}.

(1) $\implies$ (3).
From $(\sN,\sN') \leq (\sM, \sM')$ and $\Sigma\inv \sM \leq \rmu{\sM'}(\sM)$ we get $\Sigma\inv \sM \leq \sN \leq \sM$, hence
$\sN'\subseteq \sN \subseteq \Sigma\inv \sM * \sM$ by Lemma~\ref{lem:intermediate}(1).
Applying Lemma~\ref{lem:intermediate}(3) to $\rmu{\sM'}(\sM) \leq \sN \leq \sM$ gets us $\sM' = \sM \cap \rmu{\sM'}(\sM) \subseteq \sN$, and applying it to $\rmu{\sM'}(\sM) \leq \rmu{\sN'}(\sN) \leq \sM$ gives $\sM' = \sM \cap \rmu{\sM'}(\sM) \subseteq \rmu{\sN'}(\sN)$. Hence $\sM' \subseteq \sN \cap \rmu{\sN'}(\sN) = \sN'$.

By the definition of $\sB \coloneqq \sB_l(\sN';\sM)$ we have $\sB \subseteq \sN * \sM$; hence $\sN \leq \sB \leq \sM$ from Lemma~\ref{lem:intermediate}(2).
Moreover, $\rmu{\sN'}(\sN) \leq \rmu{\sN'}(\sB)$ by Lemma~\ref{lem:mut-compat}. Applying Lemma~\ref{lem:technical} with $\sB' = \sN'$ gives $\rmu{\sN'}(\sB) \leq \rmu{\sN'}(\sN)$. Hence, $\rmu{\sN'}(\sB) = \rmu{\sN'}(\sN)$.
Using Hom-finiteness and the fact that $\sD$ has a silting object, $\sN'$ is functorially finite in $\rmu{\sN'}(\sB) = \rmu{\sN'}(\sN)$, and so we can perform the corresponding left mutations, i.e.\
\[
\sB = \lmu{\sN'}(\rmu{\sN'}(\sB)) = \lmu{\sN'}(\rmu{\sN'}(\sN) ) = \sN,
\]
giving condition $(3)$, as claimed.

(3) $\implies$ (1). Again, let $\sB \coloneqq \sB_l(\sN';\sM)$. It suffices to prove $(\sB,\sN') \leq (\sM,\sM')$, i.e.\
\[
\rmu{\sM'}(\sM) \leq \rmu{\sN'}(\sB) \leq \sB \leq \sM.
\]
The two rightmost inequalities we get directly since $\sB$ is a Bongartz completion in $\Sigma\inv \sM * \sM$ and $\rmu{\sN'}(\sB) \leq \sB$ always. We are left to show $\rmu{\sM'}(\sM) \leq \rmu{\sN'}(\sB)$, which by definition of the partial order and of right mutations amounts to the vanishing of
\[  \Hom^{>0}(\rmu{\sM'}(\sM),\rmu{\sN'}(\sB))
  = \Hom^{>0}( (\Sigma\inv \sM * \sM') \cap (\Sigma\inv \sM')\orth, (\Sigma\inv \sB * \sN') \cap (\Sigma\inv \sN')\orth) .\]
We have $\Hom^{>0}(\sM',\sN')=0$ from $\sM'\subseteq\sN'\subseteq\sB$ and $\sB$ silting. Moreover, $\sN' \subseteq \Sigma\inv \sM * \sM$ and $\sM$ silting gives $\Hom^{>0}(\Sigma\inv \sM, \sN')\subseteq\Hom^{>0}(\Sigma\inv \sM, \Sigma\inv \sM * \sM)=0$. Hence
\begin{align*}
\Hom^{>0}(\rmu{\sM'}(\sM),\rmu{\sN'}(\sB))
   &= \Hom^{>0}( (\Sigma\inv \sM * \sM') \cap (\Sigma\inv \sM')\orth, \Sigma\inv \sB \cap (\Sigma\inv \sN')\orth) \\
   &= \Hom^{>0}( (\Sigma\inv \sM * \sM') \cap (\Sigma\inv \sM')\orth, \Sigma\inv (\sB \cap (\sN')\orth)) \\
   &= \Hom^{\geq0}( (\Sigma\inv \sM * \sM') \cap (\Sigma\inv \sM')\orth, (\sN' * \sM) \cap {}\orth(\Sigma \sN') \cap (\sN')\orth) \\
   &\stackrel{*}{=} \Hom^{\geq0}( (\Sigma\inv \sM * \sM') \cap (\Sigma\inv \sM')\orth, \sM \cap {}\orth(\Sigma \sN') \cap (\sN')\orth) \\
   &= \Hom^0( (\Sigma\inv \sM * \sM') \cap (\Sigma\inv \sM')\orth, \sM \cap {}\orth(\Sigma \sN') \cap (\sN')\orth) \\
   &= \Hom^0( \sM' \cap (\Sigma\inv \sM')\orth, \sM \cap {}\orth(\Sigma \sN') \cap (\sN')\orth) \\
   &\subseteq \Hom^0( \sM', \sM \cap (\sN')\orth) \\
   & \subseteq \Hom^0( \sN', (\sN')\orth)  = 0
\end{align*}
where the marked equality follows from $\sN' \cap (\sN')\orth = 0$, which implies that objects in $\sN' * \sM$ are direct summands of objects of $\sM$, hence in $\sM$ (partial silting subcategories are idempotent closed). The next two equalities follow from $\sM$ silting, and the final inclusion from $\sM'\subseteq\sN'$.

$(1) \Longleftrightarrow (2).$ Since $\sD$ is a Hom-finite, Krull-Schmidt triangulated category with a silting object, writing $\widetilde{\sM}= \rmu{\sM'}(\sM)$ and $\widetilde{\sN}=\rmu{\sN'}(\sN)$, we can rephrase the inequalities $\rmu{\sM'}(\sM) \leq \rmu{\sN'}(\sN) \leq \sN \leq \sM$ as
$\widetilde{\sM} \leq \widetilde{\sN} \leq \lmu{\sN'}(\widetilde{\sN}) \leq \lmu{\sM'}(\widetilde{\sM})$.
Now, the dual arguments to above using right Bongartz completion instead of left Bongartz completion give the desired conclusions.
\end{proof}


\section{The poset of silting pairs is a CW poset}
\label{sec:CW-poset}

\noindent
It is well known that any poset gives rise to a simplicial complex, the order complex of the poset: the vertices of the complex are the elements of the poset, and the faces are the finite chains, i.e.\ finite, totally ordered subsets of the poset. However, the order complex is more finely subdivided than is necessary and this makes calculations longer than they need to be. Furthermore, we would like to have a cellular structure which mirrors the structure of the silting objects and their mutations more closely. For example, the silting quiver of \cite{AI} should be the 1-skeleton and ``higher" mutations should correspond to higher dimensional faces.

Therefore, for our applications of the poset of silting pairs to the stability manifold, we want the structure of a regular CW complex instead (this is a CW complex such that all attaching maps are homeomorphisms). In particular, we want to discuss homotopy properties of topological spaces (arising from silting pairs, and from stability conditions) and regular CW complexes behave very well with regard to homotopy theory.

In \cite[Definition 2.1]{BjoernerEJC}, Bj\"orner introduced the class of CW posets, which correspond to regular CW complexes \cite[Proposition 3.1]{BjoernerEJC}. In this section we prove the following main theorem which gives conditions under which we obtain such a poset.

\begin{theorem}
\label{thm:CW-poset}
Let $\sD$ be a Hom-finite, Krull-Schmidt triangulated category with a silting object. Further suppose that for each silting subcategory $\sM$ the interval $[\Sigma\inv \sM,\sM]$ in $\posetsilt(\sD)$ is finite. Then the silting pair poset $\posetpair(\sD)$ is a CW poset.
\end{theorem}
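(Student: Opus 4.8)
The plan is to verify the two conditions in Björner's definition of a CW poset~\cite[Definition 2.1]{BjoernerEJC}: namely that $\posetpair(\sD)$ has a least element $\hat 0$, that it is nontrivial, and --- the substantive part --- that for every element $x > \hat 0$, the open interval $(\hat 0, x)$ has the homotopy type of a sphere $S^{d(x)-1}$ of the appropriate dimension, where $d(x)$ is the length of $x$ in the poset. The element $\hat 0$ is there by construction, and nontriviality is immediate once $\sD$ has a silting object. So everything reduces to analysing the open intervals below a silting pair.

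First I would fix a silting pair $x = (\sM,\sM')$ and describe the half-open interval $(\hat 0, x]$ concretely. Using Proposition~\ref{prop:po-equivalent}, a silting pair $(\sN,\sN')$ satisfies $(\sN,\sN') \le (\sM,\sM')$ precisely when $\sM' \subseteq \sN' \subseteq \Sigma\inv\sM * \sM$ and $\sN = \sB_l(\sN';\sM)$ is the left Bongartz completion. In other words, the pairs below $(\sM,\sM')$ are parametrised by the partial silting subcategories $\sN'$ sandwiched between $\sM'$ and (the indecomposables available in) the interval $[\Sigma\inv\sM,\sM]$: the second coordinate $\sN$ is then determined. Since by hypothesis $[\Sigma\inv\sM,\sM]$ is finite, there are only finitely many indecomposable objects that can occur as summands of such an $\sN'$; removing the indecomposable summands of $\sM'$, call the remaining available indecomposables $T_1,\dots,T_N$. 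The claim I would aim to establish is that $\sN'$ ranges exactly over $\add(\sM' \cup S)$ for $S$ an arbitrary subset of $\{T_1,\dots,T_N\}$ that still yields a partial silting subcategory inside $[\Sigma\inv\sM,\sM]$ --- and that in fact \emph{every} subset works, so that $(\hat 0,x]$ is isomorphic to the Boolean lattice on $N$ atoms. The key input for ``every subset works'' is Corollary~\ref{cor:upperbound} (together with Lemma~\ref{lem:intersection}): closure under taking unions of the second-coordinate subcategories $\sK'$ shows the poset of admissible $\sN'$ is closed under joins, and it trivially contains $\sM'$ as a bottom, so it is the full Boolean lattice on its atoms.

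Granting that $(\hat 0, (\sM,\sM')]$ is a finite Boolean lattice $B_N$, the open interval $(\hat 0, (\sM,\sM'))$ is $B_N$ with its top removed, whose order complex is the boundary complex of an $(N-1)$-simplex, hence homeomorphic to $S^{N-2}$; and the length $d((\sM,\sM'))$ in $\posetpair(\sD)$ is $N$ (one step from $\hat 0$ to $(\sM,\sM)$, then $N$ steps up the Boolean lattice --- wait, one must count carefully: $\hat 0 \lessdot$ the rank-zero pairs, and the chain length from $\hat 0$ to $(\sM,\sM')$ is $N+1$ when $\sM' \neq 0$, so $d(x) = N+1$ and we need $S^{d(x)-2}$; I would pin down the exact indexing against Björner's convention when writing the details). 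In all cases the open interval is a sphere of the right dimension, which is exactly Björner's criterion, so $\posetpair(\sD)$ is a CW poset.

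\textbf{The main obstacle} I anticipate is proving that the parametrising poset of admissible $\sN'$ really is the \emph{full} Boolean lattice --- i.e.\ that for \emph{any} subset $S \subseteq \{T_1,\dots,T_N\}$, the category $\sN' = \add(\sM' \cup S)$ is partial silting, lies in $\Sigma\inv\sM * \sM$, and has a left Bongartz completion that is a genuine silting subcategory, so that $(\sB_l(\sN';\sM),\sN')$ is an honest silting pair below $(\sM,\sM')$. The downward-closure and join-closure come from Proposition~\ref{prop:po-equivalent} and Corollary~\ref{cor:upperbound} respectively, but one must check there are no hidden obstructions to removing an arbitrary combination of summands; here I would lean on Lemma~\ref{lem:bongartz} (Bongartz completion always produces a silting subcategory from a partial silting subcategory of $\Sigma\inv\sM*\sM$) and on the finiteness hypothesis on $[\Sigma\inv\sM,\sM]$ to guarantee functorial finiteness of the relevant subcategories, which is what lets all the mutation and completion machinery run.
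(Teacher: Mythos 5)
The proposal takes a genuinely different route from the paper --- the paper verifies Björner's sufficient criterion (bottom element, the ``diamond'' condition on length-two intervals, total semimodularity of $[\hat 0,x]$) via Lemma~\ref{cor:finite}, Corollary~\ref{cor:length-two} and Proposition~\ref{prop:totally-semimodular}, whereas you try to show directly that each open lower interval is a sphere by exhibiting $(\hat 0,x]$ as a Boolean lattice. Unfortunately the Boolean--lattice claim is false, and the argument does not go through.

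The problem is the step ``and that in fact every subset works''. Not every subset of the available indecomposables yields a partial silting subcategory: indecomposables coming from \emph{different} silting subcategories in the interval $[\Sigma\inv\sM,\sM]$ typically have nonvanishing positive Homs. Concretely, take $\sD=\Db(\kk A_2)$ ($1\to2$), $\sM=\add(P_1\oplus P_2)$ the projectives, $\sM'=0$, so $x=(\sM,0)$. The indecomposables occurring in silting subcategories inside $[\Sigma\inv\sM,\sM]$ are $P_1,\,P_2,\,\Sigma\inv P_1,\,\Sigma\inv P_2,\,\Sigma\inv S_1$, so your $N=5$. But $\add(P_1,\Sigma\inv P_1)$ is not partial silting since $\Hom^1(P_1,\Sigma\inv P_1)=\Hom(P_1,P_1)\neq0$; similarly $\add(P_1,\Sigma\inv S_1)$ and several other pairs are forbidden. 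More structurally: the minimal elements of $(\hat 0,x]$ are precisely the pairs $(\sN,\sN)$ for the five silting subcategories $\sN\in[\Sigma\inv\sM,\sM]$, so $(\hat 0,x]$ has five distinct minima and is not even a lattice, let alone Boolean. The cardinality and the length also clash with your claim: $(\hat 0,x]$ has $11$ elements and the interval $[\hat 0,x]$ has length $3$ (by Lemma~\ref{lem:intlength} the length from an atom $(\sN,\sN)$ to $(\sM,\sM')$ is $\rk\sN-\rk\sM'$, plus one step from $\hat 0$), whereas $B_5$ has $32$ elements and length $5$. The open interval $(\hat 0,x)$ in this example is in fact a $10$-cycle --- which \emph{is} homeomorphic to $S^1=S^{3-2}$, so the CW-poset conclusion is correct, but the sphere arises from a non-simplicial cell structure, not from the boundary of a simplex. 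The subsidiary claim that ``closure under joins, plus downward closure, plus a bottom, gives a Boolean lattice'' is also not a valid implication in general. To make your direct strategy work you would need to establish shellability (or some other criterion implying the sphere property) without assuming a Boolean structure; this is essentially what the paper does via Proposition~\ref{prop:CW}.
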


We recall from \cite{Aihara} that a Hom-finite, Krull-Schmidt triangulated category $\sD$ with a silting object is called \emph{silting-discrete} if for each silting subcategory $\sM$ and each natural number $k$, the interval $[\Sigma^{-k} \sM,\sM]$ in $\posetsilt(\sD)$ is finite. We therefore get the following corollary.

\begin{corollary}
If $\sD$ is a silting-discrete triangulated category then the silting pair poset $\posetpair(\sD)$ is a CW poset.
\end{corollary}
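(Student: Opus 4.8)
The plan is to note that this corollary is an immediate specialisation of Theorem~\ref{thm:CW-poset}, so all I would do is check that a silting-discrete triangulated category satisfies the hypotheses of that theorem. First, by definition a silting-discrete triangulated category is Hom-finite and Krull-Schmidt and admits a silting object, which is exactly the standing assumption of Theorem~\ref{thm:CW-poset}. Second, the additional hypothesis of the theorem asks that the interval $[\Sigma\inv\sM,\sM]$ in $\posetsilt(\sD)$ be finite for every silting subcategory $\sM$; this is precisely the $k=1$ instance of the defining property of silting-discreteness, which requires $[\Sigma^{-k}\sM,\sM]$ to be finite for every silting subcategory $\sM$ and every $k\in\IN$. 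Invoking Theorem~\ref{thm:CW-poset} then yields that $\posetpair(\sD)$ is a CW poset, as claimed.

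Thus there is no genuine obstacle here: the mathematical content sits entirely in Theorem~\ref{thm:CW-poset}, and the corollary merely records that the (superficially stronger) hypothesis of silting-discreteness already triggers it. One could add the remark that the hypothesis of Theorem~\ref{thm:CW-poset} is in fact equivalent to silting-discreteness, since finiteness of every length-one interval $[\Sigma\inv\sN,\sN]$ propagates to finiteness of every $[\Sigma^{-k}\sM,\sM]$; this equivalence is established in \cite{Aihara}, but it is not needed for the statement as formulated, so I would simply cite Theorem~\ref{thm:CW-poset} and stop.
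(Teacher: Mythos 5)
Your proof is correct and matches the paper's (implicit) reasoning exactly: the paper states this corollary immediately after the definition of silting-discreteness, treating it as obvious because the $k=1$ case of that definition is precisely the finiteness hypothesis of Theorem~\ref{thm:CW-poset}, and the other standing hypotheses are built into the definition. Your optional remark about the converse implication via \cite{Aihara} is also accurate but, as you note, unnecessary for the stated claim.
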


In light of Theorem~\ref{thm:CW-poset}, we proceed to define the CW complex of silting pairs for a triangulated category.

\begin{definition}
Let $\sD$ be a triangulated category. In the case that $\posetpair(\sD)$ is a CW poset, we denote the induced CW complex by $\silt{\sD}$ and call it the \emph{silting pairs CW complex}.
\end{definition}

In the next subsection we define CW posets and describe how the corresponding regular CW complex is constructed.  We state criteria for a given poset to be a CW poset and verify these criteria, under the assumptions of Theorem~\ref{thm:CW-poset} thus proving the theorem.
In the final subsection we turn to our main class of examples and check that discrete derived categories satisfy the hypotheses of Theorem~\ref{thm:CW-poset}.

\subsection{CW posets}\label{sec:CW-posets}

We start with a brief digression on basic notions relating to posets. For the following definitions, see \cite{BjoernerWachs}. Given a poset $P$ and two elements $x,y\in P$, then $y$ \emph{covers} $x$ if $y$ is an immediate successor of $x$, i.e.\ $x<y$ and if $x<z\leq y$ then $z=y$. The poset $P$ is called
\begin{itemize}[leftmargin=1.5em]
\item \emph{bounded} if $P$ has top $\hat{1}$ and bottom $\hat{0}$ elements, i.e.\
      $\hat{0} \leq x \leq \hat{1}$ for all $x\in P$;
\item \emph{semimodular} if it is finite, bounded and whenever two distinct elements $u,v\in P$
      both cover $x\in P$ there is a $z\in P$ which covers both $u$ and $v$;
\item \emph{totally semimodular} if it is finite, bounded and all intervals $[x,y]$ in $P$ are semimodular.
\end{itemize}

Given two elements $x<y$, the \emph{length} of the interval $[x,y]$ is the maximal number $l$ such that there is a chain $x=z_0<z_1<\ldots<z_{l-1}<z_l=y$.

\begin{definition}
A poset $P$ is called a \emph{CW poset} if $P$ has a bottom element $\hat0$, contains at least two elements and the order complexes of open intervals $(\hat0,x)$ are homeomorphic to spheres for all $x\in P$, $x\neq\hat0$.
\end{definition}

Given a CW poset $P$, we outline the construction of the corresponding regular CW complex following the proof of \cite[Proposition 3.1]{BjoernerEJC}. First observe that, by the CW poset property, every $x \in P$ has a well defined rank $r(x)$ given by the length of the interval $[\hat0,x]$. For any $i \geq 1$ we denote the set of elements of rank $i$ by $P_i = \{x \in P \mid r(x) = i \}$.

We proceed inductively, starting with $\cK_0 \coloneqq P_1$ as a disjoint union of points.  Suppose we have constructed a regular CW complex $\cK_{s-1}$ with face poset $F(\cK_{s-1}) \cong \bigcup_{i \leq s} P_i$. For every $x \in P_{s+1}$, the open interval $(\smash{\hat0},x)$ corresponds to a regular CW subcomplex $\cK_x \subset \cK_{s-1}$ and by definition of CW poset, the order complex of $F(\cK_x) \setminus \{\hat{0}\}$ is homeomorphic to an $(s-1)$-sphere. We attach an $s$-cell to $\cK_{s-1}$ for each $x \in P_{s+1}$ using these homeomorphisms. The resulting CW complex $\cK_s$ is regular, since the attaching maps are homeomorphisms onto their images, and satisfies $F(\cK_{s}) \cong \bigcup_{i \leq s+1} P_i$. This construction produces a regular CW complex with face poset isomorphic to $P$.

\begin{remark}
This means that in our examples, we get an order preserving bijection between closed cells of $\silt{\sD}$ ordered by containment and silting pairs in $\posetpair(\sD)$. In particular, the higher dimensional faces of $\silt{\sD}$ correspond to ``higher" mutations, as mentioned in the introduction to this section.
\end{remark}

In this article, we apply the following criteria to check whether a poset is a CW poset. It is an immediate corollary of work of Bj\"orner and Wachs; see the proof for details.

\begin{proposition} \label{prop:CW}
Let $P$ be a poset satisfying the following conditions:
\begin{enumerate}
\item $P$ has a bottom element $\hat 0$, and contains at least one other element,
\item every interval $[x,y]$ of length two has cardinality four,
\item for every $x \in P$ the interval $[\hat 0,x]$ is totally semimodular.
\end{enumerate}
Then $P$ is a CW poset.
\end{proposition}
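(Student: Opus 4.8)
The plan is to reduce the statement to the bare definition of a CW poset and then invoke two imported results from the Bj\"orner--Wachs theory of shellable posets. By the definition recalled above, $P$ is a CW poset precisely when it has a bottom element $\hat{0}$, contains at least two elements, and the order complex of every open interval $(\hat{0},x)$ with $x\neq\hat{0}$ is homeomorphic to a sphere. Hypothesis (1) supplies the first two requirements at once, so the entire content is the statement about these open intervals.

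First I would fix $x\in P$ with $x\neq\hat{0}$ and set $Q\coloneqq[\hat{0},x]$, a finite bounded poset with bottom $\hat{0}$ and top $x$. If $x$ covers $\hat{0}$ then $(\hat{0},x)=\emptyset$, whose order complex is the $(-1)$-sphere $\{\emptyset\}$, so this case is immediate. In general, hypothesis (3) tells us that $Q$ is totally semimodular, and the first imported fact is that a bounded totally semimodular poset admits a recursive coatom ordering (Bj\"orner--Wachs \cite{BjoernerWachs}, where this is established). From this I extract the two things I need: $Q$ is graded, so that the rank $r(x)$ used in the construction of the CW complex is well defined; and the order complex of the proper part $\bar{Q}\coloneqq Q\setminus\{\hat{0},x\}$ is shellable. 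Writing $n\coloneqq r(x)$, the facets of this order complex are exactly the proper parts of the length-$n$ maximal chains of $Q$, so it is pure of dimension $n-2$.

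Next I would bring in hypothesis (2). Saying that a length-two interval of $Q$ has cardinality four is the same as saying that it has exactly two elements strictly between its endpoints; rephrased for the order complex of $\bar{Q}$, this says that every codimension-one face lies in exactly two facets, i.e.\ that $\Delta(\bar{Q})$ is a pseudomanifold without boundary. The second imported fact is that a shellable pseudomanifold without boundary of dimension $d$ is homeomorphic to $S^{d}$ (Danaraj--Klee). Applying it shows that the order complex of $(\hat{0},x)=\bar{Q}$ is homeomorphic to $S^{\,n-2}$, which is the remaining thing to prove; hence $P$ is a CW poset.

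I do not expect a genuine obstacle here: the proposition is really a repackaging of the two cited results, so the work is bookkeeping and the only care needed is in matching hypotheses. Specifically: (a) one must check that ``totally semimodular'' in the sense of the definition above is precisely the hypothesis under which Bj\"orner and Wachs produce a recursive coatom ordering, and one must record that gradedness comes out of it, since gradedness is what legitimises both the purity of $\Delta(\bar{Q})$ and the rank function $r(\cdot)$; (b) one must make the combinatorial translation ``every length-two interval has cardinality four'' $\iff$ ``every codimension-one face of $\Delta(\bar{Q})$ lies in exactly two facets'' carefully, noting that it is applied to a complex already known to be pure; and (c) the degenerate rank-one case deserves an explicit word, since there $\bar{Q}=\emptyset$ and one must invoke the convention that the empty complex is the $(-1)$-sphere.
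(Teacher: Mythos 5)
Your argument is correct, and the core pieces match the paper's: both reduce the hypothesis of total semimodularity to shellability via Bj\"orner--Wachs (the paper cites their Proposition~2.3 and Corollary~5.2; you phrase it as the existence of a recursive coatom ordering, which is the same machinery), and both get the sphere conclusion from shellability. The difference is one of granularity. The paper invokes Bj\"orner's Proposition~2.2 from \cite{BjoernerEJC} as a black box --- that result already packages conditions (1), (2), and ``$[\hat0,x]$ shellable'' into the CW-poset conclusion --- so the paper's proof is a two-line reduction. You instead re-derive the content of Bj\"orner's Proposition~2.2 inline: you unpack the definition of CW poset, observe that total semimodularity gives both a grading and shellability of $\Delta(\bar Q)$, translate condition~(2) into the statement that $\Delta(\bar Q)$ is a thin (pseudomanifold) complex, and then invoke Danaraj--Klee to conclude that a shellable pseudomanifold is a sphere. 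This is precisely how Bj\"orner's proof of Proposition~2.2 goes, so you have effectively reconstructed it. What your version buys is self-containedness and a clearer view of where each hypothesis is used (gradedness from (3), purity and the two-facet condition from (2)); what the paper's version buys is brevity. Your careful handling of the rank-one degenerate case and the explicit verification that a codimension-one face of $\Delta(\bar Q)$ lies in exactly two facets (via the length-two interval condition) are both correct and worth recording; the paper's proof elides these because they are already inside Bj\"orner's cited result.
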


\begin{proof}
In \cite[Proposition~2.2]{BjoernerEJC}, Bj\"orner proves a more general statement than the one given here, with `totally semimodular' in clause (3) replaced by `shellable'. We refrain from defining shellability here; see the introduction of \cite{BjoernerWachs} for a purely combinatorial definition.
We mention that shellability is a notion originating from topology. For instance, the order complex of a shellable poset has the homotopy type of a wedge of $r$-spheres if all maximal chains have length $r$.
For our purposes, we can bypass this notion as every totally semimodular poset is shellable by \cite[Proposition 2.3 and Corollary 5.2]{BjoernerWachs}.
\end{proof}

In the definition of $\posetpair(\sD)$, we formally adjoined a bottom element $\hat{0}$ to the set of silting pairs. Thus, we only have to check the other two conditions.

\subsection{Cardinality of length two intervals}\label{sec:length-two}

We prove property (2) of Proposition~\ref{prop:CW}. The results of this subsection hold for Hom-finite, Krull-Schmidt triangulated categories $\sD$ that have a silting object.
Recall the definition of rank, $\rk{\sM'}$, of a partial silting subcategory $\sM'$ from Definition~\ref{def:irreducible-mutation}.

\begin{proposition}
\label{prop:length-two}
Suppose that $(\sM, \sM')$ and $(\sN,\sN')$ are silting pairs in $\sD$ such that $(\sN,\sN') \leq (\sM,\sM')$ and $\rk{\sN'} = \rk{\sM'} + 2$. Then there are precisely four silting pairs $(\sK,\sK')$ such that $(\sN,\sN') \leq (\sK, \sK') \leq (\sM,\sM')$.
\end{proposition}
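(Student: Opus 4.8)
\textbf{Proof plan for Proposition~\ref{prop:length-two}.}
The plan is to translate the entire problem into the combinatorics of admissible right tilts via the compatibility result, Lemma~\ref{lem:compatibility}, and then count. First I would record the reductions. By Proposition~\ref{prop:po-equivalent}, the inequality $(\sN,\sN') \leq (\sM,\sM')$ forces $\sM' \subseteq \sN' \subseteq \Sigma\inv\sM * \sM$ and $\sN = \sB_l(\sN';\sM)$, $\rmu{\sN'}(\sN) = \sB_r(\sN';\sM)$; in particular $\sN$ and $\rmu{\sN'}(\sN)$ both lie in the interval $[\Sigma\inv\sM,\sM]$, and the same is true for any intermediate pair $(\sK,\sK')$. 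So the whole computation takes place inside the slice of $\posetsilt(\sD)$ between $\Sigma\inv\sM$ and $\sM$, which by Theorem~\ref{thm:koenig-yang} and Lemma~\ref{lem:compatibility} corresponds to the admissible right tilts of the algebraic t-structure $(\sX_\sM,\sY_\sM)$, i.e.\ to subsets of $\simples(\sH_\sM)$. Under this dictionary, a silting pair $(\sK,\sK')$ with $\Sigma\inv\sM\leq\sK\leq\sM$ corresponds to a pair of subsets $\sR(\sK')\supseteq\sR(\sK')\cap(\text{the subset defining }\sK)$; more precisely, write $R' := \simples(\sH_\sM)\cap(\sM')\orth$ translated from $\sM'$, and likewise the data of $\sM'\subseteq\sN'$ becomes an inclusion of subsets of $\simples(\sH_\sM)$ whose difference has size $\rk{\sN'}-\rk{\sM'}=2$.

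Next I would make the rank bookkeeping precise. The hypothesis $\rk{\sN'}=\rk{\sM'}+2$ says, via $K_0(\sD)\cong\Ksplit(\sM)$ and the duality $\Hom(M_i,S_j)=\delta_{ij}\kk$, that the set $S_{\sM'}\subseteq\simples(\sH_\sM)$ cutting out $\sM'$ and the set $S_{\sN'}$ cutting out $\sN'$ differ by exactly two simples, say $S_{\sN'}=S_{\sM'}\cup\{S_a,S_b\}$ (the inclusion $\sM'\subseteq\sN'$ gives the containment of these ``off'' sets in the right direction; I would spell out the variance carefully since the partial order is opposite to that of \cite{AI}). An intermediate silting pair $(\sK,\sK')$ must then satisfy $\sM'\subseteq\sK'\subseteq\sN'$ at the level of these subsets, so $S_{\sK'}$ is one of $S_{\sM'}$, $S_{\sM'}\cup\{S_a\}$, $S_{\sM'}\cup\{S_b\}$, $S_{\sN'}$; and for each such choice, $\sK$ itself is determined: the intermediate t-structures between $(\sX_\sM,\sY_\sM)$ and $(\sX_{\rmu{\sN'}(\sN)},\sY_{\rmu{\sN'}(\sN)})$ that arise as admissible right tilts are exactly those given by subsets of $\{S_a,S_b\}$ (here I would invoke Lemma~\ref{lem:intersection} / Corollary~\ref{cor:upperbound} to see that the tilt at $\{S_a,S_b\}$ is the common lower bound, and Lemma~\ref{lem:silt-vs-torsion} to see that any intermediate $\sK$ is sandwiched by an admissible tilt). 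This yields the four candidates; the boundary cases $(\sM,\sM')$ and $(\sN,\sN')$ are two of them, and the two genuinely new ones correspond to the two singletons $\{S_a\}$, $\{S_b\}$.

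The remaining task is to check that each of the four candidate pairs really does satisfy $(\sN,\sN')\leq(\sK,\sK')\leq(\sM,\sM')$ --- i.e.\ that the mutation inequality $\rmu{\sM'}(\sM)\leq\rmu{\sK'}(\sK)\leq\rmu{\sN'}(\sN)\leq\sN\leq\sK\leq\sM$ holds --- and that no two of them coincide. Existence and the inequalities I would get from the Bongartz-completion characterisation in Proposition~\ref{prop:po-equivalent}: defining $\sK := \sB_l(\sK';\sM)$ for each of the four subsets $\sK'$, conditions (1)$\Leftrightarrow$(3) there immediately give $(\sK,\sK')\leq(\sM,\sM')$, and a second application, together with Lemma~\ref{lem:mut-compat} and Lemma~\ref{lem:technical}, gives $(\sN,\sN')\leq(\sK,\sK')$. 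Distinctness follows because the four subsets $S_{\sK'}$ are distinct and $\sK'$ is recovered from the pair as $\sK\cap\rmu{\sK'}(\sK)$ by Lemma~\ref{lem:intersection-mutation}, so distinct subsets give distinct pairs.

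The main obstacle I anticipate is the bookkeeping in the middle step: making airtight the claim that an \emph{arbitrary} intermediate silting pair $(\sK,\sK')$ (not assumed a priori to come from an admissible tilt) is forced to be one of the four admissible ones. This is where I would lean hardest on Lemma~\ref{lem:silt-vs-torsion}, which produces, for any $(\sN,\sN')\leq(\sK,\sK')$, a possibly inadmissible torsion-pair tilt whose torsion class is contained in $\cT_{\sK'}$; combined with the fact that the relevant interval of simples has size two and with Lemma~\ref{lem:intersection}, one pins down $\sK$ and $\sK'$ exactly. The rest --- the rank count and the verification of the inequalities --- is essentially routine manipulation of the $*$-calculus and the equivalent formulations of the partial order already established in Sections~\ref{sec:compatibility} and \ref{sec:bongartz}.
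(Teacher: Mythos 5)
Your plan has the same combinatorial skeleton as the paper's proof: use Proposition~\ref{prop:po-equivalent} to see that any intermediate pair $(\sK,\sK')$ satisfies $\sM'\subseteq\sK'\subseteq\sN'$, note that $\rk\sN'=\rk\sM'+2$ leaves exactly four choices for $\sK'$ (namely $\sM'$, $\add(\sM',N_1)$, $\add(\sM',N_2)$, $\sN'$), observe that $\sK\coloneqq\sB_l(\sK';\sM)$ is the unique completion realising $(\sK,\sK')\leq(\sM,\sM')$, and finish by checking $(\sN,\sN')\leq(\sK,\sK')$ using Lemma~\ref{lem:technical}. So the end count of four and the mechanism for getting it are identical.

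Where you diverge is in routing the whole argument through admissible tilts via Lemma~\ref{lem:compatibility} and bookkeeping with subsets of $\simples(\sH_\sM)$. This is an unnecessary detour, and it actually costs you generality: Lemma~\ref{lem:compatibility}, Lemma~\ref{lem:intersection}, Lemma~\ref{lem:silt-vs-torsion} and the whole simple-object dictionary are only available when $\sD=\Db(\Lambda)$ for a finite-dimensional algebra $\Lambda$, whereas the proposition (and the whole of Section~\ref{sec:length-two}) is stated for an arbitrary Hom-finite Krull--Schmidt triangulated category with a silting object. The paper never leaves the silting framework, so it does not pay this price. Moreover, the ``main obstacle'' you anticipate --- showing that an arbitrary intermediate $(\sK,\sK')$ is forced to be one of the four admissible-tilt ones --- simply does not arise in the silting-language version of the argument: Proposition~\ref{prop:po-equivalent} already pins down $\sK'$ to one of the four subcategories, and then $\sK=\sB_l(\sK';\sM)$ is forced, with no reference to torsion pairs needed. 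So your plan is correct (modulo the variance bookkeeping you flag, which you'd need to sort carefully since $\sM'\subseteq\sN'$ corresponds to a reverse containment of the sets $\simples(\sH_\sM)\cap(\blank)\orth$), but it proves only the $\Db(\Lambda)$ special case of the proposition and introduces complexity the paper's direct approach avoids. If you want to match the stated generality, drop the translation to tilts and run the argument entirely in terms of $\sM'\subseteq\sK'\subseteq\sN'$, $\sB_l(\blank;\sM)$, Lemma~\ref{lem:intermediate}(2) and Lemma~\ref{lem:technical}.
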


\begin{proof}
If a silting pair $(\sK,\sK')$ in $\sD$ satisfies $(\sN,\sN') \leq (\sK, \sK') \leq (\sM,\sM')$ then, by Proposition~\ref{prop:po-equivalent}, we have $\sM' \subseteq \sK' \subseteq \sN'$.  Since $\rk{\sN'} = \rk{\sM'} +2$, $\sN'$ is additively generated by $\sM'$ and only two further indecomposable objects, up to isomorphism; write  $N_1$ and $N_2$ for these. Thus there are precisely four possibilities for $\sK'$ satisfying the required inclusion: $\sM'$, $\add(\sM',N_1)$, $\add(\sM',N_2)$ and $\sN'$.

For each possible $\sK'$, we have $\sM' \subseteq \sK'$ and $\sK' \subseteq \Sigma\inv \sM * \sM$, this latter inclusion coming from the fact that $\sK'\subseteq \sN'$ and $(\sN,\sN') \leq (\sM,\sM')$. By Proposition~\ref{prop:po-equivalent}, $\sK\coloneqq \sB_l(\sK';\sM)$ is the unique completion of $\sK'$ satisfying $(\sK,\sK') \leq (\sM, \sM')$.

We need to check $(\sN,\sN') \leq (\sK,\sK')$, i.e.\ that $\rmu{\sK'}(\sK) \leq \rmu{\sN'}(\sN) \leq \sN \leq \sK$. The Bongartz completion $\sK$ is formed by taking the additive closure of $K'$ together with objects $B_l$ of triangles
\[
\Sigma\inv M \to K'_l \to B_l \to M
\]
where $K'_l \in \sK'$. Thus $\sK \subseteq \sK' * \sM \subseteq \sN' * \sM \subseteq \sN * \sM$, whence by Lemma~\ref{lem:intermediate}(2), $\sN \leq \sK \leq \sM$. Now, since $\sK' \subseteq \sN'$, we can apply Lemma~\ref{lem:technical} to get $\rmu{\sK'}(\sK) \leq \rmu{\sN'}(\sN)$.
\end{proof}

\begin{lemma} \label{lem:intlength}
For any interval $I \coloneqq [(\sN, \sN'), (\sM,\sM')] \subseteq \posetpair$, the length $\ell(I) = \rk{\sN'}-\rk{\sM'}$.
\end{lemma}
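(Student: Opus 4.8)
The plan is to establish the formula $\ell(I) = \rk{\sN'} - \rk{\sM'}$ for an arbitrary interval $I = [(\sN,\sN'),(\sM,\sM')]$ in $\posetpair$ by combining the characterisation of the partial order from Proposition~\ref{prop:po-equivalent} with the length-two result just proved, Proposition~\ref{prop:length-two}. The key observation is that, by Proposition~\ref{prop:po-equivalent}, if $(\sN,\sN') \leq (\sK,\sK') \leq (\sM,\sM')$ then $\sM' \subseteq \sK' \subseteq \sN'$, and the second coordinate is a complete invariant for elements of this interval: by Proposition~\ref{prop:po-equivalent} the first coordinate is determined by $\sK = \sB_l(\sK';\sM)$. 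Thus the poset $I \setminus \{\hat{0}\}$ embeds, as a poset, into the lattice of functorially finite subcategories $\sK'$ with $\sM' \subseteq \sK' \subseteq \sN'$, ordered by inclusion. Since $\sN'$ is additively generated by $\sM'$ together with $d \coloneqq \rk{\sN'} - \rk{\sM'}$ further non-isomorphic indecomposables, this lattice is a Boolean lattice of rank $d$ (choosing subsets of those $d$ indecomposables), which has length exactly $d$.

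First I would argue that this map $I \setminus \{\hat 0\} \to \{\sK' : \sM' \subseteq \sK' \subseteq \sN'\}$ given by $(\sK,\sK') \mapsto \sK'$ is an order isomorphism: it is injective because $\sK = \sB_l(\sK';\sM)$ is recovered from $\sK'$, it is order-preserving and order-reflecting by the translation in Proposition~\ref{prop:po-equivalent}, and it is surjective because for any such $\sK'$ the pair $(\sB_l(\sK';\sM),\sK')$ lies in $I$ — the argument for this is exactly the one in the proof of Proposition~\ref{prop:length-two}, namely $\sK \subseteq \sK' * \sM \subseteq \sN' * \sM \subseteq \sN * \sM$ gives $\sN \leq \sK \leq \sM$ via Lemma~\ref{lem:intermediate}(2), and $\sK' \subseteq \sN'$ with Lemma~\ref{lem:technical} gives $\rmu{\sK'}(\sK) \leq \rmu{\sN'}(\sN)$, so $(\sN,\sN') \leq (\sK,\sK') \leq (\sM,\sM')$. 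Once this isomorphism is in place, the length of $I \setminus \{\hat 0\}$ equals the length of the Boolean lattice of rank $d$, which is $d$. Since $\hat 0 \notin I$ unless $(\sN,\sN') = \hat 0$ — and if $\hat 0$ were the bottom of a genuine interval the formula would need separate treatment, but $\hat 0$ has no predecessors so any interval containing it as its bottom has the form $[\hat 0, (\sM,\sM')]$, which is not of the stated form with $\sN'$ a partial silting subcategory — we conclude $\ell(I) = d = \rk{\sN'} - \rk{\sM'}$.

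Alternatively, and perhaps more cleanly, I would induct on $d = \rk{\sN'} - \rk{\sM'}$ using Proposition~\ref{prop:length-two} directly: the base cases $d = 0$ (the interval is a point, length $0$) and $d = 1$ (covering relation, length $1$) are immediate from Proposition~\ref{prop:po-equivalent}, since when $\rk{\sN'} = \rk{\sM'} + 1$ there is no strictly intermediate pair as the only subcategories $\sK'$ with $\sM' \subseteq \sK' \subseteq \sN'$ are $\sM'$ and $\sN'$ themselves. For the inductive step, given a maximal chain in $I$, its bottom covers $(\sM,\sM')$, so by the covering analysis this bottom is $(\sB_l(\sK'_1;\sM),\sK'_1)$ with $\rk{\sK'_1} = \rk{\sM'}+1$; the remaining chain lives in $[(\sN,\sN'),(\sB_l(\sK'_1;\sM),\sK'_1)]$, which has rank-difference $d - 1$, and the induction hypothesis finishes it. One must also check that every maximal chain does arise this way and that Proposition~\ref{prop:length-two} correctly counts that length-two intervals have cardinality four (hence length two), ruling out "short" maximal chains; this is exactly what property (2) of Proposition~\ref{prop:CW} guarantees once we know the interval structure is the expected Boolean one.

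The main obstacle I anticipate is the surjectivity/well-definedness point: showing that for \emph{every} intermediate subcategory $\sK'$ (not just the four appearing in the length-two case) the completion $\sK = \sB_l(\sK';\sM)$ actually yields a silting pair lying in the interval $I$. This requires checking $\sK' \subseteq \Sigma\inv\sM * \sM$ (which follows from $\sK' \subseteq \sN'$ and $(\sN,\sN') \leq (\sM,\sM')$ via Lemma~\ref{lem:intermediate}(1)) so that the Bongartz completion is defined, and then re-running the inequality chase from Proposition~\ref{prop:length-two} in this generality — these are the same moves but need to be spelled out for arbitrary $\sK'$ rather than for the four explicit choices. Everything else is bookkeeping about Boolean lattices and their lengths, together with the careful treatment of the formally adjoined $\hat 0$, which is harmless because $\hat 0$ is never the bottom of an interval of the form $[(\sN,\sN'),(\sM,\sM')]$ with both entries genuine silting pairs.
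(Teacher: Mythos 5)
Your proposal is correct, and your primary argument takes a genuinely different route from the paper's. You establish an order-reversing bijection from the interval $I$ to the lattice of additive subcategories $\sK'$ with $\sM'\subseteq\sK'\subseteq\sN'$ (via $(\sK,\sK')\mapsto\sK'$ with inverse $\sK'\mapsto(\sB_l(\sK';\sM),\sK')$), then observe that this lattice is a Boolean lattice of rank $d=\rk{\sN'}-\rk{\sM'}$. The paper instead analyses an arbitrary maximal chain directly: after noting the partial silting subcategories along the chain are nested, it shows that $\rk{\sN'_j}-\rk{\sN'_{j+1}}=1$ at each step, since a gap of $\geq 2$ would allow an intermediate pair to be inserted (contradicting maximality) while a gap of $0$ would force $\sN'_j=\sN'_{j+1}$, hence $\sN_j=\sB_l(\sN'_j;\sN_{j+1})=\sB_l(\sN'_{j+1};\sN_{j+1})=\sN_{j+1}$ (contradicting strictness). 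Your alternative inductive sketch is essentially this argument. Your primary approach is stronger: identifying the interval as a Boolean lattice hands you the length for free and also makes the subsequent total semimodularity (Proposition~\ref{prop:totally-semimodular}) immediate, whereas the paper proves that separately.

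One step in your outline is stated too quickly: you justify order-reflection ``by the translation in Proposition~\ref{prop:po-equivalent}'', but that proposition characterises $(\sK_2,\sK'_2)\leq(\sK_1,\sK'_1)$ using the Bongartz completion $\sB_l(\sK'_2;\sK_1)$ taken relative to $\sK_1$, not relative to $\sM$. Knowing $\sK_2=\sB_l(\sK'_2;\sM)$ does not by itself give $\sK_2=\sB_l(\sK'_2;\sK_1)$. The claim is still true, but it needs the same inequality chase you already flag for surjectivity: from $\sK_1\subseteq\sK'_1*\sM\subseteq\sK_2*\sM$ and Lemma~\ref{lem:intermediate}(2) one gets $\sK_2\leq\sK_1\leq\sM$, and Lemma~\ref{lem:technical} (with $\sB'=\sK'_1\subseteq\sK'_2$) gives $\rmu{\sK'_1}(\sK_1)\leq\rmu{\sK'_2}(\sK_2)$, so $(\sK_2,\sK'_2)\leq(\sK_1,\sK'_1)$. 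So this is not a gap in substance, only a place where the direct argument rather than a citation of Proposition~\ref{prop:po-equivalent} is what actually closes the step.
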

\begin{proof}
Let $I$ be an interval of length $\ell$ and let
\[
(\sN, \sN') = (\sN_0,\sN'_0) < (\sN_1,\sN'_1) < \cdots < (\sN_{\ell},\sN'_{\ell})  = (\sM,\sM') 
\]
be a strictly increasing chain of this maximal length. Using Proposition~\ref{prop:po-equivalent} we observe that the partial silting subcategories form a nested sequence $\sN'_{\ell} \subseteq \sN'_{\ell-1} \subseteq \dots \subseteq \sN'_0$. If $\rk{\sN'_{j}} \geq \rk{\sN'_{j+1}} +2$ for some $j \in \{ 0, \dots , \ell - 1\} $ then as in Proposition~\ref{prop:length-two} we could construct intermediate silting pairs contradicting the maximality of the length of the chain.  If $\rk{\sN'_{j}} = \rk{\sN'_{j+1}}$ for some $j \in \{ 0, \dots , \ell - 1\} $, then the inclusion forces $\sN'_{j} = \sN'_{j+1}$. It then follows that
\[
\sN_j = \sB_l(\sN'_j; \sN_{j+1}) = \sB_l(\sN'_{j+1}; \sN_{j+1}) = \sN_{j+1}
\]
where the first equality comes from Proposition~\ref{prop:po-equivalent} and the third one from Lemma~\ref{lem:trivial-completion}. However this would contradict the strictly increasing property of the chain. Therefore the only possibility is that $\rk{\sN'_{j}} = \rk{\sN'_{j+1}}+1$ for all $j \in \{ 0, \dots , \ell - 1\} $. It follows that $\rk{\sN'}-\rk{\sM'} = \rk{\sN'_0}-\rk{\sN'_\ell} = \ell$.
\end{proof}

Combining Proposition~\ref{prop:length-two} and Lemma~\ref{lem:intlength} immediately gets us the following corollary.

\begin{corollary}
\label{cor:length-two}
Every interval in the poset $\posetpair(\sD)$ of length two has cardinality four.
\end{corollary}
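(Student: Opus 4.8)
The plan is to prove Corollary~\ref{cor:length-two} by splitting a length-two interval $[x,y]$ in $\posetpair(\sD)$ into two cases, according to whether $x$ is a genuine silting pair or the formally adjoined bottom element $\hat0$.

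If $x=(\sN,\sN')$ and $y=(\sM,\sM')$ are genuine silting pairs, the statement is an immediate consequence of the two results preceding the corollary. By Lemma~\ref{lem:intlength} the interval has length two exactly when $\rk{\sN'}=\rk{\sM'}+2$, and in that case Proposition~\ref{prop:length-two} says there are precisely four silting pairs $(\sK,\sK')$ with $(\sN,\sN')\le(\sK,\sK')\le(\sM,\sM')$. These four pairs are exactly the elements of $[x,y]$, so $[x,y]$ has cardinality four.

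If $x=\hat0$, I would argue as follows. First, the atoms of $\posetpair(\sD)$ are exactly the pairs $(\sK,\sK)$: on one hand, if $(\sN,\sN')\le(\sK,\sK)$ then Proposition~\ref{prop:po-equivalent} forces $\sK\subseteq\sN'\subseteq\sN$, so a rank count gives $\sN'=\sN=\sK$ and $(\sK,\sK)$ is minimal among silting pairs; on the other hand, any atom is a minimal silting pair $(\sK,\sK')$, and $(\sK,\sK)\le(\sK,\sK')$ forces $\sK'=\sK$. Consequently every maximal chain from $\hat0$ to $y=(\sM,\sM')$ has the form $\hat0\lessdot(\sK,\sK)\le\dots\le(\sM,\sM')$ with $(\sK,\sK)$ an atom, and Lemma~\ref{lem:intlength} applied to the genuine interval $[(\sK,\sK),(\sM,\sM')]$ gives it length $\rho-\rk{\sM'}$, where $\rho\coloneqq\rk{K_0(\sD)}$. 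Hence $[\hat0,y]$ has length two precisely when $\rk{\sM'}=\rho-1$, i.e.\ when $\sM'$ is an almost complete silting subcategory of $\sM$. For such $y$, the elements of $[\hat0,y]$ are $\hat0$, the pair $y$, and the atoms $(\sK,\sK)$ below it; by Proposition~\ref{prop:po-equivalent} (the condition $\sB_l(\sK;\sM)=\sK$ there being automatic, since a silting subcategory admits no proper silting subcategory) together with Lemma~\ref{lem:intermediate}(1), these atoms correspond to the silting subcategories $\sK$ with $\sM'\subseteq\sK$ and $\Sigma\inv\sM\le\sK\le\sM$. Using Lemma~\ref{lem:intersection-mutation}, Lemma~\ref{lem:intermediate}(3) and the uniqueness of Bongartz completions (the remark after Lemma~\ref{lem:trivial-completion}), these are precisely the silting subcategories lying in $[\rmu{\sM'}(\sM),\sM]$, and since $\sM'$ is almost complete there are exactly two of them, namely $\sM$ and $\rmu{\sM'}(\sM)$. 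So $[\hat0,y]$ has cardinality four.

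The only input that is not already contained in the excerpt is the last step of the $\hat0$-case: that an almost complete silting subcategory $\sM'\subset\sM$ has exactly two completions to a silting subcategory inside $[\Sigma\inv\sM,\sM]$ — equivalently, that $\rmu{\sM'}(\sM)$ is covered by $\sM$ in $\posetsilt(\sD)$. This is classical silting mutation theory \cite{AI}, and I expect it to be the only (minor) obstacle; everything else is either a direct application of Lemma~\ref{lem:intlength} and Proposition~\ref{prop:length-two} or a routine rank computation.
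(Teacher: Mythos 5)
Your split into the two cases $x=(\sN,\sN')$ and $x=\hat0$ is exactly the right thing to do, and your Case~1 is precisely the paper's entire argument: the paper's proof of Corollary~\ref{cor:length-two} consists only of the sentence ``Combining Proposition~\ref{prop:length-two} and Lemma~\ref{lem:intlength} immediately gets us the following corollary.'' Since both Proposition~\ref{prop:length-two} and Lemma~\ref{lem:intlength} are stated with both endpoints being genuine silting pairs, they do not, as stated, say anything about intervals of the form $[\hat0,y]$. Yet condition~(2) of Proposition~\ref{prop:CW} (and the underlying result of Bj\"orner that it invokes) does require \emph{every} length-two interval, including those with bottom $\hat0$, to have cardinality four. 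So you have correctly identified a case that the paper's proof glosses over.

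Your treatment of the $\hat0$-case is sound. The identification of atoms with pairs $(\sK,\sK)$, the computation (via Lemma~\ref{lem:intlength} applied to $[(\sK,\sK),(\sM,\sM')]$) that $[\hat0,(\sM,\sM')]$ has length $1+\rk K_0(\sD)-\rk{\sM'}$, hence length two exactly when $\sM'$ is almost complete in $\sM$, and the reduction of the atom count to counting silting subcategories in the silting interval $[\rmu{\sM'}(\sM),\sM]$ (using Proposition~\ref{prop:po-equivalent}, Lemma~\ref{lem:intersection-mutation}, Lemma~\ref{lem:intermediate}(3) and the Bongartz minimality remark) are all correct. You are also right that the final input --- that an irreducible right mutation $\rmu{\sM'}(\sM)$ of a silting subcategory $\sM$ is a cover of $\sM$ in $\posetsilt(\sD)$ --- is the one fact not contained in the excerpt; it is indeed classical, appearing as \cite[Theorem~2.35]{AI} (irreducible silting mutations are exactly the covering relations in the poset of silting subcategories). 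So your proof is correct and, by explicitly dealing with $\hat0$ and naming the extra ingredient it needs, is in fact slightly more careful than the paper's one-line justification.
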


\subsection{Total semimodularity } \label{sec:finite-and-totally-semimodular}

In this subsection we assume that $\sD$ satisfies all of the hypotheses from Theorem~\ref{thm:CW-poset}.
\begin{lemma} \label{cor:finite}
For every $(\sK,\sK') \in \posetpair$, the interval $[\hat 0,(\sK,\sK')]$ is finite.
\end{lemma}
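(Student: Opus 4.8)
The plan is to show that the interval $[\hat 0,(\sK,\sK')]$ is finite by bounding the number of silting pairs below $(\sK,\sK')$. Since the bottom element $\hat 0$ contributes just one extra element, it suffices to bound the set of genuine silting pairs $(\sN,\sN')$ with $(\sN,\sN') \leq (\sK,\sK')$. First I would invoke Proposition~\ref{prop:po-equivalent}: the inequality $(\sN,\sN') \leq (\sK,\sK')$ forces $\sK' \subseteq \sN' \subseteq \Sigma\inv \sK * \sK$ and $\sN = \sB_l(\sN';\sK)$, so the silting pair $(\sN,\sN')$ is completely determined by the partial silting subcategory $\sN'$ sitting between $\sK'$ and $\Sigma\inv\sK*\sK$. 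Thus the whole interval injects into the set of partial silting subcategories $\sN'$ with $\sK' \subseteq \sN'$ that arise as the partial-silting part of some silting pair below $(\sK,\sK')$, and it is enough to show there are only finitely many such $\sN'$.

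The key step is then to bound the indecomposable objects that can occur as summands of an additive generator of such an $\sN'$. Every such $\sN'$ is contained in a silting subcategory lying in the interval $[\Sigma\inv \sK, \sK]$ of $\posetsilt(\sD)$: indeed, by Proposition~\ref{prop:po-equivalent} again, $\sN' \subseteq \sN = \sB_l(\sN';\sK)$ and the Bongartz completion $\sB_l(\sN';\sK)$ satisfies $\Sigma\inv\sK \leq \sB_l(\sN';\sK) \leq \sK$ (this is recorded in the remark following Lemma~\ref{lem:trivial-completion}, or follows from $\sN \subseteq \sN'*\sK$ via Lemma~\ref{lem:intermediate}(2)). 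By the hypothesis of Theorem~\ref{thm:CW-poset}, the interval $[\Sigma\inv\sK,\sK]$ in $\posetsilt(\sD)$ is finite, say it consists of silting subcategories $\sM_1,\ldots,\sM_p$. Since $\sD$ is Krull--Schmidt and each $\sM_i$ is additively generated by a basic silting object with finitely many indecomposable summands, the set $\Omega \coloneqq \bigcup_{i=1}^p \ind{\sM_i}$ of indecomposable objects appearing in this interval is finite. Every admissible $\sN'$ satisfies $\ind{\sN'} \subseteq \Omega$, because $\sN' \subseteq \sN = \sM_i$ for some $i$.

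Finally, a partial silting subcategory is determined (as a subcategory closed under direct summands and finite coproducts) by its set of indecomposable objects, and there are only finitely many subsets of the finite set $\Omega$; hence there are only finitely many possibilities for $\sN'$, and therefore only finitely many silting pairs $(\sN,\sN') \leq (\sK,\sK')$. Adjoining $\hat 0$ keeps the count finite, so $[\hat 0,(\sK,\sK')]$ is finite. I expect the only subtlety — hardly an obstacle — to be the bookkeeping that confirms $\sN$ is genuinely recovered as $\sB_l(\sN';\sK)$ and lands in $[\Sigma\inv\sK,\sK]$; everything else is a straightforward finiteness-of-subsets argument once the reduction to the partial-silting part is in place via Proposition~\ref{prop:po-equivalent}.
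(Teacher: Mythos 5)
Your proof is correct and rests on the same two pillars as the paper's: the hypothesis that the interval $[\Sigma\inv\sK,\sK]$ in $\posetsilt(\sD)$ is finite, and the observation that each silting subcategory in this interval has only finitely many partial silting subcategories (since it is additively generated by a basic object with finitely many indecomposable summands). The paper simply counts pairs $(\sN,\sN')$ with $\sN$ in the finite interval and $\sN'$ a partial silting subcategory of $\sN$, then notes $[\hat 0,(\sK,\sK')]$ is a subinterval of $[\hat 0,(\sK,0)]$, whereas you go through Proposition~\ref{prop:po-equivalent} to recover $\sN=\sB_l(\sN';\sK)$ from $\sN'$ — a slightly more elaborate bookkeeping that yields the same finiteness.
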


\begin{proof}
By the hypotheses we know that there are a finite number of silting subcategories $\sN$ such that $\Sigma\inv \sK \leq \sN \leq \sK$. Since each of these silting subcategories has a finite number of different partial silting subcategories, there are at most a finite number of silting pairs $(\sN, \sN')$ such that $\Sigma\inv \sK \leq \rmu{\sN'}(\sN) \leq \sN \leq \sK$. In other words the interval $[\hat 0,(\sK,0)]$ is finite. It follows that $[\hat 0,(\sK,\sK')]$ is finite, since this is a subinterval.
\end{proof}

\begin{proposition} \label{prop:totally-semimodular}
Every interval $[x,y] \subseteq \posetpair$ is totally semimodular.
In particular, the interval $[\hat 0,(\sK,\sK')]$ is totally semimodular for every $(\sK,\sK') \in \posetpair$.
\end{proposition}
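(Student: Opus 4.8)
The plan is to reduce Proposition~\ref{prop:totally-semimodular} to the already-established combinatorial facts about $\posetpair$, namely Corollary~\ref{cor:length-two} (every length-two interval has cardinality four) and Lemma~\ref{cor:finite} (every interval $[\hat0,(\sK,\sK')]$ is finite), together with the rank function $\rk{(-)}$ on partial silting subcategories. Since total semimodularity of $[x,y]$ means all subintervals are semimodular, and semimodularity is a property of finite bounded posets, I would first observe that every interval $[x,y]\subseteq\posetpair$ is finite: if $x=\hat0$ this is Lemma~\ref{cor:finite}, and otherwise $[x,y]$ sits inside $[\hat0,y]$ which is finite. So it suffices to show that \emph{every} interval $[x,y]$ is semimodular, because then every subinterval of every interval is semimodular too, giving total semimodularity directly; the ``in particular'' clause is then immediate.

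The heart of the argument is verifying semimodularity: given distinct $u,v$ both covering some $w$ in an interval $[x,y]$, produce a $z$ covering both. By Lemma~\ref{lem:intlength}, covering relations in $\posetpair$ between genuine silting pairs are exactly the rank-one steps, i.e.\ $(\sN,\sN')$ covers $(\sM,\sM')$ iff $(\sN,\sN')\leq(\sM,\sM')$ and $\rk{\sN'}=\rk{\sM'}+1$ (the covers of $\hat0$ being the rank-one silting pairs, which by Proposition~\ref{prop:length-two}-style reasoning one treats separately or subsumes by noting $\rk{(-)}$ on the maximal pair $(\sM,\sM)$ is, say, normalised so that $\hat0$ sits one rank below). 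Writing $w=(\sW,\sW')$, $u=(\sU,\sU'')$, $v=(\sV,\sV'')$ with $\sU''$ and $\sV''$ obtained from $\sW'$ by dropping one indecomposable each, one has $\rk{\sU''}=\rk{\sV''}=\rk{\sW'}-1$, and I would set $z=(\sZ,\sZ')$ with $\sZ'\coloneqq\sU''\cap\sV''$, so $\rk{\sZ'}=\rk{\sW'}-2$, and $\sZ\coloneqq\sB_l(\sZ';\sM_{\mathrm{top}})$ the left Bongartz completion inside the ambient silting subcategory of the top pair $y=(\sM,\sM')$, exactly as in the proof of Proposition~\ref{prop:length-two}. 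One then checks, using Corollary~\ref{cor:upperbound} (with $I$ the two pairs $(\sW,\sU'')$ and $(\sW,\sV'')$, whose intersection of second components is $\sZ'$) together with Proposition~\ref{prop:po-equivalent}, that $z\leq u$, $z\leq v$ and $z\geq x$, so $z\in[x,y]$; and since $\rk{\sZ'}=\rk{\sU''}-1=\rk{\sV''}-1$, Lemma~\ref{lem:intlength} forces $z$ to be covered by both $u$ and $v$. When $w=\hat0$ the two covers $u,v$ are distinct rank-one silting pairs $(\sU,\sU'')$, $(\sV,\sV'')$; here one must exhibit a common cover, which after adjoining $\hat0$ amounts to finding a rank-two pair $z$ with $z\leq u$, $z\leq v$ — and one takes $z=(\sZ,\sZ')$ with $\sZ'=\add(\sU''\cup\sV'')$ (rank two), $\sZ=\sB_l(\sZ';\sM_{\mathrm{top}})$, again invoking Corollary~\ref{cor:upperbound} to land below both $u$ and $v$.

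The main obstacle I anticipate is the boundary bookkeeping around the formally adjoined element $\hat0$: one has to make sure the ``rank'' convention makes $\hat0$ behave as a rank-$0$ element covered exactly by the rank-one silting pairs, and that the semimodularity diamond closes when $w=\hat0$ even though $\hat0$ is not a genuine silting pair and so Proposition~\ref{prop:po-equivalent} and Lemma~\ref{lem:technical} do not literally apply at $\hat0$ itself. The cleanest route is probably to prove semimodularity of all intervals \emph{not} containing $\hat0$ as their bottom by the Bongartz-completion/Corollary~\ref{cor:upperbound} argument above (this is essentially a re-run of Proposition~\ref{prop:length-two} without the rank-two hypothesis), and then handle $[\hat0,y]$ by a short separate argument: its length-two sub-configuration at the bottom is controlled by the rank-one silting pairs, and a common upper cover for two of them is constructed exactly as in the $w=\hat0$ case just described. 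Everything else — finiteness, boundedness of the relevant intervals, and the translation ``cover $\Leftrightarrow$ rank jumps by one'' — is already in hand from Lemma~\ref{lem:intlength}, Corollary~\ref{cor:length-two} and Lemma~\ref{cor:finite}, so the proof should be short modulo this care at $\hat0$.
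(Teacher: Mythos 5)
Your core construction agrees with the paper's: take the intersection $\sZ'=\sU''\cap\sV''$ of the two partial silting subcategories and left-Bongartz-complete with respect to the top silting subcategory of the ambient interval (the paper writes $\sB'=\sU'\cap\sV'$ and $\sB=\sB_l(\sB';\sY)$). However, your write-up systematically reverses the direction of the order and the covering relation, and this is not a harmless slip here because the whole point is to land on the correct side of $u$ and $v$. Concretely: by Lemma~\ref{lem:intlength}, it is $(\sM,\sM')$ that covers $(\sN,\sN')$ when $(\sN,\sN')\leq(\sM,\sM')$ and $\rk\sN'=\rk\sM'+1$, not the other way round as you state; and semimodularity requires a $z$ that \emph{covers} $u$ and $v$, i.e.\ $u\leq z$ and $v\leq z$, whereas you repeatedly write ``$z\leq u$'' and ``$z$ to be covered by both.'' The construction you propose does give $u\leq z$ and $v\leq z$ (since $\rk\sZ'$ drops, $z$ sits above), so what you built is correct, but the stated inequalities are all inverted, and so is your description of the atoms.

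The choice of tool is also off: you invoke Corollary~\ref{cor:upperbound} with $I=\{(\sW,\sU''),(\sW,\sV'')\}$, but that corollary only compares right mutations of a \emph{fixed} silting object $\sK$ against a test object; it yields information about $\rmu{\sZ'}(\sW)$, not about $\rmu{\sZ'}(\sZ)$ where $\sZ=\sB_l(\sZ';\sY)$, which is what you need to compare with $\rmu{\sU''}(\sU)$. The paper instead uses Lemma~\ref{lem:technical} (with $\sB'\subseteq\sU'$, $\sB=\sB_l(\sB';\sY)$) precisely to obtain $\rmu{\sB'}(\sB)\leq\rmu{\sU'}(\sU)$; that lemma is tailored to the Bongartz-completed object, and Corollary~\ref{cor:upperbound} cannot be substituted for it without an extra argument. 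You also still need the minimality half of the covering property (why nothing sits strictly between $u$ and $z$), which the paper establishes by a rank count showing $\ind{\sU'}\setminus\ind{\sB'}$ is a single object; this step is absent from your sketch.

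Finally, the treatment of $\hat0$ is incorrect. The atoms of $\posetpair$ (the covers of $\hat0$) are the pairs $(\sN,\sN)$, whose partial silting subcategory has \emph{maximal} rank $\rk K_0(\sD)$, not ``rank one'' as you say; and your proposed $\sZ'=\add(\sU''\cup\sV'')$ does not make sense as a partial silting subcategory of any single silting subcategory when $\sU''=\sU\neq\sV=\sV''$. (You were right to flag this boundary case as delicate --- note that the paper's own proof only treats $a,x,y$ that are genuine silting pairs --- but the repair you offer goes the wrong way, both because of the direction inversion and because it replaces intersection with union.)
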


\begin{proof}
Let  $a=(\sA,\sA')$, $x=(\sX,\sX')$ and $y=(\sY,\sY')$ be silting pairs with $a \in [x,y] \subseteq \posetpair$. By definition, we have
\[
\rmu{\sY'}(\sY) \leq \rmu{\sA'}(\sA) \leq \rmu{\sX'}(\sX) \leq \sX \leq \sA \leq \sY.
\]
Suppose that $u=(\sU,\sU')$ and $v=(\sV,\sV')$ are silting pairs which cover $a$ and lie in the interval $[x,y]$.
Since $a < u$ and $a<v$ in $\posetpair$, by definition we have the inequalities
\[
\rmu{\sU'}(\sU)  \leq \rmu{\sA'}(\sA) \leq \sA \leq \sU
\quad \textrm{and} \quad
\rmu{\sV'}(\sV)  \leq \rmu{\sA'}(\sA) \leq \sA \leq \sV.
\]

Set $\sB'\coloneqq \sU' \cap \sV'$, $\sB\coloneqq \sB_l(\sB';\sY)$ and write $b=(\sB,\sB')$. We claim that $u \leq b$; the corresponding claim $v \leq b$ is analogous. Since $\sB' \subseteq \sU'$, the argument of Proposition~\ref{prop:length-two} shows that $\sU \leq \sB \leq \sY$.
Now, since $\sB' \subseteq \sU'$, apply Lemma~\ref{lem:technical} to see that $\rmu{\sB'}(\sB) \leq \rmu{\sU'}(\sU)$, giving the desired chain of inequalities. Thus $u \leq b$.

We have $\sY'\subseteq \sU'$ and $\sY' \subseteq \sV'$ by Proposition~\ref{prop:po-equivalent}, thus $\sY' \subseteq \sB'$. Analogously, we obtain $\sB' \subseteq \Sigma\inv \sY *  \sY$. Thus, by Proposition~\ref{prop:po-equivalent}, we have $(\sB,\sB') \leq (\sY,\sY')$, i.e $b \leq y$. Now $x\leq u$ and $u \leq b$, so by transitivity of $\leq$, we get $x \leq b \leq y$, i.e.\ $b \in [x,y]$.

Now only the minimality aspect of the covering property remains.
Suppose $u \leq w \leq b$ for some $w=(\sW,\sW')$, i.e.\ $(\sU,\sU') < (\sW,\sW') \leq (\sB,\sB')$. By Proposition~\ref{prop:po-equivalent}, $\sB' \subseteq \sW' \subseteq \sU'$.
Now suppose $\rk \sA' = \rho$, then $\rk \sU' = \rk \sV' = \rho - 1$. Since $(\sU,\sU') \neq (\sV,\sV')$, it follows that $\sU' \neq \sV'$. Hence, $\rk \sB' = \rho - 2$.
Hence $\ind{\sU'}\setminus\ind{\sB'}$ consists of only one object, up to isomorphism. In particular, $\sW'=\sB'$ or $\sW'=\sU'$.

Suppose $\sW'=\sB'$ and consider $(\sW,\sW') \leq (\sB,\sB')$. Then by Proposition~\ref{prop:po-equivalent}, we have $\sW = \sB_l(\sW';\sB) = \sB_l(\sB';\sB) = \sB$, where the last equality follows from Lemma~\ref{lem:trivial-completion}.
Applying the same argument to $(\sU,\sU') \leq (\sW,\sW')$ in the case that $\sW' = \sU'$ giving $\sW = \sU$. Hence the minimality of the covering property holds for $u$; analogously for $v$.
\end{proof}

Putting Lemma~\ref{cor:finite}, Proposition~\ref{prop:totally-semimodular} and Corollary~\ref{cor:length-two} together yields Theorem~\ref{thm:CW-poset}.

\subsection{Finiteness}
We now prove the finiteness condition from Theorem~\ref{thm:CW-poset} holds for the bounded derived categories of derived discrete algebras of finite global dimension. In fact, we actually prove something stronger, which we shall use again later. The following proposition is a converse to Lemma~\ref{lem:total_order} in the case that $\sD = \Db(\LLambda)$ is the derived category of a derived-discrete algebra.

\begin{proposition} \label{prop:finiteness}
Let $\sM$ be a silting subcategory of $\Db(\LLambda)$ and $k \in \IN$. The set of silting subcategories $\sN$ such that $\Sigma^{-k} \sM \leq \sN \leq \sM$ is finite.
\end{proposition}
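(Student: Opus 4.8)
The plan is to reduce the statement to a concrete, finite bookkeeping problem using the explicit description of the AR quiver of $\Db(\LLambda)$ recalled in Section~\ref{sec:discrete}, in particular the action of $\Sigma$ on the $\cZ$-components and the Hom-hammocks of Proposition~\ref{prop:Z-hammocks}. By Lemma~\ref{lem:equivalent-po}, the condition $\Sigma^{-k}\sM \leq \sN \leq \sM$ is equivalent to the co-aisle containment $\cosusp\Sigma^{-k}\sM \subseteq \cosusp\sN \subseteq \cosusp\sM$, equivalently $\susp\sM \subseteq \susp\sN \subseteq \susp\Sigma^{-k}\sM$. Since any silting subcategory of $\Db(\LLambda)$ is additively generated by a basic silting object $N$ with $\rk K_0 = n+m$ indecomposable summands (Proposition~\ref{prop:silt-properties}), it suffices to show there are only finitely many possible indecomposable summands $N_i$, i.e.\ that all indecomposable summands of any such $\sN$ lie in a fixed finite subset of $\ind\Db(\LLambda)$ depending only on $\sM$ and $k$.

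The key step is a ``boundedness of summands'' argument. First I would use Lemma~\ref{lem:Z-in-silting}: every silting subcategory contains an indecomposable object from some $\cZ$ component, so $\sN$ contains some $Z = Z^k_{ij}$. Using Proposition~\ref{prop:embedding-A}, $\sZ = \thick{}{Z}$ is functorially finite with $\sZ\orth \simeq \Db(\kk A_{n+m-1})$, and silting reduction (Theorem~\ref{thm:silt-red}) lets me peel off this $\cZ$-object and reduce to counting silting subcategories of a derived category of type $A$, where the ambient poset of silting subcategories in any bounded co-aisle interval is already known to be finite (this is the derived-discrete, indeed representation-finite hereditary, case). The finiteness of the interval $[\Sigma^{-k}\sM,\sM]$ in the $A_{n+m-1}$ factor is classical; the remaining issue is to control how many choices of the $\cZ$-object $Z$ are compatible with being bounded between $\Sigma^{-k}\sM$ and $\sM$. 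Here I would invoke the co-aisle inequality directly: $Z \in \susp\sM$ forces, via Proposition~\ref{prop:Z-hammocks} and the explicit $\Sigma$-action, an upper bound on the coordinates $(i,j)$ of $Z$ (it cannot be ``too negative''), while $\susp\sN \subseteq \susp\Sigma^{-k}\sM$ — equivalently $\Hom^{>0}(\Sigma^{-k}\sM, Z)$ need not vanish but $Z \in \susp\Sigma^{-k}\sM$ — forces a lower bound on $(i,j)$ (it cannot be ``too positive''). Since the $\cZ$-components are of type $\IZ A_\infty^\infty$ and there are only $r$ of them, a two-sided bound on $(i,j)$ leaves finitely many choices of $Z$.

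Concretely, the steps in order are: (1) translate the hypothesis into co-aisle containments via Lemma~\ref{lem:equivalent-po}; (2) fix a basic silting object $M$ for $\sM$ and note every silting subcategory in the interval is $\add(N)$ for a basic $N$ with $n+m$ summands; (3) using Lemma~\ref{lem:Z-in-silting}, isolate a $\cZ$-summand $Z$ of each such $N$; (4) use the Hom-hammocks (Proposition~\ref{prop:Z-hammocks}) together with $Z\in\susp\sM = \susp M$ and $Z\in\susp\Sigma^{-k}\sM$ to derive explicit two-sided bounds on the AR-coordinates of $Z$, hence finitely many choices for $Z$; (5) for each such $Z$, apply silting reduction (Theorem~\ref{thm:silt-red} with $\sU = \thick{}{Z}$, using Proposition~\ref{prop:embedding-A}) to biject the silting subcategories of $\Db(\LLambda)$ containing $Z$ with those of $\sD/\sU \simeq \Db(\kk A_{n+m-1})$; (6) check that the interval condition $\Sigma^{-k}\sM\leq\sN\leq\sM$ maps to a bounded co-aisle interval downstairs, and cite the (classical) finiteness of such intervals for the type $A$ derived category; (7) conclude that the total set of admissible $\sN$ is a finite union of finite sets.

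The main obstacle I anticipate is step (4)–(6): making the reduction via silting reduction interact cleanly with the partial order, i.e.\ verifying that the image of the interval $[\Sigma^{-k}\sM,\sM]$ under the bijection $\sN\mapsto F(\sN)$ of Theorem~\ref{thm:silt-red} is itself contained in a bounded interval $[\Sigma^{-k'}\overline{\sM},\overline{\sM}]$ of $\posetsilt(\sD/\sU)$ for some finite $k'$. The subtlety is that $F$ is only defined for silting subcategories \emph{containing} the fixed $\cZ$-object $Z$, whereas different $\sN$ in the interval may contain different $\cZ$-objects; this is precisely why step (4)'s a priori bound on $Z$ is essential — it reduces everything to finitely many applications of silting reduction, after which the finiteness of each piece follows from the well-understood representation theory of $\kk A_{n+m-1}$. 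A secondary technical point, handled by the Hom-hammock computation, is ruling out ``escape to infinity'' of the $\cX$- and $\cY$-summands of $N$; but since those components are of type $\IZ A_\infty$ and the co-aisle inequalities again pin them between two translates of $\sM$, the same two-sided bounding argument applies verbatim there.
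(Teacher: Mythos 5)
Your plan is structurally the same as the paper's proof: both isolate a $\cZ$-summand of $\sN$ via Lemma~\ref{lem:Z-in-silting}, bound its AR-coordinates using the Hom-hammocks of Proposition~\ref{prop:Z-hammocks}, and then control the remaining summands by silting reduction at $\thick{}{Z}$ (Theorem~\ref{thm:silt-red} plus Proposition~\ref{prop:embedding-A}) down to the representation-finite hereditary case $\Db(\kk A_{n+m-1})$. The technical points you flag as needing care are exactly those the paper works out; in particular, for bounding the suspension exponent of the $\cZ$-object the paper uses boundedness of the aisle for a lower bound and a Serre/Nakayama-duality argument for an upper bound, a step your proposal leaves unproven.

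There is, however, one concrete slip in your step (4). From $\sN \leq \sM$, Lemma~\ref{lem:equivalent-po} gives $\susp\sM \subseteq \susp\sN$, which places $\sM$ (not $\sN$) inside $\susp\sN$; it does \emph{not} give $N_0 \in \susp\sM$ — that inclusion would require the opposite inequality $\sM \leq \sN$. Moreover, $N_0 \in \susp\Sigma^{-k}\sM$ alone is no constraint from the upper side, since $\susp\sM \subseteq \susp\Sigma^{-k}\sM$. The two-sided constraint you actually have is $N_0 \in \cosusp\sM$ (from $\cosusp\sN \subseteq \cosusp\sM$) together with $N_0 \in \susp\Sigma^{-k}\sM$ (from $\susp\sN \subseteq \susp\Sigma^{-k}\sM$). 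With this correction, and with an explicit argument pinning down the suspension of $Z$ (e.g., as in the paper, using boundedness of $(\sX_{\sM},\sY_{\sM})$ for one side and Serre duality for the other), your outline matches the paper's proof.
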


\begin{remark}
By \cite[Proposition 3.8]{Aihara}, Proposition~\ref{prop:finiteness} means that $\Db(\LLambda)$ is silting-discrete.
\end{remark}

\begin{proof}[Proof of Proposition~\ref{prop:finiteness}]
To avoid minus signs we actually prove the equivalent statement that the set of silting subcategories $\sN$ such that $\sM \leq \sN \leq \Sigma^k \sM$ is finite.
By Lemma~\ref{lem:Z-in-silting}, there are indecomposable objects $M_0 \in \sM$ and $N_0 \in \sN$ which lie in the $\cZ$ component of the AR quiver of $\Db(\Lambda)$. The inequalities $\sM \leq \sN \leq \Sigma^k \sM$ together imply that $\Hom(M_0,\Sigma^i N_0) =0= \Hom(N_0,\Sigma^{k+i} M_0)$ for all $i>0$.
Using Proposition~\ref{prop:Z-hammocks} to examine the Hom-hammocks for $\Hom(\Sigma^{-i} M_0,\blank)$ and $\Hom(\blank,\Sigma^{k+i}M_0)$ for $i>0$ excludes indecomposable objects in the shaded regions in Figure~\ref{fig:finiteness} from being possible candidates for $N_0$.

\begin{figure} \label{fig:finiteness}
 \parbox{\textwidth}{
    \parbox{0.3\textwidth}{
      \centerline{$\cZ^0$}
      \includegraphics[width=0.3\textwidth]{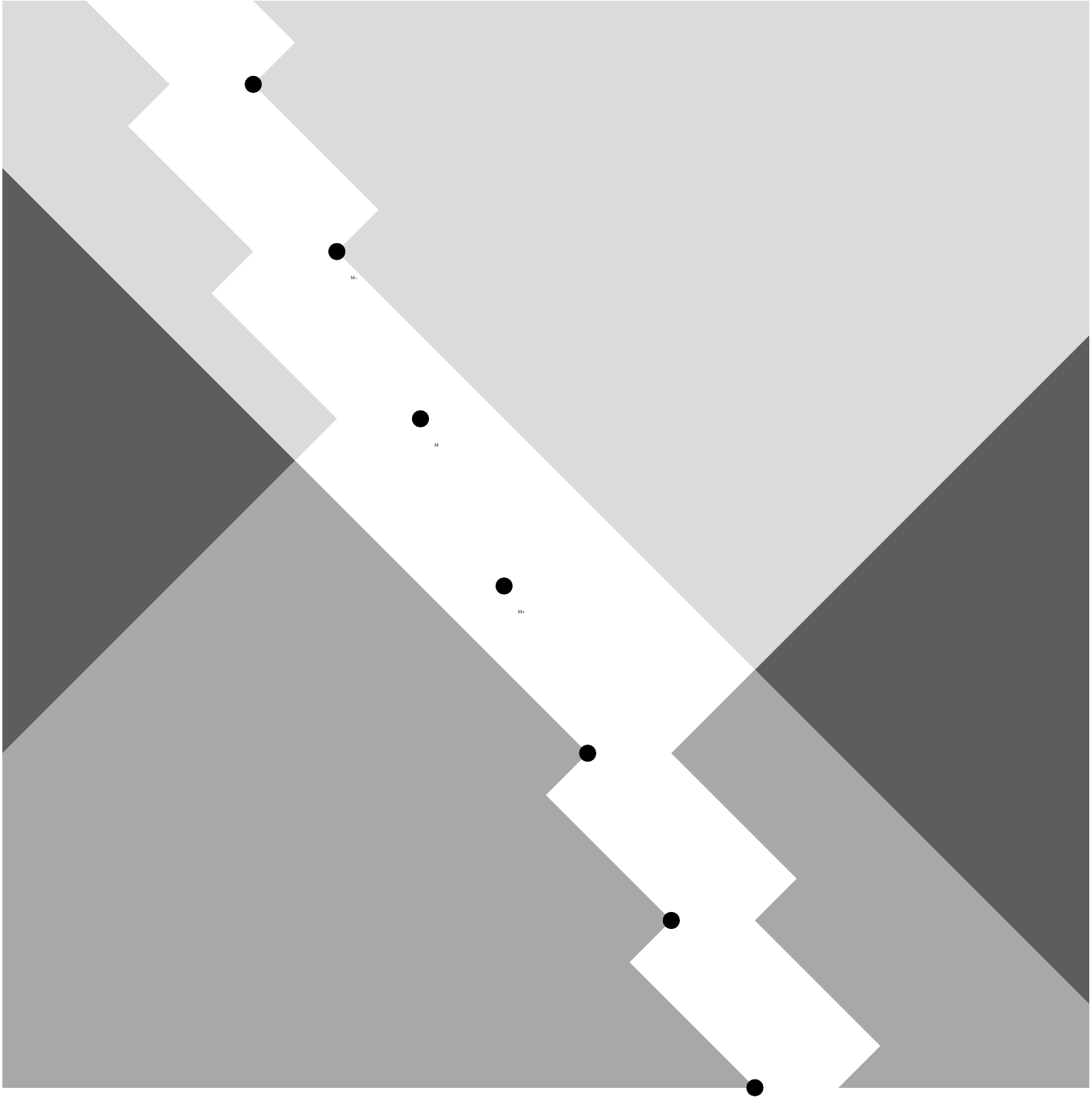}
    }
    \hfill
    \parbox{0.3\textwidth}{
      \centerline{$\cZ^1$}
      \includegraphics[width=0.3\textwidth]{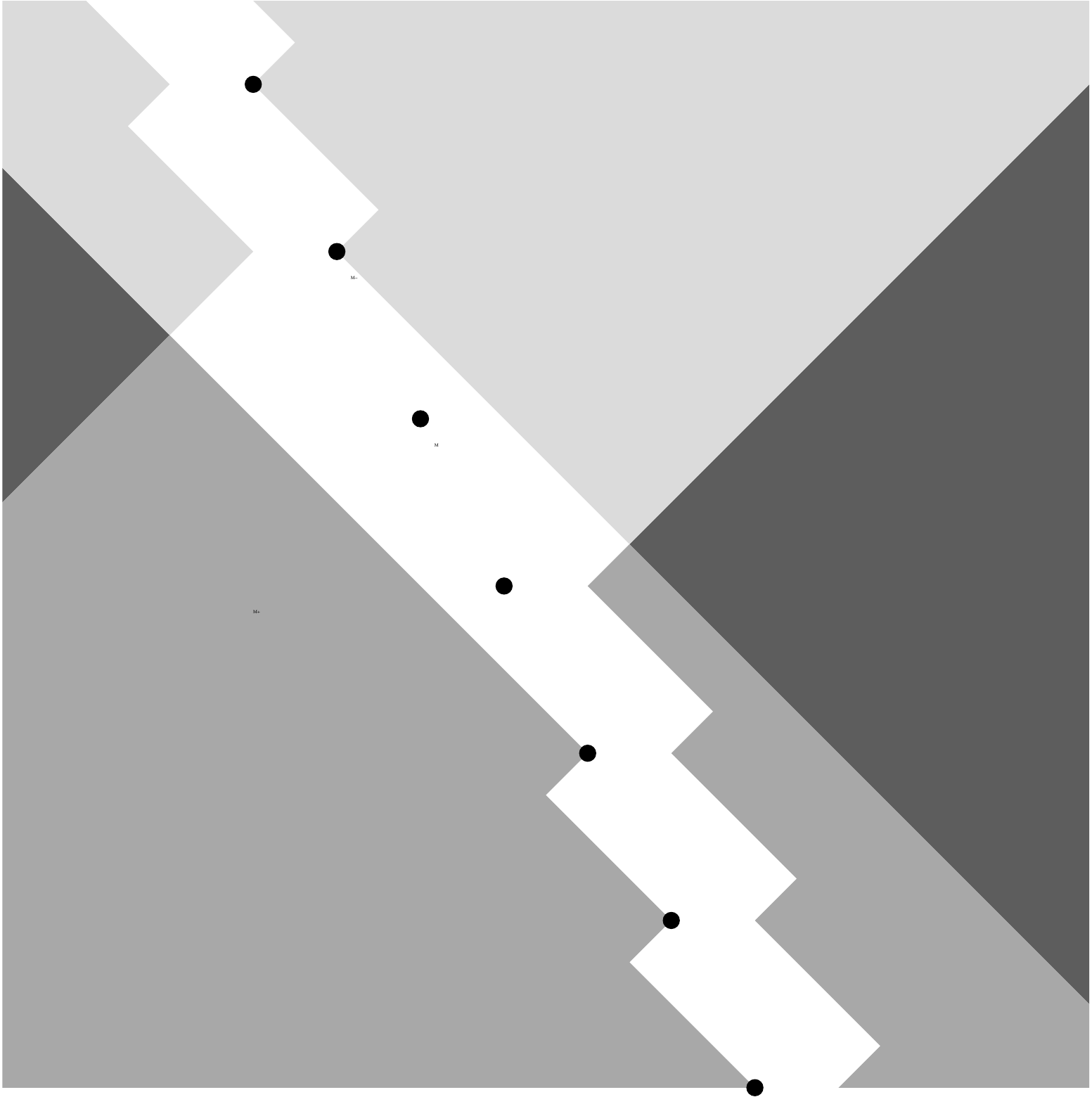}
    }
    \hfill
    \parbox{0.3\textwidth}{
      \centerline{$\cZ^2\cdots\cZ^{r-1}$}
      \includegraphics[width=0.3\textwidth]{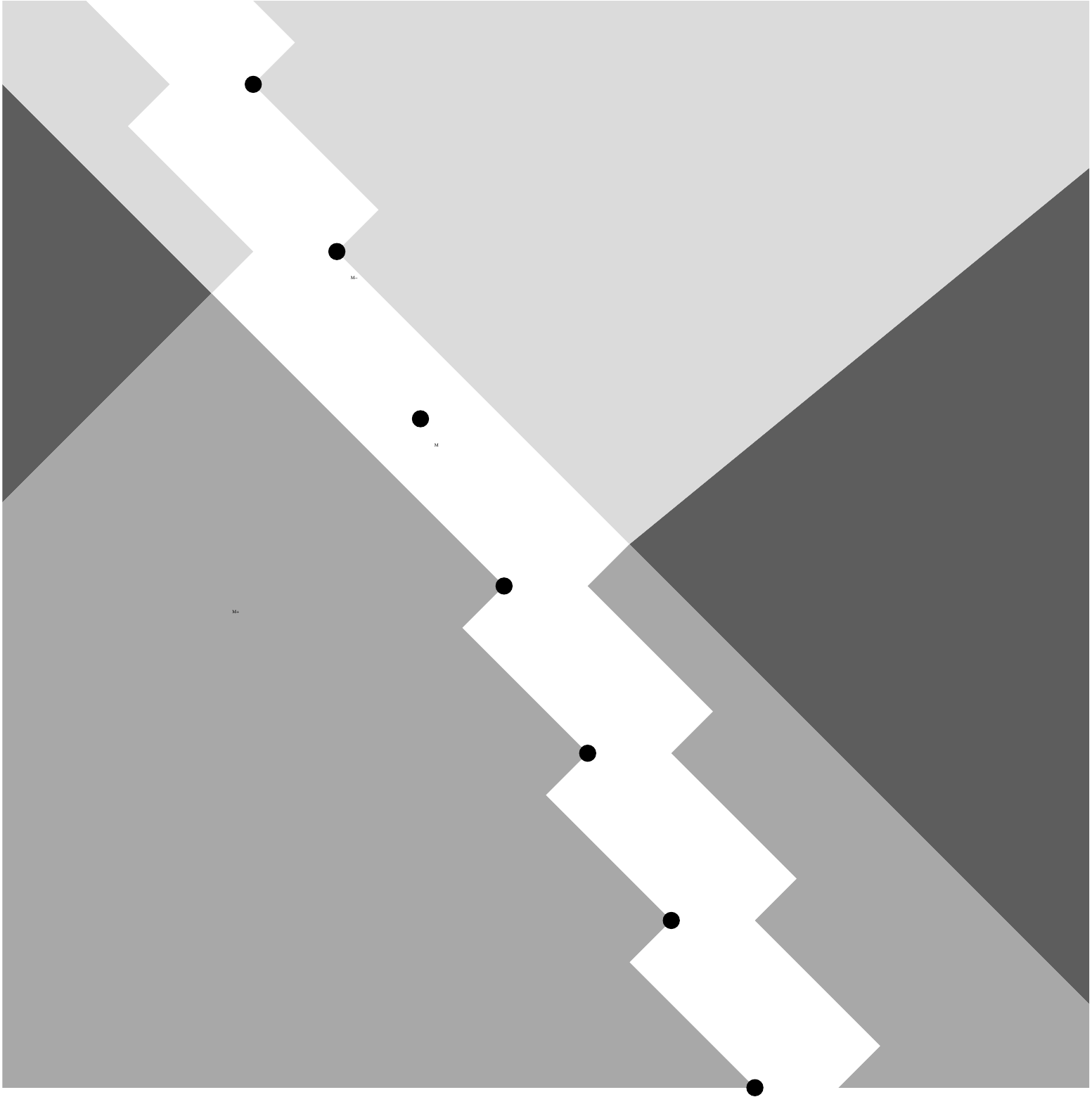}
    }
 }
  \caption{\label{fig:prop-finiteness}
           For the proof of Proposition~\ref{prop:finiteness}. \newline
           Hammocks $\Hom(\blank,\Sigma^{k+i}M)\neq0$ in light grey;
           $\Hom(\Sigma^{-i}M,\blank)\neq0$ in grey, and the intersection in dark grey.
           Note that the right-hand picture applies to all of $\cZ^2,\dots,\cZ^{r-1}$;
           the objects given are for $\cZ^{r-1}$. \newline
           The black dots indicate the objects $\ldots,\Sigma^{-2}M,\Sigma\inv M,M,\Sigma M,\Sigma^2M,\ldots$
           Their additive closure is $\thick{}{M} = \add(\Sigma^i M \mid i\in\IZ)$.
          }
\end{figure}

The remaining unshaded regions of the AR quiver of the $\cZ$ component contain finitely many indecomposable objects up to suspensions. Consider one such object, $Z\in\cZ$ and assume $\Sigma^jZ \in \sN$ for some $j\in\IZ$.
Since the t-structure $(\sX_{\sM},\sY_{\sM})$ is bounded, there is a largest $p\in\IZ$ such that $\Sigma^p Z \notin \sX_\sM$.
Thus, for any silting subcategory $\sN\geq\sM$, i.e.\ $\sX_\sN\subseteq\sX_\sM$, we have $\Sigma^p Z \notin \sX_\sN$. Since $\sN\subset\sX_\sN$ due to $\sN$ silting, $\Sigma^j Z\in\sN$ implies $p<j$. In other words,
 $p$ is a lower bound on the possible suspensions of objects $\Sigma^j Z\in\ind{\cZ}\cap\sN$ with $\sN\geq\sM$.

For the upper bound, consider $\Sigma \tau Z$, where $\tau$ is the Auslander--Reiten translation in $\Db(\LLambda)$. By the boundedness of $(\sX_{\Sigma^k \sM},\sY_{\Sigma^k \sM})$, there is a largest $q' \in \IZ$ such that $\Sigma^{q'} (\Sigma\tau Z) \notin \sX_{\Sigma^k \sM} =(\Sigma^{<0}\Sigma^k\sM)\orth$. Taking $q < \min\{0, q'\}$, by Nakayama (equivalently, Serre) duality there exists $t>0$ such that
\[
0 \neq \Hom(\Sigma^k \sM, \Sigma^{t +q +1}\tau Z) \cong \Hom(\Sigma^{t+q} Z,\Sigma^k \sM)^*
     = \Hom(\Sigma^t Z, \Sigma^{k-q} \sM)^* ,
\]
where $(\blank)^*$ denotes the dual vector space.
This implies $\Sigma^t Z \notin \sN$ since $q < 0$ and $N \leq \Sigma^k M$.  It is easy to check that $\Sigma^{t+i} Z$ cannot be an object of $\sN$ for each $i \geq 0$.

Hence there are only finitely many possible indecomposable objects in the $\cZ$ component which may lie in some silting subcategory $\sN$ such that $\sM \leq \sN \leq \Sigma^k \sM$. Let $\cZ_{\sM}$ be the additive subcategory of $\Db(\Lambda)$ generated by these objects.

For each $Z \in \ind{\cZ_{\sM}}$, we can perform silting reduction at $\thick{}{Z}$, which is functorially finite by Proposition~\ref{prop:embedding-A}. Furthermore, the same proposition asserts
$\thick{}{Z}\orth \simeq \Db(\kk A_{n+m-1})$.
Since the aisles $\sX_{\sM}$ and $\sX_{\Sigma^k \sM}$ are  bounded, there are integers $a$ and $b$ so that for each indecomposable object $U \in \thick{}{Z}\orth$ we have  $\Sigma^i U \in \sX_{\sM}$ for $i>a$ and $\Sigma^i U \in \sX_{\Sigma^k \sM}$ for $i>b$. Similarly for the co-aisles $\sY_{\sM}$ and $\sY_{\Sigma^k \sM}$, with $\Sigma^{-i}$ in place of $\Sigma^i$. As the path algebra $\kk A_{n+m-1}$ is representation-finite, there are only finitely many possible complements to $Z$ which also satisfy the orthogonality conditions imposed by $\sM$ and $\Sigma^k \sM$ in $\thick{}{Z}\orth$. Theorem~\ref{thm:silt-red} gives a bijection between silting subcategories of $\Db(\Lambda)$ containing $Z$ and silting subcategories of $\thick{}{Z}\orth$, hence there are only finitely many silting subcategories $\sN$ of $\Db(\Lambda)$ such that $Z \in \sN$ that also satisfy $\sM \leq \sN \leq \Sigma^k \sM$. Since there are only finitely many possible $Z$, we obtain the claim.
\end{proof}

\begin{remark}
This argument can be modified to prove the analogous result for $\sD = \Db(\kk Q)$ where $Q$ is a simply-laced quiver of Dynkin type.
\end{remark}


\section{Contractibility of the silting pairs CW complex for $\Db(\Lambda)$}
\label{sec:CW-properties}

\noindent
In this section, we come to the main theorem of the article.

\begin{theorem} \label{thm:CW-contractible}
The silting pairs CW complex $\silt{\Db(\LLambda)}$ is contractible.
\end{theorem}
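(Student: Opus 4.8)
First I would record that the object of study is well defined. By Theorem~\ref{thm:CW-poset} together with Proposition~\ref{prop:finiteness} (and the Remark identifying $\Db(\LLambda)$ as silting-discrete), the poset $\posetpair(\Db(\LLambda))$ is a CW poset and $\silt{\Db(\LLambda)}$ is a regular CW complex with face poset $\posetpair(\Db(\LLambda))$. Consequently the barycentric subdivision of $\silt{\Db(\LLambda)}$ is the order complex of the poset $\cP$ of \emph{genuine} silting pairs (that is, $\posetpair(\Db(\LLambda))$ with $\hat 0$ deleted), so it suffices to prove that $\lvert\Delta(\cP)\rvert$ is contractible. Throughout I would use the reformulation implicit in Section~\ref{sec:silting-pairs}: a silting pair $(\sM,\sM')$ is the same datum as the interval $[\rmu{\sM'}(\sM),\sM]\subseteq[\Sigma\inv\sM,\sM]$ inside $\posetsilt(\Db(\LLambda))$, and $(\sN,\sN')\le(\sM,\sM')$ holds precisely when the interval of the former is contained in that of the latter.

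The geometric heart of the argument is a cover of $\silt{\Db(\LLambda)}$ by its closed maximal cells. For each silting subcategory $\sM$ the pair $(\sM,0)$ is a maximal element of $\posetpair$, and the associated closed cell $\overline C_\sM$ is a ball which, by the interval description, consists exactly of those silting pairs whose interval is contained in $[\Sigma\inv\sM,\sM]$; thus $\overline C_\sM$ may be identified with the silting pairs complex of the interval $[\Sigma\inv\sM,\sM]$. The tautological chain $(\sM,\sM)\le(\sM,\sM')\le(\sM,0)$ shows that the $\overline C_\sM$ cover $\silt{\Db(\LLambda)}$, and each is contractible, so by the nerve lemma it remains to verify (i) every nonempty finite intersection $\overline C_{\sM_1}\cap\dots\cap\overline C_{\sM_k}$ is contractible, and (ii) the nerve of the cover is contractible. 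Via the interval description, (i) amounts to showing that the subposet $[\Sigma\inv\sM_1,\sM_1]\cap\dots\cap[\Sigma\inv\sM_k,\sM_k]$ of $\posetsilt(\Db(\LLambda))$ has contractible order complex, which I would obtain by using the $\cZ$-hammocks of Proposition~\ref{prop:Z-hammocks} to show that this subposet, when nonempty, is directed (in fact has a maximum). Since $\overline C_{\sM_1}\cap\dots\cap\overline C_{\sM_k}$ is nonempty exactly when $[\Sigma\inv\sM_1,\sM_1]\cap\dots\cap[\Sigma\inv\sM_k,\sM_k]$ is, the nerve of $\{\overline C_\sM\}$ coincides with the nerve of the cover of $\posetsilt(\Db(\LLambda))$ by the intervals $\{[\Sigma\inv\sM,\sM]\}$; each such interval has a top element $\sM$, hence contractible order complex, and a second application of the nerve lemma shows this nerve is homotopy equivalent to $\lvert\posetsilt(\Db(\LLambda))\rvert$.

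It thus remains to prove that $\lvert\posetsilt(\Db(\LLambda))\rvert$ is contractible, which I would do by induction, using silting reduction. By Lemma~\ref{lem:Z-in-silting} every silting subcategory of $\Db(\LLambda)$ contains an indecomposable $\cZ$-object, so $\posetsilt(\Db(\LLambda))$ is covered by the subposets $\sU_Z=\{\sM\mid Z\in\ind{\sM}\}$, with $Z$ running over indecomposable $\cZ$-objects. By Proposition~\ref{prop:embedding-A} the subcategory $\thick{}{Z}$ is functorially finite with $\thick{}{Z}\orth\simeq\Db(\kk A_{n+m-1})$, so Theorem~\ref{thm:silt-red} identifies $\sU_Z$, as a poset, with $\posetsilt(\Db(\kk A_{n+m-1}))$; the same reduction at $\thick{}{Z_1,\dots,Z_k}$ identifies each nonempty intersection $\sU_{Z_1}\cap\dots\cap\sU_{Z_k}$ with the silting poset of a derived category of strictly smaller type-$A$ quivers (possibly empty). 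By the inductive hypothesis these are all contractible, the type-$A$ case being handled by the same scheme with the trivial base cases $A_0,A_1$; a final application of the nerve lemma then reduces the claim to the contractibility of the nerve of $\{\sU_Z\}$, namely the simplicial complex whose simplices are the finite collections of $\cZ$-objects lying in a common silting subcategory, which I would again deduce from the explicit hammock structure of $\Db(\LLambda)$.

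The step I expect to be the main obstacle is precisely this combinatorial control of the intersections and of the two nerves: proving that $[\Sigma\inv\sM_1,\sM_1]\cap\dots\cap[\Sigma\inv\sM_k,\sM_k]$ is directed, and that the $\cZ$-compatibility nerve of the cover of $\posetsilt(\Db(\LLambda))$ is contractible. Both rest on a delicate analysis of which $\cZ$-objects can coexist in a silting subcategory and of how neighbouring silting subcategories in $\posetsilt(\Db(\LLambda))$ are related --- that is, on the explicit description of the $\cZ$-components recalled in Section~\ref{sec:discrete} (especially Proposition~\ref{prop:Z-hammocks}) combined with the silting-reduction bijection of Theorem~\ref{thm:silt-red}. (An alternative route, which I would keep in reserve, is to contract $\silt{\Db(\LLambda)}$ directly by stratifying it according to the Harder--Narasimhan type relative to a fixed generic stability function, but this essentially imports the geometry whose combinatorial shadow we are trying to compute.)
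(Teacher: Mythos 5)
Your proposal is a genuinely different strategy from the paper's, but it has real gaps that you acknowledge only in part. The paper proves the theorem by a direct Whitehead-type argument: any cellular map $f\colon S^i\to\silt{\Db(\LLambda)}$ meets only finitely many cells, Lemma~\ref{lem:total_order} and Proposition~\ref{prop:finiteness} place the relevant silting subcategories in a finite interval, and one then null-homotopes $f$ by an induction that repeatedly pushes the image off a cell $C$ attached to a maximal silting subcategory $\sK$, using the homotopy extension property for CW pairs and --- crucially --- Corollary~\ref{cor:upperbound} (the upper bound via Bongartz completion) to guarantee that the new image still lies over the same lower bound $\sM$. You instead propose a nerve-lemma argument with the cover by closed maximal cells.

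The main problem is the step where you claim that contractibility of $\overline C_{\sM_1}\cap\dots\cap\overline C_{\sM_k}$ ``amounts to'' contractibility of the order complex of $[\Sigma\inv\sM_1,\sM_1]\cap\dots\cap[\Sigma\inv\sM_k,\sM_k]$ in $\posetsilt$. These are different posets: the first is a downset in the poset of silting \emph{pairs}, the second is a subposet of silting \emph{subcategories}, and there is no given homotopy equivalence between their order complexes. Even if you show the interval intersection is directed with maximum $\sK$, it does not follow that the subcomplex of $\silt{\Db(\LLambda)}$ consisting of all cells $(\sN,\sN')$ with $[\rmu{\sN'}(\sN),\sN]$ inside the intersection is a cone, because $(\sN,\sN)$ is not in general comparable to $(\sK,\sK)$ or to any single $(\sK,\sK')$, and the intersection of intervals need not itself be an interval of the form $[\Sigma\inv\sK,\sK]$. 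Intersections of closed cells in a regular CW complex are subcomplexes but are not contractible in general, so this point needs a real proof. The same unproven directedness claim is also what you would need for your second nerve-lemma application reducing to $\lvert\posetsilt(\Db(\LLambda))\rvert$, so the circle is not closed. Finally, the induction via silting reduction requires more than the bijection of Theorem~\ref{thm:silt-red}: you need it to be a poset isomorphism compatible with mutation and with the CW structure, and you would need to handle the fact that $\thick{}{Z}\orth$ changes as $Z$ changes, plus establish contractibility of your ``$\cZ$-compatibility nerve,'' which you flag as unresolved. In short, as written this is a plan with several substantial lemmas left unproven, whereas the paper's argument avoids all of them by working directly with the homotopy extension property and the single key input Corollary~\ref{cor:upperbound}. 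If you want to pursue the nerve-lemma route, the first order of business is to prove (not just assert) that the downsets $\{(\sN,\sN')\le(\sM_i,0)\ \forall i\}$ in $\posetpair$ have contractible order complexes; I would expect that to require an argument not much easier than the one in the paper.
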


\begin{proof}
We write $X \coloneqq \silt{\Db(\LLambda)}$ for the silting pairs complex. As $X$ is a CW complex, by Whitehead's theorem, contractibility is equivalent to all homotopy groups $\pi_i(X)$ being trivial; see \cite[Theorem 4.5]{Hatcher}. Hence we have to prove that every map $f\colon S^i \to X$ from an $i$-sphere to $X$ is homotopic to a constant map. Furthermore, with $X$ and $S^i$ two CW complexes, every such map $f$ is homotopic to a cellular map; see for example \cite[Theorem~4.8]{Hatcher}. Hence we can assume that the image $f(S^i)$ is contained in $j$-cells for $j \leq i$. Each cell of $S^i$ is compact so this image meets finitely many cells in $X$ by \cite[Proposition~A.1]{Hatcher}. The cells correspond to silting pairs and so, forgetting for the moment the partial silting subcategory of the pair, we obtain a finite set of silting subcategories which we denote $\siltsub{f}$.

By Lemma~\ref{lem:total_order}, there exist an $a\in\IZ$ and a silting subcategory $\sM$ such that $\siltsub{f} \subset [\sM, \Sigma^a \sM]$.
By Proposition~\ref{prop:finiteness}, the interval $[\sM,\Sigma^a \sM]$ is finite.
We fix the choice of such an $\sM$ and consider the finite subposet of $\posetsilt(\sD)$
 \[ \rho(f) \coloneqq
    \{ \sK \text{ a silting subcategory} \mid \sM \leq \sK \leq \sN \text{ for any } \sN \in \siltsub{f} \}  . \]
We proceed inductively. If $\lvert \rho(f) \rvert =1$, then $\siltsub{f} = \{\sM\}$ and so the image $f(S^i)$ is a subset of the top-dimensional cell $(\sM, 0)$. This is contractible, and so $f$ is homotopic to a constant map.

If $\lvert \rho(f) \rvert >1$, then we choose some maximal element $\sK$ of the poset $\rho(f)$, and look at the set $I$ of all pairs $(\sK,\sK')$ for which the corresponding cells intersect $f(S^i)$.
We construct a new silting pair
\[ (\sK, \widetilde{\sK}) \coloneqq \big( \sK , \bigcap_{(\sK,\sK') \in I} \sK' \big) \]
and denote by $C$, the corresponding (closed) cell in the silting pairs CW complex. In fact there are several CW complexes that will be of interest, which we now list together to fix notation and aid readability:

\medskip\noindent
\begin{tabular}{@{} p{0.05\textwidth} @{} p{0.95\textwidth} @{}}
$X$:   & the silting pairs CW complex, \\
$C$: & the closed cell in $X$ corresponding to the pair $(\sK, \widetilde{\sK})$, \\
$N$:  & the union of all closed cells in $X$ which do not contain the point $(\sK, \sK)$, \\
$A$:  & the union of all closed cells in $C$ which do not contain the point $(\sK, \sK)$. \\
\end{tabular}

\medskip\noindent
We construct a deformation retraction of $C$ onto $A$: Since $X$ is a regular CW complex, the cell $C$ is homeomorphic to a ball $B^k$ and its boundary is homeomorphic to a sphere $S^k$. The complement of $A$ in the boundary of $C$ is open and contractible to the point $(\sK,\sK)$ so $A$ is homeomorphic to a (closed) hemisphere of $S^k$. There is a deformation retraction of $B^k$ onto this hemisphere, induced by the projection along a diameter. Therefore, we can write down a deformation retract of the cell $g\colon C \times [0,1] \to C$ onto $A$ by composing this deformation retract with the two homeomorphisms.

We wish to extend this to a homotopy on the whole of $X$. We do this in two steps: first, since the restriction of $g(-,t)$ to the intersection $C \cap N = A$ is the identity map for all $t \in [0,1]$, we can extend $g$ trivially to obtain a deformation retraction $\widetilde g\colon C \cup N \times [0,1] \to C \cup N$ onto $N$. Since $C \cup N$ is a union of closed cells in $X$, then $(X, C \cup N)$ is a CW pair and so has the homotopy extension property; see \cite[Proposition 0.16]{Hatcher}. In particular, $\widetilde g(-,0) = \id_{C \cup N}$, so we can extend $\widetilde g$ to a homotopy $h\colon X \times [0,1] \to X$ such that $h(-,0) = \id_{X}$.

The composition
 $F \coloneqq h \circ (f \times \id) \colon S^i \times [0,1] \to X$
gives a homotopy between $f = F(-,0)$ and $f_1 \coloneqq F(-,1)$.
We consider which cells the image $f_1(S^i)$ intersects. Since $h$ is an extension of $\widetilde g$ which is a deformation retraction onto $N$, we know that $h(-,t)$ restricted to $N$ is the identity for all $t$. Therefore a point $x$ in the interior of $N$ is in $f_1(S^i)$ if and only if it is in $f(S^i)$. In other words, the images $f_1(S^i)$ and $f(S^i)$ only differ on $\overline{X \setminus N}$.

Any point in $\overline{X \setminus N}$ is contained in a closed cell $(\sL,\sL')$ of $X$ which contains the vertex $(\sK,\sK)$. Therefore $(\sK,\sK) \leq (\sL,\sL')$ which, in particular, implies that $\sK \leq \sL$. Using the maximality of $\sK$, it follows that if $f(x) \in f(S^i)$ is in this cell, then $\sL = \sK$ and it follows that $f(x) \in C$. By construction, $h$ retracts $C$ onto $A$ and so $f_1(x) \in A $. Putting this together, we have that $f_1(x) \in A$ if $f(x) \in \overline{X \setminus N}$ and  $f_1(x) = f(x)$ otherwise.  Note that the silting subcategory of the cells in $A$  correspond to silting subcategories in the half open interval $[\rmu{\widetilde{\sK}}(\sK) , \sK )$, however by Corollary~\ref{cor:upperbound}, $\sM \leq \rmu{\widetilde{\sK}} \sK \leq \sK$.
Therefore the same $\sM$ is a lower bound of $\siltsub{f_1}$, and $ \rho(f_1) \subseteq \rho(f) \setminus \{\sK\} $ so $\lvert \rho(f) \rvert > \lvert \rho(f_1) \rvert$.

The result then follows by induction.
\end{proof}

\begin{corollary}
The silting quiver, in the sense of \cite{AI}, of $\Db(\LLambda)$ is connected.
\end{corollary}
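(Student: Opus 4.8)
The plan is to deduce the statement directly from Theorem~\ref{thm:CW-contractible} by unwinding the combinatorics of the $0$- and $1$-cells of the silting pairs CW complex $\silt{\Db(\LLambda)}$. Recall that the silting quiver of \cite{AI} has the silting subcategories of $\Db(\LLambda)$ as its vertices, with an arrow between $\sM$ and $\sN$ whenever $\sN$ is an irreducible mutation of $\sM$; its connectedness means connectedness of its underlying graph, so it suffices to show that this graph is connected.

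The first step is to identify that graph with the $1$-skeleton of $\silt{\Db(\LLambda)}$. Writing $\rho \coloneqq \rk{K_0(\Db(\LLambda))}$, one checks that $\hat 0$ is covered precisely by the pairs $(\sM,\sM)$: each such pair is minimal among genuine silting pairs (by Lemma~\ref{lem:intersection-mutation}, as in the anti-symmetry argument), while $(\sM,\sM) \leq (\sM,\sM')$ holds tautologically. Combining this with Lemma~\ref{lem:intlength} gives $r((\sM,\sM')) = 1 + \rho - \rk{\sM'}$, so the rank-$1$ elements of $\posetpair(\Db(\LLambda))$ — the $0$-cells — are exactly the pairs $(\sM,\sM)$, in bijection with silting subcategories, and the rank-$2$ elements — the $1$-cells — are exactly the pairs $(\sM,\sM')$ with $\rk{\sM'} = \rho - 1$, i.e.\ those obtained by deleting a single indecomposable summand, in bijection with irreducible mutations. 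Since $\silt{\Db(\LLambda)}$ is a regular CW complex, the open interval below such a $1$-cell $(\sM,\sM')$ is an $S^0$, and its two points are the rank-$1$ pairs lying below $(\sM,\sM')$; by Proposition~\ref{prop:po-equivalent} and Lemma~\ref{lem:intersection-mutation} these are $(\sM,\sM)$ and $(\rmu{\sM'}(\sM),\rmu{\sM'}(\sM))$, which are distinct because $\rmu{\sM'}(\sM) = \sM$ would force the mutated summand into $\sM' = \sM \cap \rmu{\sM'}(\sM)$. Thus each $1$-cell joins a silting subcategory to its irreducible mutation at the relevant summand, and conversely every irreducible mutation arises this way; hence the $1$-skeleton of $\silt{\Db(\LLambda)}$ is precisely the underlying graph of the silting quiver.

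The last step is then immediate: by Theorem~\ref{thm:CW-contractible} the complex $\silt{\Db(\LLambda)}$ is contractible, hence connected, and for any CW complex the inclusion of the $1$-skeleton is a bijection on path components, so the $1$-skeleton is connected as well. Therefore the silting quiver of $\Db(\LLambda)$ is connected. I do not expect a genuine obstacle here; the only point that needs care is the bookkeeping in the middle paragraph — matching $0$- and $1$-cells with silting subcategories and their irreducible mutations — and this is routine given the description of the CW structure in Section~\ref{sec:CW-poset}.
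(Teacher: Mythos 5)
Your proof is correct and follows what the paper clearly intends: the authors state this corollary without proof immediately after Theorem~\ref{thm:CW-contractible}, having already flagged at the start of Section~\ref{sec:CW-poset} that the silting quiver ``should be the $1$-skeleton'' of $\silt{\sD}$, so the intended argument is precisely that contractibility of the CW complex forces connectedness of its $1$-skeleton. Your middle paragraph carefully carries out the rank bookkeeping (via Lemma~\ref{lem:intlength}, Lemma~\ref{lem:intersection-mutation}, Corollary~\ref{cor:length-two} and Proposition~\ref{prop:po-equivalent}) needed to see that the $0$-cells are the pairs $(\sM,\sM)$, the $1$-cells are the pairs with $\rk\sM'=\rho-1$, and each $1$-cell has exactly the two endpoints $(\sM,\sM)$ and $(\rmu{\sM'}(\sM),\rmu{\sM'}(\sM))$; this is exactly the verification the paper leaves implicit.
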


Suppose $\sD$ is a triangulated category with a silting subcategory $\sM$. Any silting subcategory $\sN$ of $\sD$ with $\sN \subseteq \Sigma\inv \sM * \sM$ is called a \emph{two-term $\sM$-silting subcategory}.

\begin{corollary} \label{cor:contractible}
Let $\sM$ be a silting subcategory of $\Db(\LLambda)$. Then the CW complex of two-term $\sM$-silting subcategories is contractible.
\end{corollary}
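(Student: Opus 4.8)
\emph{Proof plan.} The plan is to realise the CW complex of two-term $\sM$-silting subcategories as a single closed cell of the silting pairs CW complex $\silt{\Db(\LLambda)}$, and then to invoke regularity.

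First I would verify that $\Db(\LLambda)$ satisfies the hypotheses of Theorem~\ref{thm:CW-poset}: it is Hom-finite and Krull--Schmidt, it has a silting object, and by Proposition~\ref{prop:finiteness} (with $k=1$) every interval $[\Sigma\inv\sN,\sN]$ in $\posetsilt(\Db(\LLambda))$ is finite. Hence $\posetpair(\Db(\LLambda))$ is a CW poset, $\silt{\Db(\LLambda)}$ is a regular CW complex, and, by Lemma~\ref{cor:finite}, the interval $J\coloneqq[\hat0,(\sM,0)]$ in $\posetpair(\Db(\LLambda))$ is finite.

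Next I would identify $J$ with the CW complex of two-term $\sM$-silting subcategories. Unravelling the definition of the order on silting pairs, together with $\rmu{0}(\sM)=\Sigma\inv\sM$ and $\rmu{\sN}(\sN)=\sN$, one sees that $(\sN,\sN)\leq(\sM,0)$ if and only if $\Sigma\inv\sM\leq\sN\leq\sM$, which by Lemma~\ref{lem:intermediate}(1) is precisely the condition that $\sN$ is two-term $\sM$-silting; so the atoms of $J$ are exactly the pairs $(\sN,\sN)$ with $\sN$ two-term $\sM$-silting. More generally, Proposition~\ref{prop:po-equivalent} shows that $(\sN,\sN')\leq(\sM,0)$ holds exactly when $\sN'\subseteq\Sigma\inv\sM*\sM$ is partial silting and $\sN=\sB_l(\sN';\sM)$; thus the cells of $J$ are indexed by the partial silting subcategories contained in $\Sigma\inv\sM*\sM$, with $\sN$ recovered as the left Bongartz completion, and the edges of $J$ record the irreducible mutations between two-term $\sM$-silting subcategories. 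With the regular CW structure it inherits from $\silt{\Db(\LLambda)}$, the poset $J$ is therefore the closed cell $C_{(\sM,0)}$ corresponding to $(\sM,0)$, and it deserves the name used in the statement.

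Finally, since $\silt{\Db(\LLambda)}$ is a regular CW complex, the closed cell $C_{(\sM,0)}$ is homeomorphic to a closed ball, hence contractible; this proves the corollary. I expect no serious obstacle here: the topology is immediate once the regular CW structure is in place, and the only point requiring a little care is the bookkeeping in the previous paragraph that matches the faces of $J$ with the partial silting subcategories of $\Sigma\inv\sM*\sM$. (As an alternative to the closed-cell observation, one could rerun the deformation-retraction argument of Theorem~\ref{thm:CW-contractible} inside the cell $C_{(\sM,0)}$, but that is considerably longer.)
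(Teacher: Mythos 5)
Your proposal is correct and follows the same route as the paper: identify the CW complex of two-term $\sM$-silting subcategories with the closed cell of $\silt{\Db(\LLambda)}$ corresponding to the pair $(\sM,0)$, and then observe that a closed cell in a regular CW complex is homeomorphic to a ball, hence contractible. You have simply spelled out in full the identification (via $\rmu{0}(\sM)=\Sigma\inv\sM$, Lemma~\ref{lem:intermediate}(1), and Proposition~\ref{prop:po-equivalent}) that the paper's two-sentence proof leaves implicit.
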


\begin{proof}
The silting subcategory $\sM$ generates a cell of the silting pairs CW complex corresponding to the interval $\Sigma\inv \sM * \sM$. This interval is a closed ball in the silting pairs CW complex, and as such, is contractible.
\end{proof}

\begin{remark} \label{rem:cluster-connection}
Corollary~\ref{cor:contractible} may be of wider interest in cluster-tilting theory. There are well-known connections between two-term silting objects and cluster tilting theory; see \cite{Bruestle-Yang} for an overview of the theory. As such, Corollary~\ref{cor:contractible} may be considered a result regarding `cluster structures' for algebras that until now occur outside cluster theory.
\end{remark}


\section{The stability manifold of $\Db(\LLambda)$}
\label{sec:stability}

\noindent
We provide a recap of the basic notions of stability conditions following \cite{Bridgeland}. Here $\sD$ is an arbitrary triangulated category.
Let $\IH \coloneqq \{r\exp(i \pi \phi) \mid r > 0 \textrm{ and } 0 < \phi \leq 1 \} \subseteq \IC$ and write $\phi(z)\in (0,1]$ for the phase of $z\in\IC$. Note that $\pi\phi(z)$ is the argument of $z \in \IC$.

A \emph{stability function} on an abelian category $\sA$ is a group homomorphism $Z \colon K_0(\sA) \to \IC$ such that $Z(A)\in\IH$ for all $0 \neq A \in \sA$.
An object $0 \neq A \in \sA$ is \emph{semistable} with respect to $Z$ if every subobject $0 \neq A' \subset A$ satisfies $\phi(A') \leq \phi(A)$, where we write $\phi(A) \coloneqq \phi(Z(A))$.

A \emph{Harder-Narasimhan (HN) filtration} of $0 \neq A \in \sA$ is a finite chain of subobjects
\[
0 = A_0 \subset A_1 \subset \cdots \subset A_{n-1} \subset A_n = A,
\]
whose factors $F_i = A_i/A_{i-1}$ are semistable objects with
$
\phi(F_1) > \phi(F_2) > \cdots > \phi(F_n).
$
A stability function $Z$ is said to satisfy the \emph{HN property} if every non-zero object of $\sA$ admits a HN filtration.

Rather than define stability conditions on triangulated categories, we refer to the following proposition, which provides a description fitting our framework better.

\begin{proposition}[{\cite[Proposition 5.3]{Bridgeland}}]
To give a stability condition on a triangulated category $\sD$ is equivalent to giving a bounded t-structure in $\sD$ and a stability function on its heart which satisfies the HN property.
\end{proposition}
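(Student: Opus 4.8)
The plan is to work with the original (slicing) formulation of stability conditions from \cite{Bridgeland}: a stability condition on $\sD$ is a pair $(Z,\cP)$ consisting of a group homomorphism $Z\colon K_0(\sD)\to\IC$ and a family of full additive subcategories $\cP(\phi)\subseteq\sD$, $\phi\in\IR$, of \emph{semistable objects of phase $\phi$}, satisfying: (i) $Z(E)\in\IR_{>0}\,e^{i\pi\phi}$ for every $0\neq E\in\cP(\phi)$; (ii) $\cP(\phi+1)=\Sigma\cP(\phi)$; (iii) $\Hom_{\sD}(E_1,E_2)=0$ whenever $E_i\in\cP(\phi_i)$ with $\phi_1>\phi_2$; and (iv) every $0\neq E\in\sD$ admits a finite filtration with semistable factors of strictly decreasing phase. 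I will exhibit two mutually inverse constructions: one sending $(Z,\cP)$ to a bounded t-structure together with a stability function on its heart satisfying the HN property, and one reversing this.

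\textbf{From a stability condition to a t-structure.} Given $(Z,\cP)$, set $\sX\coloneqq\clext{\cP(\phi)\mid\phi>0}$, $\sY\coloneqq\clext{\cP(\phi)\mid\phi\leq0}$ and $\sA\coloneqq\clext{\cP(\phi)\mid\phi\in(0,1]}$. Axiom (iii) yields $\Hom_{\sD}(\sX,\sY)=0$ and the orthogonality relations $\sX={}\orth\sY$, $\sY=\sX\orth$; splitting an HN filtration at phase $0$ (axiom (iv)) gives $\sX*\sY=\sD$; axiom (ii) gives $\Sigma\sX\subseteq\sX$ as well as boundedness. So $(\sX,\sY)$ is a bounded t-structure, and one identifies its heart $\sX\cap\Sigma\sY$ with $\sA$. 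Restricting $Z$ to $K_0(\sA)$ is a group homomorphism; for $0\neq A\in\sA$, the HN filtration of $A$ has semistable factors with $Z$-values on rays $\IR_{>0}e^{i\pi\phi}$, $\phi\in(0,1]$, and since $\IH$ — the open upper half-plane together with the negative real axis — is closed under addition (the imaginary part of such a sum is non-negative, and vanishes only when both summands lie on the negative real axis), we get $Z(A)\in\IH$; thus $Z|_{\sA}$ is a stability function. Its semistable objects are exactly the nonzero objects of $\cP(\phi)$, $\phi\in(0,1]$, so the HN property for $Z|_{\sA}$ is precisely the restriction of axiom (iv).

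\textbf{From a t-structure to a stability condition, and the obstacle.} Conversely, let $(\sX,\sY)$ be a bounded t-structure with heart $\sA$ and $Z$ a stability function on $\sA$ with the HN property. For $\phi\in(0,1]$ let $\cP(\phi)$ consist of $0$ together with the $Z$-semistable objects of $\sA$ of phase $\phi$, and extend by $\cP(\phi+n)\coloneqq\Sigma^n\cP(\phi)$, $n\in\IZ$; axioms (i) and (ii) then hold by construction. For (iii) I would first establish the standard lemma that $\Hom_{\sA}(A,B)=0$ for $Z$-semistable $A,B\in\sA$ with $\phi(A)>\phi(B)$ — the image of a nonzero map is a quotient of $A$, hence of phase $\geq\phi(A)$, and a subobject of $B$, hence of phase $\leq\phi(B)$, a contradiction — while factors whose phases differ by at least $1$ have vanishing $\Hom$ because negative $\Ext$-groups in a heart vanish, and the intermediate cases reduce to one of these after an integer shift. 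For axiom (iv), given $0\neq E\in\sD$, boundedness of $(\sX,\sY)$ provides a finite filtration of $E$ with factors $\Sigma^{-n_i}H_i$, $H_i\in\sA$ nonzero and $n_1>\cdots>n_m$; refining each $H_i$ by its HN filtration inside $\sA$ and concatenating via the octahedral axiom yields a filtration of $E$ whose factors are semistable of strictly decreasing phase, the blocks not overlapping because the HN phases of $H_i$ lie in $(0,1]$, hence after shift in $(n_i,n_i+1]$, and the $n_i$ are distinct integers. That the two constructions are mutually inverse then follows from uniqueness of HN filtrations (a see-saw consequence of (iii)) together with the fact that $Z$ is preserved verbatim. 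I expect the bookkeeping in axiom (iv) — assembling the global HN filtration from the t-structure truncations and the HN filtrations inside the heart, and checking strict monotonicity of the phases — to be the main technical point; everything else is formal.
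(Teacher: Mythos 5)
The paper does not prove this statement; it is quoted verbatim from \cite{Bridgeland}, Proposition 5.3, and used as a black box so that stability conditions can be specified by choosing a heart and a stability function on it. There is therefore no in-paper proof to compare against. Your argument is a correct reproduction of Bridgeland's original proof via the slicing formulation: the forward direction takes $\sX=\cP(>0)$, $\sY=\cP(\le 0)$, $\sA=\cP((0,1])$, and the backward direction builds the slicing from t-structure truncation refined by HN filtrations in the heart. Two small imprecisions are worth flagging, though neither is a genuine gap. First, boundedness of $(\sX,\sY)$ in the forward direction is a consequence of the HN axiom (your axiom (iv)), not of the shift axiom (ii): one needs that every object lies in some $\cP([a,b])$ to conclude $\bigcup_n\Sigma^n\sX=\bigcup_n\Sigma^n\sY=\sD$. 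Second, identifying the $Z|_\sA$-semistable objects of phase $\phi$ with the nonzero objects of $\cP(\phi)$ requires a short see-saw argument using axiom (iii) (if a subobject $A'\subset A$ had an HN factor of phase exceeding $\phi(A)$, there would be a nonzero map contradicting (iii)); you implicitly invoke this when asserting the two notions of HN filtration agree. Bridgeland carries out both points explicitly, and your outline is otherwise faithful to his argument.
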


As in Section~\ref{sec:compatibility}, we write $\simples(\sH)$ for the set of simple objects of an abelian category $\sH$.

Now let $\sD = \Db(\Lambda)$ be the bounded derived category of a finite-dimensional algebra $\Lambda$ of finite global dimension. Then the K\"onig--Yang correspondences (Theorem~\ref{thm:Koenig-Yang}) associate to any silting subcategory $\sM$ of $\sD$ a t-structure $(\sX_\sM,\sY_\sM)$; we denote its heart by $\sH_\sM$. Moreover, $\sH_\sM$ is the module category of a finite-dimensional algebra, and thus satisfies the HN property.

Now we restrict to a discrete derived category $\sD = \Db(\LLambda)$.
Then $\sH_\sM$ has finitely many simple objects by Proposition~\ref{prop:length}. Therefore, the above characterisation allows us to define stability conditions on $\sD$ by mapping the finitely many simple objects $\simples(\sH_\sM)\to\IH$.

The set of stability conditions on a triangulated category $\sD$ is denoted by $\stab{\sD}$. In \cite{Bridgeland}, $\stab{\sD}$ was topologised using a (generalised) metric. (We mention that all stability conditions on $\Db(\LLambda)$ are locally finite and numerical.) The following is the most basic fact about the stability space.

\begin{theorem}[{\cite[Theorem 1.2]{Bridgeland}}]
The topological space $\stab{\sD}$ is a complex manifold.
\end{theorem}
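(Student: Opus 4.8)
The plan is to reproduce Bridgeland's deformation argument. The central object is the \emph{forgetful map}
\[
\cZ \colon \stab{\sD} \too \Hom(K_0(\sD),\IC), \qquad (\charge,\cP) \mapsto \charge,
\]
remembering only the central charge of a stability condition $\sigma=(\charge,\cP)$, where $\cP$ denotes the associated slicing (equivalently, the bounded t-structure refined by its semistable subcategories). The goal is to show that $\cZ$ is a local homeomorphism onto an open subset of $\Hom(K_0(\sD),\IC)$. Granting this, we are done: here $\sD=\Db(\LLambda)$, so $K_0(\sD)$ is free of finite rank $n$ by Proposition~\ref{prop:silt-properties}, since $\Db(\LLambda)$ has a silting object; hence $\Hom(K_0(\sD),\IC)\cong\IC^n$, the charts furnished by $\cZ$ make $\stab{\sD}$ into a complex manifold of dimension $n$, and the transition maps, being restrictions of the identity of $\IC^n$, are trivially holomorphic.

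First I would prove local injectivity of $\cZ$: if $\sigma_1=(\charge,\cP_1)$ and $\sigma_2=(\charge,\cP_2)$ are close in Bridgeland's generalised metric and share the same central charge, then $\cP_1=\cP_2$. Indeed, a $\cP_1$-semistable object of phase $\phi$ is, by the metric bound, filtered by $\cP_2$-semistable objects whose phases lie in a narrow window around $\phi$; comparing the rays on which the central charges of these factors lie with the ray of $\charge$ of the original object forces all the phases to agree, so the two slicings coincide on semistable objects and hence everywhere.

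The substantial step, which I expect to be the main obstacle, is local surjectivity --- the deformation lemma: given $\sigma=(\charge,\cP)\in\stab{\sD}$ and a homomorphism $W\colon K_0(\sD)\to\IC$ sufficiently close to $\charge$, one must construct a stability condition $\tau=(W,\cQ)$ near $\sigma$. The slicing $\cQ$ is built by perturbing $\cP$: each $\sigma$-semistable class has its phase nudged in accordance with how $W$ displaces its central charge, and one must verify that the subcategories so defined still satisfy the Harder--Narasimhan axioms, i.e.\ that the perturbed filtrations exist and terminate. The delicate point is to control, on each bounded interval of phases, the finitely many ``interactions'' between semistable classes whose phases might cross; this is exactly where local finiteness enters (all stability conditions on $\Db(\LLambda)$ are locally finite), and it is handled by a covering argument on the circle of phases.

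Finally I would check that the local inverse $W\mapsto\tau$ of $\cZ$ near $\sigma$ is continuous for the metric, which is read off directly from the construction of $\cQ$ and the bound on the phase displacements. Combined with local injectivity, this shows $\cZ$ is a local homeomorphism and completes the proof.
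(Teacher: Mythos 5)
The paper does not prove this theorem at all; it cites Bridgeland directly, and your proposal is a sketch of Bridgeland's own deformation argument, so the route is the same. There is, however, a gap in the way you state the conclusion. You assert that $\cZ$ is a local homeomorphism onto an \emph{open subset of} $\Hom(K_0(\sD),\IC)$, so that the dimension is $\rk K_0(\sD)$. What Bridgeland actually proves is that, on each connected component $\Sigma$, the map $\cZ$ restricts to a local homeomorphism onto an open subset of a \emph{linear subspace} $V(\Sigma)\subseteq\Hom(K_0(\sD),\IC)$, cut out by the finiteness condition $\sup\{|W(E)|/|\charge(E)|: E\ \sigma\text{-semistable}\}<\infty$. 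Even when $K_0$ has finite rank this can a priori be a proper subspace (this is essentially the later-introduced support property), and the paper itself flags the issue in the sentence immediately following the theorem. Your local-injectivity and deformation steps only deliver openness of the image inside $V(\Sigma)$, not inside the whole of $\Hom(K_0(\sD),\IC)$. In the present setting one can close the gap: by Proposition~\ref{prop:length} every heart in $\Db(\LLambda)$ has finitely many indecomposables, so up to shift there are only finitely many semistable classes; the supremum is then a maximum over a finite set and $V(\Sigma)=\Hom(K_0(\sD),\IC)$. Without invoking that finiteness your dimension claim does not follow from the deformation lemma alone.
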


For general triangulated categories, it is only known that each connected component of $\stab{\sD}$ is locally homeomorphic to a linear subspace of $K_0(\sD)$. However, for $\sD=\Db(\LLambda)$ we show in Theorem~\ref{thm:stab-contractible} that the stability manifold is contractible, so particularly connected. Moreover, each point has a neighbourhood corresponding to stability functions on some bounded heart. Hence, $\stab{\Db(\LLambda)}$ is connected and has dimension $\rk K_0(\sD)$.

\subsection{An embedding $\silt{\Db(\LLambda)} \hookrightarrow \stab{\Db(\LLambda)}$}

Throughout this section $\sD = \Db(\LLambda)$. Let $U(\sH) \subseteq \stab{\sD}$ be the subset of stability conditions which have heart $\sH$. Since for discrete derived categories all hearts are length categories by Proposition~\ref{prop:length}, it follows from \cite[Lemma 5.2]{Bridgeland} that
\[ U(\sH) \cong \IH^{n+m}  \]
where $n+m = \rk K_0(\sD)$ is the number of simple objects in the heart.

Define
$f\colon \silt{\sD} \too \stab{\sD}$
as follows:
Let $(\sM,\sM')$ be a silting pair --- this is an element of the poset $\posetpair(\sD)$ and hence a point of $\silt{\sD}$. By Theorem~\ref{thm:Koenig-Yang}, the silting subcategory $\sM$ gives rise to a t-structure and thus to a heart $\sH_M$. We define a stability function on $\sH$ by mapping simple objects to the upper half-plane:
\[ f(\sM,\sM') \coloneqq (\sH_\sM, \charge_{\sM,\sM'}) \quad\text{with}\quad
   \charge_{\sM,\sM'} \colon \simples(\sH_\sM) \to \IH, \quad
   S \mapsto
                                   \begin{cases}
                                      i = e^{i\pi/2} &  \text{if } S \notin (\sM')\orth  \\
                                     -1             &  \text{if } S \in    (\sM')\orth
                                    \end{cases}
\]

Thus, simple objects $S$ with $\Hom(\sM',S)\neq 0$ get mapped to the imaginary unit $i$, and the other simple objects get mapped to $-1$.

Since $\silt{\sD}$ is homeomorphic to the order complex of $\posetpair(\sD) \setminus \{\hat{0}\}$, an arbitary point is given by the data $(\sM_\fl,\sM'_\fl,a_\fl)$, where
\[ (\sM_\fl,\sM'_\fl) = \big( (\sM_0,\sM_0) \leq (\sM_1,\sM'_1) \leq \ldots \leq (\sM_{k-1},\sM'_{k-1})  \leq (\sM_k,0)  \big) \]
is a chain of silting pairs of maximal length, and $a_\fl=(a_0,\ldots,a_k)$ is a convex combination, i.e.\ a tuple of non-negative real numbers which sum to one.
Let $l\in\{0,\ldots,k\}$ be minimal with $a_l>0$. Then we set
\[ f(\sM_\fl,\sM'_\fl,a_\fl) \coloneqq \big(\sH_{\sM_l}, \, \sum_{j=0}^k a_j \charge_{\sM_j,\sM_j'} \big) . \]

In the rest of the section we shall employ the following shorthand notation.

\begin{notation}
For silting pairs $(\sM,\sM')$  and $(\sN,\sN')$ we write $\rmu{}\sM=\rmu{\sM'}(\sM)$ and $\rmu{}\sN=\rmu{\sN'}(\sN)$.
\end{notation}

\begin{lemma} \label{lem:cellclosure}
Let $(\sN,\sN') \leq (\sM,\sM')$ be silting pairs. Then $f(\sM,\sM') \in \overline{U(\sH_\sN)}$.
\end{lemma}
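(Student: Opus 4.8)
I want to show $f(\sM,\sM')\in\overline{U(\sH_\sN)}$, i.e.\ that the stability condition $f(\sM,\sM')=(\sH_\sM,\charge_{\sM,\sM'})$ lies in the closure of the locus of stability conditions whose heart is exactly $\sH_\sN$. The natural strategy is to exhibit $f(\sM,\sM')$ as a limit of a one-parameter family of stability conditions, all of which have heart $\sH_\sN$, by rotating the central charges of the simple objects of $\sH_\sN$ so that in the limit certain of them land on the boundary ray $\IR_{<0}$ of $\IH$ and the heart ``tilts forward'' to $\sH_\sM$.

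First I would invoke the relationship between $(\sM,\sM')$ and $(\sN,\sN')$ coming from Proposition~\ref{prop:po-equivalent} and Lemma~\ref{lem:silt-vs-torsion}: since $(\sN,\sN')\leq(\sM,\sM')$ we have $\rmu{\sM'}(\sM)\leq\rmu{\sN'}(\sN)\leq\sN\leq\sM$, so there is a (possibly inadmissible) tilt from $(\sX_\sM,\sY_\sM)$ to $(\sX_\sN,\sY_\sN)$ given by a torsion pair $(\cT,\cF)$ in $\sH_\sM$ with $\cT\subseteq\cT_{\sM'}=\clext{\simples(\sH_\sM)\cap(\sM')\orth}_{\sH_\sM}$. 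Thus $\sH_\sN=\clext{\cF,\Sigma\cT}$, and every simple object of $\sH_\sN$ lies either in $\cF\subseteq\sH_\sM$ or in $\Sigma\cT\subseteq\Sigma\sH_\sM$. The key numerical point is that a simple of $\sH_\sN$ lying in $\cF$ is an object of $\sH_\sM$ built from simples $S$ with $Z(S)=i$ or $-1$, hence has central charge in $\IH\setminus\IR_{<0}$ (strictly positive imaginary part unless all its composition factors are mapped to $-1$), while a simple of $\sH_\sN$ lying in $\Sigma\cT$ has central charge $-\charge(\text{object of }\cT)$, and since $\cT\subseteq\cT_{\sM'}$, every composition factor of that object is a simple mapped to $-1$, so its central charge under $\charge_{\sM,\sM'}$ is a positive real number $=$ on the negative real axis after the shift, i.e.\ equals $\charge_{\sM,\sM'}(\Sigma(\,\cdot\,))\in\IR_{<0}$. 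So under $\charge_{\sM,\sM'}$, the simples of $\sH_\sN$ coming from $\Sigma\cT$ sit exactly on the boundary ray $\IR_{<0}\subset\partial\IH$, and those coming from $\cF$ sit in $\IH$ proper.

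Next I would build the family. For $\epsilon\in(0,\epsilon_0]$ define a group homomorphism $\charge^\epsilon\colon K_0(\sD)\to\IC$ by keeping $\charge^\epsilon(S)=\charge_{\sM,\sM'}(S)$ for simples $S$ of $\sH_\sN$ arising from $\cF$, and rotating $\charge^\epsilon(S)=e^{i\pi\epsilon}\,|\charge_{\sM,\sM'}(S)|$ (equivalently, phase $1-\epsilon$ instead of phase $1$) for the finitely many simples of $\sH_\sN$ arising from $\Sigma\cT$. This is a legitimate stability function on $\sH_\sN$: it sends every simple into $\IH$, and since $\sH_\sN$ is a finite-length category with finitely many simples (Proposition~\ref{prop:length}), any stability function sending simples into $\IH$ automatically satisfies the HN property and defines a point of $U(\sH_\sN)$ by \cite[Lemma 5.2]{Bridgeland}. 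As $\epsilon\to 0^+$, $\charge^\epsilon\to\charge_{\sM,\sM'}$ pointwise on the (finitely generated) group $K_0(\sD)$, so the corresponding stability conditions converge in the metric topology of $\stab{\sD}$ to a stability condition with central charge $\charge_{\sM,\sM'}$; one checks its heart is $\sH_\sM$ because in the limit exactly the objects of $\cT$ (and their extensions with $\cF$-objects, shifted appropriately) have their phases cross the $\phi=1$ threshold back into $\sH_\sM$ — this is precisely the forward tilt inverse to the tilt $(\cT,\cF)$, and it is governed by the boundary behaviour $\charge_{\sM,\sM'}(\Sigma\cT)\subset\IR_{<0}$. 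Hence $f(\sM,\sM')=(\sH_\sM,\charge_{\sM,\sM'})=\lim_{\epsilon\to 0^+}(\sH_\sN,\charge^\epsilon)\in\overline{U(\sH_\sN)}$.

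\textbf{The main obstacle.} The delicate point is the very last identification: that the limiting stability condition, assembled from the limiting central charge $\charge_{\sM,\sM'}$ together with the HN-data that survives the limit, genuinely has heart $\sH_\sM$ (rather than some intermediate heart, or a heart that differs from $\sH_\sM$ on the objects of $\cT$). This is where one must use that $\cT\subseteq\cT_{\sM'}$ so that the objects whose phase reaches $1$ in the limit are exactly accounted for, and that the tilt $(\cT,\cF)$ is the one attached to $(\sN,\sN')\leq(\sM,\sM')$ via Lemma~\ref{lem:silt-vs-torsion}. Concretely, one identifies the heart of the limit as $\clext{\{A\in\sH_\sN : \phi_{\charge_{\sM,\sM'}}(A)\le 1\},\,\Sigma\{A\in\sH_\sN:\phi=1 \text{ "from above"}\}}$ and checks this coincides with $\clext{\cF,\cT}=\sH_\sM$. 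Verifying that the convergence is in Bridgeland's topology (controlling phases of \emph{all} semistable objects, not just simples) uses the finiteness of $\sH_\sN$ and a standard deformation argument along the lines of \cite[Section 7]{Bridgeland}; with only finitely many simples and finitely many semistable classes in play, this is routine but needs to be stated carefully.
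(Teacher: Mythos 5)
Your proof follows the paper's setup exactly as far as it goes: you invoke Lemma~\ref{lem:silt-vs-torsion} to extract the torsion pair $(\cT,\cF)$ giving the tilt from $(\sX_\sM,\sY_\sM)$ to $(\sX_\sN,\sY_\sN)$ and to observe the crucial inclusion $\cT\subseteq\cT_{\sM'}$, and you correctly note that $\charge_{\sM,\sM'}$ sends $\cT$ onto the phase-$1$ boundary. At that point the paper simply cites \cite[Corollary~2.8]{Woolf} together with Proposition~\ref{prop:length} (to know that the hearts are finite-length so Woolf's result applies) and is done. You, by contrast, try to reprove the required instance of Woolf's corollary by exhibiting $f(\sM,\sM')$ as an explicit limit of a one-parameter family in $U(\sH_\sN)$. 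That is a reasonable alternative in principle, but it has two problems as written.

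First, there are persistent sign errors. With the paper's conventions the right tilt of $\sH_\sM$ at $(\cT,\cF)$ gives $\sH_\sN=\clext{\cF,\Sigma^{-1}\cT}$, not $\clext{\cF,\Sigma\cT}$. Consequently, a simple $S$ of $\sH_\sN$ coming from the torsion class is of the form $\Sigma^{-1}T$ with $T\in\cT$, so $\charge_{\sM,\sM'}(S)=-\charge_{\sM,\sM'}(T)\in\IR_{>0}$, which lies at phase $0$, not on $\IR_{<0}$ at phase $1$. Your deformation formula $\charge^\epsilon(S)=e^{i\pi\epsilon}|\charge_{\sM,\sM'}(S)|$ does push $S$ towards phase $0$, which is the correct boundary to approach; but your surrounding prose says the opposite (``equivalently, phase $1-\epsilon$ instead of phase $1$'', and ``equals $\charge_{\sM,\sM'}(\Sigma(\cdot))\in\IR_{<0}$''). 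The claim earlier that every simple in $\cF$ has central charge in $\IH\setminus\IR_{<0}$ is also wrong, as $\cF$ may well contain objects of $\cT_{\sM'}$ (all of whose composition factors are mapped to $-1$): only simples in $\cF_{\sM'}$ are guaranteed to have non-zero imaginary part. These inconsistencies suggest the argument has not actually been carried out carefully.

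Second, and more fundamentally, the heart of your argument — that $(\sH_\sN,\charge^\epsilon)$ converges in Bridgeland's metric topology and that the limit has slicing equal to that of $(\sH_\sM,\charge_{\sM,\sM'})$ rather than, say, being the degenerate pre-stability condition $(\sH_\sN,\charge_{\sM,\sM'})$ — is precisely the content of \cite[Corollary~2.8]{Woolf}. You identify this as ``the main obstacle'' and declare it ``routine but needs to be stated carefully,'' referring vaguely to \cite[Section~7]{Bridgeland}; but Bridgeland's deformation result is about perturbing an existing stability condition, not about identifying limits of degenerating families whose heart jumps in the limit. Pointwise convergence of the central charge on $K_0(\sD)$ does not control the phases of HN factors of arbitrary objects, and one must genuinely compare HN filtrations across the tilt boundary — comparing HN data with respect to $\sH_\sN$ for $\epsilon>0$ against HN data with respect to $\sH_\sM$ at $\epsilon=0$. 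Without that comparison (which is exactly what Woolf's proof supplies) the argument begs the question. So: same strategy and same key observation $\cT\subseteq\cT_{\sM'}$ as the paper, but the final and decisive step — that this boundary behaviour implies $f(\sM,\sM')\in\overline{U(\sH_\sN)}$ — is not proved; it is the content of the external result the paper cites.
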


\begin{proof}
By Lemma~\ref{lem:silt-vs-torsion}, there are torsion pairs $(\cT,\cF)$ and $(\cT_{\sM'},\cF_{\sM'})$, respectively, corresponding to the tilts from $(\sX_{\sM},\sY_{\sM})$ to $(\sX_{\sN},\sY_{\sN})$ and to $(\sX_{\rmu{}{\sM}},\sY_{\rmu{}{\sM}})$. Moreover $\cT \subseteq \cT_{\sM'}$.
The stability function of $f(\sM,\sM')$ has phase $1$ on precisely the simple objects $S$ with $\Hom(\sM',S) = 0$, which lie in $\cT_{\sM'}$. Thus, by \cite[Corollary~2.8]{Woolf} and Proposition~\ref{prop:length} (which allows us to apply \cite[Corollary~2.8]{Woolf}), we find $f(\sM,\sM') \in U(\sH_\sM) \cap \overline{U(\sH_\sN)}$.
\end{proof}

The torsion class of the admissible tilt occurring in the above proof was defined in the proof of Lemma~\ref{lem:silt-vs-torsion}. It is $\cT_{\sM'} = \clext{\simples(\sH_\sM)\cap(\sM')\orth}_{\sH_{\sM}}$.

\begin{corollary} \label{cor:real}
Let $(\sN,\sN') \leq (\sM,\sM')$ be silting pairs and suppose $S \in \simples(\sH_{\sN}) \cap (\sN')\orth$. Then $\charge_{\sM,\sM'}(S) \in \IR$.
\end{corollary}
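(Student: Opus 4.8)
The plan is to unwind the definition of the stability function $\charge_{\sM,\sM'}$ and show that the hypothesis $S\in\simples(\sH_{\sN})\cap(\sN')\orth$ forces $S$ into the part of the heart $\sH_{\sN}$ on which this function takes a real value. Recall that $\charge_{\sM,\sM'}$ sends a simple object $T\in\simples(\sH_\sM)$ to $i$ if $T\notin(\sM')\orth$ and to $-1$ if $T\in(\sM')\orth$, and is extended $\IZ$-linearly to $K_0(\sH_\sM)=K_0(\sD)$; the value $-1$ is real while the value $i$ is not. So the issue is which classes in $K_0(\sD)$ are sent into $\IR$: precisely those lying in the subgroup generated by $\simples(\sH_\sM)\cap(\sM')\orth$, equivalently (by Proposition~\ref{prop:length}, which makes $\sH_\sM$ a length category) classes of objects of the torsion class $\cT_{\sM'}=\clext{\simples(\sH_\sM)\cap(\sM')\orth}_{\sH_\sM}$. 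Thus it suffices to show $S\in\cT_{\sM'}$, or at least that $[S]\in K_0(\cT_{\sM'})$.

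First I would invoke Lemma~\ref{lem:silt-vs-torsion}: since $(\sN,\sN')\le(\sM,\sM')$, there is a torsion pair $(\cT,\cF)$ in $\sH_\sM$ giving the (possibly inadmissible) tilt from $(\sX_\sM,\sY_\sM)$ to $(\sX_\sN,\sY_\sN)$, and moreover $\cT\subseteq\cT_{\sM'}$. Next I would identify where the object $S$ lives with respect to this tilt. By the HRS-tilt formulae, $\sH_\sN$ is built from $\cT$ and $\Sigma^{-1}\cF$; more precisely $\cT\subseteq\sH_\sN$ and $\Sigma^{-1}\cF\subseteq\sH_\sN$, and every object of $\sH_\sN$ is an extension of something in $\Sigma^{-1}\cF$ by something in $\cT$. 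A simple object $S$ of $\sH_\sN$ therefore lies either in $\cT$ or in $\Sigma^{-1}\cF$. The claim reduces to excluding the second possibility under the hypothesis $S\in(\sN')\orth$: I expect $S\in\Sigma^{-1}\cF$ to be incompatible with $\Hom(\sN',S)=0$. Concretely, the simple objects of $\sH_\sN$ detected by $\sN'$ (i.e.\ those \emph{not} in $(\sN')\orth$) are exactly the torsion-free part $\Sigma^{-1}\cF$ coming from the further mutation at $\sN'$, by the König--Yang duality $\Hom(M_i,S_j)=\delta_{ij}\kk$ recalled in Section~\ref{sub:compatibility} together with the definition of $\cT_{\sN'}$; so $S\in(\sN')\orth$ forces $S\in\cT_{\sN'}\subseteq\cT\subseteq\cT_{\sM'}$. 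Having placed $S$ in $\cT_{\sM'}$, linearity of $\charge_{\sM,\sM'}$ and the fact that it is real-valued on $\simples(\sH_\sM)\cap(\sM')\orth$ give $\charge_{\sM,\sM'}(S)\in\IR$.

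I would organise the write-up as: (1) quote Lemma~\ref{lem:silt-vs-torsion} to get $\cT\subseteq\cT_{\sM'}$ and the tilt description of $\sH_\sN$; (2) observe that a simple of $\sH_\sN$ lies in $\cT$ or in $\Sigma^{-1}\cF$; (3) use the orthogonality hypothesis $S\in(\sN')\orth$ to rule out the $\Sigma^{-1}\cF$ case, placing $S\in\cT\subseteq\cT_{\sM'}$ (here again Proposition~\ref{prop:length} is used to guarantee $\cT_{\sM'}$ is the extension closure of the relevant simples, so $[S]$ is a non-negative combination of classes of simples in $\simples(\sH_\sM)\cap(\sM')\orth$); (4) conclude by $\IZ$-linearity of $\charge_{\sM,\sM'}$ and its value $-1$ on those simples. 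The main obstacle is step (3): making precise the identification ``simple objects of $\sH_\sN$ in $(\sN')\orth$ $\Leftrightarrow$ objects of the torsion class at $\sN'$'', i.e.\ correctly matching up the König--Yang duality between summands of a silting object and simples of its heart with the orthogonality condition $(\sN')\orth$. One must be careful that $\sN'$ is a subcategory of $\sN$ (not of $\sM$), so the relevant torsion class is $\cT_{\sN'}\subseteq\sH_\sN$, and then chase the inclusions $\cT_{\sN'}\subseteq\cT\subseteq\cT_{\sM'}$ from Lemma~\ref{lem:silt-vs-torsion}; once this bookkeeping is done, the rest is immediate.
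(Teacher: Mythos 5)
Your overall strategy---reduce to showing $S\in\cT_{\sM'}$ and conclude by $\IZ$-linearity of $\charge_{\sM,\sM'}$---is the right one and matches the paper's. The gap is in step (3), where you assert the chain $\cT_{\sN'}\subseteq\cT\subseteq\cT_{\sM'}$. The second inclusion is Lemma~\ref{lem:silt-vs-torsion}, but the first, $\cT_{\sN'}\subseteq\cT$, is false in general: $\cT$ is a torsion class in $\sH_\sM$, whereas $\cT_{\sN'}\subseteq\sH_\sN$, and $\cT\cap\sH_\sN\subseteq\cT\cap(\sH_\sM\cap\sH_\sN)=\cT\cap\cF=0$, so $\cT_{\sN'}\subseteq\cT$ would force $\cT_{\sN'}=0$, which fails whenever $\sN'\subsetneq\sN$. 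Relatedly, your HRS decomposition is reversed: in the paper's convention the heart after the right tilt at $(\cT,\cF)$ is $\sH_\sN=\cF*\Sigma^{-1}\cT$, so a simple of $\sH_\sN$ lies in $\cF$ or in $\Sigma^{-1}\cT$ (not in $\cT$ or $\Sigma^{-1}\cF$), and the case you are trying to force, $S\in\cT$, is impossible outright. You also conflate the torsion pair $(\cT,\cF)$ of the tilt $\sH_\sM\leadsto\sH_\sN$ with the torsion pair $(\cT_{\sN'},\cF_{\sN'})$ of the further tilt $\sH_\sN\leadsto\sH_{\rmu{\sN'}(\sN)}$; the simples detected by $\sN'$ are governed by the latter, and there is no reason for them to coincide with any shifted piece of the former.

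The paper's proof avoids the HRS case analysis entirely. It expresses both torsion classes in terms of aisles and co-aisles, $\cT_{\sN'}=\sX_{\rmu{\sN'}(\sN)}\cap\Sigma\sY_\sN$ and $\cT_{\sM'}=\sX_{\rmu{\sM'}(\sM)}\cap\Sigma\sY_\sM$, and then uses the two monotonicities from Lemma~\ref{lem:equivalent-po}---$\sY_\sN\subseteq\sY_\sM$ (from $\sN\leq\sM$) and $\sX_{\rmu{\sN'}(\sN)}\subseteq\sX_{\rmu{\sM'}(\sM)}$ (from $\rmu{\sM'}(\sM)\leq\rmu{\sN'}(\sN)$)---to conclude $\cT_{\sN'}\subseteq\cT_{\sM'}$ directly. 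Since $\simples(\sH_\sN)\cap(\sN')\orth\subseteq\cT_{\sN'}$ holds by definition, this gives $S\in\cT_{\sM'}$ in a single chain of inclusions. Your write-up can be repaired by replacing the false step with this inclusion chain, but at that point it becomes the paper's calculation rather than the torsion-theoretic dichotomy you set up; the latter, as written, does not close because $\cT\cap\sH_\sN=0$.
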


\begin{proof}
We have
$
\simples(\sH_{\sN}) \cap (\sN')\orth \subset \cT_{\sN'} = \sX_{\rmu{}\sN} \cap \Sigma \sY_\sN \subset \sX_{\rmu{}\sN} \cap \Sigma \sY_\sM \subset \sX_{\rmu{}\sM} \cap \Sigma \sY_\sM = \cT_{\sM'}.
$
\end{proof}

\begin{lemma}
$Z_{\sM_\fl,\sM'_\fl}$ is a well defined stability function.
\end{lemma}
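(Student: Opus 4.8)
The plan is to verify the two defining properties of a stability function for $Z_{\sM_\fl,\sM'_\fl}=\sum_{j=0}^{k}a_j\charge_{\sM_j,\sM_j'}$ on the heart $\sH_{\sM_l}$. That it is a group homomorphism $K_0(\sD)\to\IC$ is formal: by Proposition~\ref{prop:length} each $\sH_{\sM_j}$ is a length category with finitely many simples, so each $\charge_{\sM_j,\sM_j'}$ extends uniquely from $\simples(\sH_{\sM_j})$ to a homomorphism $K_0(\sH_{\sM_j})=K_0(\sD)\to\IC$, and any non-negative real combination of group homomorphisms is again one. Since $a_j=0$ for $j<l$ we may write $Z_{\sM_\fl,\sM'_\fl}=\sum_{j=l}^{k}a_j\charge_{\sM_j,\sM_j'}$ with $a_l>0$, so it remains to prove $Z_{\sM_\fl,\sM'_\fl}(A)\in\IH$ for every $0\neq A\in\sH_{\sM_l}$.

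I would first record two facts. For the diagonal term $j=l$: since $A$ has a composition series in $\sH_{\sM_l}$ and $\charge_{\sM_l,\sM_l'}$ sends each simple to $i$ or $-1$, the value $\charge_{\sM_l,\sM_l'}(A)$ is a non-negative integral combination of $i$ and $-1$, nonzero because $[S]\neq 0$ in $K_0(\sD)$ for every simple $S$; hence $\charge_{\sM_l,\sM_l'}(A)\in\IH$. Moreover $\operatorname{Im}\charge_{\sM_l,\sM_l'}(A)=0$ forces every composition factor of $A$ to lie in $(\sM_l')\orth$, i.e.\ $A\in\cT_{\sM_l'}=\clext{\simples(\sH_{\sM_l})\cap(\sM_l')\orth}_{\sH_{\sM_l}}$, and then $\charge_{\sM_l,\sM_l'}(A)\in\IR_{<0}$. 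For the off-diagonal terms $j>l$: along the maximal chain we have $(\sM_l,\sM_l')\leq(\sM_j,\sM_j')$, so by Lemma~\ref{lem:cellclosure} the stability condition $f(\sM_j,\sM_j')=(\sH_{\sM_j},\charge_{\sM_j,\sM_j'})$ lies in $\overline{U(\sH_{\sM_l})}$; since the central charge of a stability condition depends continuously on it, $\charge_{\sM_j,\sM_j'}$ maps $\sH_{\sM_l}$ into the closed upper half-plane $\overline{\IH}$, so $\operatorname{Im}\charge_{\sM_j,\sM_j'}(A)\geq 0$ for every $A\in\sH_{\sM_l}$.

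The conclusion is then a case split on $\operatorname{Im}Z_{\sM_\fl,\sM'_\fl}(A)=\sum_{j=l}^{k}a_j\operatorname{Im}\charge_{\sM_j,\sM_j'}(A)$, a sum of non-negative numbers. If it is strictly positive, $Z_{\sM_\fl,\sM'_\fl}(A)$ lies in the open upper half-plane, hence in $\IH$. If it is zero, every summand vanishes; since $a_l>0$ this forces $\operatorname{Im}\charge_{\sM_l,\sM_l'}(A)=0$, so $A\in\cT_{\sM_l'}$ and $\charge_{\sM_l,\sM_l'}(A)\in\IR_{<0}$ by the first fact. For each $j>l$ and each simple $S\in\simples(\sH_{\sM_l})\cap(\sM_l')\orth$, Corollary~\ref{cor:real} gives $\charge_{\sM_j,\sM_j'}(S)\in\IR$, and the chain of inclusions in its proof places $S$ inside the torsion class $\cT_{\sM_j'}\subseteq\sH_{\sM_j}$, which is generated by simples of $\sH_{\sM_j}$ of central charge $-1$; hence $\charge_{\sM_j,\sM_j'}(S)$ is a non-positive integer, and nonzero since $[S]\neq 0$, so $\charge_{\sM_j,\sM_j'}(S)\in\IR_{<0}$. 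Expressing $A$ as a non-trivial non-negative integral combination of such simples in $K_0(\sD)$ gives $\charge_{\sM_j,\sM_j'}(A)\in\IR_{<0}$ for all $j\geq l$, whence $Z_{\sM_\fl,\sM'_\fl}(A)\in\IR_{<0}\subseteq\IH$.

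The crux is the off-diagonal terms: for $j>l$ the function $\charge_{\sM_j,\sM_j'}$ is in general not a stability function on $\sH_{\sM_l}$ (it can take positive real values there), so one cannot argue termwise, and the two facts above are exactly what makes the argument go through --- non-negativity of $\operatorname{Im}\charge_{\sM_j,\sM_j'}$ on $\sH_{\sM_l}$ via Lemma~\ref{lem:cellclosure}, together with strict negativity of $\charge_{\sM_j,\sM_j'}$ on $\cT_{\sM_l'}$ via Corollary~\ref{cor:real}. As an alternative to invoking Lemma~\ref{lem:cellclosure} for the first of these, one could make the HRS tilt $\sH_{\sM_l}=\clext{\Sigma^{-1}\cT,\cF}$ from $\sH_{\sM_j}$ explicit, with $\cT\subseteq\cT_{\sM_j'}$ supplied by Lemma~\ref{lem:silt-vs-torsion}, and read off $\operatorname{Im}\charge_{\sM_j,\sM_j'}\geq 0$ directly from its two graded pieces.
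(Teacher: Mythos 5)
Your proof is correct and follows essentially the same strategy as the paper's: both rest on Lemma~\ref{lem:cellclosure} (together with \cite[Proposition~2.11]{Woolf}, which is the precise form of your "depends continuously" remark) to get $\operatorname{Im}\charge_{\sM_j,\sM'_j}\geq 0$ on $\sH_{\sM_l}$ for $j>l$, and on the torsion-class inclusion $\cT_{\sM'_l}\subseteq\cT_{\sM'_j}$ from Corollary~\ref{cor:real}/Lemma~\ref{lem:silt-vs-torsion} to force negative real values in the degenerate case. The only cosmetic difference is that the paper checks simples $S$ and splits on whether $S\in(\sM'_l)\orth$, whereas you check an arbitrary nonzero $A$ and split on whether $\operatorname{Im}Z(A)$ vanishes; the two are equivalent since $\IH$ is closed under addition.
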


\begin{proof}
The point $f(\sM_\fl,\sM'_\fl,a_\fl)$ is defined on the heart $\sH_{\sM_l}$, so let $S$ be a simple object in this heart. We need to show that the stability function maps $S$ into $\IH$.

First suppose $\Hom(\sM'_l,S)\neq 0$. Then $\charge_{\sM_l,\sM'_l}(S) = i$ and hence $f(\sM_j,\sM'_j) \in \overline{U(\sH_{\sM_l})}$ for all $j \geq l$ by Lemma~\ref{lem:cellclosure}. Therefore, $\charge_{\sM_j,\sM'_j}(S)$ is in the closure of the upper half-plane for all $j \geq l$; see \cite[Proposition~2.11]{Woolf}. It follows that the convex sum $\charge_{\sM_\fl,\sM'_\fl}(S)$ is a point in the open upper half-plane.

Now we consider the set of simple objects $\simples(\sH_{\sM_l}) \cap (\sM'_l)\orth$. From Lemma~\ref{lem:compatibility} there are torsion pairs giving rise to admissible tilts as follows:
\begin{align*}
(\cT \coloneqq \clext{\simples(\sH_{\sM_l}) \cap (\sM'_l)\orth}, \cF)
     & \text{ corresponding to } (\sX_{\sM_l},\sY_{\sM_l}) \leadsto (\sX_{\rmu{}{\sM_l}},\sY_{\rmu{}{\sM_l}}), \\
(\cT_j \coloneqq \clext{\simples(\sH_{\sM_j}) \cap (\sM'_j)\orth}, \cF_j)
     & \text{ corresponding to } (\sX_{\sM_j},\sY_{\sM_j}) \leadsto (\sX_{\rmu{}{\sM_j}},\sY_{\rmu{}{\sM_j}}).
\end{align*}
Applying Lemma~\ref{lem:silt-vs-torsion} gives $\cT \subseteq \cT_j$ for all $j > l$.

By definition, $\charge_{\sM_j,\sM'_j}$ has phase $1$ on each generator of $\cT_j$, and therefore, on each object in $\cT_j$. Using $\cT \subseteq \cT_j$, we have that $\charge_{\sM_j,\sM'_j}$ has phase $1$ for all objects of $\cT$ and in particular for all simple objects in $S \in \simples(\sH_{\sM_l}) \cap (\sM'_l)\orth$. It follows that $\charge_{\sM_{*},\sM'_{*}}(S) \in \IH$.
\end{proof}

\begin{remark}
We observe that the map $f$ is well defined: In the order complex of $\posetpair(\sD) \setminus \{\hat{0}\}$, two points $(\sM_\fl,\sM'_\fl,a_\fl)$ and $(\sN_\fl,\sN'_\fl,b_\fl)$ are the same if and only if $a_j =b_j$ for all $j\in\{0,\ldots,k\}$ and $(\sM_j,\sM'_j)=(\sN_j,\sN'_j)$ when $a_j =b_j \neq 0$. It is then clear from the definition that $f(\sM_\fl,\sM'_\fl,a_\fl) = f(\sN_\fl,\sN'_\fl,b_\fl)$.
\end{remark}

\begin{lemma}
$f$ is continuous.
\end{lemma}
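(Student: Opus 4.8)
The plan is to localise continuity to the closed cells of $\silt{\sD}$ and then, one boundary stratum at a time, to compare the hearts attached to neighbouring cells. Since a map out of a CW complex is continuous as soon as its restriction to each closed cell is, and $\silt{\sD}$ is identified with the order complex of $\posetpair(\sD)\setminus\{\hat0\}$, it suffices to check continuity of $f$ on each closed simplex. Fix a maximal chain $(\sM_0,\sM_0)\leq\cdots\leq(\sM_k,0)$, write $\Delta$ for the corresponding simplex with barycentric coordinates $a_\fl=(a_0,\ldots,a_k)$, and stratify $\Delta$ by $l(a_\fl):=\min\{j:a_j>0\}$. On the stratum $\Delta_l:=\{a_\fl:a_0=\cdots=a_{l-1}=0,\ a_l>0\}$ the map $f$ is $a_\fl\mapsto(\sH_{\sM_l},\sum_{j\geq l}a_j\,\charge_{\sM_j,\sM'_j})$, which is continuous: by \cite[Lemma~5.2]{Bridgeland} it factors through $U(\sH_{\sM_l})\cong\IH^{n+m}$, the composite $\Delta_l\to\IH^{n+m}$ being the affine assignment of the central charges of the simples of $\sH_{\sM_l}$ (landing in $\IH$ by the lemma just proved). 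As $\Delta$ has finitely many strata, it remains to handle points where two strata meet: after passing to a subsequence we may assume $a_\fl^{(s)}\to a_\fl$ with all $a_\fl^{(s)}$ in one stratum $\Delta_l$ and $a_\fl\in\Delta_{l'}$ for some $l\leq l'$; the case $l=l'$ is covered by continuity on $\Delta_{l'}$, so assume $l<l'$ and set $\sigma:=f(a_\fl)=(\sH_{\sM_{l'}},\charge)$, $\sigma^{(s)}:=f(a_\fl^{(s)})=(\sH_{\sM_l},\charge^{(s)})$, with $\charge=\sum_{j\geq l'}a_j\,\charge_{\sM_j,\sM'_j}$ and $\charge^{(s)}=\sum_{j\geq l}a_j^{(s)}\,\charge_{\sM_j,\sM'_j}$.

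Each $\charge_{\sM_j,\sM'_j}$ extends uniquely to an element of $\Hom(K_0(\sD),\IC)$ (the simples of $\sH_{\sM_j}$ being a basis), and since $a_j^{(s)}\to a_j$ for all $j$ while $a_j=0$ for $l\leq j<l'$, the central charges converge: $\charge^{(s)}\to\charge$ in $\Hom(K_0(\sD),\IC)$. By \cite[Theorem~1.2]{Bridgeland} the central-charge map $\stab{\sD}\to\Hom(K_0(\sD),\IC)$ is a local homeomorphism, and, since all our stability conditions are locally finite and $\sH_{\sM_{l'}}$ is a length category (Proposition~\ref{prop:length}), Bridgeland's deformation theorem \cite[Theorem~7.1]{Bridgeland} furnishes a neighbourhood $V$ of $\sigma$ mapped homeomorphically onto a neighbourhood of $\charge$. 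Hence for $s\gg0$ there is a unique $\tau^{(s)}\in V$ with central charge $\charge^{(s)}$, and $\tau^{(s)}\to\sigma$; it remains to identify $\tau^{(s)}$ with $\sigma^{(s)}$. Since a stability condition is determined by its heart together with its central charge \cite[Proposition~5.3]{Bridgeland}, it is enough to check that $\tau^{(s)}$ has heart $\sH_{\sM_l}$. By the deformation theorem the heart of $\tau^{(s)}$ is the tilt of $\sH_{\sM_{l'}}$ at the torsion pair singled out by the perturbation $\charge\leadsto\charge^{(s)}$, whereas $\sH_{\sM_l}$ is a tilt of $\sH_{\sM_{l'}}$ by Lemma~\ref{lem:silt-vs-torsion}; one verifies these two tilts coincide, using Corollary~\ref{cor:real} together with the explicit values $\{-1,i\}$ of the maps $\charge_{\sM_j,\sM'_j}$ to pin down which simples move. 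Therefore $\tau^{(s)}=\sigma^{(s)}$, so $\sigma^{(s)}\to\sigma$, and $f|_\Delta$ is continuous.

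The main obstacle is exactly this heart identification: one must show that the simples of $\sH_{\sM_{l'}}$ pushed past phase $1$ by the deformed central charge $\charge^{(s)}$ are precisely those cutting out the (possibly inadmissible) torsion class of the tilt $\sH_{\sM_{l'}}\leadsto\sH_{\sM_l}$ from Lemma~\ref{lem:silt-vs-torsion}. This forces one to combine the compatibility apparatus of Section~\ref{sec:compatibility} --- the nesting $\cT\subseteq\cT_{\sM'}$ of torsion classes, the reality of the relevant charges (Corollary~\ref{cor:real}), and the compatibility of silting mutation with admissible tilting (Lemma~\ref{lem:compatibility}, Lemma~\ref{lem:cellclosure}) --- with the local description of $\stab{\sD}$ near the boundary of a stratum \cite[Corollary~2.8, Proposition~2.11]{Woolf}. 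Once this is in place, the remaining verifications are routine.
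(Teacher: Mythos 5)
Your decomposition is genuinely different from the paper's. The paper reduces to sequential continuity via first-countability, partitions a convergent sequence into subsequences each eventually lying in the closure of a single simplex, and then asserts that on the image of each simplex ``the heart is fixed and the stability function varies linearly''. You instead reduce to continuity on each closed cell directly (a cleaner use of the CW topology), stratify the closed simplex by $l(a_\fl)$, note that $f$ restricted to each open stratum factors through $U(\sH_{\sM_l})\cong\IH^{n+m}$ and is affine in the barycentric coordinates, and then handle boundary crossings between strata via Bridgeland's deformation theorem and the local-homeomorphism property of the central-charge map. This is more explicit than the paper's proof, which takes the passage between strata for granted.

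However, your proof has a gap precisely where you say ``one verifies these two tilts coincide'': you identify the crux (that the simples of $\sH_{\sM_{l'}}$ whose charge is pushed below the real axis by $\charge^{(s)}$ are exactly those generating the torsion class of the tilt $\sH_{\sM_{l'}}\leadsto\sH_{\sM_l}$ from Lemma~\ref{lem:silt-vs-torsion}), and you correctly list the tools (Corollary~\ref{cor:real}, Lemma~\ref{lem:compatibility}, Woolf's Corollary~2.8 and Proposition~2.11), but you never carry out the verification, which is the only nontrivial content of the whole argument. Note also that the assertion ``by the deformation theorem the heart of $\tau^{(s)}$ is the tilt of $\sH_{\sM_{l'}}$ at the torsion pair singled out by the perturbation'' is not what \cite[Theorem~7.1]{Bridgeland} says; Bridgeland's theorem produces the nearby stability condition $\tau^{(s)}$ but gives no such description of its heart. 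That description is the content of \cite[Corollary~2.8, Proposition~2.11]{Woolf} (already the backbone of Lemma~\ref{lem:cellclosure}), and it is there that the heart identification must actually be argued. To be fair, the paper's own justification on this point is equally compressed: ``the heart is fixed'' holds only on an open stratum, not on the closed simplex, so the paper is also tacitly invoking the Woolf picture of $\overline{U(\sH)}$. Your proof is not wrong in strategy — it is the more honest of the two in flagging where the work lies — but as written it stops short of closing that step.
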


\begin{proof}
Since $\silt{\sD}$ is a first-countable topological space, it is sufficient to prove sequential continuity. Take any sequence $(x_n)$ of points in $\silt{\sD}$ which converge to a point $x \in \silt{\sD}$. Using the finiteness property proved in Proposition~\ref{prop:finiteness} it is straightforward to see that $x$ is contained in at most a finite number simplices of the order complex of $\posetpair(\sD) \setminus \{\hat{0}\}$. We can therefore partition the sequence into a finite number of subsequences whose terms eventually lie in the closure of one of the simplices containing $x$. On the image of each such simplex under $f$, the heart is fixed and the stability function varies linearly and is therefore continuous. Hence every subsequence converges to $f(x)$.
\end{proof}

\begin{lemma}
$f$ is injective.
\end{lemma}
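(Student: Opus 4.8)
The idea is to reconstruct the datum $(\sM_\fl,\sM'_\fl,a_\fl)$ from the stability condition $f(\sM_\fl,\sM'_\fl,a_\fl)$ in stages. Recall that a stability condition determines both its heart and its central charge: by \cite[Proposition~5.3]{Bridgeland} it amounts to the datum of a bounded t-structure together with a stability function on its heart. So assume $f(\sM_\fl,\sM'_\fl,a_\fl)=f(\sN_\fl,\sN'_\fl,b_\fl)$, and let $l$ (respectively $l'$) be the least index with $a_l>0$ (respectively $b_{l'}>0$). The first stability condition has heart $\sH_{\sM_l}$ and the second has heart $\sH_{\sN_{l'}}$, so $\sH_{\sM_l}=\sH_{\sN_{l'}}$, and therefore $\sM_l=\sN_{l'}$ by the K\"onig--Yang bijection between silting subcategories and algebraic t-structures (Theorem~\ref{thm:Koenig-Yang}). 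Put $\sH\coloneqq\sH_{\sM_l}$ and let $\charge\coloneqq\sum_j a_j\charge_{\sM_j,\sM'_j}=\sum_j b_j\charge_{\sN_j,\sN'_j}$ be the common central charge; it remains to reconstruct from $(\sH,\charge)$ the subcategory $\sM'_l$, the index $l$, and then the rest of the chain and the coefficients.

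First I would recover $\sM'_l$ and $l$. Let $S\in\simples(\sH)$. If $S\in(\sM'_l)\orth$, then $\charge_{\sM_l,\sM'_l}(S)=-1$, and by the argument in the proof that $\charge_{\sM_\fl,\sM'_\fl}$ is a stability function (which rests on Corollary~\ref{cor:real} and Lemma~\ref{lem:silt-vs-torsion}) every $\charge_{\sM_j,\sM'_j}(S)$ with $a_j>0$ is a negative real, so $\charge(S)\in\IR_{<0}$. If instead $\Hom(\sM'_l,S)\neq0$, then $\charge_{\sM_l,\sM'_l}(S)=i$, while by Lemma~\ref{lem:cellclosure} and \cite[Proposition~2.11]{Woolf} each of the remaining summands $\charge_{\sM_j,\sM'_j}(S)$ lies in the closed upper half-plane, so $\mathrm{Im}\,\charge(S)\ge a_l>0$. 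Hence $\{\,S\in\simples(\sH)\mid\charge(S)\in\IR\,\}=\simples(\sH)\cap(\sM'_l)\orth$. Writing $M_1\oplus\cdots\oplus M_t$ for a basic silting generator of $\sM_l$ and $S_1,\dots,S_t$ for the corresponding simples of $\sH$, the K\"onig--Yang duality $\Hom(M_i,S_j)=\delta_{ij}\kk$ from Section~\ref{sub:compatibility} shows that the additive subcategory $\sM'_l\subseteq\sM_l$ is recovered from $\simples(\sH)\cap(\sM'_l)\orth=\{\,S_i\mid M_i\notin\sM'_l\,\}$; hence $\sM'_l=\sN'_{l'}$. In particular $\rk{\sM'_l}=\rk{\sN'_{l'}}$; as both chains are maximal, Lemma~\ref{lem:intlength} forces the rank of the second component to drop by exactly one at each covering step, from $\rk{K_0(\sD)}$ at the bottom vertex $(\sM_0,\sM_0)$ down to $0$ at the top vertex $(\sM_k,0)$, so $l=l'$. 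Thus the two supporting simplices share their bottom vertex $(\sM_l,\sM'_l)$.

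The remaining reconstruction is by induction on $\rk{\sM'_l}$. If $\rk{\sM'_l}=0$, then $(\sM_l,\sM'_l)=(\sM_l,0)$ is a maximal element of $\posetpair(\sD)\setminus\{\hat{0}\}$, so each of the two points is forced to be this single vertex and they agree. If $\rk{\sM'_l}\ge1$, the crucial sub-claim is that the coefficient $a_l$ is determined by $(\sH,\charge)$. Granting this, $a_l=b_{l'}$; if $a_l=1$ then both points equal the vertex $(\sM_l,\sM'_l)$, and if $a_l<1$ then $\tfrac{1}{1-a_l}\bigl(\charge-a_l\charge_{\sM_l,\sM'_l}\bigr)$ is the central charge of the $f$-image of the point obtained by deleting the bottom vertex $(\sM_l,\sM'_l)$ and renormalising the coefficients, and likewise for the second point. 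These truncated points have supporting simplices whose bottom vertices are of strictly smaller rank, so by the inductive hypothesis they coincide; unwinding the renormalisation then yields $(\sM_\fl,\sM'_\fl,a_\fl)=(\sN_\fl,\sN'_\fl,b_\fl)$.

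The hard part is the sub-claim: recovering $a_l$, equivalently ruling out that two distinct chains through the common bottom vertex give the same central charge. I would attack this via the nested torsion classes $\cT_{\sM'_l}$ and the (possibly inadmissible) tilts of Lemma~\ref{lem:silt-vs-torsion}, reducing it to two ingredients. First, that the functionals $\charge_{\sM_j,\sM'_j}$ indexed by the vertices of a single chain are $\IR$-linearly independent in $\Hom(K_0(\sD),\IC)$ --- this already yields injectivity of $f$ on each open simplex. Second, that $a_l\,\charge_{\sM_l,\sM'_l}$ is detectable geometrically inside the cell $U(\sH)\cong\IH^{\rk{K_0(\sD)}}$; concretely, $a_l$ should be the largest $t\ge0$ for which $\charge-t\,\charge_{\sM_l,\sM'_l}$ is, up to positive scaling, still the central charge of a point of $f(\silt{\sD})$ whose heart is a tilt of $\sH$ along the chain. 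Proving these two statements --- which is where the detailed representation theory of the silting pairs $(\sM_j,\sM'_j)$ really enters --- closes the induction and establishes injectivity of $f$.
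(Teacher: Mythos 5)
Your reconstruction of the heart, the silting subcategory $\sM_l$, and the partial silting subcategory $\sM'_l$ (hence $l = l'$ via the rank argument) is correct and is essentially the same first half as the paper's proof: agreement of hearts gives $\sM_l = \sN_{l'}$ by K\"onig--Yang, and the dichotomy $\charge(S) \in \IR_{<0}$ versus $\mathrm{Im}\,\charge(S)>0$ (using Corollary~\ref{cor:real} and the closed upper half-plane bound from Lemma~\ref{lem:cellclosure}) recovers $\simples(\sH)\cap(\sM'_l)\orth$, hence $\sM'_l$. Your inductive skeleton (peel off the bottom vertex, renormalise, induct on $\rk\sM'_l$) is a clean variant of the paper's $\lambda_t$-homotopy reparametrisation, so no objection there.

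The genuine gap is exactly where you flag it: the sub-claim that $a_l$ is recoverable from $(\sH,\charge)$, i.e.\ that $a_l = b_{l'}$. You leave both of your proposed ingredients unproved, and they are substantially harder than what the paper actually does. Worse, the second ingredient as stated ("$a_l$ is the largest $t\ge 0$ for which $\charge - t\,\charge_{\sM_l,\sM'_l}$ is, up to positive scaling, the central charge of a point of $f(\silt{\sD})$ whose heart is a tilt of $\sH$ along the chain") is circular, because the chain is part of the data you are trying to recover; and the first ingredient ($\IR$-linear independence of the functionals $\charge_{\sM_j,\sM'_j}$) is not obviously needed and is not established. The paper's argument at this point is much more elementary and parallel to the step you already carried out for $\sM'_l$: since the ranks $\rk\sM'_j$ strictly decrease along the chain, there is exactly one indecomposable summand $M$ in $\sM'_l$ that is not in $\sM'_{l+1}$, and the dual simple $S$ (with $\Hom(M,S)\neq0$, $\Hom(M_i,S)=0$ for the other summands $M_i$) satisfies $\charge_{\sM_l,\sM'_l}(S)=i$ while $\charge_{\sM_{l+j},\sM'_{l+j}}(S)\in\IR$ for all $j>0$ by Corollary~\ref{cor:real}. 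Taking imaginary parts of $\sum a_j\charge_{\sM_j,\sM'_j}(S) = \sum b_j\charge_{\sN_j,\sN'_j}(S)$ and using that each term on the right has non-negative imaginary part then gives $a_l \ge b_{l'}$, and symmetry finishes. You should replace your two unproved ingredients with this direct computation; until you do, the induction does not close and the proof is incomplete.
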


\begin{proof}
Given any point $(\sM_\fl,\sM'_\fl,a_\fl)$ in the silting pairs CW complex, we can assign an integer $\# a_\fl \in (0, k]$ to be the number of $a_i$ which are non-zero. For two points $(\sM_\fl,\sM'_\fl,a_\fl)$ and $(\sN_\fl,\sN'_\fl,b_\fl)$ define an integer $d \coloneqq \max\{ \#  a_\fl,  \#  b_\fl\}$. We use this integer as the index for a proof by induction. Assume $f(\sM_\fl,\sM'_\fl,a_\fl) = f(\sN_\fl,\sN'_\fl,b_\fl)$.

First suppose that $d=1$. It follows that there is exacly one non-zero $a_i$ and $b_j$ and therefore these are both equal to $1$. Since the images are the same we see that $\sH_{\sM_l} = \sH_{\sN_{l'}}$ and $\charge_{\sM_l,\sM_l'} = \charge_{\sN_{l'},\sN_{l'}'}$.
The hearts determine the bounded t-structures, which by the K\"onig--Yang correspondences, Theorem~\ref{thm:koenig-yang}, determine the silting subcategories, so that $\sM_l=\sN_{l'}$. Since the stability functions coincide, the same simple objects of the heart are mapped to $-1$ and $i$, respectively. Now since the partial silting subcategory is determined by the simple objects sent to $-1$ via the silting subcategory versus simple object duality described in Section~\ref{sub:compatibility}, the subcategories $\sM'_l,\sN'_{l'}\subseteq\sM_l=\sN'_{l'}$ are determined by the stability function.
It is then clear from the definitions that $(\sM_l,\sM_l') = (\sN_{l'},\sN_{l'}')$ and so by construction, $(\sM_\fl,\sM'_\fl,a_\fl)$ and $ (\sN_\fl,\sN'_\fl,b_\fl)$ define the same point in the silting pairs CW complex.

Now suppose that any $(\sM_\fl,\sM'_\fl,a_\fl)$ and $(\sN_\fl,\sN'_\fl,b_\fl)$ with index $d \leq d'$, and satisfying $f(\sM_\fl,\sM'_\fl,a_\fl) = f(\sN_\fl,\sN'_\fl,b_\fl)$ correspond to the same point in the silting pairs CW complex. Take any $(\sM_\fl,\sM'_\fl,a_\fl)$ and $(\sN_\fl,\sN'_\fl,b_\fl)$ with index $d'+1$ which map to the same point under $f$. 
It follows from the definition that $\sH_{\sM_l} = \sH_{\sN_{l'}}$ and so $\sM_l = \sN_{l'}$.

Suppose that $\sM_l' \neq \sN_{l'}'$. Then there is some simple object $S \in \sH_{\sM_l} = \sH_{\sN_{l'}}$ such that $\charge_{\sM_l,\sM_l'}(S) =  -1$, but $\charge_{\sN_{l'},\sN_{l'}'}(S)=i$ (or vice-versa). It follows from Corollary~\ref{cor:real} that in fact $\charge_{\sM_i,\sM_i'}(S) \in \IR$ for all $i \geq l$. Looking at the imaginary part of $\sum_{j=0}^k a_j \charge_{\sM_j,\sM_j'} =\sum_{j=0}^k b_j \charge_{\sN_j,\sN_j'}$ evaluated on $S$, using the fact that $\charge_{\sM_j,\sM_j'}$ and $\charge_{\sN_j,\sN_j'}$ map the simple objects of $\sH_{\sM_l} = \sH_{\sN_{l'}}$ into the closed upper half-plane, we see that $b_{l'}=0$. However, this is a contradiction. Therefore, $\sM_l' = \sN_{l'}'$.

We use a similar argument to show that $a_l = b_{l'}$. Since the rank of $\sM_j'$ strictly decreases as $j$ increases, we can find a simple object $S$ in $\sH_{\sM_l} = \sH_{\sN_{l'}}$ such that $\charge_{\sM_l,\sM_l'}(S) = \charge_{\sN_{l'},\sN_{l'}'}(S) = i$, but $\charge_{\sM_{l+j},\sM_{l+j}'}(S) \in \IR$ for all $j >0$. Looking again at the imaginary part of the stability function evaluated at $S$, we see that \[ a_l = b_{l'} + \sum_{j=l'}^k b_j \text{Im}(\charge_{\sN_{j},\sN_{j}'}(S))\]
and since all the terms are positive it follows that $ a_l \geq b_{l'}$. The result follows using symmetry.

For any $t \in [0, 1]$ we define \[ \lambda_t \coloneqq \frac{1 - a_l(1-t)}{1-a_l} = \frac{1 - b_{l'}(1-t)}{1-b_{l'}}\]
(this is well defined since $d'+1>1$ implies $a_l \neq 1 \neq b_{l'}$) and consider the new pair of points $(\sM_\fl,\sM'_\fl,\widetilde a_\fl(t))$ and $(\sN_\fl,\sN'_\fl,\widetilde b_\fl(t))$ where

\[ \widetilde a_j(t) \coloneqq \begin{cases}
                                      (1-t)a_l & j=l, \\
                                      \lambda_t a_j             & \text{otherwise}
                                    \end{cases} \quad\text{and}\quad
   \widetilde b_j(t) \coloneqq \begin{cases}
                                      (1-t)b_{l'} & j=l', \\
                                      \lambda_t b_j             & \text{otherwise}
                                    \end{cases}
\]

We note that for $t=0$ we have the same pair of points as before, and for $t \in [0,1)$, the hearts $\sH_{\sM} = \sH_{\sN}$ of the corresponding stability functions are the same. Furthermore, since
\[ \sum_{j=l}^k a_j \charge_{\sM_j,\sM_j'} =\sum_{j=l'}^k b_j \charge_{\sN_j,\sN_j'} \] and we have shown that $\sM_l' = \sN_{l'}'$ and $a_l = b_{l'}$, it is clear that
\[ \lambda_t \sum_{j=l+1}^k a_j \charge_{\sM_j,\sM_j'} = \lambda_t \sum_{j=l'+1}^k b_j \charge_{\sN_j,\sN_j'}. \] Therefore  $f(\sM_\fl,\sM'_\fl,\widetilde a_\fl(t)) = f(\sN_\fl,\sN'_\fl, \widetilde b_\fl(t))$ for all $t \in [0,1)$. Continuity then gives the equality for all $t \in [0,1]$. However, by construction, $\widetilde a_l(1) = 0 = \widetilde b_{l'}(1)$ and so the pair of points $(\sM_\fl,\sM'_\fl,\widetilde a_\fl(1))$ and $(\sN_\fl,\sN'_\fl,\widetilde b_\fl(1))$ has degree $d\leq d'$. Invoking the induction hypothesis we conclude that the points are the same in the silting pairs CW complex. All of the points $(\sM_\fl,\sM'_\fl,\widetilde a_\fl(t))$ and $(\sN_\fl,\sN'_\fl,\widetilde b_\fl(t))$ lie in a $(d'+1)$-dimensional simplex which has $(\sM_l, \sM_l') =(\sN_{l'}, \sN_{l'}')$ as one of the generators. The parametrisation varies the coefficient of this generator in exactly the same way for both points. It is therefore clear that for each $t \in [0,1]$ (and in particular for $t=0$), the points $(\sM_\fl,\sM'_\fl,\widetilde a_\fl(t))$ and $(\sN_\fl,\sN'_\fl,\widetilde b_\fl(t))$ coincide in the silting pairs CW complex. This completes the induction.
\end{proof}

\begin{lemma} $f^{-1} \colon \image(f) \to \silt{\sD}$ is continuous.
\end{lemma}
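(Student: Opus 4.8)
The plan is to exhibit $f^{-1}$ as a composition of continuous maps by working locally, cell by cell. Since $\silt{\sD}$ is a CW complex, hence compactly generated, and $\image(f) \subseteq \stab{\sD}$ is a subspace of a metric space, it suffices to check that $f^{-1}$ is continuous on the image of each closed simplex of the order complex of $\posetpair(\sD)\setminus\{\hat 0\}$, and then glue. So fix a maximal chain $(\sM_\fl,\sM'_\fl) = \big((\sM_0,\sM_0) \leq \cdots \leq (\sM_k,0)\big)$ and let $\Delta$ denote the corresponding closed $k$-simplex; write $\sigma \coloneqq f|_\Delta$. By the injectivity lemma, $\sigma$ is a continuous bijection onto $\image(\sigma)$. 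The key point is that $\Delta$ is \emph{compact} and $\stab{\sD}$ is Hausdorff, so $\sigma$ is automatically a homeomorphism onto its image; thus $\sigma^{-1} = f^{-1}|_{\image(\sigma)}$ is continuous.

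The remaining work is to pass from the collection of local inverses $\{\sigma^{-1}\}$ to global continuity of $f^{-1}$. First I would argue that $\silt{\sD}$ is locally finite: by Proposition~\ref{prop:finiteness} and Lemma~\ref{lem:total_order}, any point lies in only finitely many simplices, so $\silt{\sD}$ carries the weak topology with respect to the finite subcomplexes, and it is locally compact. Then, given a point $y_0 \in \image(f)$ with $y_0 = f(x_0)$, choose a compact neighbourhood of $x_0$ in $\silt{\sD}$ meeting only the finitely many closed simplices $\Delta_1,\ldots,\Delta_N$ through $x_0$; their images $\image(\sigma_1),\ldots,\image(\sigma_N)$ are closed in $\stab{\sD}$ (compact in Hausdorff), so their union $V \coloneqq \bigcup_i \image(\sigma_i)$ is a neighbourhood of $y_0$ in $\image(f)$, on which $f^{-1}$ restricts to the pasting of the finitely many continuous maps $\sigma_i^{-1}$. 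These agree on overlaps because $f$ is a bijection, and the pasting lemma for finitely many closed sets gives continuity of $f^{-1}$ on $V$; since $y_0$ was arbitrary, $f^{-1}$ is continuous.

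The main obstacle is the book-keeping needed to see that $\image(f)$ really does get the right local structure — specifically, verifying that the images $\image(\sigma_i)$ of the finitely many simplices through a point are closed and cover a neighbourhood of $f(x_0)$ in $\image(f)$, rather than merely in $\stab{\sD}$. This rests on local finiteness of $\silt{\sD}$ (from Proposition~\ref{prop:finiteness}) together with compactness of closed simplices; once those are in hand, everything reduces to the elementary fact that a continuous bijection from a compact space to a Hausdorff space is a homeomorphism, applied simplex by simplex, plus the finite closed pasting lemma. No genuinely new estimate on the stability functions $\charge_{\sM_\fl,\sM'_\fl}$ is required beyond what the continuity and injectivity lemmas already provide.
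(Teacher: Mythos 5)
Your argument is essentially the paper's: both hinge on the elementary fact that a continuous bijection from a compact space to a Hausdorff space is a homeomorphism, together with local compactness of $\silt{\sD}$. The paper's version is shorter because it applies this fact directly to arbitrary compact subsets $K \subset \silt{\sD}$ (for which $f|_K$ is automatically a homeomorphism onto $f(K)$), so the decomposition into closed simplices, the appeal to local finiteness, and the finite-closed pasting lemma in your second paragraph are all unnecessary scaffolding. The subtlety you flag at the end --- that a compact image must contain a relative neighbourhood of $f(x_0)$ inside $\image(f)$, not just inside $\stab{\sD}$ --- is indeed the delicate point, and the paper's two-sentence proof leaves it just as implicit as you do.
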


We thank the referee for the following argument.

\begin{proof}
For any compact subset $K\subset\silt{\sD}$, the restriction $f|_K\colon K\to f(K)$ is a homeo\-morphism, because it is a continuous bijection from a compact space to a Hausdorff space. As $\silt{\sD}$ is locally compact, the map $f$ is continuous on all of $\silt{\sD}$.
\end{proof}

\subsection{Contractibility}

There is a folklore belief that when $\stab{\sD}$ is non-empty it is contractible. This is known in only a very few cases. For instance, in representation theory, it is known only for the bounded derived categories of
the algebras $\kk A_2, \kk\tilde A_1, \kk\tilde A_2, \Lambda(1,2,0)$; see \cite{BQS,Okada,DK,Koenig-Yang}.
As a corollary of our Theorem~\ref{thm:CW-contractible} and the work of Qiu and Woolf in \cite{QW}, we are able to add the family of non-hereditary, finite global dimension derived-discrete algebras to this list.

\begin{theorem} \label{thm:stab-contractible}
The stability manifold $\stab{\Db(\LLambda)}$ is contractible.
\end{theorem}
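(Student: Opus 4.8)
The plan is to deduce Theorem~\ref{thm:stab-contractible} from Theorem~\ref{thm:CW-contractible}, the embedding $f\colon\silt{\Db(\LLambda)}\hookrightarrow\stab{\Db(\LLambda)}$ constructed above, and the deformation-retraction result of Qiu and Woolf \cite{QW}, the idea being to show that $\stab{\Db(\LLambda)}$ is homotopy equivalent to $\silt{\Db(\LLambda)}$ and then to quote the contractibility of the latter. The first step is to observe that $\stab{\Db(\LLambda)}$ is connected: by Proposition~\ref{prop:length} and the K\"onig--Yang correspondences (Theorem~\ref{thm:Koenig-Yang}), every bounded t-structure of $\Db(\LLambda)$ has an algebraic heart $\sH_\sM$ attached to a silting subcategory $\sM$, so $\stab{\Db(\LLambda)}=\bigcup_\sM U(\sH_\sM)$, a union of the connected chambers $U(\sH_\sM)\cong\IH^{n+m}$; each such chamber contains the point $f(\sM,\sM)$, and $\image(f)$ is connected (being a continuous image of the connected space $\silt{\Db(\LLambda)}$), so $\stab{\Db(\LLambda)}$ is connected.

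The second step is to invoke \cite{QW}. Their poset $\mathrm{Int}(\sD)$ is, after formally adjoining a bottom element, isomorphic to $\posetpair(\Db(\LLambda))$, and they prove that each connected component of $\stab{\Db(\LLambda)}$ deformation retracts onto the classifying space of this poset, i.e.\ onto $\silt{\Db(\LLambda)}$ identified with $\image(f)$. Together with connectedness this yields a deformation retraction of all of $\stab{\Db(\LLambda)}$ onto $\image(f)\cong\silt{\Db(\LLambda)}$. Since a deformation retraction is a homotopy equivalence, $\stab{\Db(\LLambda)}\simeq\silt{\Db(\LLambda)}$, and the latter is contractible by Theorem~\ref{thm:CW-contractible}; hence $\stab{\Db(\LLambda)}$ is contractible.

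The main obstacle is the deformation retraction of the second step: this is the genuine technical content, and I would import it wholesale from \cite{QW} rather than reprove it. The delicate points there are the precise identification of $\mathrm{Int}(\sD)$ with $\posetpair(\Db(\LLambda))$ and the verification that the target of the retraction is exactly $\image(f)$, rather than merely an abstractly homotopy-equivalent subspace. A minor subtlety, confined to the connectedness step, is that it relies on \emph{every} heart in a discrete derived category being algebraic (Proposition~\ref{prop:length}) --- the strong finiteness property highlighted in Section~\ref{sec:discrete}, which is exactly what lets the silting pairs CW complex faithfully model the whole stability manifold.
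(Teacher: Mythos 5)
Your proof is correct and follows the paper's strategy: use Proposition~\ref{prop:length} to see that all stability conditions on $\Db(\LLambda)$ are algebraic, identify $\posetpair(\Db(\LLambda))$ with the poset $\mathrm{Int}(\sD)\cup\{\hat 0\}$ of \cite{QW}, invoke their deformation retraction of each component of $\stab{\Db(\LLambda)}$ onto a component of the induced CW complex, and finish with Theorem~\ref{thm:CW-contractible}. The one place your route genuinely differs is how you establish connectedness of $\stab{\Db(\LLambda)}$: you argue it directly from the chamber decomposition $\stab{\Db(\LLambda)}=\bigcup_\sM U(\sH_\sM)$, using that each chamber contains $f(\sM,\sM)$ and that $\image(f)$ is connected; the paper instead extracts connectedness from the \cite{QW} retraction together with the observation that the CW complex is connected (being contractible by Theorem~\ref{thm:CW-contractible}), and does not use the embedding $f$ at all in this proof. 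Both routes ultimately bottom out at the connectedness of $\silt{\Db(\LLambda)}$, which comes from Theorem~\ref{thm:CW-contractible}; yours has the virtue of being more self-contained and explicit. One simplification worth noting: the claim that the \cite{QW} retraction lands exactly on $\image(f)$ --- which you correctly flag as delicate --- is stronger than the argument needs. Knowing that each component of $\stab{\Db(\LLambda)}$ retracts onto \emph{some} component of a CW complex whose face poset is $\mathrm{Int}(\sD)\cup\{\hat 0\}$, combined with contractibility of that CW complex (via the poset isomorphism and Theorem~\ref{thm:CW-contractible}) and connectedness of $\stab{\Db(\LLambda)}$, already gives the result; identifying the retraction target with $\image(f)$ can be omitted, and the paper does omit it.
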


\begin{proof}
By Proposition~\ref{prop:length}, all the hearts in $\sD \coloneqq \Db(\LLambda)$ are module categories of finite-dimensional algebras. This means that all stability conditions on $\sD$ are algebraic in the sense of \cite{QW}. In particular, via the correspondences of K\"onig and Yang in \cite{Koenig-Yang} (see Theorem~\ref{thm:Koenig-Yang}) and the compatibility with mutation, the posets $\posetsilt(\sD)$ and $\posetpair(\sD)$ in Section~\ref{sec:silting-pairs} of this paper correspond to the posets $\mathrm{Tilt}_{\mathrm{alg}}(\sD)^{\mathrm{op}}$ and $\mathrm{Int}(\sD) \cup \{\hat{0}\}$ respectively in \cite{QW}, where $\hat{0}$ is a formally adjoined bottom element.

In \cite{QW}, it is shown that each component of $\stab{\sD}$ deformation-retracts onto a component of a CW complex induced from $\mathrm{Int}(\sD)$. They use this to deduce that each component of $\stab{\sD}$ is contractible. We mention that \cite{QW} uses topological results from \cite{FMT}; the topological backbone for Section~\ref{sec:CW-poset} was \cite{BjoernerEJC}.

We can, however, say more: using Theorem~\ref{thm:CW-contractible}, we know that the CW complex induced from the poset $\posetpair(\sD)$, and thus from $\mathrm{Int}(\sD)$, is contractible. Therefore, it follows that the whole space $\stab{\sD}$ is contractible.
\end{proof}


\appendix

\section{The K\"onig--Yang correspondences} \label{app:examples}

\noindent
In this appendix we explicitly describe the K\"onig--Yang correspondences of Theorem~\ref{thm:koenig-yang} when $\Lambda$ has finite global dimension and give some concrete examples in Dynkin type $A_3$. 
The reference is, of course, \cite{Koenig-Yang}, and the references therein.

\subsection{The correspondences}

Let $\Lambda$ be a finite-dimensional algebra of finite global dimension. Let $\sM \subseteq \Db(\Lambda) \simeq \Kb(\proj{\Lambda})$ be a silting subcategory. Then $\sM$ determines an algebraic t-structure $(\sX_\sM, \sY_\sM)$ as follows:
\[
\sX_\sM \coloneqq (\Sigma^{<0} \sM)\orth \quad \text{and} \quad  \sY_\sM \coloneqq (\Sigma^{\geq 0} \sM)\orth,
\]
and a bounded co-t-structure
\[
\sA_\sM \coloneqq \cosusp{\Sigma^{-1} \sM} = \bigcup_{l > 0} \Sigma^{-l} \sM * \cdots * \Sigma^{-1} \sM
\text{ and }
\sB_\sM \coloneqq \susp{\sM} = \bigcup_{l \geq 0} \sM * \cdots * \Sigma^l \sM.
\]

Now given an algebraic t-structure $(\sX, \sY)$, we obtain a bounded co-t-structure and a silting subcategory as follows:
\[
(\sA, \sB) \coloneqq ({}\orth \sX, \sX) 
\text{ and }
\sM \coloneqq \Sigma ({}\orth \sX) \cap \sX.
\]
In the terminology of \cite{Bondarko} we say that the co-t-structure $(\sA,\sB)$ is \emph{left adjacent} to the t-structure $(\sX,\sY)$. 

Finally, given a bounded co-t-structure $(\sA,\sB)$ we define a silting subcategory $\sM$ and an algebraic t-structure as follows:
\[
\sM \coloneqq \Sigma \sA \cap \sB
\text{ and }
(\sX, \sY) \coloneqq (\sB, \sB\orth).
\]
The silting subcategory $\sM$ is the \emph{co-heart} of $(\sA,\sB)$ \cite{Pauksztello} and the t-structure is \emph{right adjacent} to the co-t-structure.

\subsection{Examples}

In each of the diagrams below we depict the indecomposable objects of the AR quiver of $\Db(\kk A_3)$. 
Each diagram depicts a bounded t-structure together with the corresponding bounded co-t-structure, silting subcategory and the simple objects of the heart. We highlight these subcategories and collections of objects as follows.

\noindent
\begin{center}
\begin{tabular}{@{} p{0.15\textwidth} @{} p{0.73\textwidth} @{}}
symbols                                                     & indecomposable objects of \\ \midrule
\tikz{\draw (0,0) circle (1mm);}                             & the aisle of the t-structure (also co-aisle of  the co-t-structure) \\
\tikz{\draw (0.1, 0.1) rectangle (-0.1,-0.1);}               & the co-aisle of the t-structure \\
\tikz{\draw (7, 2 + 0.1) -- (7 + 0.1, 2 - 0.1) -- (7 - 0.1, 2 - 0.1) -- cycle;} & neither aisle nor co-aisle of the t-structure \\
\tikz{\draw[fill=gray!80] (0,0) circle (1mm);} or 
\tikz{\draw[fill=gray!80,very thick] (0,0) circle (1mm);}    & the silting subcategory (also co-heart of co-t-structure) \\
\tikz{\draw[very thick] (0,0) circle (1mm);} or 
\tikz{\draw[fill=gray!80,very thick] (0,0) circle (1mm);}    & simple objects of the heart of the t-structure \\
\tikz{\draw[fill=gray!30] (7, 2 + 0.1) -- (7 + 0.1, 2 - 0.1) -- (7 - 0.1, 2 - 0.1) -- cycle;} or
\tikz{\draw[fill=gray!30] (0.1, 0.1) rectangle (-0.1,-0.1);} & the aisle of the co-t-structure \\
\multicolumn{2}{@{} l}{Hearts of the t-structures are indicated by triangle outlined regions:
                    \tikz{\draw[rounded corners] (10, 2.5) -- (10.5, 1.75) -- (9.5, 1.75) -- cycle;}. }
\end{tabular}
\end{center}

\bigskip
\noindent
Below we consider the standard t-structure. Using the orientation $1\to2\to3$, the silting object is $P(1)\oplus P(2)\oplus P(3)$; it is the standard tilting module for $\mod{\kk A_3}$ with this orientation.

\begin{center}
\begin{tikzpicture}

\foreach \x in {0,1,...,5} \foreach \y in {0}
  { \fill[gray!30] (\x - 0.1, \y + 0.1) -- (\x + 0.1, \y + 0.1) -- (\x + 0.1, \y - 0.1) -- (\x - 0.1, \y - 0.1) -- cycle; }

\foreach \x in {0.5,1.5,...,4.5} \foreach \y in {1}
  { \fill[gray!30] (\x - 0.1, \y + 0.1) -- (\x + 0.1, \y + 0.1) -- (\x + 0.1, \y - 0.1) -- (\x - 0.1, \y - 0.1) -- cycle; }

\foreach \x in {0,1,...,4} \foreach \y in {2}
  { \fill[gray!30] (\x - 0.1, \y + 0.1) -- (\x + 0.1, \y + 0.1) -- (\x + 0.1, \y - 0.1) -- (\x - 0.1, \y - 0.1) -- cycle; }

\fill[gray!80] (6,0)   circle (1mm);
\fill[gray!80] (6.5,1) circle (1mm);
\fill[gray!80] (7,2)   circle (1mm);

\foreach \x in {0,1,...,5} \foreach \y in {0}
  { \draw (\x - 0.1, \y + 0.1) -- (\x + 0.1, \y + 0.1) -- (\x + 0.1, \y - 0.1) -- (\x - 0.1, \y - 0.1) -- cycle; }

\foreach \x in {0.5,1.5,...,5.5} \foreach \y in {1}
  { \draw (\x - 0.1, \y + 0.1) -- (\x + 0.1, \y + 0.1) -- (\x + 0.1, \y - 0.1) -- (\x - 0.1, \y - 0.1) -- cycle; }

\foreach \x in {0,1,...,6} \foreach \y in {2}
  { \draw (\x - 0.1, \y + 0.1) -- (\x + 0.1, \y + 0.1) -- (\x + 0.1, \y - 0.1) -- (\x - 0.1, \y - 0.1) -- cycle;}

\foreach \x in {6,7,...,14} \foreach \y in {0} { \draw (\x, \y) circle (1mm); }

\foreach \x in {6.5,7.5,...,13.5} \foreach \y in {1} { \draw (\x, \y) circle (1mm); }

\foreach \x in {7,8,...,14} \foreach \y in {2} { \draw (\x, \y) circle (1mm); }

\draw[rounded corners] (7,2.5) -- (8.5,-0.25) -- (5.5,-0.25) -- cycle; 

\draw[very thick] (6,0) circle (1mm);
\draw[very thick] (7,0) circle (1mm);
\draw[very thick] (8,0) circle (1mm);
\end{tikzpicture}
\end{center}

\bigskip
\noindent
Here we consider an irreducible right mutation of the standard t-structure: we take the subobject
 $M' = P(1)\oplus P(2) \subset M = P(1)\oplus P(2)\oplus P(3)$ and arrive at the new silting object 
$\rmu{M'}(M) = P(1) \oplus P(2) \oplus \Sigma\inv P(3)$. This is a non-tilting silting object, and hence does not live in the heart of the t-structure.
\begin{center}
\begin{tikzpicture}

\foreach \x in {0,1,...,2} \foreach \y in {0}
  { \fill[gray!30] (\x - 0.1, \y + 0.1) -- (\x + 0.1, \y + 0.1) -- (\x + 0.1, \y - 0.1) -- (\x - 0.1, \y - 0.1) -- cycle; }

\foreach \x in {0.5,1.5,...,2.5} \foreach \y in {1}
  { \fill[gray!30] (\x - 0.1, \y + 0.1) -- (\x + 0.1, \y + 0.1) -- (\x + 0.1, \y - 0.1) -- (\x - 0.1, \y - 0.1) -- cycle; }

\foreach \x in {0,1,...,3} \foreach \y in {2}
  { \fill[gray!30] (\x - 0.1, \y + 0.1) -- (\x + 0.1, \y + 0.1) -- (\x + 0.1, \y - 0.1) -- (\x - 0.1, \y - 0.1) -- cycle; }

\fill[gray!30] (5, 0 + 0.1) -- (5 + 0.1, 0 - 0.1) -- (5 - 0.1, 0 - 0.1) -- cycle;
\fill[gray!30] (4.5, 1 + 0.1) -- (4.5 + 0.1, 1 - 0.1) -- (4.5 - 0.1, 1 - 0.1) -- cycle;
\fill[gray!30] (4 - 0.1, 0 + 0.1) -- (4 + 0.1, 0 + 0.1) -- (4 + 0.1, 0 - 0.1) -- (4 - 0.1, 0 - 0.1) -- cycle;

\fill[gray!80] (7,2)   circle (1mm);
\fill[gray!80] (6.5,1) circle (1mm);
\fill[gray!80] (4,2)   circle (1mm);

\foreach \x in {0,1,...,4} \foreach \y in {0}
  { \draw (\x - 0.1, \y + 0.1) -- (\x + 0.1, \y + 0.1) -- (\x + 0.1, \y - 0.1) -- (\x - 0.1, \y - 0.1) -- cycle; }

\foreach \x in {0.5,1.5,...,3.5} \foreach \y in {1}
  { \draw (\x - 0.1, \y + 0.1) -- (\x + 0.1, \y + 0.1) -- (\x + 0.1, \y - 0.1) -- (\x - 0.1, \y - 0.1) -- cycle; }

\foreach \x in {0,1,...,3} \foreach \y in {2}
  { \draw (\x - 0.1, \y + 0.1) -- (\x + 0.1, \y + 0.1) -- (\x + 0.1, \y - 0.1) -- (\x - 0.1, \y - 0.1) -- cycle; }

\draw (6 - 0.1, 2 + 0.1) -- (6 + 0.1, 2 + 0.1) -- (6 + 0.1, 2 - 0.1) -- (6 - 0.1, 2 - 0.1) -- cycle;
\draw (5 - 0.1, 2 + 0.1) -- (5 + 0.1, 2 + 0.1) -- (5 + 0.1, 2 - 0.1) -- (5 - 0.1, 2 - 0.1) -- cycle;
\draw (5.5 - 0.1, 1 + 0.1) -- (5.5 + 0.1, 1 + 0.1) -- (5.5 + 0.1, 1 - 0.1) -- (5.5 - 0.1, 1 - 0.1) -- cycle;

\foreach \x in {6,7,...,14} \foreach \y in {0}
 { \draw (\x, \y) circle (1mm); }

\foreach \x in {6.5,7.5,...,13.5} \foreach \y in {1}
  { \draw (\x, \y) circle (1mm); }

\foreach \x in {7,8,...,14} \foreach \y in {2}
  { \draw (\x, \y) circle (1mm); }

\draw (4,2) circle (1mm);

\draw (5, 0 + 0.1) -- (5 + 0.1, 0 - 0.1) -- (5 - 0.1, 0 - 0.1) -- cycle;
\draw (4.5, 1 + 0.1) -- (4.5 + 0.1, 1 - 0.1) -- (4.5 - 0.1, 1 - 0.1) -- cycle; 
 
\draw[rounded corners] (7.5,1.5) -- (8.5,-0.25) -- (6.5,-0.25) -- cycle; 
\draw[rounded corners] (4, 2.5) -- (4.5, 1.75) -- (3.5, 1.75) -- cycle;

\draw[very thick] (8,0)  circle (1mm);
\draw[very thick] (7,0)  circle (1mm);
\draw[very thick] (4,2) circle (1mm);

\end{tikzpicture}
\end{center}

\FloatBarrier

\addtocontents{toc}{\protect{\setcounter{tocdepth}{-1}}}

\end{document}